\newcommand{\newOld}[1]{#1}
\DeclareMathAlphabet{\mathpzc}{OT1}{pzc}{m}{it}
\newtheorem{theorem}{Theorem}
\crefname{theorem}{theorem}{Theorems}
\Crefname{Theorem}{Theorem}{Theorems}
\newaliascnt{lemma}{theorem}
\newtheorem{lemma}[lemma]{Lemma}
\crefname{lemma}{lemma}{lemmas}
\Crefname{Lemma}{Lemma}{Lemmas}
\newaliascnt{corollary}{theorem}
\newtheorem{corollary}[corollary]{Corollary}
\crefname{corollary}{corollary}{corollaries}
\Crefname{Corollary}{Corollary}{Corollaries}
\newaliascnt{proposition}{theorem}
\newtheorem{proposition}[proposition]{Proposition}
\crefname{proposition}{proposition}{propositions}
\Crefname{Proposition}{Proposition}{Propositions}
\newaliascnt{definition}{theorem}
\crefname{definition}{definition}{definitions}
\Crefname{Definition}{Definition}{Definitions}
\newaliascnt{definitionProposition}{theorem}
\crefname{Proposition and Definition}{Proposition and Definition}{Proposition and Definition}
\Crefname{Proposition and Definition}{Proposition and Definition}{Proposition and Definition}
\newaliascnt{remark}{theorem}
\newtheorem{remark}[remark]{Remark}
\crefname{remark}{remark}{remarks}
\Crefname{Remark}{Remark}{Remarks}
\crefname{example}{example}{examples}
\Crefname{Example}{Example}{Examples}
\crefname{figure}{figure}{figures}
\Crefname{Figure}{Figure}{Figures}
\newtheorem{assumption}{\textbf{H}\hspace{-3pt}}
\Crefname{assumption}{\textbf{H}\hspace{-3pt}}{\textbf{H}\hspace{-3pt}}
\crefname{assumption}{\textbf{H}}{\textbf{H}}
\Crefname{assumptionAO}{\textbf{AO}\hspace{-3pt}}{\textbf{AO}\hspace{-3pt}}
\crefname{assumptionAO}{\textbf{AO}}{\textbf{AO}}
\Crefname{assumptionL}{\textbf{L}\hspace{-3pt}}{\textbf{L}\hspace{-3pt}}
\crefname{assumptionL}{\textbf{L}}{\textbf{L}}
\Crefname{assumptionA}{\textbf{A}\hspace{-3pt}}{\textbf{A}\hspace{-3pt}}
\crefname{assumptionA}{\textbf{A}}{\textbf{A}}
\Crefname{assumptionG}{\textbf{G}\hspace{-3pt}}{\textbf{G}\hspace{-3pt}}
\crefname{assumptionG}{\textbf{G}}{\textbf{G}}
\def\bfM{\mathbf{M}}
\def\mch{\mathcal{H}}
\def\mci{\mathcal{I}}
\def\rset{\mathbb{R}}
\def\rsetd{\mathbb{R}^d}
\def\rsetdd{\mathbb{R}^{2d}}
\def\nset{\mathbb{N}}
\def\nsets{\mathbb{N}^*}
\def\rml{\mathrm{L}}
\def\rmd{\mathrm{d}}
\def\rme{\mathrm{e}}
\def\rmN{\mathrm{N}}
\def\rmC{\mathrm{C}}
\newcommand{\cco}{\llbracket}
\newcommand{\ccf}{\rrbracket}
\newcommand{\po}{\left(}
\newcommand{\pf}{\right)}
\newcommand{\co}{\left[}
\newcommand{\cf}{\right]}
\newcommand{\R}{\mathbb R}
\newcommand{\dd}{\mathrm{d}}
\newcommand{\na}{\nabla}
\newcommand{\gabriel}[1]{\textcolor{brown}{#1}}
\newcommandx{\functionspace}[2][1=+]{\mathbb{F}_{#1}(#2)}
\newcommandx{\VarDeux}[3][3=]{\operatorname{Var}^{#3}_{#1}\left\{#2 \right\}}
\newcommand{\LeftEqNo}{\let\veqno\@@leqno}
\newcommand{\floor}[1]{\left\lfloor #1 \right\rfloor}
\newcommand{\ceil}[1]{\left\lceil #1 \right\rceil}
\newcommand{\N}{\ensuremath{\mathbb{N}}}
\newcommand{\PE}{\mathbb{E}}
\newcommand{\abs}[1]{\left\vert #1 \right\vert}
\newcommand{\absLigne}[1]{\vert #1 \vert}
\newcommandx{\Vnorm}[2][1=V]{\| #2 \|_{#1}}
\newcommandx{\VnormEq}[2][1=V]{\left\| #2 \right\|_{#1}}
\newcommandx{\norm}[2][1=]{\ifthenelse{\equal{#1}{}}{\left\vert #2 \right\vert}{\left\vert #2 \right\vert^{#1}}}
\newcommandx{\normLigne}[2][1=]{\ifthenelse{\equal{#1}{}}{\Vert #2 \Vert}{\Vert #2\Vert^{#1}}}
\newcommand{\parenthese}[1]{\left(#1 \right)}
\newcommand{\parentheseDeux}[1]{\left[ #1 \right]}
\newcommand{\defEns}[1]{\left\lbrace #1 \right\rbrace }
\newcommand{\defEnsLigne}[1]{\lbrace #1 \rbrace }
\newcommand{\ps}[2]{\left\langle#1,#2 \right\rangle}
\newcommand{\psLigne}[2]{\langle#1,#2 \rangle}
\newcommandx\probaMarkovTilde[2][2=]
\newcommand{\bigO}{\ensuremath{\mathcal O}}
\newcommand{\plusinfty}{+\infty}
\def\ie{\textit{i.e.}}
\def\eqsp{\;}
\newcommand{\coint}[1]{\left[#1\right)}
\newcommand{\ooint}[1]{\left(#1\right)}
\newcommand{\ocintLigne}[1]{(#1]}
\newcommand{\tint}{\mathtt{T}}
\newcommand{\gammaint}{\gamma}
\newcommandx{\weight}[2][2=n]{\omega_{#1,#2}^N}
\newcommand{\dps}{\displaystyle}
\newcommandx\sequence[3][2=,3=]
\newcommandx\sequenceD[3][2=,3=]
\newcommandx{\sequencen}[2][2=n\in\N]{\ensuremath{\{ #1_n, \eqsp #2 \}}}
\newcommandx\sequenceDouble[4][3=,4=]
\newcommandx{\sequencenDouble}[3][3=n\in\N]{\ensuremath{\{ (#1_{n},#2_{n}), \eqsp #3 \}}}
\newcommand{\opnorm}[1]{{\left\vert\kern-0.25ex\left\vert\kern-0.25ex\left\vert #1 
    \right\vert\kern-0.25ex\right\vert\kern-0.25ex\right\vert}}
\def\bfe{\mathbf{e}}
\newcommandx{\CPE}[3][1=]{{\mathbb E}_{#1}\left[\left. #2 \, \middle \vert \, #3 \right. \right]} 
\newcommandx{\CPELigne}[3][1=]{{\mathbb E}_{#1}[\left. #2 \,  \vert \, #3 \right. ]} 
\newcommandx{\CPVar}[3][1=]{\mathrm{Var}^{#3}_{#1}\left\{ #2 \right\}}
\newcommand{\CPP}[3][]
{\ifthenelse{\equal{#1}{}}{{\mathbb P}\left(\left. #2 \, \right| #3 \right)}{{\mathbb P}_{#1}\left(\left. #2 \, \right | #3 \right)}}
\newcommandx{\osc}[2][1=]{\mathrm{osc}_{#1}(#2)}
\def\IdM{\operatorname{I}_d}
\def\Lone{\mathrm{L}^1}
\def\a{a}
\def\b{b}
\def\c{c}
\def\e{e}
\def\y{y}
\def\bE{\bar{E}}
\def\bT{\bar{T}}
\def\Mt{\tilde{M}}
\def\tM{\Mt}
\def\tT{\tilde{T}}
\def\bE{\bar{E}}
\def\bT{\bar{T}}
\def\rmD{\mathrm{D}}
\newcommand\coupling[2]{\Gamma(\mu,\nu)}
\newcommand{\fracmm}[2]{ #1  / #2 }
\renewcommand{\geq}{\geqslant}
\renewcommand{\leq}{\leqslant}
\def\Leb{\mathrm{Leb}}
\def\vareps{\varepsilon}
\def\bdelta{\bar{\delta}}
\def\varphibf{\boldsymbol{\varphi}}
\def\bfm{\mathbf{m}}
\def\mtt{\mathtt{m}}
\def\Mtt{\mathtt{M}}
\def\a{\mathbf{a}}
\newcommandx{\wasserstein}[3][1=\distance,3=]{\mathscr{W}_{#1}^{#3}\left(#2\right)}
\newcommandx{\wassersteinLigne}[3][1=\distance,3=]{\mathscr{W}_{#1}^{#3}(#2)}
\newcommandx{\wassersteinD}[1][1=\distance]{\mathscr{W}_{#1}}
\newcommandx{\wassersteinDLigne}[1][1=\distance]{\mathscr{W}_{#1}}
\newcommand{\txts}{\textstyle}
\def\rmN{\mathrm{N}}
\def\gauss{\mathrm{N}}
\def\Pens{\mathcal{P}}
\def\bfX{\mathbf{X}}
\def\bfV{\mathbf{V}}
\def\bfP{\mathbf{P}}
\def\rmK{\mathrm{K}}
\def\rmP{\mathrm{P}}
\def\rmV{\mathrm{V}}
\def\CLS{C_{\mathrm{LS}}}
\def\refl{\mathrm{R}}
\def\nq{d_0}
\def\Uq{U^{(q)}}
\def\Wq{W^{(q)}}
\def\bfo{\mathbf{o}}
\def\tbfo{\tilde{\mathbf{o}}}
\def\Vo{\mathfrak{V}_{\bfo}}
\newcommandx{\Voi}[1][1=i]{\mathfrak{V}_{\bfo,#1}}
\newcommandx{\Vlyapc}[2][1=\bfo,2=i]{\mathfrak{V}_{#1,#2}}
\def\Wlyap{\mathfrak{W}}
\def\bfomega{\boldsymbol{\omega}}
\def\Wol{\Wlyap_{\bfomega}^{(\ell)}}
\def\Vlyol{\Vo^{(\ell)}}
\def\WoU{\Wlyap_{\bfomega}^{(1)}}
\newcommandx{\Woi}[1][1=i]{\Wlyap_{\bfomega,#1}}
\newcommandx{\Wlyapc}[2][1=\bfomega,2=i]{\Wlyap_{#1,#2}}
\renewcommand{\iint}[2]{\{#1,\ldots,#2\}}
\def\a{a}
\def\b{b}
\def\c{c}
\def\e{e}
\def\f{f}
\def\teta{\tilde{\eta}}
\def\rmPo{\rmP_{\bfomega}}
\def\tbfm{\tilde{\bfm}}
\def\tC{\tilde{C}}
\def\bC{\bar{C}}
\def\tdelta{\tilde{\delta}}
\def\nC{C}
\def\espilon{\epsilon}
\def\bareta{\bar{\eta}}
\def\bD{\bar{D}}
\def\bfA{\mathbf{A}}
\def\bfB{\mathbf{B}}
\def\bfS{\mathbf{S}}
\def\ta{\tilde{a}}
\def\tkappa{\tilde{\kappa}}
\def\rmQ{\mathrm{Q}}
\def\Deltabf{\boldsymbol{\Delta}}
\title{Second order quantitative bounds for unadjusted generalized Hamiltonian Monte Carlo}
\author{Evan Camrud, Alain Durmus, Pierre Monmarché, Gabriel Stoltz}
\begin{document}

\maketitle

\begin{abstract}~ This paper provides a convergence analysis for
  generalized Hamiltonian Monte Carlo samplers, a family of Markov
  Chain Monte Carlo methods based on leapfrog integration of
  Hamiltonian dynamics and kinetic Langevin diffusion, that
  encompasses the unadjusted Hamiltonian Monte Carlo method.  Assuming
  that the target distribution $\pi$ satisfies a log-Sobolev inequality and
  mild conditions on the corresponding potential function, we
  establish quantitative bounds on the relative entropy of the
  iterates defined by the algorithm, with respect to $\pi$.
  Our
  approach is based on a perturbative and discrete version of the 
  modified entropy method developed to establish hypocoercivity for the continuous-time kinetic Langevin process.   As a corollary of our main result, we are able to derive complexity bounds for the class of algorithms at hand. In particular, we show that the total number of iterations to achieve a target accuracy $\varepsilon >0$ is of order $d/\varepsilon^{1/4}$, where $d$ is the dimension of the problem. This result can be further improved in the case of weakly interacting mean field potentials, for which we find a total number of iterations of order $(d/\varepsilon)^{1/4}$.
 \end{abstract}

\section{Introduction}\label{sec:intro}

We consider in this paper the problem of sampling from a target distribution $\pi$. This problem is ubiquitous in various fields such as statistical physics \cite{lelievre2016partial}, statistics \cite{gelman2013bayesian}, and machine learning \cite{andrieu2003introduction}. However, in most applications, the distribution $\pi$ has a density with respect to a dominating measure known up to an intractable multiplicative constant. Markov chain Monte Carlo methods are now a family of popular algorithms for solving this problem. They consist in designing a Markov chain associated with a Markov kernel for which $\pi$ is an invariant distribution. One of the best known MCMC instances is the family of Metropolis--Hastings algorithms \cite{metropolis1953equation,hastings1970monte}. In the case where the target distribution has a smooth and positive density with respect to the Lebesgue measure on $\rset^d$, denoted by $\pi$ still, another class of MCMC algorithms is based on discretizations of continuous-time stochastic dynamics \cite{grenander:miller:1994,grenander:1983,rossky1978brownian}. Famous examples of such MCMC methods are the Unadjusted Langevin Algorithm (ULA)~\cite{roberts:tweedie:1996} and Stochastic Gradient Langevin Dynamics~\cite{welling:the:2011}, which are based on the overdamped Langevin diffusion.

Here we consider numerical schemes based on Hamiltonian-type dynamics, \ie, ideal Hamiltonian
Monte Carlo and kinetic (or underdamped) Langevin diffusion. Although
these two dynamics are different, they have many features in common.
For example, both define an extended process on $\rset^{2d}$ that has
the product of $\pi$ and the standard Gaussian distribution as
invariant measure, denoted hereafter by $\mu$. Also, the infinitesimal
generators of these extended processes differ only in their symmetric
parts, while their antisymmetric parts are the same and correspond to
the Hamiltonian dynamics associated with the potential~$U$ associated
with~$\pi$, \ie, $\pi \propto \exp(-U)$. From this observation it
follows that common discretization strategies have been employed for
both dynamics. Among these methods, those based on splitting
techniques \cite{bussi2007accurate,Leimkuhler} are particularly attractive since they come with valuable
properties and important convergence guarantees. 

In this paper, we are
particularly interested in the family of splitting methods known as
generalized Hamiltonian Monte Carlo  (gHMC). This method has been
shown to yield weak second order errors. More precisely, denoting by
$\rmP_{\bfomega}$ the Markov kernel associated with gHMC, where~$\bfomega$ are the hyperparameters of the algorithm including the step size~$\delta  > 0$, the refreshment rate~$\eta$ and the integration time~$\tint>0$, 
the following convergence bound holds under suitable conditions (see e.g. \cite{abdulle2015long} for the Langevin case~$\tint=\delta$): for any sufficiently regular function $f : \rset^{2d} \to \rset$ and  initial distribution $\nu_0$, there exist $C_f \geq 0$ and $c_1 > 0$ depending on the parameters~$\bfomega$ only through the friction~$\gamma=(1-\eta)/\tint$ ($c_1$ being independent of~$f,\nu_0$) such that for any  number of iterations $k \in \nset$, $\vert \nu_0 \rmP_{\bfomega}^k f - \pi(f)\vert \leq C_f(\rme^{-c_1 \tint k} +
\delta^2)$.
This result illustrates the advantages of using well-chosen
splitting strategies compared to traditional Euler schemes, for which
similar conclusions can be drawn but with a larger second term on the right hand side of the previous inequality, typically of order~$\delta$ instead of~$\delta^2$. 
While weak error bounds already
provide significant convergence guarantees, another line of research
is concerned with establishing quantitative bounds for MCMC
algorithms, paying particular attention to the dimension dependence \cite{dalalyan2017theoretical,durmus2017nonasymptotic}.

Regarding the kinetic Langevin algorithm, existing works
\cite{Chatterji,cheng2018underdamped,zhang2023improved,Dalalyan2018OnSF}
analyze for most of them a modification of the Euler scheme. For this
particular algorithm, \cite{Chatterji} shows that, when $\pi$ satisfies
a log-Sobolev inequality and under additional conditions on $U$,
and denoting by $\nu_k$ the distribution of the $k$-th iterate of the
algorithm starting from $\nu_0$ with step size $\delta > 0$, 
there are constants $C_{\nu_0} \geq 0$ and~$c_1 > 0$, independent of $\delta$, such that for any $k \in\nset$, $\mch(\nu_k | \pi) \leq C_{\nu_0} (\rme^{-c_1 \delta k } + \delta^2)$.
where $\mch$ denotes the relative entropy or Kullback--Leibler divergence. Note that the Pinsker inequality then implies the same
type of bounds for the total variation distance, but with a second
term of order $\delta$. 

Quantitative bounds for splitting schemes of the kinetic Langevin diffusion are scarcer   up to our knowledge. Higher order quantitative bounds are established in \cite{MonmarcheSplitting}  in this context for the case when the potential $U$ is convex. On the other hand, regarding unadjusted HMC, combining results from \cite{BouRabeeEberleZimmer} and \cite{durmus2021asymptotic} implies that, for a fixed integration time $\tint>0$, there exists~$c_1 > 0$ (depending on the parameters of the dynamics only through~$\tint$) such that, for any initial distribution~$\nu_0$, 
 $ k \in\nset$, $\mathbf{W}_1(\tilde{\nu}_k, \pi) \leq C_{\nu_0}(\rme^{-c_1   k } +
\delta^2)$, 
where~$C_{\nu_0} \geq 0$ is a constant, $\tilde {\nu}_k$ is the distribution of the $k$-th iterates of unadjusted HMC starting from $\nu_0$, and $\mathbf{W}_1$ denotes the Wasserstein distance of order~$1$. This result once again highlights the improved accuracy of the leapfrog integrator. Finally, the analysis of gHMC in the Wasserstein distance has been conducted in
\cite{monmarche2022hmc}, but only for the strongly convex scenario.

The main contribution of this paper is to extend and generalize to the non-convex scenario the results we just mentioned and to analyze gHMC. In particular, we show that gHMC achieves a higher order accuracy than traditional Euler schemes in relative entropy, under the condition that~$\pi$ satisfies a log-Sobolev inequality, and additional relatively mild assumptions on the potential energy function~$U$. Roughly speaking, we establish a bound of the form
$\mch(\nu_0 \rmP^k | \mu) \leq C_{\nu_0}\rme^{-c_1 k } +
C_{2}\delta^4$ for some explicit constants~$C_{\nu_0},C_2 \geq 0 $ and
$c_1 > 0$. Our approach is based on a perturbative argument of the modified entropy approach initiated in
\cite{Villani2009}, and more precisely to its recent discrete-time variation in \cite{MonmarcheIdealized} for idealized gHMC. From our main result, we derive bounds on the total number of iterations $k$ and a step size $\delta$ to achieve
$\mch(\nu_0 \rmP^k | \mu) \leq \vareps$ for some $\vareps > 0$,
where two cases can be distinguished: in the general case, we obtain a bound on the number of gradient computations of order $\bigO(d/\vareps^{1/4})$; while in a weakly interacting mean-field regime, an improved bound of order $\bigO((d/\vareps)^{1/4})$ can be achieved.

The paper is organized as follows. The family of (unadjusted) generalized Hamiltonian Monte Carlo samplers is introduced in Section~\ref{subsec:definitions}. Our  main assumptions and results are stated in Section~\ref{subsec:MainResult}. They are discussed and compared to previous works in  Section~\ref{sec:discussion}. Section~\ref{sec:proof-theor-refthm:m}  is devoted to the proof of  Theorem~\ref{thm:main_KL}. Other more technical proofs are postponed to Appendices.  

\paragraph{Notation.}
We denote by $|x|$ the Euclidean norm of $x \in\rset^d$.
We denote by $\rmC^{k}(\R^d)$ the set of functions from $\R^d$ to $\R$ with continuous derivatives up to order $k$. For $f\in \rmC^k(\R^d)$, $\na f$ stands for the gradient of $f$ and $\rmD^k f$ for the $k$-th derivative of $f$. In addition, $|\rmD^k f(x)|$ is the multilinear operator norm of $\rmD^k f(x)$ with respect to the Euclidean norm on $\R^d$ and $\|\rmD^k f\|_\infty = \sup_{x\in\R^d}|\rmD^k f(x)| $. For a $d\times d$ matrix $\bfA$, $|\bfA|$ stands for the operator norm on $\R^d$ with respect to the Euclidean norm. For a differentiable map $\Phi:\R^d \rightarrow \R^p$ and $x\in\R^d$ we write $\na \Phi(x) = (\partial_{x_i} \Phi_j(x))_{i\in\cco 1,d\ccf,j\in\cco 1,p\ccf}$ (where $i$ stands for the line and $j$ for the column). This notation is such that $\na\po \Phi \circ \Psi\pf  = \na \Psi \na \Phi\circ \Psi$ and is consistent in the case $p=1$ (since $\na\Phi$ is in that case the gradient of $\Phi$). Notice that $\na \Phi$ is the \emph{transpose} of what is most commonly named the Jacobian matrix of $\Phi$.

 $\Leb$  stands for the Lebesgue measure, $\mathrm{N}(0,\IdM)$ for the $d$-dimensional standard Gaussian distribution and
$\mathcal{P}(\rset^d)$ for the set of distribution of $\rset^d$ endowed with its Borel $\sigma$-field denoted by $\mathcal{B}(\rset^d)$. We write $\nu\ll\mu$ when $\nu$ is absolutely continuous with respect to $\mu$.  We define the relative entropy (or Kullback-Leibler [KL-] divergence) and the Fisher information of $\nu_1 \in \Pens(\rsetd)$ with respect to  $\nu_2 \in \Pens(\rsetd)$ respectively as
 \begin{equation}
 \mathcal  H(\nu_1|\nu_2)  = \begin{cases}
\int \ln \po \frac{\rmd \nu_1}{\rmd \nu_2}\pf  \rmd \nu_2  & \text{if } \nu_1 \ll \nu_2\\
+ \infty & \text{otherwise } 
\end{cases}
\eqsp, \quad 
\mathcal  I(\nu_1|\nu_2)   =
\begin{cases}
\int \left| \nabla\ln \po \frac{\rmd \nu_1}{\rmd \nu_2}\pf\right|^2 \rmd \nu_1  & \text{if } \nu_1 \ll \nu_2\\
+ \infty & \text{otherwise }
\end{cases}
\eqsp,
 \end{equation}
where for a measurable function $f: \rsetd \to \rset$, $|\na f|$ is defined  for any $z \in\rset^d$ as
\[|\na f|(z) \ = \ \lim_{r\downarrow 0}\ \sup\left\{ \frac{|f(z)-f(y)|}{|z-y|} \, :\,\ y\in\rsetd,\ 0<|y-z|\leqslant r\right\}\,.\]
If $f$ is continuously differentiable, note that $\abs{\nabla f}(z)$ is simply the norm of $\nabla f(z)$ and therefore our notation is consistent.

We write $\cco i,j\ccf$ the integer interval $\{i,\dots,j\}$ for $i,j\in\mathbb Z$.


\section{Non-asymptotic bounds for splitting schemes for Hamiltonian type dynamics}\label{sec:main}

\subsection{Splitting schemes for Hamiltonian type dynamics -- gHMC}\label{subsec:definitions}

Recall that we assume that the target distribution $\pi$ admits a positive density with respect to the Lebesgue measure $\pi \propto \exp(-U)$. In addition, we suppose the following condition on $U$.

\begin{assumption}\label{hyp:target}
  The potential $U \in \rmC^{4}(\rset^d)$ is such that~$\rme^{-U} \in \Lone(\rset^d)$, and there exists~$L>0$ such that~$|\na^2 U(x)| \leqslant L$ for any $x\in\R^d$.
\end{assumption}

We now introduce more formally the two continuous dynamics that are considered in this work: Hamiltonian dynamics and kinetic (also referred to as underdamped) Langevin dynamics.
As previously mentioned, these two dynamics leave the extended target distribution $\mu = \pi \otimes \gauss(0,\IdM)$ invariant.

\paragraph{Ideal Hamiltonian Monte Carlo.}
The Hamiltonian dynamics associated with the potential~$U$ defines the differential flow $(\psi_t)_{t \geq 0}$ for any $t \in \rset_+$ and $x,v \in\rset^d$ as $\psi_t(x,v)=(x_t,v_t)$, where $(x_t,v_t)_{t \geq 0}$ is the solution of the Hamiltonian differential equation:
\begin{equation}
  \label{eq:def_hamil_ode}
  \partial_t x_t  = v_t \eqsp,\quad \partial_t v_t = -\nabla U(x_t) \eqsp, \quad \text{ with } (x_0,v_0)=(x,v) \eqsp.
\end{equation}
As discussed in~\cite{neal:2010} for instance, the flow~$(\psi_t)_{t \geq 0}$ preserves $\mu$, \ie, if $(X_0,V_0)$ has distribution $\mu$ then so does $\psi_t(X_0,V_0)$ for all $t \geq 0$. However, the trajectory $(\psi_t(X_0,V_0))_{t \geq 0}$ is of course not ergodic for~$\mu$ as, starting from any fixed initial conditions $x_0,v_0\in\rsetd$, it remains in the corresponding level set  $\{(x,v) \in \rsetdd \, : \, H(x,v) = H(x_0,v_0)\}$ of the Hamiltonian function~$H$  defined by
\begin{equation}
  \label{eq:def_hamil}
  H(x,v) = U(x) + \frac12 \abs{v}^2 \eqsp.
\end{equation}
To address this issue, one can add a velocity randomization (or refreshment) at deterministic times multiple of~$\tint  >0$. To formalize this procedure, consider the Markov operator $\rmD_{\eta}$ given, for $\eta \in \coint{0,1}$ and any measurable and bounded function $f :\rsetdd \to \rset$, by 
\begin{equation}
  \label{eq:2}
  \rmD_{\eta}f(x,v)  = \int_{\R^d} f\parenthese{x, \eta v + \sqrt{1-\eta^2}  g} \varphibf(g) \rmd g \eqsp, 
\end{equation}
where $\varphibf$ stands for the density with respect to $\Leb$ of $\mathrm{N}(0,\IdM)$. Then, under mild assumptions, the resulting kernel
\begin{equation}
  \label{eq:def_ham_ideal}
  \rmK_{\eta,\tint}f(x,v) = \rmD_\eta[f\circ \psi_{\tint}](x,v) \eqsp,
\end{equation}
is ergodic with respect to $\mu$. Recently, \cite{MonmarcheIdealized} used  hypocoercivity techniques from \cite{Villani2009} to show exponential convergence of $  \rmK_{\eta,\tint}$ in a modified entropy with respect to $\mu$. 

\paragraph{Underdamped Langevin dynamics.}
The second process which shares some important features with Hamiltonian dynamics is the underdamped Langevin diffusion:
\begin{equation}\label{eq:EDScontinuousLangevin2}
\dd \bfX_s  =  \bfV_s \, \dd s \eqsp, \qquad 
\dd \bfV_s  =  - \na U(\bfX_s) \, \dd s - \gamma \bfV_s \, \dd s + \sqrt{2\gamma} \, \dd B_s\,,
\end{equation}
where $(B_s)_{s \geq 0}$ is a $d$-dimensional Brownian motion and $\gamma>0$ is a damping parameter.
It can  easily be  shown that under mild assumptions on $U$, the Hamiltonian dynamics with a suitably scaled refreshment  weakly converges to  \eqref{eq:EDScontinuousLangevin2}: for any $s \geq 0 $, $x,v\in\rset^d$ and smooth and bounded function $f : \rsetdd \to \rset$,  it holds $\displaystyle \lim_{\delta \to 0} \rmK_{\eta_{\delta},\delta}^{\ceil{s/\delta}}f(x,v)  = \bfP_s f(x,v)$ where
\begin{equation}
  \label{eq:def_eta_delta}
  \eta_{\delta} = \rme^{-\delta \gamma} \eqsp,
\end{equation}
 and $(\bfP_s)_{s\geq 0}$ is the Markov semigroup associated with \eqref{eq:EDScontinuousLangevin2}. 
 
 \paragraph{Discretizations of underdamped Langevin and ideal HMC.}
 We now present a family of splitting schemes which encompasses
 discretizations for both Hamiltonian and underdamped
 Langevin dynamics. This family of algorithms will be referred to as generalized Hamiltonian Monte Carlo (gHMC). It is based on the Verlet discretization of \eqref{eq:def_hamil_ode}
 defined at time $k \delta$, for $k \in \nset$ and a given stepsize~$\delta >0$, by $(x_{k},v_{k}) = \Phi^k_{\delta}(x_0,v_0)$ where $\Phi^k_{\delta} = \Phi^{k-1}_\delta \circ \Phi_{\delta}$, with~$\Phi^0_{\delta}$ is the identity function and 
 \begin{equation}
   \label{eq:verlet_decompo}
      \Phi_{\delta} = \Phi^{(v)}_{\delta/2} \circ \Phi^{(x)}_{\delta} \circ \Phi^{(v)}_{\delta/2}\eqsp, \quad \Phi^{(v)}_{\delta/2}(x,v) = \po x,v -(\delta/2) \nabla U(x)\pf  \eqsp,  \quad \Phi^{(x)}_{\delta}(x,v) = \po x + \delta v ,v\pf  \eqsp.
 \end{equation}
 It is well-known that the integrator $\Phi_{\delta}$ is symplectic and reversible (see~\cite{HairerLubichWanner06}). Its inverse reads
  \begin{equation}
   \label{eq:verlet_inverse_and_refl}
   \Phi_{\delta}^{-1} = \refl \circ \Phi_{\delta} \circ \refl \eqsp, \qquad \refl(x,v) = (x,-v) \eqsp.
 \end{equation}

The gHMC algorithm then consists in the composition of a (possibly partial) velocity refreshment with a $K$-step Verlet scheme. Setting $\bfomega= (K,\delta,\eta)$, this corresponds to the inexact Markov chain Monte Carlo (MCMC) method with Markov kernel specified  by
\begin{equation}
  \label{eq:def_SHD}
  \rmPo =  \rmD_{\eta}\rmV_{\delta}^K  \eqsp, 
\end{equation}
where $\rmV_{\delta}$ corresponds to the deterministic kernel
\begin{equation}
  \label{eq:def_Verlet_kernel}
  \rmV_{\delta}((x,v),\cdot) = \updelta_{\Phi_{\delta}(x,v)}(\cdot) \eqsp, \quad \text{ for $x,v \in \rset^d$} \eqsp. 
\end{equation}

For $\eta=0$ and $K \delta = \tint$ for an integration time $\tint >0$, the kernel~$\rmP_{\bfomega}$ corresponds to the usual unadjusted Hamiltonian Monte Carlo algorithm, which is a discretized version of the ideal Hamiltonian dynamics $\rmK_{\tint ,\eta}$ defined by \eqref{eq:def_ham_ideal}. On the other hand, with~$K=1$ and the damping parameter~$\eta_{\delta}$ defined in~\eqref{eq:def_eta_delta}, the kernel~$\rmP_{\bfomega}$ corresponds to a splitting scheme whose invariant probability measure is correct at second order in~$\delta$ (this corresponds in fact to the so-called GLA scheme~\cite{BO10}; see also~\cite{Leimkuhler,lelievre2016partial} for other splitting schemes). Alternatively, taking $\eta=0$ and $K=1$ leads to the Euler--Maruyama scheme 
with step size $\delta^2/2$ for the overdamped Langevin process.

\paragraph{Normalization and rescaling.}
When \Cref{hyp:target} is satisfied, we introduce
\begin{equation}
  \label{eq:def_tint_gammaint}
  \tint = \delta K\sqrt{L}\eqsp,\qquad \gammaint = (1-\eta)/\tint\,,
\end{equation}
which are respectively, up to a suitable rescaling, the physical time of integration of the Hamiltonian dynamics by the Verlet integrator, and the strength of the damping. As discussed in \cite{monmarche2022hmc}, upon rescaling the process so that $L=1$ (see the discussion in Section~\ref{subsubsec:scaling}), the time needed for the process to forget its initial velocity is of order~$1/\gammaint$, so that this quantity is also the typical distance covered by one of the coordinates of the process in a single ballistic run in flat parts of the space.

\subsection{Main result}\label{subsec:MainResult}

We present in this section our main result, under additional assumptions regarding~$\pi$ and~$U$ (see Section~\ref{sec:actual_main_result}). We next provide in Section~\ref{sec:sufficient_conditions_for_main_result} sufficient conditions under which these additional assumptions are satisfied.

\subsubsection{Uniform convergence in time}
\label{sec:actual_main_result}
\paragraph{Additional assumptions.}
Our first assumption is that~$\pi$ satisfies a log-Sobolev inequality with constant $\CLS \geq 0$,  \ie 
\begin{equation}\label{eq:log-Sob}
\forall \nu \in \Pens(\rset^d)\,,\qquad \mathcal H(\nu|\pi)  \leqslant \CLS \, \mathcal I(\nu|\pi)\eqsp.
\end{equation}
Various sufficient conditions for this inequality to hold have been derived, see~\cite{BakryGentilLedoux}. It holds for instance when~$U$ is uniformly convex outside some compact set. Various estimates are available for $\CLS$, e.g. in mean-field, convex or low-temperature cases, which can capture more specific information on the target~$\pi$ than uniform bounds on the curvature $\ps{x-y}{ \na U(x)-\na U(y)} /|x-y|^2$ that are used in direct coupling methods. Notably, in non-convex cases, for a fixed dimension $d$, up to polynomial terms in the other parameters, the log-Sobolev constant is of order~$\rme^{c_*}$ where~$c_*$ is the so-called critical height of~$U$ (see~\cite[Section 5]{MonmarcheIdealized} and references within).


Part of our analysis consists in controlling some numerical errors of the leapfrog integrator, \ie, for example differences of the Hamiltonian function evaluated at the dynamics at times~$0$ and $s >0$ starting from the $k$-th iterates of the gHMC chain. To this end, we need additional regularity conditions on~$U$. More specifically, we require uniform bounds on the third and fourth derivatives of $U$, as made precise in the following condition.
 

\begin{assumption}\label{assu:derivativesUbounded}
There exist two norms $\rmN_3$ and $\rmN_4$ on $\R^d$ such that, for all $x,y,z\in\R^d$,
\begin{align}
\abs{ \po  \na^2 U(x+y) - \na^2 U(x)\pf z} & \leqslant      \rmN_3(y)\rmN_3(z) \,, \label{eqdef:assuU3bounded}\\
\abs{\na U(x+y) - \na U(x) - \frac12\po \na^2 U(x)+\na^2 U(x+y)\pf y}  &\leqslant    \rmN_4^3(y)
\,. \label{eqdef:assuU4bounded}
\end{align}
\end{assumption}

Using a third order Taylor expansion of $\na U$, it can be shown that~\eqref{eqdef:assuU4bounded} follows from a uniform bound on the third and fourth derivatives of~$U$ (see Proposition~\ref{prop:regularity} below).
\Cref{{assu:derivativesUbounded}} may be relaxed, i.e, if one assumes only bounded third derivatives for~$U$, our main result still holds but with worse dependencies with respect to the step size, see Remark~\ref{rem:order1} below.

Bounding the left-hand sides in~\eqref{eqdef:assuU3bounded} and~\eqref{eqdef:assuU4bounded} with respect to the Euclidean distance on~$\rset^d$ may lead to sub-optimal dependencies with respect to the dimension~$d$ in~\Cref{thm:main_KL} below (for instance when~$U$ is separable, see Theorem~\ref{Thm:drift} below), which is why we allow in \Cref{assu:derivativesUbounded} some flexibility in the choice of the norms. A specific choice of norms is given in \Cref{prop:regularity}, and the relevance of this choice for the dimensionality dependence is discussed in  \Cref{rem:scaling_dimension}.

As made precise in \Cref{sec:numerique}, the numerical errors that need to be controlled at a point~$(x,v)\in\R^{2d}$ can be bounded by the function 
\begin{equation}\label{eq:defVmoments}
   \mathbf{M} (x,v) = L^2 |v|^2 + L^2 |\na U(x)|^2 + \rmN_3^4(v)  + \rmN_3^4(\na U(x) ) + \rmN_4^6(v)  + \rmN_4^6(\na U(x) )\,.
\end{equation}
In order to control the expectation of these errors, one needs uniform moment bounds for the  Markov kernel~$\rmP_{\bfomega}$. These estimates typically follow from a Lyapunov condition, but for clarity we postpone this analysis to Theorem~\ref{Thm:drift} below and, for the time being, state it as the next assumption.


\begin{assumption}\label{assu:moments}
There exist $\rho,C_1,C_2> 0$ and $\Wlyap : \rsetdd \to \rset_+$ such that, for all~$(x,v) \in \rset^{2d}$, $k\in \{0,\ldots,K\}$ and $n\in\N$,
\begin{equation}
  \label{eq:1_bis}
  \rmP_{\bfomega}^n \rmD_{\eta}\rmV_{\delta}^k \bfM (x,v) \leq C_1 \rme^{-\rho n \tint} \Wlyap(x,v) + C_2 \,,
\end{equation}
where $\tint$ is defined in \eqref{eq:def_tint_gammaint}.
\end{assumption}

In practice, this can be established by designing a suitable Lyapunov function $\Wlyap$ such that $\mathbf{M} \leqslant C_1 \Wlyap$ and
\begin{equation}
  \label{eq:drift_v0}
  \rmP_{\bfomega} \Wlyap \leqslant \rme^{-\rho \tint} \Wlyap + C_2' \tint \eqsp,
\end{equation}
for some constants~$C_1,C_2',\rho>0$. As we will see in Theorem~\ref{Thm:drift}, the constants~$C_1,C_2,\rho$ typically only depend on the parameters~$\bfomega$ through~$\gammaint$ defined in~\eqref{eq:def_tint_gammaint}. In particular, they are uniform over all sufficiently small step-sizes~$\delta$. For the continuous-time kinetic Langevin diffusion and its discretizations, such Lyapunov functions have been designed in  \cite{MATTINGLY2002185,Talay,DEMS} under coercitivity conditions on~$U$. We show in Appendix~\ref{sec:Lyapunov} that appropriate modifications of these Lyapunov functions can be used to establish a drift condition~\eqref{eq:drift_v0} for $\rmP_{\bfomega}$.

\paragraph{Main convergence result.}
We are now in position to state our main result. For the sake of clarity, we only consider the case where $L=1$ in \Cref{hyp:target}, the general case being obtained by rescaling, as discussed at the end of this section. 

\begin{theorem}\label{thm:main_KL}
Assume that \Cref{hyp:target}-\Cref{assu:derivativesUbounded} hold with $L=1$, and consider~$\bfomega =(K,\delta,\eta)$ with~$K \in\nsets$, $\delta >0$ and $\eta \in \coint{0,1}$ such that \Cref{assu:moments} holds and~$\tint=K\delta \leqslant 1/10$.   Furthermore,  assume that $\pi$ satisfies the log-Sobolev inequality \eqref{eq:log-Sob}. Consider an initial condition $\nu_0$ such that $\mathcal I( \nu_0|\mu) < +\infty$ and $\int_{\R^{2d}} \abs{z}^p \rmd \nu_0(z) <\plusinfty$ for any $p >0$.
Then, for any $n\in\N$, 
\begin{equation}
\label{eq:bound_theo}
\begin{aligned}
  \mathcal H \po \nu_0 \rmPo^n | \mu\pf & \leqslant (1+ \kappa  \tint)^{-n}  \left[ \mathcal H(\nu_0|\mu) + 2a \, \mathcal I(\nu_0|\mu)\right]  \\
 & \qquad + \delta^4 M   \left[ n\tint  \theta^{n\tint} C_1\nu_0( \Wlyap )+  C_2 ( \kappa^{-1} +\tint) \right], 
\end{aligned}
\end{equation}
where $\theta= \max(\rme^{-\rho},(1+\kappa \tint)^{-1/\tint})$ and 
\begin{equation}\label{eq:a_M_simpler}
      a = \frac{\gammaint}{7+3(\gammaint+3)^2}\,,\qquad  \kappa   = \frac{ a}{3\max( \CLS,1)+6 a}\,,\qquad M = \newOld{9} + \frac{1}{a}\,.  
    \end{equation}
\end{theorem}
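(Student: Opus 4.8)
The plan is to implement a discrete-time, perturbative version of Villani's modified entropy (hypocoercivity) method, following the idealized-gHMC argument of \cite{MonmarcheIdealized} but accounting for the extra numerical error introduced by the Verlet integrator $\Phi_\delta$ in place of the exact Hamiltonian flow $\psi_\tint$. Concretely, one tracks along the chain $\nu_n = \nu_0 \rmPo^n$ the \emph{modified entropy functional}
\begin{equation*}
\mathcal E(\nu) \ = \ \mathcal H(\nu|\mu) + a\, \ps{\nabla \ln(\dd\nu/\dd\mu)}{\bfM_a \,\nabla \ln(\dd\nu/\dd\mu)}_{\nu},
\end{equation*}
where $\bfM_a$ is the usual $2\times 2$ block matrix mixing the $x$- and $v$-Fisher-information terms with a small off-diagonal coupling tuned by the refreshment; the constants $a,\kappa$ of \eqref{eq:a_M_simpler} are exactly those that make the continuous/idealized contraction work when $\pi$ satisfies the log-Sobolev inequality \eqref{eq:log-Sob}. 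The target is a one-step inequality of the form $\mathcal E(\nu_{n+1}) \le (1+\kappa\tint)^{-1}\mathcal E(\nu_n) + \delta^4 M \cdot (\text{moment term at step }n)$, which then telescopes, together with \Cref{assu:moments} to bound the accumulated error terms $\sum_k (1+\kappa\tint)^{-(n-k)} \rmPo^k\bfM$, to give \eqref{eq:bound_theo}; the geometric–versus–Lyapunov competition in the error sum is what produces $\theta = \max(\rme^{-\rho},(1+\kappa\tint)^{-1/\tint})$.

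First I would handle the \emph{refreshment step} $\rmD_\eta$: acting on the velocity marginal by an Ornstein--Uhlenbeck-type kernel that is reversible w.r.t.\ $\gauss(0,\IdM)$, it contracts the $v$-block of the modified Fisher information and (via a de Bruijn / Stam-type identity, or directly since $\mu$ is Gaussian in $v$) does not increase the entropy; this is the step that injects the $\kappa\tint \sim (1-\eta)$ dissipation. Then I would treat the \emph{Verlet step} $\rmV_\delta^K$: since $\Phi_\delta$ is a diffeomorphism, push-forward of entropy is governed by how $\mu$ fails to be invariant under $\Phi_\delta$, i.e.\ by $\ln(\dd(\Phi_\delta^{-1})_\#\mu/\dd\mu) = H\circ\Phi_\delta - H$ up to the (zero, by symplecticity) Jacobian term. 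The exact Hamiltonian flow would leave $\mu$ invariant and merely rotate the Fisher-information vector through the flow's Jacobian, reproducing the idealized computation of \cite{MonmarcheIdealized}; the discretization error is precisely the Hamiltonian defect $\Delta_k := H(\Phi_\delta^{k}(x,v)) - H(\Phi_\delta^{k-1}(x,v))$ accumulated over $K$ steps. A careful Taylor expansion of $\Delta_k$ — using the velocity-Verlet structure \eqref{eq:verlet_decompo}, the symmetry \eqref{eq:verlet_inverse_and_refl}, and \Cref{assu:derivativesUbounded} — yields that $\sum_{k\le K}\Delta_k$ is $\bigO(\delta^2)$ pointwise with the leading $\delta^1$ terms cancelling, and that the relevant entropy/Fisher perturbation is $\bigO(\delta^4)$, controlled pointwise by the function $\bfM$ of \eqref{eq:defVmoments}; this is presumably the content of the numerical-analysis section \Cref{sec:numerique}, which I would invoke. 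Combining: one step multiplies $\mathcal E$ by $(1+\kappa\tint)^{-1}$ (idealized contraction) and adds $\delta^4 M\,\nu_n(\bfM)$ (numerical error), then one uses \Cref{assu:moments} with the kernel $\rmD_\eta\rmV_\delta^k$ to bound $\nu_n(\bfM)$ uniformly.

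The main obstacle, and the technically heaviest part, is controlling the \emph{coupling term} in $\mathcal E$ — the $a$-weighted cross Fisher-information — through the Verlet step in the non-convex regime: unlike the pure entropy, this term is not monotone and its evolution through $\Phi_\delta$ involves $\nabla^2 U$ along the discrete trajectory, so one needs (i) the idealized algebra of \cite{MonmarcheIdealized} showing that, for the \emph{exact} flow over time $\tint\le 1/10$ and with the log-Sobolev inequality, the quadratic form $\bfM_a$ is contracted by the factor $(1+\kappa\tint)^{-1}$, and (ii) a perturbation estimate showing the Verlet Jacobian differs from the exact-flow Jacobian by $\bigO(\delta^2)$ in operator norm with constants controlled by $L,\rmN_3,\rmN_4$, so that the contraction survives with the claimed error. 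A secondary technical point is justifying the entropy/Fisher manipulations rigorously (finiteness, differentiation under the integral, validity of the de Bruijn identity for a non-smooth density), which is why the theorem assumes $\mathcal I(\nu_0|\mu)<\infty$ and all polynomial moments of $\nu_0$ finite; I would establish these along a short regularization argument (mollify $\nu_0$, prove the bound, pass to the limit by lower semicontinuity of $\mathcal H$ and $\mathcal I$). Finally I would assemble the telescoped inequality, split the error sum into its two geometric regimes to get the $\theta^{n\tint}$ factor and the $C_2(\kappa^{-1}+\tint)$ steady-state term, and read off \eqref{eq:bound_theo}.
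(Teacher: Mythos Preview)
Your overall architecture matches the paper's: one establishes a one-step contraction-plus-error inequality for the modified entropy $\mathcal L_a(\nu|\mu)=\mathcal H(\nu|\mu)+a\int|\nabla_x h+\nabla_v h|^2/h\,\dd\mu$ (this is Theorem~\ref{Prop:dissipation}), then telescopes it using \Cref{assu:moments} exactly as you sketch, the competition between the two geometric rates producing~$\theta$. Two points deserve correction. First, the paper does not compare the Verlet Jacobian to the exact-flow Jacobian; Lemma~\ref{lem:VerletMatrice} proves directly, by induction on the number of steps, that $\Psi=(\nabla\Phi_\delta^{-K})\circ\Phi_\delta^K$ has the same $2\times 2$ block structure (to $O(\tint^3)$) as a generic Hamiltonian-flow Jacobian, and the positivity of the dissipation matrix~$\bfS$ is then checked for any matrix of that form --- the exact flow never enters.

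Second, and more importantly, your account of how the entropy error becomes $O(\delta^4)$ is not the actual mechanism, and as stated it would fail. You correctly note that $\mathcal H((\rmV_\delta^K)^*h)-\mathcal H(h)$ equals the expectation under $h\mu$ of the Hamiltonian defect $H\circ\Phi_\delta^K-H$, and that this defect is $O(\delta^2)$ pointwise; but a Taylor expansion of the defect does \emph{not} by itself produce an $O(\delta^4)$ entropy perturbation --- naively it gives $O(\tint\delta^2)$, which is too large by a full order. The paper instead (Lemma~\ref{lem:entropy_verlet}) interpolates continuously in $s\in[0,\delta]$ via $\Phi_s$, writes $\tfrac{\dd}{\dd s}\mathcal H(h_s)=\int(j_s-F_H)\cdot\nabla\ln h_s\,\dd\nu_s$ (the $F_H$ part vanishing because $F_H$ preserves $\mu$), and applies Young's inequality: one piece is $\varepsilon\,\mathcal I(h_s)$, which is \emph{absorbed into the main Fisher dissipation} coming from the damping step; the other is $\varepsilon^{-1}\int|\partial_s\Phi_s-F_H\circ\Phi_s|^2\,\dd\nu_s$, and this \emph{squared} local drift error is $O(s^4)$ pointwise, hence $O(\delta^5)$ per step and $O(\tint\delta^4)$ overall. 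It is this Fisher-absorption trick --- not a higher-order expansion of the energy defect --- that lifts the scheme from first to second order in entropy and is the main technical addition beyond~\cite{MonmarcheIdealized}. (A minor further point you omit: Theorem~\ref{Prop:dissipation} is proved only for $\eta>0$; the case $\eta=0$ is recovered by letting $\eta\to 0$, using lower semicontinuity of $\mathcal H$ and a synchronous-coupling argument for the convergence of $\nu_0\rmPo^n$.)
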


The first term on the right hand side of~\eqref{eq:bound_theo} encodes the long-time convergence of the idealized HMC chain~\cite{MonmarcheIdealized} (and goes to $0$ as $n\rightarrow \infty$). The Fisher term~$\mathcal I(\nu_0|\mu)$ appearing there is due to the fact Theorem~\ref{thm:main_KL} is proven using  a modified entropy (defined in \eqref{eq:modifiedEntropy}) instead of $\mathcal H$. The second line of~\eqref{eq:bound_theo}, of order $\delta^4$, corresponds to the numerical error due to the Verlet integration, which is decomposed into two terms. The first one comes from the fact that the initial condition is not invariant under~$\rmP_{\bfomega}$ and highlights the exponential forgetting of this initial condition since $\theta<1$. The second term involving~$C_2$ accounts for the numerical error at stationarity, \ie~the bias between the invariant probability measure of the discrete process and the reference probability measure~$\mu$.

Since, in the case $L=1$, the physical integration time of the Hamiltonian dynamics is~$\tint$, the total physical time of the simulation after $n$ iterations is~$n\tint$. The constant $\kappa$ is therefore the convergence rate per unit physical time. In view of~\eqref{eq:a_M_simpler}, $\kappa$ depends on the parameters~$\bfomega$ only through the friction~$\gammaint$, with a dependency of order~$\min(\gammaint,\gammaint^{-1})$ similarly to what is obtained for the continuous-time Langevin diffusion (see for instance~\cite{DKMS13,GS16,IOS19}). From this observation, it follows that  the convergence per gradient computation is $\kappa \tint/K = \kappa \delta$.

At the expense of more involved expressions, we provide an improved version of \Cref{thm:main_KL} in \Cref{Prop:dissipation} below. In particular, we provide a slightly sharper value of $\kappa $, which is optimal in the overdamped case since, in the continuous time limit~$\delta \to 0$, one obtains (see Remark~\ref{rmk:convergence_implies_LSI})
\begin{equation}
\label{eq:convergence_implies_LSI}    
\mathcal H(\hat \nu_0 \mathrm Q_s|\pi) \leqslant \rme^{-s/\CLS} \mathcal H(\hat \nu_0|\pi) \,,
\end{equation}
where $(\mathrm{Q}_t)_{t\geqslant 0}$ is the semi-group associated to the overdamped Langevin diffusion.
This shows that our result is sharp since  it \emph{implies} that $\pi$ satistifes a log-Sobolev inequality with constant $\CLS$ (see e.g., \cite[Theorem 5.2.1]{BakryGentilLedoux}).


\paragraph{Rescaling.}\label{subsubsec:scaling} 
As mentioned above, following \cite[Section 1.3]{MonmarcheIdealized}, it is not restrictive to assume that~$L=1$ in \Cref{hyp:target}. Indeed, if $Z_n=(X_n,V_n)$ is an unadjusted gHMC chain with potential energy function~$U$ and parameters~$K,\delta,\eta$, then $(\sqrt{L} X_n,V_n)$ is an unadjusted gHMC chain associated with the potential energy function~$\tilde U(x)=U(x/\sqrt{L})$ (so that $\na \tilde U$ is $1$-Lipschitz under \Cref{hyp:target}) and parameters $K,\delta \sqrt{L},\eta$. Moreover, if $\pi$ satisfies a log-Sobolev inequality with constant $\CLS$, then the change of variable $x \mapsto x/\sqrt{L}$ implies that the rescaled target measure $\tilde \pi \propto \rme^{-\tilde U}$ satisfies a log-Sobolev inequality with constant $L\CLS$. Similarly, using the same change of variable, the relative entropy is invariant by scaling, namely, if $(X,V)$ has distribution $\nu \in \mathcal{P}(\rsetdd)$ and $\tilde \nu$ stands for the law of $(\sqrt{L}X,V)$, then $\mathcal H\po \nu | \mu \pf = \mathcal H\po \tilde \nu | \tilde \mu \pf $,
where $\tilde \mu = \tilde \pi \otimes \mathrm N(0,\IdM)$; while the Fisher information satisfies
\[\mathcal I\po \tilde \nu| \tilde \mu \pf = \int_{\R^{2d}} \frac{L^{-1} |\na_x h|^2 + |\na_v h|^2}{h} \dd \mu\,, \text{ where $h = \dd \nu/\dd \mu$} \eqsp.\]

%
%

\subsubsection{Sufficient conditions for \Cref{assu:derivativesUbounded} and \Cref{assu:moments}}\label{sec:sufficient_conditions_for_main_result} 

We now provide practical conditions on~$U$ to establish \Cref{assu:derivativesUbounded} and \Cref{assu:moments}. We assume that~$d=\nq q$ for some $\nq ,q\geqslant 1$ and, decomposing $x=(x_1,\dots,x_{\nq}) \in \R^d = (\R^q)^{\nq}$, that~$U$ is of the following form:
\begin{equation}\label{eq:Umean_field}
U(x) = \sum_{i=1}^{\nq} \Uq (x_i) + \frac{\epsilon}{\nq } \sum_{i,j=1}^{\nq } \Wq(x_i-x_j) \eqsp,
\end{equation}
where $\Uq$ and $\Wq$ are potentials defined on $\R^q$,  and $\epsilon>0$. Such mean field potentials are commonly encountered in various applications (see e.g., \cite{ChaintronDiez}). Furthermore, in line with the approach taken in \cite{BouRabeeSchuh}, we consider this formulation as a representative example to demonstrate the dimension dependence in weakly correlated cases. Let us however emphasize that~\eqref{eq:Umean_field} is not a restrictive setting since it always holds with the choice $\nq =1$, $\Uq=U$, and $\Wq=0$. However, as discussed below, this mean-field formulation allows us to precisely determine the dependence of $C_1$ and $C_2$ in \Cref{assu:moments} on the dimension $d$, in particular by distinguishing the roles of $\nq$ and $q$. For conditions under which $\pi$ satisfies a log-Sobolev inequality with a constant $\CLS$ independent of $\nq$ for $U$ given by \eqref{eq:Umean_field} (provided $\epsilon$ is sufficiently small), see \cite[Theorem 8]{GuillinWuZhang}.

First, as proved in Appendix~\ref{sec:regularity}, we can readily obtain the following.

\begin{proposition}\label{prop:regularity}
Assume that  $d=\nq  q$ for some $\nq ,q\geqslant 1$ and that $U$ is of the form \eqref{eq:Umean_field} where $U^{(q)},W^{(q)}\in\mathrm{C}^4(\R^q)$ have uniformly bounded third and fourth derivatives. Then  \Cref{assu:derivativesUbounded}  holds with 
\[
N_3(x) = L_3 \left( \sum_{i=1}^{\nq} |x_i|^4 \right)^{1/4} \, ,
\qquad 
N_4(x) = L_4 \left( \sum_{i=1}^{\nq} |x_i|^6 \right)^{1/6} \, ,
\]
where  
  \begin{align}
     L_3^4  &= (1+\epsilon) \left\|\rmD^3 U^{(q)}\right\|_\infty ^2 + 16 \epsilon(1+\epsilon) \left\|\rmD^3 W^{(q)}\right\|_\infty^2 \eqsp, \\
     L_4^6 &= \po \frac{1+\epsilon}{144} \left\| \rmD^4 U^{(q)} \right\|_\infty^2 + \frac{4\epsilon(1 +\epsilon) }{9 } \left\|\rmD^4 W^{(q)} \right\|_\infty^2 \pf q \,.
  \end{align}
  As a consequence, assuming furthermore that $\na U^{(q)}(0) = 0 = \na W^{(q)}(0)$, the function~$\mathbf{M} (x,v)$ defined in~\eqref{eq:defVmoments} is bounded as 
  \[
  \mathbf{M} (x,v) \leqslant   \sum_{i=1}^{\nq}  \sum_{\ell\in\{2,4,6\}} L_\ell^\ell ( r_\ell |x_i|^\ell + |v_i|^\ell) \, ,
  \]
  with $r_2 = 1$, $r_\ell = 2^{\ell-1}\|\na^2 U^{(q)}\|_\infty^{\ell}  + 2^{3\ell}\epsilon^{\ell}  \|\na^2 W^{(q)}\|_\infty^\ell $ for $\ell\in\{4,6\}$ and $L_2 = L$. 
\end{proposition}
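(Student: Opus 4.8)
The plan is to prove \Cref{prop:regularity} by direct computation, exploiting the separable-plus-pairwise structure of \eqref{eq:Umean_field} so that all estimates decompose over the blocks $x_i \in \R^q$. First I would compute $\na^2 U$ and $\na^3 U$ explicitly. Writing $U(x) = \sum_i \Uq(x_i) + (\epsilon/\nq)\sum_{i,j}\Wq(x_i-x_j)$, the Hessian is block-structured: the $(i,i)$ block is $\na^2\Uq(x_i) + (2\epsilon/\nq)\sum_j \na^2\Wq(x_i-x_j)$ (using symmetry of $\Wq$, or at least treating the symmetrized pair sum), and the $(i,j)$ off-diagonal block for $i\neq j$ is $-(2\epsilon/\nq)\na^2\Wq(x_i-x_j)$. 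Similarly $\na^3 U$ only couples indices through the $\Wq$ terms, with each third-derivative tensor entry bounded by $\|\rmD^3\Uq\|_\infty$ on the diagonal part and $\|\rmD^3\Wq\|_\infty$ times combinatorial factors from the pair sum.

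Next I would verify \eqref{eqdef:assuU3bounded}. Fix $x,y,z$ and expand $(\na^2U(x+y)-\na^2U(x))z$ blockwise. Each block-row $i$ of this vector is a sum of a ``self'' term $(\na^2\Uq(x_i+y_i)-\na^2\Uq(x_i))z_i$ plus interaction terms of the form $(2\epsilon/\nq)(\na^2\Wq(\cdot+ (y_i-y_j)) - \na^2\Wq(\cdot))(z_i-z_j)$ summed over $j$. Bounding $\na^2\Uq(x_i+y_i)-\na^2\Uq(x_i)$ in operator norm by $\|\rmD^3\Uq\|_\infty |y_i|$ and likewise for $\Wq$, then applying Cauchy--Schwarz over $j$ (so that $\sum_j |y_i-y_j||z_i-z_j| \leq (\sum_j |y_i-y_j|^2)^{1/2}(\sum_j|z_i-z_j|^2)^{1/2}$ and $\sum_j|y_i-y_j|^2 \leq 2\nq(\sum_k|y_k|^2) $ crudely, or more carefully $2\nq |y|^2$ after a triangle-type bound), and finally summing the squared block-row norms over $i$ and taking square roots, should produce exactly the stated $N_3$ with $L_3^4 = (1+\epsilon)\|\rmD^3\Uq\|_\infty^2 + 16\epsilon(1+\epsilon)\|\rmD^3\Wq\|_\infty^2$ after tracking the numerical constants (the factor $16$ and the $(1+\epsilon)$ coefficients come precisely from the Cauchy--Schwarz/triangle bookkeeping on the double sum, together with a Young-type split of the self and interaction contributions). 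For \eqref{eqdef:assuU4bounded} I would invoke the remark already made in the text: a third-order Taylor expansion with integral remainder gives $\na U(x+y) - \na U(x) - \tfrac12(\na^2U(x)+\na^2U(x+y))y = \tfrac12\int_0^1 t(1-t)\,[\rmD^3 U((1-t)x+t(x+y)) - \text{(symmetrization)}]\,\dots$, more precisely the error in the trapezoidal rule for $\int_0^1 \na^2U(x+sy)y\,ds$, whose standard bound is $\tfrac1{12}\sup_s|\rmD^4U(x+sy)[y,y,y]|$ componentwise; decomposing this blockwise exactly as above and collecting the $\Uq$ and $\Wq$ pieces yields $N_4^3(y) = L_4^3(\sum_i |y_i|^6)^{1/2}$ with the stated $L_4^6$, the extra factor $q$ arising because the fourth derivative estimate is taken in the multilinear norm on $\R^d$ and the cube of the $\ell^6$-block-norm must be compared to a sum of sixth powers, costing a dimensional factor $q$ from each block's $\R^q$.

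Finally I would bound $\mathbf M(x,v)$ from \eqref{eq:defVmoments}. Under the added hypothesis $\na\Uq(0)=\na\Wq(0)=0$, $L$-Lipschitzness of $\na U$ (\Cref{hyp:target}, with $L_2 = L$) gives $|\na U(x)| \leq L|x|$, hence $L^2|\na U(x)|^2 \leq L^4|x|^2 = \sum_i L^4|x_i|^2$; but we want the finer blockwise bound $|\na U(x)|^2 \leq \sum_i (\text{const})|x_i|^2$ coming from $|\na_{x_i}U(x)| \leq \|\na^2\Uq\|_\infty|x_i| + (2\epsilon/\nq)\sum_j\|\na^2\Wq\|_\infty|x_i-x_j|$ and Cauchy--Schwarz, which after squaring produces the coefficients $r_4, r_6$ with their $2^{\ell-1}$ and $2^{3\ell}$ factors (from $(a+b)^\ell \leq 2^{\ell-1}(a^\ell+b^\ell)$ and the pair-sum bound $(\tfrac1\nq\sum_j|x_i-x_j|)^\ell \leq 2^\ell\cdot\tfrac1\nq\sum_j(|x_i|^\ell+|x_j|^\ell)$, loosely). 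Then $\rmN_3^4(\na U(x)) = L_3^4(\sum_i|\na_{x_i}U(x)|^4)^{1} = L_3^4\sum_i|\na_{x_i}U(x)|^4$, similarly $\rmN_4^6(\na U(x)) = L_4^6\sum_i|\na_{x_i}U(x)|^6$, and $\rmN_3^4(v) = L_3^4\sum_i|v_i|^4$, $\rmN_4^6(v)=L_4^6\sum_i|v_i|^6$; summing all six terms of $\mathbf M$ and grouping by block index $i$ and by power $\ell\in\{2,4,6\}$ gives the claimed bound $\mathbf M(x,v) \leq \sum_i\sum_{\ell\in\{2,4,6\}} L_\ell^\ell(r_\ell|x_i|^\ell + |v_i|^\ell)$.

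The main obstacle is purely bookkeeping: getting the numerical constants ($16$, the $(1+\epsilon)$ versus $\epsilon(1+\epsilon)$ split, the $2^{\ell-1}$ and $2^{3\ell}$, and the stray factor $q$ in $L_4^6$) to come out exactly as stated, since they depend on precisely which Young's-inequality split and which form of the pair-sum Cauchy--Schwarz one uses. Nothing is conceptually deep — the one genuinely substantive input is the trapezoidal-rule remainder identity used to reduce \eqref{eqdef:assuU4bounded} to a fourth-derivative bound — but care is needed so that the interaction contributions, which carry an extra combinatorial sum over $j\in\cco 1,\nq\ccf$, collapse correctly after normalization by $1/\nq$ and do not leave residual $\nq$-dependence.
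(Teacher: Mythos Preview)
Your proposal is correct and follows essentially the same approach as the paper: a blockwise decomposition of $\na^2 U$ using the mean-field structure, a Young split with parameters $(1+\epsilon)$ and $(1+\epsilon^{-1})$ to separate the $\Uq$ and $\Wq$ contributions, Cauchy--Schwarz on the double sum $\sum_{i,j}|y_i-y_j|^2|z_i-z_j|^2$ to produce the factor~$16$, the trapezoidal-rule remainder $\int_0^1 f - \tfrac12(f(0)+f(1)) = \tfrac12\int_0^1 s(1-s)f''(s)\,\dd s$ for \eqref{eqdef:assuU4bounded}, and the same Lipschitz-from-zero bound on $\na_{x_i} U$ for $\mathbf M$. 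One small sharpening of your account: the factor $q$ in $L_4^6$ comes concretely from bounding each of the $q$ scalar components $\bfe_\ell \cdot(\cdots)$ by $\tfrac1{12}\|\rmD^4\Uq\|_\infty|y_i|^3$ and then summing their squares, rather than from a norm comparison.
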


To establish moment bounds, we consider the following assumption.

\begin{assumption}\label{ass:for_drift} The dimension is $d=\nq  q$ for some $\nq ,q\geqslant 1$ and $U$ is of the form~\eqref{eq:Umean_field} where $U^{(q)},W^{(q)}\in\mathrm{C}^2(\R^q)$ have uniformly bounded second derivatives and are such that $\nabla U^{(q)}(0) = \nabla W^{(q)}(0) = 0$. In addition,  there exist $\mtt >0$ and $\Mtt \geq 0$ such that 
  for any $x_1 \in \rset^q$,
  \begin{equation}
    \label{eq:ass_drift}
 \ps{x_1 }{ \nabla U^{(q)}(x_1)}  \geq \mtt \abs{x_1}^{2} -\Mtt \eqsp.
  \end{equation}
\end{assumption}

Note that this assumption is more general than assuming that~$U^{(q)}$ is strongly convex outside a compact set. For instance, a potential such as $\abs{x_1}^2 + C \cos \abs{x_1}$ satisfies~\eqref{eq:ass_drift} for any~$C \in \mathbb{R}$, but is not strongly convex when~$|C| \geq 2$.

\begin{theorem}\label{Thm:drift}
Suppose that \Cref{ass:for_drift} is satisfied, and define 
\begin{equation}\label{eq:Wlyap_simple}
\Wlyap(x,v) =\sum_{\ell\in\{2,4,6\}} \sum_{i=1}^{\nq} \po  |x_i|^\ell + |v_i|^\ell\pf \,.
\end{equation}
For any $\gammaint_0 >0$, there exist~$\overline{\tint},\overline{\delta},\overline{C}_1,\overline{C}_2,\overline{\epsilon},\rho >0$, which depend on~$\|\na^2 U^{(q)}\|_\infty$, $\|\na^2 W^{(q)}\|_\infty$, $\mtt$, $\Mtt$, $\gammaint_0$ (but not on~$q$,$\nq$), such that the following holds: for any~$\epsilon \in (0,\overline{\epsilon}]$ and~$\bfomega =(K,\delta,\eta)$ with~$\delta \in (0,\overline{\delta}]$, $\eta \in [0,1)$ such that~$(1-\eta)/(K\delta) = \gammaint_0$ and~$\delta K \leq \overline{\tint}$, it holds, for any~$(x,v) \in \rset^{2d}$, $k\in \{0,\ldots,K\}$ and $n\in\N$,
\begin{equation}
  \label{eq:1_bis}
  \rmP_{\bfomega}^n \rmD_{\eta}\rmV_{\delta}^k \Wlyap (x,v) \leq \overline{C}_1 \rme^{-\rho n K\delta } \Wlyap(x,v) + \overline{C}_2 \nq q^3 \,.
\end{equation}
\end{theorem}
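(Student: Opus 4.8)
The plan is to establish a one-step drift inequality of the form $\rmP_{\bfomega}\Wlyap \leq \rme^{-\rho K\delta}\Wlyap + C\nq q^3 \delta K$ (as in~\eqref{eq:drift_v0}), and then iterate. The main point is that, because of the mean-field structure~\eqref{eq:Umean_field}, it suffices to work coordinate-block by coordinate-block in $\R^q$ and track how the $\nq$ blocks interact only through the weak coupling $\epsilon \Wq$. First I would reduce to the continuous-time intuition: the Verlet kernel $\rmV_\delta$ approximates the Hamiltonian flow, and composing with $\rmD_\eta$ over $K$ steps with $(1-\eta)/(K\delta)=\gammaint_0$ approximates, after the rescaling $L=1$ (so that physical time is $K\delta$), one ``unit'' of the kinetic Langevin evolution run for time $\tint=K\delta$. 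For the continuous kinetic Langevin diffusion, the classical Lyapunov functions of~\cite{MATTINGLY2002185,Talay,DEMS} give, under the one-sided dissipativity~\eqref{eq:ass_drift}, a contraction $\PE[\Wlyap(\bfX_t,\bfV_t)] \leq \rme^{-\rho t}\Wlyap(x,v) + C(1-\rme^{-\rho t})$ per block; the $\nq q^3$ comes from summing over the $\nq$ blocks and from the fact that the $\ell=2,4,6$ moments of the Gaussian noise in dimension $q$ contribute factors up to $q^3$ (indeed $\PE|g|^6 = O(q^3)$ for $g\sim\gauss(0,\Idd[q])$).

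The key steps, in order: (i) Decompose $\Wlyap$ as a sum over blocks $i$ and over powers $\ell\in\{2,4,6\}$; establish a ``discrete Grönwall''/energy estimate showing that one Verlet step changes $|x_i|^\ell+|v_i|^\ell$ by a controlled amount, using $\nabla U^{(q)}(0)=\nabla W^{(q)}(0)=0$ and the Lipschitz bounds $\|\nabla^2 U^{(q)}\|_\infty,\|\nabla^2 W^{(q)}\|_\infty$ to bound $|\nabla U(x)|$ linearly in $|x|$; here the coupling term contributes $O(\epsilon)$ perturbations that are absorbed for $\epsilon\leq\overline\epsilon$. (ii) Handle the refreshment step $\rmD_\eta$: it replaces $v$ by $\eta v + \sqrt{1-\eta^2}\,g$, which contracts the velocity moments by $\eta^\ell$ and adds a Gaussian contribution of size $O((1-\eta^2)^{\ell/2} q^{\ell/2})$, hence $O((1-\eta)q^{\ell/2})$ after using $\eta\to 1$; summed over blocks this is $O(\nq q^3 (1-\eta)) = O(\nq q^3 \gammaint_0 K\delta)$. (iii) Combine (i)–(ii) to get a single-step inequality $\rmP_{\bfomega}\Wlyap \leq (1-c\gammaint_0 K\delta + O((K\delta)^2))\Wlyap + C'\nq q^3 K\delta$ valid for $K\delta\leq\overline\tint$ and $\delta\leq\overline\delta$ small enough that the higher-order-in-$\delta$ terms from the Verlet discretization are dominated; this identifies $\rho$ proportional to $\min(\gammaint_0,\gammaint_0^{-1})$ up to constants. (iv) Iterate the single-step bound $n$ times, giving $\rmP_{\bfomega}^n\Wlyap \leq \rme^{-\rho n K\delta}\Wlyap + \overline C_2 \nq q^3$ with $\overline C_2$ the geometric-series constant. (v) Finally, insert the extra factor $\rmD_\eta\rmV_\delta^k$ for $k\in\{0,\dots,K\}$: since a partial Verlet-plus-refreshment trajectory changes $\Wlyap$ by at most a bounded multiplicative factor (again by the linear growth of $\nabla U$ and $K\delta\leq\overline\tint$), one has $\rmD_\eta\rmV_\delta^k\Wlyap \leq \overline C_1'\Wlyap + \overline C_2' \nq q^3$, and composing with the $n$-step bound yields~\eqref{eq:1_bis}.

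The subtle points I expect to fight with are: tracking the discretization error of the Verlet scheme \emph{uniformly} in the dimension, i.e., making sure the correction terms that distinguish $\rmV_\delta^K$ from the exact Hamiltonian flow do not pick up bad factors of $q$ or $\nq$ — this is where the mean-field decomposition and working with the sums $\sum_i|x_i|^\ell$ rather than the Euclidean norm $|x|^\ell$ is essential, exactly as flagged before \Cref{prop:regularity}; and ensuring the constants $\overline\tint,\overline\delta,\overline C_1,\overline C_2,\overline\epsilon,\rho$ genuinely depend only on $\|\nabla^2 U^{(q)}\|_\infty$, $\|\nabla^2 W^{(q)}\|_\infty$, $\mtt$, $\Mtt$, $\gammaint_0$ and not on $q,\nq$ — which forces every intermediate estimate to be phrased per block. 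The bookkeeping of the $\ell=4$ and $\ell=6$ moments (cross terms like $|x_i|^3|v_i|$ arising from expanding $(|x_i'|^\ell)$ after a Verlet step) is routine but lengthy; I would handle it by Young's inequality to reduce everything to the monomials already present in $\Wlyap$. The detailed computations are deferred to Appendix~\ref{sec:Lyapunov}.
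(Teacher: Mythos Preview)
Your plan has a structural gap at step~(iii): the simple Lyapunov function~$\Wlyap(x,v)=\sum_{i,\ell}(|x_i|^\ell+|v_i|^\ell)$ does \emph{not} satisfy a one-step contraction of the form $\rmP_{\bfomega}\Wlyap\leq(1-c\gammaint_0 K\delta)\Wlyap+C$. The damping $\rmD_\eta$ contracts only the velocity components, and in the small-$\tint$ regime enforced by $K\delta\leq\overline\tint$, the Verlet block $\rmV_\delta^K$ is a near-identity perturbation that does not transfer this contraction to the position variables. Concretely, after one gHMC step the quantity $|x_i|^2$ changes by $2K\delta\langle x_i,v_i\rangle+O((K\delta)^2)$, which has no reason to be negative. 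The dissipativity condition~\eqref{eq:ass_drift} enters only through $\langle x_i,\nabla U^{(q)}(x_i)\rangle$, and the channel that converts this into position decay is the \emph{cross term} $\langle x_i,v_i\rangle$: one has (at the continuous level) $\frac{\rmd}{\rmd t}\langle x_i,v_i\rangle=|v_i|^2-\langle x_i,\nabla U^{(q)}(x_i)\rangle\leq |v_i|^2-\mtt|x_i|^2+\Mtt$, and this is what the classical references~\cite{MATTINGLY2002185,Talay,DEMS} you cite actually exploit---their Lyapunov functions all carry such cross terms.

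The paper's route is therefore genuinely different from your outline: it introduces a modified per-block Lyapunov function $\Woi(x,v)=\tfrac{\gamma_0^2}{2}|x_i|^2+|v_i|^2+\eta\gamma_0\langle x_i,v_i\rangle+U^{(q)}(x_i)$ (cf.~\eqref{eq:def_wlyap}), embeds it in a parametric family $\Vlyapc[\bfo][i]$ with free coefficients $\bfo=(a,b,c,b_0,e,f)$, and tracks how these coefficients evolve along a single Verlet step (Lemma~\ref{lem:drift_v1}), along $K$ Verlet steps via a discrete Gr\"onwall on the coefficients (Lemmas~\ref{lem:rec_for_drift}--\ref{lem:drift_k_step}), and through the refreshment (Lemmas~\ref{lem:drift_for_fixed_Gaussian}--\ref{lem:drift_one_step}). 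Only \emph{after} establishing the drift for $\Wol=\sum_i(\Woi)^\ell$ (Theorem~\ref{theo:drift}) does the paper transfer the result to your $\Wlyap$ via the equivalence~\eqref{eq:Lyap_for_main_drift_thm}. Your per-block, dimension-tracking and Gaussian-moment intuitions in~(ii),~(iv),~(v) are all correct and mirror the paper, but to make the plan work you must replace step~(iii) by the hypocoercive modification: prove the one-step contraction for a Lyapunov function containing $\langle x_i,v_i\rangle$, then compare it with $\Wlyap$.
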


Explicit expressions for $\overline{C}_1,\overline{C}_2,\overline{\epsilon},\rho$ are given in Appendix~\ref{sec:Lyapunov}, where a more detailed result, Theorem~\ref{theo:drift}, is stated and proven. As explained in Appendix~\ref{sec:Lyapunov}, Theorem~\ref{Thm:drift} is a straightforward corollary of Theorem~\ref{theo:drift}.

\begin{remark}\label{rem:scaling_dimension}
In order to determine more precisely the dimension dependence of~$C_1,C_2$ in~\Cref{assu:moments}, obtained for~$\Wlyap$ given by \eqref{eq:Wlyap_simple} by combining Proposition~\ref{prop:regularity} with Theorem~\ref{Thm:drift}, we treat~$\|\rmD^k U^{(q)}\|_\infty$ and $\|\rmD^k W^{(q)}\|_\infty$ for $k\in\{2,3,4\}$ as constants independent from~$q$. This motivates defining 
$R = q^{-1} \max \{L_\ell^\ell \max(r_\ell,1) : \ell \in \{2,4,6\} \} $,
which can be considered to be uniform in~$\nq$ and~$q$ in this context, so that the expressions  $C_1 = R \overline{C}_1 q$, $C_2 = R \overline{C}_2 \nq q^4$
capture the dimension dependence of~$C_1,C_2$ in terms of~$q,\nq$.
\end{remark}

\subsection{Discussion and related works}\label{sec:discussion}

\paragraph{Complexity bounds.}
Regarding the dependence in the dimension of the constants appearing  in \Cref{assu:moments}, we distinguish two cases in view of Remark~\ref{rem:scaling_dimension}. In the first one, which we call the weakly interacting case, we consider that $C_2$ and $\nu_0(\Wlyap )$ in \eqref{eq:bound_theo} are $\mathcal O(d)$. This corresponds in Remark~\ref{rem:scaling_dimension} to the case where $q$ is fixed, so that $d$ is proportional to $d_0$. In the second case, which we call the general case, we consider that $C_2$ and $\nu_0(\Wlyap )$  are~$\mathcal O(d^4)$.  This corresponds in Remark~\ref{rem:scaling_dimension} to the case~$d_0=1$ and~$d=q$. In both settings, we consider that $L$, $C_1$ and $\gammaint$ are constants independent of the dimension, and that~$\mathcal H(\nu_0|\mu)$ and~$\mathcal I(\nu_0|\mu)$ are~$\mathcal O(d)$ (see e.g. \cite[Lemma~1]{VempalaWibisono}). Moreover, the convergence rate~$\kappa$ is of order~$1/\CLS$, with $\CLS$ large, so that~$(1+\kappa \tint)^{-n}$ is of order~$\rme^{-\alpha n K \delta/\CLS}$ for some constant~$\alpha > 0$. In this context, Theorem~\ref{thm:main_KL} gives the following complexity bounds for a given error tolerance~$\varepsilon>0$ (in relative entropy):
\begin{itemize}
    \item in the general case, upon choosing
$\delta = \mathcal O ( \varepsilon^{1/4} d^{-1})$ (so that the second term on the right hand side of~\eqref{eq:bound_theo} is of order~$\varepsilon$), we get that $\mathcal H(\nu_0 \rmPo^n|\mu) \leqslant \newOld{\varepsilon}$  after a number $n K = \mathcal O( \CLS d \vareps^{-1/4} \ln (d\vareps^{-1}) ) $  of computations of the gradient $\na U$.
\item in the weakly interacting case, a similar reasoning suggests choosing~$\delta = \mathcal O ( \varepsilon^{1/4}d^{-1/4})$, in which cas~$\mathcal H(\nu_0 \rmPo^n|\mu) \leqslant \varepsilon$ after a number~$n K = \mathcal O( \CLS (d/ \vareps)^{1/4} \ln  (d\vareps^{-1}) )$
of computations of the gradient $\na U$. 
\end{itemize}
Note that, in both cases, 
the Pinsker and Talagrand inequalities respectively imply that~$ \| \mu-\nu_0 \rmPo^n\|_{\mathrm{TV}}$ and~$\mathbf W_2\po \mu,\nu_0 \rmPo^n\pf$ are of order~$\mathcal O(\varepsilon^{1/2})$, where~$\| \cdot\|_{\mathrm{TV}}$ and~$\mathbf W_2$ respectively denote the total variation distance and Wasserstein distance of order~$2$. In view of this, the dependence of the number of gradient evaluation on~$\varepsilon^{1/2}$, namely~$nK = \mathcal O (\varepsilon^{-1/4})$ (omitting the  logarithmic term), highlights that gMHC is a second-order splitting scheme. This is consistent with existing weak error estimates; see  \cite{abdulle2015long} and references therein.

\paragraph{Error estimates on the invariant probability measure.}
For a fixed set of parameters~$\bfomega$, if $\nu_0 \rmPo^n$ converges weakly to a stationary distribution~$\mu_{\bfomega}$, \Cref{thm:main_KL} implies that~$ \| \mu-\mu_{\bfomega}\|_{\mathrm{TV}}$ and~$\mathbf W_2\po \mu,\mu_{\bfomega}\pf$ are both of order~$\mathcal O(\delta^2)$ (using the lower semicontinuity of the entropy function, see~\cite[Theorem~29.1]{Villani_OT09}). This is indeed the expected scaling of the bias on the invariant probability measure of the Markov chain, since the latter one is a composition of two evolution operators, namely~$\rmD_{\eta}$ which exactly preserves~$\mu$, and~$\rmV_{\delta}$ which is a second order approximation of the Hamiltonian evolution operator exactly preserving~$\mu$.

\paragraph{Comparison with previous works.}
There is an abundant literature on the convergence of Markov chain Monte Carlo algorithms based on discretization of continuous-time dynamics, see for instance~\cite{MATTINGLY2002185,Talay,Dalalyan2018OnSF,Dwivedi2,MonmarcheSplitting,ADNR,MangoubiSmith}.  Making a complete survey and a detailed description of these works is beyond the scope of the paper. Here, we focus on works establishing quantitative bounds for gHMC or some of its particular instances, and convergence in the entropic sense for discretizations of the kinetic Langevin diffusion.

\begin{itemize}
    \item \textbf{Entropy methods for discretization of the kinetic Langevin diffusion.} An adaptation of the modified entropy strategy from~\cite{Villani2009} for continuous-time processes has been conducted by~\cite{Chatterji} for a slight modification of the Euler--Maruyama scheme for the kinetic Langevin process. More precisely, the scheme considered in \cite{Chatterji} (introduced in \cite{cheng2018underdamped} in the machine learning community, and referred to as the Stochastic exponential Euler scheme in~\cite{DEMS}, where earlier apparitions in the physics literature are mentioned~\cite{chandrasekhar1943stochastic,ERMAK1980169,Skeel}) is given by the solution of
    \begin{equation}\label{eq:schemeChenetal}
\dd \bfX_t^\delta  =  \bfV_t^\delta \dd t \eqsp, \qquad 
\dd \bfV_t^\delta  =  - \na U(\bfX_{\delta \lceil t/\delta\rceil}^\delta)\dd t - \gamma \bfV_t^\delta \dd t + \sqrt{2\gamma}\dd B_t\,,
\end{equation}
    which is similar to~\eqref{eq:EDScontinuousLangevin2}, except that the force $-\na U$ is constant within time intervals~$[k\delta,(k+1)\delta)$ for~$k\in\N$. 
Hence, in \cite{Chatterji}, the approximate entropy dissipation is established through a continuous-time derivation. In contrast, we follow the discrete time computations of~\cite{MonmarcheIdealized}, which focuses on idealized gHMC, where the exact Hamiltonian dynamics is followed instead of the Verlet integration. In particular, we are not considering in this work the underdamped Langevin diffusion process as a continuous-time reference, since we also cover unadjusted HMC. Moreover, in terms of dependence with respect to the stepsize~$\delta$, the schemes of~\cite{Chatterji} are only of first order 
(see the discussion in~\cite[Section~3.3]{Chatterji}, in particular Equation~(14)), and we obtain an improvement from~$\sqrt{\varepsilon}$ to~$\varepsilon^{1/4}$ in the complexity. Concerning the dimension~$d$, the stepsize in~\cite{Chatterji} scales as $d^{-1/2}L_H^{-1}$, where $L_H$ is the Lipschitz constant of $\na^2 U$ in the Frobenius norm, \ie, $U$ is supposed to satisfy $\|\na^2 U(x)-\na^2 U(y)\|_F \leqslant L_H|x-y|$ for all $x,y\in\R^d$. Assuming that $L_H$ is independent of~$d$ thus corresponds to the weakly interacting case, while the assumption that the derivatives of $U$ are bounded uniformy in~$d$ (in terms of the Euclidean norm) leads to $L_H$ of order $\sqrt{d}$. We get an improvement from $d^{-1/2}$ to $d^{-1/4}$ in the first case, and  the same dependence~$d^{-1}$ in the second. Notice that, in order to take advantage of an higher order  numerical scheme, we have to assume that~$U$ has  bounded first four derivatives, in contrast to~\cite{Chatterji} which only assumes that~$U$ has  bounded first three derivatives. As discussed in Remark~\ref{rem:order1}, if we consider this second condition on~$U$, a result similar to~\cite{Chatterji} and with the same dependence on~$\varepsilon$ and~$d$ can be derived. The main takeaway of this observation is that gHMC can achieve better accuracy when~$U$ is sufficiently smooth, and does not yield a worst complexity than the discretization~\eqref{eq:schemeChenetal} considered in~\cite{Chatterji} under the same conditions.  Finally, up to our knowledge, our result is the first one based on entropy methods for unadjusted HMC, and for splitting discretization schemes of the underdamped Langevin diffusion.
\item \textbf{HMC.}
  Analyses of HMC for strongly convex potentials $U$ have been conducted in~\cite{MangoubiSmith,BouRabeeEberleZimmer}. For non-convex potentials,
  the works~\cite{BouRabeeEberleZimmer,durmus2021asymptotic,BouRabeeEberle,BouRabeeSchuh} have established several non-asymptotic convergence bounds for position HMC (\ie, $\eta=0$), both in Wasserstein $1$ distance and total variation norm. In the mean-field case in particular, \cite[Theorem 10]{BouRabeeSchuh} shows that, after~$\mathcal O(\sqrt{\nq/\varepsilon'})$ gradient evaluations, the law of the chain is at distance at most~$\varepsilon'$ to~$\pi$ in terms of the $\mathbf W_{\ell_1}$ Wasserstein distance associated to the $\ell_1$ norm $\|x-y\|_{\ell_1}  = \sum_{i=1}^{\nq}|x_i-y_i|$.
    To compare this result with ours, we can bound this distance by~$\sqrt{\nq}$ times the standard~$\mathbf W_2$ Wasserstein distance (associated with the Euclidean distance),  whose square is bounded by the relative entropy times the log-Sobolev constant of $\pi$ (which, from the results of~\cite{GuillinWuZhang}, is independent of~$\nq$ in the weakly interacting case where~$\epsilon $ is small enough, which is the regime where this complexity is obtained in~\cite{BouRabeeSchuh}). In other words, we should take $\varepsilon' = \sqrt{\nq \varepsilon}$ in the results of \cite{BouRabeeSchuh} for a fair comparison. In that case, the number of gradient computations required in~\cite[Theorem 10]{BouRabeeSchuh} is   $\mathcal O((\nq/\varepsilon)^{1/4})$ in the weakly interacting case, which is the same as our results.
    The approach in~\cite{BouRabeeEberle,BouRabeeEberleZimmer,BouRabeeSchuh} is based on reflection couplings and concave modifications of the distance (building upon the method developed for continuous-time diffusion processes, in particular~\cite{EBERLE20111101} for the overdamped Langevin process and~\cite{EberleGuillinZimmer} for the underdamped one). In particular, it does not provide a result for the relative entropy or the~$\mathbf W_2$ Wasserstein distance, and the bound on the convergence rate is expressed in terms of global bounds on~$\ps{x-y}{ \na U(x)-\na U(y)} /|x-y|^2$.
    \item \textbf{Splitting schemes for kinetic Langevin diffusion for strongly convex potential $U$.} Splitting schemes for Langevin diffusion with  strongly convex potential energy functions have been investigated in~\cite{MonmarcheSplitting,leimkuhler2023contraction}. Their conclusions on the complexity for the corresponding gHMC method are similar to ours. Finally, the analysis of the general gHMC methodology have been conducted in \cite{monmarche2022hmc} under the same strong convexity condition. Once again, the same conclusions can be drawn.
\end{itemize}
As a conclusion, up to our knowledge, none of the existing works cover our results, which give simple second order explicit estimates in relative entropy uniformly over a class of unadjusted gHMC samplers and for non-convex potential energy functions~$U$. 


\section{Proof of Theorem~\ref{thm:main_KL}}\label{sec:proof-theor-refthm:m}

The proof of Theorem~\ref{thm:main_KL} is an easy consequence of a more detailed result given in Section~\ref{Sec:dissipation}, stated for a modified entropy function mixing a relative entropy and some Fisher information similar to the one considered for the time continuous Langevin dynamics. Instead of time derivatives, we have to estimate the discrete-time evolution of the two parts of the modified entropy along the two steps~$\rmV_\delta$ and~$\rmD_\eta$ of the chain.  The technical details of this analysis are postponed to~\Cref{sec:intermediaryLemmas}, where this evolution is shown to involve, on the one hand, the Jacobian matrix of the Verlet map and, on the other hand, some numerical error terms (due to the fact~$\mu$ is not invariant by~$\rmV_\delta$). The study of these terms is performed respectively in~\Cref{subsec:Psi,sec:numerique}. 

\subsection{A more detailed approximate decay result for a modified entropy}
\label{Sec:dissipation}


\paragraph{Relative density and modified entropy.} 
In the remainder of the present Section~\ref{sec:proof-theor-refthm:m}, for an initial distribution $\nu_0 \in \mathcal{P}(\rsetdd)$ which satisfies the conditions in \Cref{thm:main_KL}, parameters $\bfomega = (K,\delta,\eta)$ and $n \in\nset$, 
we denote by
\begin{equation}
  \label{eq:4}
  h_n = \frac{\rmd\left(\nu_0 \rmP_{\bfomega}^n\right)}{\rmd \mu} \eqsp,
  \qquad 
  \mch(h_n) = \mch(\nu_0 \rmP_{\bfomega}^n | \mu ) \eqsp, 
  \qquad 
  \mci(h_n) = \mci(\nu_0 \rmP_{\bfomega}^n | \mu) \eqsp, 
\end{equation}
where $\mch$ and $\mci$ denote (with some abuse of notation) the relative entropy and Fisher information with respect to~$\mu$ respectively. More generally, we denote by $\mch(h) = \mch(\nu | \mu)$ and $\mci(h) = \mci(\nu |\mu)$ for any probability density $h : \rsetdd \to \rset$ with respect to~$\mu$, so that~$\nu = h \mu$ is a probability measure. In addition,  for $\tilde{\bfA} \in \rset^{2d\times 2d}$, or possibly a matrix field $z \mapsto {\tilde{\bfA}}(z)$ on $\R^{2d}$,  define 
\begin{equation}
  \mathcal I_{\tilde{\bfA}}(\nu | \mu)   =
\begin{cases}
\displaystyle \int_{\R^{2d}} \left| {\tilde{\bfA}} \nabla\ln \po \frac{\rmd \nu}{\rmd \mu }\pf\right|^2 \rmd \nu  & \text{if } \nu \ll \mu, \\
+ \infty & \text{otherwise } \eqsp.
\end{cases}
\end{equation}
Note that if $\nu \ll \mu$ and $h= \rmd \nu / \rmd \mu$, 
\begin{equation}
  \label{eq:FisherA}
  \mathcal I_{\tilde{\bfA}}(\nu | \mu)  =  \int_{\R^{2d}} \frac{|{\tilde{\bfA}} \nabla h|^2}{h} \, \dd \mu\eqsp. 
\end{equation}
Following the convention above, we write $ \mathcal I_{\tilde{\bfA}}(h) = \mathcal I_{\tilde{\bfA}}( \nu  |\mu)$, for any probability density $h : \rsetdd \to \rset$ with respect to $\mu$ with $\nu = h \mu$. 

The main result of this section, from which we deduce \Cref{thm:main_KL}, establishes the dissipation in time of a suitable modified entropy up to some numerical error. More precisely, 
we consider, for $\nu \in\mathcal{P}(\rsetdd)$ and $\tilde{a}>0$,
\begin{equation}
\label{eq:A}
\mathcal{L}_{\tilde{a}}(\nu |\mu) = \mch(h) + \tilde{a} \mathcal I_\bfA(h) \eqsp,
\qquad
\bfA = \frac1{\sqrt2} \begin{pmatrix}
1 & 1 \\ 1 & 1
\end{pmatrix} \,.
\end{equation}
When $\nu \ll \mu$ with $h = \rmd \nu/\rmd \mu$, 
\begin{equation}\label{eq:modifiedEntropy}
    \mathcal L_{\tilde{a}}(\nu |\mu) = \int_{\R^{2d}} h \ln (h) \, \dd \mu+ \tilde{a}\int_{\R^{2d}} \frac{\left|\na_x h  + \na_v h \right|^2}{h} \, \dd \mu\,,
  \end{equation}
  and, following the same convention as previously, we write $\mathcal{L}_{\tilde{a}}(h) = \mathcal{L}_{\tilde{a}}(\nu | \mu)$, for any probability density $h : \rsetdd \to \rset$ with respect to $\mu$ with $\nu = h\mu$. With this choice, $\mathcal L_{\tilde{a}}$ involves a mixed derivative term $\ps{\na_x h}{ \na_v h}$ in the Fisher part, which has been crucially used in hypocoercive studies of the continuous-time kinetic Langevin process \cite{Talay,Herau2007,Villani2009}. More specifically, \eqref{eq:modifiedEntropy} is exactly the modified entropy introduced in continuous-time settings in~\cite[Section~6]{Villani2009}.






\paragraph{Precise statement of the convergence result.}
We are now ready to state the main result of this section. 

\begin{theorem}\label{Prop:dissipation}
Assume \Cref{hyp:target}-\Cref{assu:derivativesUbounded} with $L=1$, and consider~$\bfomega =(K,\delta,\eta)$ with~$K \in\nsets$, $\delta >0$ and $\eta \in \ooint{0,1}$ such that \Cref{assu:moments} holds and $\tint=K\delta \leqslant 1/10$. Furthermore, assume that $\pi$ satisfies the log-Sobolev inequality~\eqref{eq:log-Sob}. Consider an initial condition~$\nu_0$ such that 
$\mathcal I( \nu_0|\mu) < +\infty$ and for all $p >0$, $ \int_{\R^{2d}} \abs{z}^p \rmd \nu_0(z) <\plusinfty$.
Then, for any $n\in\N$,
\begin{equation}\label{eq:contract_n1}
    \po 1+ \kappa  \tint\pf \mathcal L_a(h_{n+1}) \leqslant \mathcal L_a(h_n) + \tint   \delta^4 M \po C_1 \rme^{-\rho nt} \nu_0\po \Wlyap\pf  + C_2\pf\,,
  \end{equation}
  where $a,\kappa,M$ are given in \eqref{eq:a_M_simpler}.
  
Moreover, a refined estimate can be obtained as follows: define
   \[
   m_1 =  2  - 3 \tint\,, 
   \qquad  
   m_2 = \frac{\gammaint}{\eta} + 2  + 4 \tint\,, 
   \qquad 
   m_3 =  \frac{\gammaint(1+\eta)}{\eta^2} \po \frac{1}{2 \ta}+1\pf-2-4\tint\,,
   \]
   and
   \begin{equation}\label{eq:lambda_etaneq0}
     \lambda  =
\frac{m_1+m_3}{2 } - \sqrt{\po \frac{m_1+m_3}{2 }\pf^2 - m_1m_3 + m_2^2}
     \end{equation}
     Then, for any $\tilde{a} >0$ such that $\lambda >0$,  for any $\varepsilon\in(0,1)$ and $n \in\nset$,
\begin{equation}\label{eq:contract_n1_2}
    \po 1+ \tkappa  \tint\pf \mathcal L_{\ta}(h_{n+1}) \leqslant \mathcal L_{\ta}(h_n) + \tint   \delta^4 \tM \po C_1 \rme^{-\rho nt} \nu_0\po \Wlyap\pf  + C_2\pf\,,
  \end{equation}
  where
  \[ 
  \tkappa   = \frac{ \ta \lambda (1-\varepsilon)(1-3\tint)}{\max(L \CLS,1)+2 \ta}\,,
  \qquad 
  \tM = \frac{1}{ \varepsilon \lambda} \po \frac4{15\ta }   +    24 \ta \pf +    \tilde a  \po (33  \lambda +4) \tint  +\frac{\lambda\vareps}{240} \pf   
  \,. \]
\end{theorem}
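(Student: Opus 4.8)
The plan is to establish the one-step contraction inequality \eqref{eq:contract_n1_2} (from which \eqref{eq:contract_n1} follows by the specific choice of $\tilde a = a$ and a crude bound on the constants), by decomposing the kernel $\rmP_{\bfomega} = \rmD_\eta \rmV_\delta^K$ into its two building blocks and tracking how the modified entropy $\mathcal L_{\ta}$ evolves along each of them. First I would write $h_{n+1} = (\rmD_\eta \rmV_\delta^K)^* h_n$ in the $\Ltwo(\mu)$ sense and split the analysis: (i) the pure refreshment step $\rmD_\eta$, and (ii) the $K$ successive Verlet substeps $\rmV_\delta$. The refreshment step $\rmD_\eta$ is an Ornstein--Uhlenbeck-type operator in the velocity variable that exactly preserves $\mu$; along it, standard entropy-dissipation computations (as in the continuous-time hypocoercive analysis of \cite{Villani2009} and its discrete adaptation in \cite{MonmarcheIdealized}) give that $\mch$ decreases and that the Fisher part $\mci_{\bfA}$ contracts with a rate governed by $\eta$, producing the $\gammaint/\eta$ and $\gammaint(1+\eta)/\eta^2$ terms appearing in $m_2,m_3$. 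The Verlet step $\rmV_\delta$ is deterministic and invertible, so it acts on densities by pushforward through $\Phi_\delta$; it exactly preserves $\mch$ (being a bijection) but distorts the Fisher term through the Jacobian $\na \Phi_\delta$, and — crucially — it does \emph{not} preserve $\mu$, which is the source of the numerical error term of order $\delta^4$.

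Second, I would carry out the key algebraic step: controlling $\mci_{\bfA}$ after composition with $\rmV_\delta^K$. Since $\rmV_\delta$ transports $h$ to $h \circ \Phi_\delta^{-1}$ times a Jacobian correction (which is $1$ here by symplecticity), one gets $\na (h\circ\Phi_\delta^{-1}) = (\na \Phi_\delta^{-1}) (\na h)\circ \Phi_\delta^{-1}$, so the matrix $\bfA$ in the Fisher functional effectively gets replaced by $\bfA \na\Phi_\delta$ (more precisely its transpose), and one must compare $\mci_{\bfA \na \Phi_\delta^K}$ with $\mci_{\bfA}$ and with the ``position gradient'' piece that couples to the log-Sobolev inequality. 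This is where the explicit form $\Phi_\delta = \Phi^{(v)}_{\delta/2}\circ \Phi^{(x)}_\delta \circ \Phi^{(v)}_{\delta/2}$ is used: one computes $\na \Phi_\delta$ explicitly (a product of three shear matrices with blocks involving $\na^2 U$), expands to the relevant order in $\delta$, and feeds the result into the quadratic-form estimates. The parameters $m_1,m_2,m_3$ are precisely the entries of the resulting $2\times 2$ symmetric form bounding the one-step decrement of $\mch + \ta \mci_{\bfA}$ (before invoking log-Sobolev), $\lambda$ is its smallest eigenvalue (hence the $\sqrt{\cdot}$ in \eqref{eq:lambda_etaneq0}), and the condition $\lambda>0$ is exactly what makes the combined functional dissipative. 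Then the log-Sobolev inequality \eqref{eq:log-Sob} converts the residual $\mch$ term into a multiple of Fisher information to close the inequality into the stated $(1+\tkappa\tint)^{-1}$ geometric contraction, the factor $\max(L\CLS,1)+2\ta$ in the denominator of $\tkappa$ reflecting the trade-off between the entropy and Fisher parts.

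Third, I would handle the error terms. The failure of $\mu$-invariance of $\rmV_\delta$ produces, at each substep, remainder terms that by the numerical-analysis estimates of \Cref{sec:numerique} are controlled pointwise by $\delta^4 \mathbf M(x,v)$ (this is exactly why $\mathbf M$ in \eqref{eq:defVmoments} is built from the combinations of $L^2|v|^2$, $L^2|\na U|^2$, $\rmN_3^4$, $\rmN_4^6$ that arise from a fourth-order Taylor expansion of the Hamiltonian along the leapfrog flow). Taking expectations under $\nu_0 \rmP_{\bfomega}^n \rmD_\eta \rmV_\delta^k$ and applying \Cref{assu:moments} bounds $\mathbb E[\mathbf M] \le C_1 \rme^{-\rho n \tint}\nu_0(\Wlyap) + C_2$, which after summing over the $K$ Verlet substeps and using $K\delta = \tint$ gives the $\tint \delta^4 \tM(\dots)$ term on the right of \eqref{eq:contract_n1_2}; the constant $\tM$ collects the $1/(\veps\lambda)$ contribution from absorbing cross terms via Young's inequality (with split parameter $\veps$) together with lower-order $\tint$-corrections. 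Finally, \eqref{eq:contract_n1} is obtained by specializing $\tilde a = a$ and checking, using $\tint \le 1/10$ and $\eta \in \ooint{0,1}$, that with the simpler choice of $a$ in \eqref{eq:a_M_simpler} one has $\lambda \ge \ta/(\text{explicit})$ and $\tkappa \ge \kappa$, $\tM \le M$, so the refined bound implies the coarse one.

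The main obstacle I anticipate is the Verlet-Jacobian step: getting a \emph{clean, non-asymptotic} comparison between $\mci_{\bfA \na\Phi_\delta^K}(h)$ and the pair $\big(\mci_{\bfA}(h), \mci_{\na_x}(h)\big)$ with constants that are uniform over all $K,\delta$ with $K\delta \le 1/10$ and that do not degrade with $d$ beyond the prescribed powers. This requires carefully expanding the $K$-fold composition $\na \Phi_\delta^K$ (not just $\na\Phi_\delta$), keeping track of which terms are $O(\tint)$ versus $O(\tint^2)$ versus genuine error, and ensuring the quadratic-form manipulations only ever cost controlled multiplicative factors — the bookkeeping behind $m_1,m_2,m_3$ and the need to route the dimension dependence through the norms $\rmN_3,\rmN_4$ (rather than the Euclidean norm) is exactly where the delicacy lies, and it is deferred in the paper to \Cref{sec:intermediaryLemmas,subsec:Psi,sec:numerique}.
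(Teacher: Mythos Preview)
Your high-level architecture is right — decompose along $\rmD_\eta$ then $(\rmV_\delta^K)^*$, use the damping step for velocity-Fisher dissipation, track the Fisher part through the Verlet Jacobian, reduce to a $2\times 2$ eigenvalue problem giving $\lambda$, and close with log-Sobolev — but there is a genuine gap in your treatment of the entropy along the Verlet step. You write that $\rmV_\delta$ ``exactly preserves $\mch$ (being a bijection)''. This is false: a bijection $T$ preserves relative entropy only in the sense $\mch(T_\#\nu\,|\,T_\#\mu)=\mch(\nu\,|\,\mu)$, whereas here we need $\mch(T_\#\nu\,|\,\mu)$ with the \emph{fixed} reference $\mu$, which is not $\Phi_\delta$-invariant. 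Concretely, $(\rmV_\delta^K)^* h = (h\circ\Phi_\delta^{-K})\,\rme^{\square H}$ with $\square H = H - H\circ\Phi_\delta^{-K}$, and the factor $\rme^{\square H}$ is precisely what makes $\mch$ change. In the paper this is handled by Lemma~\ref{lem:entropy_verlet} and Corollary~\ref{C:improved_ent_verlet}: interpolating along $s\mapsto\Phi_s$ and subtracting the exact Hamiltonian vector field $F_H$ (which \emph{would} preserve $\mu$), one gets
\[
\mathcal H\big((\rmV_\delta^K)^* h\big)-\mathcal H(h)\;\leqslant\; 4\varepsilon_1 K\delta\,\mathcal I(h)\;+\;\text{(numerical error of order }\delta^5\text{)}\,.
\]
The first term on the right is not an error term to be discarded; it is a genuine $\mathcal I$-contribution that must be absorbed into the quadratic form defining $\bfS$ (it is the $4\varepsilon_1\tint$ subtracted in the definition of $\rho'$ in \eqref{eq:def_rho_prime_rppfo}). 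If you treat $\mch$ as Verlet-invariant you lose this mechanism and your $\Deltabf_1$ term is simply unaccounted for.

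A second, smaller correction: the log-Sobolev inequality is not used to ``convert the residual $\mch$ term into Fisher information'' inside the dissipation computation. Rather, after assembling the inequality $\mathcal L_{\ta}(h_{n+1})-\mathcal L_{\ta}(h_n)\leqslant -\rho'(1-3\tint)\,\mathcal I(h_{n+1}) + \text{error}$ (note the Fisher information is evaluated at $h_{n+1}$, obtained by a further application of Lemma~\ref{L:fisher_verlet} with $\tilde\bfA=\Psi^{-1}\circ\Phi_\delta^{-K}$), one uses log-Sobolev for $\mu$ in the form $\mathcal L_{\ta}(h_{n+1})\leqslant(\max(\CLS,1)+2\ta)\,\mathcal I(h_{n+1})$ to convert the right-hand Fisher dissipation into a multiple of $\mathcal L_{\ta}(h_{n+1})$, which is what yields the $(1+\tkappa\tint)$ factor. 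Finally, your worry about dimension dependence in the Jacobian step is misplaced: Lemma~\ref{lem:VerletMatrice} gives operator-norm bounds on $\Psi$ that are dimension-free; the dimension enters only through the moment function $\mathbf M$ and the constants $C_1,C_2$ of \Cref{assu:moments}.
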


Note that this result is only stated for $\eta>0$. As explained in the proof of \Cref{thm:main_KL}, the case $\eta=0$ is obtained by a limiting procedure. The expression for~$\lambda$ in~\eqref{eq:lambda_etaneq0} admits a limit when~$\eta\to 0$; more precisely, this limit is 
$2 - 3\tint -   2\ta/[\tint(1+2\ta)]$ which can be obtained from
\[
\lambda = \frac{m_1m_3-m_2^2}{\displaystyle \frac{m_1+m_3}{2 } + \sqrt{\po \frac{m_1+m_3}{2 }\pf^2 - m_1m_3 + m_2^2}},
\]
the numerator and denominator being respectively $\eta^{-2} \gammaint(2-3\tint)(1+1/(2\tilde{a}))$ and~$\eta^{-2} \gammaint [(1+1/(2\tilde{a}))-\gammaint]$ at dominant order in~$\eta$, with~$\gammaint = \tint^{-1}$ when~$\eta=0$ in view of~\eqref{eq:def_tint_gammaint}. 

\begin{remark}
\label{rmk:convergence_implies_LSI}
We discuss in this remark how to obtain~\eqref{eq:convergence_implies_LSI} from the refined convergence result~\eqref{eq:contract_n1_2}. First, since~$L$ is any upper bound of~$\|\na^2 U\|_\infty$, we can assume without loss of generality that~$L \CLS \geq 1$. Upon rescaling, it can therefore be assumed that~$L=1$ and~$\CLS \geq 1$ (see the discussion at the end of \Cref{sec:actual_main_result}). We consider an initial condition of the form $\nu_0 = \hat \nu_0 \otimes \mathrm{N}(0,\IdM)$. Having in mind that the Euler--Maruyama discretization of overdamped Langevin dynamics corresponds to one-step HMC with full resampling of the momenta at each step with effective step-size~$\delta^2/2$, we set~$K=1$ and~$\eta=0$, and therefore consider~$n=2s/\delta^2$ for some time~$s>0$. It follows from~\eqref{eq:def_tint_gammaint} that $\tint = \delta$. Choosing for instance~$\varepsilon = \delta$ (or any other choice ensuring that~$\varepsilon$ vanishes as~$\delta \to 0$) and~$\ta = \alpha \delta$, it is then easily seen that~$\lambda$ is at dominant order~$2(1-\alpha)$, so that the convergence rate~$\tkappa$ is of order~$2\delta \alpha(1-\alpha)/\CLS$. This suggests to choose~$\alpha=1/2$ to maximize the convergence rate. Note that the factor~$(1+\tkappa\tint)^{-n}$ then converges to~$s / \CLS$ as~$\delta \to 0$ with these choices, which leads to~\eqref{eq:convergence_implies_LSI} as claimed.
\end{remark}

\Cref{Prop:dissipation} is proved below. First, let us deduce Theorem~\ref{thm:main_KL} from Theorem~\ref{Prop:dissipation}:

\begin{proof}[Proof of Theorem~\ref{thm:main_KL}] 
 In the case $\eta>0$, an easy induction based on~\eqref{eq:contract_n1} in \Cref{Prop:dissipation} implies that, for any $n\geqslant 1$,
\[
\mathcal L_a(h_{n})  \leqslant \po 1+ \kappa  \tint\pf^{-n} \mathcal L_a(h_0) + \tint  \delta^4 M \ \sum_{k=1}^n \po 1+ \kappa  \tint\pf^{-k}\co \newOld{C_1}\rme^{-\rho (n-k) \tint} \nu_0\po \Wlyap\pf  + C_2\cf \eqsp.
\]
The proof then follows from the bounds 
\[
\sum_{k=1}^n (1+\kappa \tint)^{-k} \leqslant \frac{1+\kappa \tint}{\kappa \tint}\,, 
\qquad 
\sum_{k=1}^n (1+\kappa \tint)^{-k}\rme^{-\rho(n-k) \tint}  \leqslant n \co \max\po (1+\kappa \tint)^{-1/\tint},\rme^{-\rho} \pf \cf^{n\tint} \,, 
\]
as well as~$\mathcal H(h_n) \leqslant \mathcal L_a(h_n)$ and~$\mathcal L_a(h_0) \leqslant \mathcal H(h_0) + 2a \mathcal I(h_0)$. 

The case $\eta=0$ is obtained by letting $\eta$ go to~0 in the previous case.\footnote{We mention that the argument in the case $\eta=0$ in \cite{MonmarcheIdealized} is not correct, an inequality is used in the wrong sense.} Indeed, it is clear that the right hand side of~\eqref{eq:bound_theo} converges. For the left hand side, we use the lower semi-continuity of the relative entropy (see~\cite[Theorem 29.1]{Villani_OT09}) with respect to the weak convergence of probability measures. The convergence of $\nu_0 \rmPo^n$ in the Wasserstein sense (hence in the weak sense) as $\eta\rightarrow 0$ follows from the moment bounds of \Cref{assu:moments}  together with a synchronous coupling argument, i.e. using the same Gaussian variables in the randomization steps of two chains, one with $\eta=0$ and one with $\eta'>0$ (so that the expected distance between the chains goes to zero as $\eta'$ goes to zero).
\end{proof}

\paragraph{Proof of \Cref{Prop:dissipation}.}
Before providing the proof of \Cref{Prop:dissipation}, let us recall the key steps of the proof of the similar result established in \cite{Villani2009} for the continuous-time kinetic Langevin process, as the structure of the proof is similar. Denoting by $(\hat h_t)_{t\geqslant 0}$ the  analogue of $(h_n)_{n\in\N}$ in this continuous-time context, the first observation is that
\begin{equation}\label{eq:Villani1}
    \frac{\rmd}{\rmd t} \mathcal H(\hat h_t) = - \int_{\R^{2d}} \frac{|\na_v \hat h_t|^2}{\hat h_t} \, \dd \mu\,.
\end{equation}
This entropy dissipation is not sufficient to conclude to an exponential decay, as it can vanish for non-constant densities~$\hat h_t$ (and thus in particular it cannot be upper bounded by $-\kappa \mathcal H(\hat h_t)$ for some $\kappa>0$). A key observation is then that, when $\na^2 U$ is bounded,
\begin{equation}\label{eq:Villani2}
\frac{\rmd}{\rmd t}\left( \int_{\R^{2d}} \frac{|\na_x \hat h_t + \na_v \hat h_t |^2}{\hat h_t} \, \dd \mu \right) \leqslant - c \int_{\R^{2d}}  \frac{|\na_x \hat h_t|^2}{\hat h_t} \, \dd \mu + C \int_{\R^{2d}}  \frac{|\na_v \hat h_t|^2}{\hat h_t} \, \dd \mu     
\end{equation}
for some constants $c,C>0$. The last term can be controlled thanks to the entropy dissipation \eqref{eq:Villani1}. As a consequence, for $a$ small enough, considering $\mathcal L_a$ given by \eqref{eq:modifiedEntropy}, 
\begin{equation}\label{eq:Villani3}
\frac{\rmd}{\rmd t} \mathcal L_a(\hat h_t) \leqslant - a c \mathcal I(\hat h_t)\,,    
\end{equation}
and the conclusion follows from
\[\mathcal L_a(\hat h_t) \leqslant (\max(C_{\rm LS},1)+2a)  \mathcal I(\hat h_t)\,, \]
where we used that $\mu$ satisfies a log-Sobolev inequality with constant $\max(C_{LS},1)$ as the tensor product of $\pi$ and $\mathrm{N}(0,\IdM)$. 

The proof of Theorem~\ref{Prop:dissipation} is fundamentally based on the same ingredients, but with discrete-time evolutions along the two steps $\rmV_\delta$ and $\rmD_\eta$ of the chain, instead of the time derivatives of \eqref{eq:Villani1} and \eqref{eq:Villani2}. This requires various estimates given in \Cref{sec:intermediaryLemmas}.

\begin{proof}[Proof of \Cref{Prop:dissipation}]
We start by general estimates, and then specify at the very end particular choices leading respectively to~\eqref{eq:contract_n1} and~\eqref{eq:contract_n1_2}. Fix~$\tilde{a} >0$ and $n \in\nset$.
In this proof, we consider the matrix $\bfA$ given in \eqref{eq:A}. We write the proof for $\eta>0$, the case $\eta=0$ being obtained by passing to the limit~$\eta\to0$ in the final expressions, as discussed after \Cref{Prop:dissipation}.

The estimates on the evolution of~$h_n$ are based on a dual formulation where we consider the evolution of an arbitrary bounded measurable function~$g$:
\begin{equation}
\int_{\R^{2d}} h_{n+1} g \, \dd \mu = \int_{\R^{2d}} h_n  \rmPo g \, \dd \mu \,,
\end{equation}
in other words $h_{n+1}= \rmPo^* h_n$ where we denote by $\rmQ^*$ the adjoint of a bounded linear operator~$\rmQ$ on~$\rml^2(\mu)$. The first task is to identify the action of the adjoints of the operators constituting~$\rmPo$. 
A simple calculation shows that the  damping step is reversible in the probabilistic sense, namely $\rmD_\eta^* = \rmD_\eta $. 
Since the Verlet map $\Phi_\delta^K$ satisfies $|\mathrm{det} \nabla \Phi_{\delta}^K | \equiv 1$ (as a consequence of symplecticity~\cite{HairerLubichWanner06}),
\[
\begin{aligned}
\int_{\R^{2d}} h  \left(\rmV_\delta^K g \right) \dd \mu  
& = \int_{\R^{2d}} h(z) g\po \Phi^{K}_\delta(z)\pf \mu(z) \, \dd z
= \int_{\R^{2d}} h\po \Phi^{-K}_\delta(z)\pf g (z)   \mu\po \Phi^{-K}_\delta(z)\pf\dd z \\
& = \int_{\R^{2d}}  g \left[\po \rmV_\delta^K\pf ^*h \right] \dd  \mu \,, 
\end{aligned}
\]
which allows to identify the action of the adjoint operator as
\begin{equation}
    \label{eq:action_adjoint_Verlet}
\po \rmV_\delta^K\pf ^* h  =  \left(h \circ \Phi^{-K}_\delta\right) \rme^{\square H}\,,
\qquad 
\square H = H- H\circ \Phi^{-K}_\delta  \,.
\end{equation}
In addition, using the reversibility property~\eqref{eq:verlet_inverse_and_refl}, we obtain that $\Phi^{-K}_\delta = (\Phi_\delta^{-1})^{\circ K}$ with
\begin{align}
 \Phi_\delta^{-1}(x,v) &=     \po x- \delta v - \frac{\delta^2}{2} \na U(x)\ , \ v + \frac{\delta}{2}\co \na U(x) + \na U \po x- \delta v - \frac{\delta^2}{2} \na U(x)\pf  \cf \pf \label{eq:phi-1} \,. 
\end{align}

From these observations and since $\rmPo^* = (\rmV_\delta^K)^*\rmD_\eta^*$, we consider the decomposition
\[\mathcal{L}_{\ta}(h_{n+1}) - \mathcal{L}_{\ta}(h_n) = \mathcal{L}_{\ta} (\rmPo^* h_n)-\mathcal{L}_{\ta}(h_n) = \Deltabf_1+\Deltabf_2+\ta \Deltabf_3+\ta \Deltabf_4\,,\]
where 
\[
\begin{array}{lll}
   \Deltabf_1 & =  \mathcal H \po (\rmV_\delta^K)^*\rmD_\eta  h_n\pf -\mathcal H (\rmD_\eta  h_n) \eqsp,
\qquad  & \Deltabf_2 = \mathcal H (\rmD_\eta  h_n) - \mathcal H ( h_n)\\[5pt]
    \Deltabf_3 & = \mathcal I_\bfA \po (\rmV_\delta^K)^*\rmD_\eta h_n\pf \eqsp, 
\qquad 
   & \Deltabf_4=- \mathcal I_\bfA \newOld{(h_n)}\,.
\end{array}
\]
We bound these four terms as follows. By Corollary~\ref{C:improved_ent_verlet}, for any $\varepsilon_1>0$,
\[\Deltabf_1 \leqslant  4 \varepsilon_1 K \delta  \mathcal I  \po \rmD_\eta  h_n \pf + E_{\varepsilon_1} W_n \, ,
\]
where for conciseness we write $W_n = C_1 \rme^{-\rho n\tint} \nu_0(\Wlyap) + C_2$ (recall the notation from \Cref{assu:moments}), and, by  Corollaries~\ref{C:error1} and \ref{C:error2}, \begin{align}
E_{\varepsilon_1}& = \sum_{j=0}^{K-1} \co \newOld{6} \delta^7 j^2 \varepsilon_1     + \varepsilon_1 \int_0^\delta \newOld{2} s^6  \dd s + \frac1{30 \varepsilon_1} \delta^5  \cf \\
& \leqslant  \newOld{2 \vareps_1} \delta^7 K^3 + \frac{\newOld{2}}{7} \varepsilon_1 K \delta^7 + \frac{1}{30 \varepsilon_1} K \delta^5  \leqslant \newOld{\frac1{30}} \left( \varepsilon_1 + \frac{1}{\varepsilon_1}\right) K \delta^5 \eqsp,
\end{align}
where we used that $K\delta\leqslant 1/10$.
 
 
By Lemma~\ref{L:diss_damp},
\[\Deltabf_2\leq -\frac12 (\eta^{-2}-1)  \mathcal I_{\bfB_v}(\rmD_\eta  h_n) \eqsp, \qquad \Deltabf_4\leq \mathcal -\mathcal I_{\bfA \bfB_{\eta}^{-1}}(\rmD_\eta h_n) \eqsp,\]
where
\begin{equation}\label{eq:BvBeta}
\bfB_v = \begin{pmatrix}
0 & 0 \\ 0 & 1
\end{pmatrix}\,,\qquad \bfB_{\eta}^{-1} = \begin{pmatrix}
1 & 0 \\ 0 & 1/\eta
\end{pmatrix}\eqsp .    
\end{equation}

Finally, using Lemma~\ref{L:fisher_verlet} and  \newOld{Corollary~\ref{C:error1}} and setting
\begin{equation}
  \label{eq:proof_def_Psi}
  \Psi =(\na \Phi_\delta^{-K})\circ \Phi_\delta^{K} \eqsp,
\end{equation}
it holds, for any $\varepsilon_2>0$,
\begin{align}
   \Deltabf_3 & \leq   (1+\varepsilon_2)\mathcal I_{\bfA \Psi }(\rmD_\eta h_n) + \newOld{2} ( 1+\varepsilon_2^{-1}) |\bfA|^2  K^2 \delta^6 W_n  \\
   & \leqslant  \mathcal I_{\bfA \Psi }(\rmD_\eta h_n) + \varepsilon_2|\bfA|^2 \|\Psi\|_\infty ^2 \mathcal I(\rmD_\eta h_n) +\newOld{2} ( 1+\varepsilon_2^{-1}) |\bfA|^2  K^2 \delta^6 W_n  \\
     & \leqslant  \mathcal I_{\bfA \Psi }(\rmD_\eta h_n) +  3 \varepsilon_2  \mathcal I(\rmD_\eta h_n) + \newOld{4} ( 1+\varepsilon_2^{-1})   K^2 \delta^6 W_n\,, 
\end{align}
where we used $|\bfA|=\sqrt{2}$ and~$\|\Psi\|_\infty\|^2 \leq 3/2$, the latter bound following from Equation~\eqref{eq:normPsi} in \Cref{lem:VerletMatrice} and $\tint = K\delta  \leqslant 1/10$.

Therefore, by combining the inequalities above, we get 
\begin{align} 
\mathcal{L}_{\ta}(h_{n+1}) - \mathcal{L}_{\ta}(h_n) \ \leqslant \ - \frac{\eta^{-2}-1}{2} \mathcal I_{\bfB_v}(\rmD_\eta  h_n) 
+ \ta \mathcal I_{\bfA \Psi }(\rmD_\eta h_n)   - \ta\mathcal I_{\bfA \bfB_{\eta}^{-1}}(\rmD_\eta h_n)\\
+ \po 4 \varepsilon_1 K \delta + 3 \ta \varepsilon_2 \pf   \mathcal I  \po \rmD_\eta  h_n \pf + \co \frac{\varepsilon_1 + \varepsilon_1^{-1}}{30} K \delta^5 +  \newOld{4} \ta ( 1+\varepsilon_2^{-1})   K^2 \delta^6\cf  W_n\,.\label{eq:ent_diff1}
\end{align}
The sum of the terms in the first line of the right hand side is equal to 
\begin{equation}
  - \ta \mathcal I_{\bfS^{1/2}}(\rmD_\eta h_n)=- \ta \int_{\R^{2d}} \frac{\ps{\na \left(\rmD_\eta h_n\right)} { \bfS \na \left(\rmD_\eta h_n\right)}}{\rmD_\eta h_n} \, \dd \mu \eqsp,
  \label{eq:def_S_proof}  
\end{equation}
with, for $z\in\R^{2d}$,
\[\bfS(z) =   (2\ta)^{-1} (\eta^{-2}-1) \bfB_v^{\top} \bfB_v -  (\bfA\Psi(z))^{\top} \bfA\Psi(z) +  ( \bfA \bfB_{\eta}^{-1})^{\top} \bfA \bfB_{\eta}^{-1},
  \]
where $\Psi$ is defined in \eqref{eq:proof_def_Psi}.

A key step of the proof is now to establish a positive lower bound on $\bfS$ uniformly in $z$, for $\tilde{a}$ small enough, which is the analogue in our context of~\eqref{eq:Villani3}.  We introduce the $d\times d$ block decompositions
\[
\Psi(z)=\begin{pmatrix} \psi_{11}(z) & \psi_{12}(z) \\ \psi_{21}(z) & \psi_{22}(z) \end{pmatrix},
\qquad 
\bfS(z)=\begin{pmatrix} s_{11}(z) & s_{12}(z) \\ s_{21}(z) & s_{22}(z) \end{pmatrix}\,,
\]
 so that
\begin{align}
  s_{11}&=  \IdM -(\psi_{11}+\psi_{21})^{\top}(\psi_{11}+\psi_{21})\eqsp, \quad      s_{12}= s_{21}  =  \eta^{-1} \IdM  - (\psi_{11}+\psi_{21})^{\top}(\psi_{12}+\psi_{22})\eqsp, \\
   s_{22}  &=  \po \frac{\eta^{-2}-1}{2\ta} + \eta^{-2} \pf \IdM  -  (\psi_{22}+\psi_{12})^{\top}(\psi_{22}+\psi_{12})\,.
\end{align}
Using  Lemma~\ref{lem:VerletMatrice} and~$\tint \leq 1/10$, it holds, for any $x\in\R^d$,
\begin{align}
   | (\psi_{11}+\psi_{21})x| & \leqslant  \left|\po (1-\tint) \IdM  - \frac{\tint^2}{2} \alpha \pf x \right| + \frac{2 \tint^3}{3} |x|\leq \po 1 - \tint + \tint^2 \pf   |x| \\
    | (\psi_{22}+\psi_{12})x| & \leqslant  \left|\po  \IdM  + \tint\beta  - \frac{\tint^2}{2} \zeta  \pf x \right| + \frac{2 \tint^3}{3} |x|\leq \po 1 + \tint + \tint^2 \pf   |x|  \eqsp,
\end{align}
and similarly 
\begin{align}
     | (\psi_{11}+\psi_{21})x - x | & \leqslant  \po \tint + \tint^2 \pf   |x| \eqsp, \qquad 
     | (\psi_{22}+\psi_{12}) x - x |  \leqslant  \po \tint + \tint^2 \pf   |x| \,,
\end{align}
so that, for any $x,y\in \R^d$, 
\begin{align}
\ps{x}{   s_{11} x} & \geqslant  \co 1 - \po 1 -\tint + \tint^2 \pf^2 \cf    |x|^2
 \geqslant   \po  2 \tint - 3 \tint^2\pf   |x|^2 \eqsp, \\
\ps{y}{   s_{22} y} & \geqslant  \co \frac{\eta^{-2}-1}{2\ta} + \eta^{-2} - \po 1 + \tint + \tint^2 \pf^2 \cf    |y|^2
 \geqslant   \po \frac{\eta^{-2}-1}{2\ta} + \eta^{-2} - 1 -2\tint - 4 \tint^2\pf    |y|^2 \eqsp, \\  
\ps{   x}{   s_{12} y} & =  \po \eta^{-1}-1\pf   x \cdot  y + x \po \IdM - (\psi_{11}+\psi_{21})^{\top}(\psi_{12}+\psi_{22}) \pf  y \\
   & =  \po \eta^{-1}-1\pf   x \cdot  y +  \ps{ \po \IdM- \newOld{(\psi_{11}+\psi_{21})}\pf x }{ (\psi_{12}+\psi_{22}) y}  +\ps{ x}{  \po\IdM- \newOld{(\psi_{12}+\psi_{22})}\pf  y }\\
   & \geqslant  - \po \eta^{-1}-1 +  \po 2+ \tint+ \tint^2 \pf  \po \tint+ \tint^2 \pf\pf |x||y|\\
   & \geqslant  - \po \eta^{-1}-1 +  2 \tint + 4\tint^2 \pf |x||y|\,.
\end{align}
Note that the last right hand sides in the previous three inequalities are of order~$\tint$ at dominant order in~$\tint$ (using~\eqref{eq:def_tint_gammaint} to write~$\eta = 1-\gammaint\tint$). This motivates factoring out~$\tint$, and writing, for any $x,y\in \R^d$ and $z\in\R^{2d}$,
\begin{equation}
\label{eq:lower_bound_S_matrix}
\begin{pmatrix} x \\ y\end{pmatrix}^{\top} \bfS(z) \begin{pmatrix} x\\ y\end{pmatrix} \geqslant \tint \lambda \po |x|^2 +|y|^2\pf \eqsp, 
\end{equation}
where $\lambda$ is the smallest eigenvalue of the symmetric $2\times 2$ matrix
\begin{align}
   \begin{pmatrix}
m_1 & m_2 \\ m_2 & m_3 
\end{pmatrix} &:= \frac1{\tint} \begin{pmatrix}  2 \tint - 3 \tint^2 & - \po \eta^{-1}-1 +  2 \tint + 4\tint^2 \pf\\ - \po \eta^{-1}-1 +  2 \tint+ 4\tint^2 \pf& \displaystyle \frac{\eta^{-2}-1}{2\ta} + \eta^{-2} - 1 -2\tint - 4 \tint^2 \end{pmatrix} \\
&=       \bfB_{\eta}^{-1} 
\begin{pmatrix}  2  - 3 \tint & - \po  \gammaint   +  \eta(2  + 4\tint) \pf\\ - \po \gammaint   + \eta( 2 + 4\tint ) \pf& \displaystyle \gammaint (1+\eta) \po \frac{1}{2\ta}+1\pf    - \eta^2 (2  + 4\tint) \end{pmatrix}
\bfB_{\eta}^{-1}\,,
\end{align}
recalling the notation $\gammaint = (1-\eta)/\tint$. This provides the definitions given in~\eqref{eq:def_m_i_modified_entropy}. The expression~\eqref{eq:lambda_etaneq0} for~$\lambda$ then follows from
\begin{equation}
\lambda  =  \frac{m_1+m_3}{2 } - \sqrt{\po \frac{m_1+m_3}{2 }\pf^2 - m_1m_3 + m_2^2}\eqsp,
\end{equation}
which is positive provided~$\tilde{a}$ is small enough. 
Alternatively, to get the simpler expressions~\eqref{eq:a_M_simpler}, we proceed as follows:  using that $\eta\leqslant 1$ and  $\tint\leqslant 1/10$, we bound
\begin{equation}
    \label{eq:def_m_i_modified_entropy}
   m_1  \geqslant   \frac{17}{10} \,, \qquad 
    |m_2|   \leqslant   \frac{ 1}{\eta } \po \gammaint + \frac{12}5\pf  \,,  \qquad 
    m_3    \geqslant   \frac{ 1}{\eta^2} \po \frac{\gammaint}{2\ta}- \frac{12}5 \pf    \eqsp,  
\end{equation}
to get that
    \begin{align}
       \begin{pmatrix} x\\y\end{pmatrix}^{\top} \begin{pmatrix} m_1& m_2\\ m_2 & m_3\end{pmatrix}\begin{pmatrix} x\\ y\end{pmatrix} & =  m_1 |x|^2 + 2 m_2 |x||y| + m_3 |y|^2 \\
       & \geqslant \frac12 m_1 |x|^2   + \po  m_3 - \frac{2m_2^2}{m_1}\pf  |y|^2 \geqslant   \frac{17}{20}  \po |x|^2   + |y|^2  \pf \,,
    \end{align}
          \ie, $\lambda \geqslant 17  /20$ if we choose $\tilde{a}$ such that~$m_3 \geq m_1 /2 + 2m_2^2/m_1$. This condition is satisfied for any~$\eta\in(0,1]$ when it is satisfied for~$\eta=1$, in which case it reads
          \begin{equation}
         \frac{\gammaint}{2\ta} \geqslant \frac{17}{20} + \frac{12}5 + \frac{20}{17}\po \gammaint + \frac{12}{5}\pf^2 \,.\label{eq:13}
       \label{eq:def_lambda_moins_condtion}
     \end{equation}
 This holds in particular with the choice of $\ta=a$ in \eqref{eq:a_M_simpler}. 

In both cases, at this point, plugging the lower bound~\eqref{eq:lower_bound_S_matrix} in~\eqref{eq:def_S_proof}, we have determined $\lambda > 0$ such that
\[
\ta \mathcal I_{\bfS^{1/2}}(\rmD_\eta h_n) \leqslant - \ta\lambda \tint \mathcal I(\rmD_\eta h_n)\,. 
\]
Combining this result with  \eqref{eq:ent_diff1} yields 
\begin{equation}\label{eq:Lhn_intermediary}
      \mathcal{L}_{\ta}(h_{n+1}) - \mathcal{L}_{\ta}(h_n) \leqslant  -\rho '  \mathcal I  \po \rmD_\eta  h_n \pf  
    + \co \frac{\varepsilon_1 +\varepsilon_1^{-1}}{30} K \delta^5 +  \newOld{4} \ta ( 1+\varepsilon_2^{-1})   K^2 \delta^6\cf  W_n\,,
\end{equation}
for
\begin{equation}
  \label{eq:def_rho_prime_rppfo}
  \rho' =  \ta \lambda \tint -   4 \varepsilon_1 \tint - 3 \ta \varepsilon_2 \eqsp.
\end{equation}

 Now, applying Lemma~\ref{L:fisher_verlet} with $\tilde{\bfA} = \Psi^{-1}\circ \Phi_\delta^{-K}$ and~$h$ replaced by~$\rmD_\eta  h_n$ (see Remark~\ref{rem:Fisher_Verlet_MatrixField}), we obtain that, for any~$\varepsilon_3>0$
\[(1+\varepsilon_3) \mathcal I (\rmD_\eta h_n )  \geq \mathcal I_{\Psi^{-1}\circ \Phi_\delta^{-K}}  \po \rmPo^* h_n \pf - (1+\varepsilon_3^{-1})\left\|\Psi^{-1}\circ \Phi_\delta^{-K}\right\|_\infty^2 \mathrm{Er}_1^{\delta,K}(\rmD_\eta h_n)\,.\]
Thanks to Lemma~\ref{lem:VerletMatrice}, and more specifically to \eqref{eq:normPsi}, for any $z,u\in\R^{2d}$,
\[
\begin{aligned}
   |\Psi^{-1}(z) u|^2 & \geqslant \po 1+\tint +  \tint^2/2 +  \tint^3/3\pf^{-2} |u|^2 \\
   & \newOld{\geqslant} \newOld{ (1+5\tint/2)^{-1}|u|^2 }\\
   &\geqslant \po 1 - 5 \tint /2\pf |u|^2\,,
\end{aligned}
\]
so that~$\mathcal I_{\Psi^{-1}\circ \Phi_\delta^{-K}}  \po \rmPo^* h_n \pf \geqslant \po 1 - 5 \tint /2\pf  \mathcal I   \po \rmPo^* h_n \pf$. Using Lemma~\ref{lem:VerletMatrice} again, we have, for any $z,u\in\R^{2d}$,
\[|u| \leqslant |\Psi(z) u| + |\Psi(z) u - u| \leqslant  |\Psi(z) u| + \po \tint + \fracmm{\tint^2}{2}+ \fracmm{\tint^3}{3}\pf |u|\,, \]
and thus 
\[
\left|\Psi^{-1}(z) u\right| \leq |u| + \po \tint + \fracmm{\tint^2}{2}+ \fracmm{\tint^3}{3}\pf \left|\Psi^{-1}(z) u\right| \leq |u| + \frac12 \left|\Psi^{-1}(z) u\right|,
\]
so that~$\|\Psi^{-1}\circ \Phi_\delta^{-K}\|_\infty\leqslant 2$. Plugging these bounds in  \eqref{eq:Lhn_intermediary} yields
\begin{align}
   \mathcal{L}_{\ta}(h_{n+1}) - \mathcal{L}_{\ta}(h_n) &\leqslant   -\frac{\rho '}{1+\varepsilon_3}  \co \po 1 - \frac{5 \tint}{2}\pf  \mathcal I  \po h_{n+1}\pf - 4(1+\varepsilon_3^{-1})  \mathrm{Er}_1^{\delta,K}(\rmD_\eta h_n)\cf   \\
   &\qquad 
    + \co \frac{\varepsilon_1 + \varepsilon_1^{-1}}{30} K \delta^5 + \newOld{4} \ta ( 1+\varepsilon_2^{-1})   K^2 \delta^6\cf  W_n\,.
\end{align}
We next use that~$\mu$ satisfies a log-Sobolev inequality with constant~$\max(1,\CLS)$ (by tensorization from the log-Sobolev inequalities satisfied by~$\pi$ and~$\mathrm{N}(0,\IdM)$), and the equality~$|\bfA|^2= 2$, to write 
\[
\mathcal{L}_{\ta}(h_{n+1}) \leq \left( \max(1,\CLS)+2\ta \right)\mathcal{I}\po h_{n+1}\pf.
\]
Choosing $\varepsilon_3 = \tint/4$, using Corollary~\ref{C:error1} to bound the remaining numerical error term, setting 
\[
M =    \newOld{8} \newOld{\rho'}\po 1+ \frac{4}{\tint} \pf \tint  + \frac{\varepsilon_1 + \varepsilon_1^{-1}}{30}  +  \newOld{4} \ta ( 1+\varepsilon_2^{-1})   \tint  \,, 
\]
and noting that $(1-5\tint/2)/(1+\tint/4) \geq 1-3\tint$ and~$\rho'(1-3\tint)>0$, we can conclude that
\begin{align}
 \mathcal{L}_{\ta}(h_{n+1}) - \mathcal{L}_{\ta}(h_n)  & \leqslant   -\rho'(1-3\tint) \mathcal I  \po h_{n+1}\pf + \tint \delta^4 M  W_n  \\
 & \leqslant  -\frac{\rho'(1-3\tint)}{\max(1,\CLS)+2\ta} \mathcal{L}_{\ta}  \po h_{n+1}\pf + \tint \delta^4 M W_n\,.
\end{align}

Let us finally discuss how to obtain~\eqref{eq:contract_n1} and~\eqref{eq:contract_n1_2}:
\begin{itemize}
\item to obtain \eqref{eq:contract_n1} with the simpler expressions of $\kappa$ and $M$ in~\eqref{eq:a_M_simpler}, we first note that~$a$ in~\eqref{eq:a_M_simpler} is smaller than~$1/10$, as can be seen for instance by distinguishing the case~$\gamma \leq 3$, for which $a \leq \gamma/34$, and the case~$\gamma \geq 3$, for which $a \leq (\gamma+3)/[3(\gamma+3)^2]$). We next rely on the expression~\eqref{eq:def_rho_prime_rppfo} for~$\rho'$, with the choices~$\varepsilon_1 = a/20$ and~$\varepsilon_2 = \tint/20$, so that, replacing~$\lambda$ by~$17/20$,  
\[
\rho' = a\tint\left(\lambda-\frac{1}{5}-\frac{3}{20}\right) = \frac{a\tint}{2}\,.
\]
The value of~$\kappa$ is then obtained as
\[
\kappa = \frac{\rho'(1-3\tint)}{\max(1,\CLS)+2a} = \frac{a(1-3\tint)}{2\left[\max(1,\CLS)+2a\right]} \geq \frac{a}{3\left[\max(1,\CLS)+2a\right]}\,.
\]
In the expression of $M$, we simply bound $\rho' \leqslant 1/200$, $a\leqslant 1/10$, $\tint\leqslant 1/10$.
\item To get the sharper inequality \eqref{eq:contract_n1_2}, we keep a free parameter $\varepsilon\in(0,1)$ and choose   $\varepsilon_1 = \fracmm{\tilde a\varepsilon \lambda}{8}$, $\varepsilon_2 =  \fracmm{\tint \varepsilon \lambda}{6}$ which implies that $ \rho' = \tilde a \lambda \tint (1-\varepsilon)$ (which immediately leads to the claimed value for~$\kappa$) and
\[
M = \newOld{8}  \tilde a \lambda \tint  \po \tint + 4\pf  +   \frac{\ta\lambda\vareps}{240} + \frac4{15\ta\lambda\vareps}   +  \newOld{4} \ta \left( \tint +\frac6{ \varepsilon \lambda} \right), \]
and using that $\tint\leqslant 1/10$ gives the claimed result.
\end{itemize}
This allows to conclude the proof of \Cref{Prop:dissipation}.
  \end{proof}
  

  In the next sections, we derive the technical results that we used in the proof of \Cref{Prop:dissipation}. In Section~\ref{sec:intermediaryLemmas}, we bound $\mathcal{L}_{\ta}(h_{n+1})$ in terms of $h_n$, the Jacobian matrix of the (reverse) Verlet map $\Phi_\delta^{-K}$ and some  numerical error terms. The latters are studied respectively in Sections~\ref{subsec:Psi} and \ref{sec:numerique}.
  

\subsection{Evolution of entropies under elementary steps}\label{sec:intermediaryLemmas}

In this section, we compute the evolution of the relative entropy and Fisher-like terms \eqref{eq:FisherA} along the damping and Verlet steps. The velocity randomization step is the same as in  \cite[Section 4.1.1]{MonmarcheIdealized},  where the following is established.

\begin{lemma}[Dissipation from velocity randomization] \label{L:diss_damp}
Recalling from \eqref{eq:BvBeta} the notation
\[\bfB_v = \begin{pmatrix}
0 & 0 \\ 0 & 1
\end{pmatrix}\,,\qquad \bfB_{\eta} = \begin{pmatrix}
1 & 0 \\ 0 & \eta
\end{pmatrix}\,,\]
then, for all $\eta\in(0,1)$ and all measurable~$h$ with~$\mathcal H (h) < +\infty$ ,
\[\mathcal H (\rmD_\eta h )-\mathcal H (h )\leq -\po\frac{\eta^{-2}-1}{2}\pf \mathcal I_{\bfB_v} (\rmD_\eta  h).\]
and, for any matrix $\tilde{\bfA}$,
\[
\mathcal I_{\tilde{\bfA}\,\bfB_{\eta}^{-1}} (\rmD_\eta h )\leq \mathcal I_{\tilde{\bfA}} (h).
\]
\end{lemma}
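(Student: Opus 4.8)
The plan is to recognize $\rmD_\eta$ as the time-$t$ map, with $t=-\ln\eta>0$, of the Ornstein--Uhlenbeck semigroup acting on the velocity variable only. A one-line Gaussian computation gives $\rmD_\eta\rmD_{\eta'}=\rmD_{\eta\eta'}$, so $(\rmD_{\mre^{-s}})_{s\geqslant 0}$ is a Markov semigroup on $\rset^{2d}$; it is reversible with respect to $\mu=\pi\otimes\mathrm N(0,\IdM)$ (the $x$ variable is frozen and $\mathrm N(0,\IdM)$ is reversible for the OU dynamics on $v$), so the image of $h\mu$ under one randomization step has density $\rmD_{\mre^{-s}}h$ with respect to $\mu$, consistently with the statement. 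Differentiating under the integral sign in~\eqref{eq:2} yields, for any $s\geqslant 0$, the gradient commutation relations
\[
\na_x\rmD_{\mre^{-s}}f=\rmD_{\mre^{-s}}\na_x f\eqsp,\qquad \na_v\rmD_{\mre^{-s}}f=\mre^{-s}\,\rmD_{\mre^{-s}}\na_v f\eqsp,
\]
and, denoting $g_s=\rmD_{\mre^{-s}}h$, one has $\partial_s g_s=\Delta_v g_s-v\cdot\na_v g_s$. Two elementary facts will be used repeatedly: the $\mu$-invariance $\int \rmD_{\mre^{-s}}\phi\,\rmd\mu=\int\phi\,\rmd\mu$, and the pointwise Cauchy--Schwarz inequality $\bigl|\rmD_{\mre^{-s}}\phi\bigr|^2\leqslant\bigl(\rmD_{\mre^{-s}}\psi\bigr)\,\rmD_{\mre^{-s}}\!\bigl(|\phi|^2/\psi\bigr)$, valid for any vector field $\phi$ and any positive scalar $\psi$.

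For the second inequality, the commutation relations with $s=t=-\ln\eta$ give, componentwise, $\bfB_\eta^{-1}\na(\rmD_\eta h)=(\na_x\rmD_\eta h,\ \eta^{-1}\na_v\rmD_\eta h)=(\rmD_\eta\na_x h,\ \rmD_\eta\na_v h)=\rmD_\eta(\na h)$, where $\rmD_\eta$ acts entrywise. Hence, for a constant matrix $\tilde\bfA$, $\tilde\bfA\,\bfB_\eta^{-1}\na(\rmD_\eta h)=\rmD_\eta(\tilde\bfA\na h)$, and applying the Cauchy--Schwarz inequality with $\phi=\tilde\bfA\na h$, $\psi=h$, followed by $\mu$-invariance,
\[
\mathcal I_{\tilde\bfA\,\bfB_\eta^{-1}}(\rmD_\eta h)=\int_{\rset^{2d}}\frac{\bigl|\rmD_\eta(\tilde\bfA\na h)\bigr|^2}{\rmD_\eta h}\,\rmd\mu\leqslant\int_{\rset^{2d}}\rmD_\eta\!\left(\frac{|\tilde\bfA\na h|^2}{h}\right)\rmd\mu=\mathcal I_{\tilde\bfA}(h)\eqsp.
\]

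For the first inequality, differentiate $\varphi(s)=\mathcal H(g_s)$: using $\partial_s g_s=\Delta_v g_s-v\cdot\na_v g_s$, $\int\partial_s g_s\,\rmd\mu=0$, and integration by parts with respect to $\mu$ (whose carré du champ along this dynamics is $|\na_v\cdot|^2$), one obtains the dissipation identity $\varphi'(s)=-\int|\na_v g_s|^2/g_s\,\rmd\mu=-\mathcal I_{\bfB_v}(g_s)$. Next, writing $g_t=\rmD_{\mre^{-(t-s)}}g_s$ for $0\leqslant s\leqslant t$ and repeating the Cauchy--Schwarz/commutation computation (now with a factor $\mre^{-(t-s)}$ coming from $\na_v$), one gets $\mathcal I_{\bfB_v}(g_t)\leqslant\mre^{-2(t-s)}\mathcal I_{\bfB_v}(g_s)$, that is, $\mathcal I_{\bfB_v}(g_s)\geqslant\mre^{2(t-s)}\mathcal I_{\bfB_v}(\rmD_\eta h)$. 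Integrating the dissipation identity over $s\in[0,t]$,
\[
\mathcal H(\rmD_\eta h)-\mathcal H(h)=-\int_0^t\mathcal I_{\bfB_v}(g_s)\,\rmd s\leqslant-\mathcal I_{\bfB_v}(\rmD_\eta h)\int_0^t\mre^{2(t-s)}\,\rmd s=-\frac{\mre^{2t}-1}{2}\,\mathcal I_{\bfB_v}(\rmD_\eta h)\eqsp,
\]
and $\mre^{2t}=\eta^{-2}$ gives the announced bound.

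The only delicate point is the rigorous justification of the differentiation of $\varphi$ and of the integration by parts, which require enough smoothness and integrability of $g_s$. The standard remedy is to first prove both inequalities for $h$ smooth, bounded and bounded away from $0$ — a class stable enough for all manipulations to be licit — and then pass to a general $h$ with $\mathcal H(h)<\plusinfty$ by approximation, using the lower semicontinuity of $\mathcal H$ and $\mathcal I_{\bfB_v}$ and the continuity of $h\mapsto\rmD_\eta h$; alternatively, one may simply invoke the corresponding classical facts about the Ornstein--Uhlenbeck semigroup, exactly as in~\cite[Section~4.1.1]{MonmarcheIdealized}. Note that the hypoelliptic smoothing of $\rmD_\eta$ makes $g_s$ automatically smooth for $s>0$, so $\mathcal I_{\bfB_v}(\rmD_\eta h)$ on the right-hand side of the first inequality is finite as soon as $\mathcal H(h)<\plusinfty$; this is precisely why the estimate is stated with the endpoint Fisher quantity rather than with $\mathcal I_{\bfB_v}(h)$.
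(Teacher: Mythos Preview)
Your proof is correct. For the second inequality, your Cauchy--Schwarz argument is exactly the paper's Jensen inequality for the convex function $(t,x)\mapsto |x|^2/t$, combined with the same commutation identities $\na_x(\rmD_\eta h)=\rmD_\eta(\na_x h)$ and $\na_v(\rmD_\eta h)=\eta\,\rmD_\eta(\na_v h)$. For the first inequality, the paper simply cites~\cite[Equation~(45)]{MonmarcheIdealized}, whereas you supply the self-contained Ornstein--Uhlenbeck argument (de~Bruijn dissipation $\partial_s\mathcal H(g_s)=-\mathcal I_{\bfB_v}(g_s)$ plus the exponential Fisher decay $\mathcal I_{\bfB_v}(g_s)\geqslant \mre^{2(t-s)}\mathcal I_{\bfB_v}(g_t)$, integrated over $[0,t]$); this is the standard route and presumably what the cited reference does. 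One terminological quibble: the smoothing of $\rmD_\eta$ is elliptic in~$v$ (Gaussian convolution), not hypoelliptic, but this does not affect the argument.
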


\begin{proof}
The first inequality is equivalent to
\[
0 \leq \mathcal H (\rmD_\eta h ) + \po\frac{\eta^{-2}-1}{2}\pf \mathcal I_{\bfB_v} (\rmD_\eta  h) \leq \mathcal H (h ) < +\infty\,,
\]
which is established in~\cite[Equation~(45)]{MonmarcheIdealized} (it is sufficient to prove the bound for a smooth positive~$h$ with compact support and pass to the limit). 

The second point follows from Jensen's inequality and the convexity of $(t,x)\mapsto \norm{x}^2/t$ on $\rset^*_+ \times \rset^d$ (using \cite[Proposition 2.3 (ii)]{combettes2018perspective}) which imply
\[\int_{\R^{2d}} \frac{| \tilde{\bfA} \na h|^2 }{h}\dd \mu 
=\int_{\R^{2d}} \rmD_\eta\po \frac{| \tilde{\bfA} \na h|^2 }{h}\pf \dd \mu  
\geqslant  \int_{\R^{2d}} \frac{|\rmD_\eta \tilde{\bfA}  \na h|^2 }{\rmD_\eta h}\dd \mu  =  \int_{\R^{2d}} \frac{| \tilde{\bfA} \bfB_{\eta}^{-1} \na \rmD_\eta h|^2 }{\rmD_\eta h}\dd \mu \,,\]
where the last equality comes from~$\na_v (\rmD_\eta h) = \eta \rmD_\eta (\nabla_v h)$ and~$\rmD_\eta (\nabla_x h) = \nabla_x (\rmD_\eta h)$.
\end{proof}

\begin{lemma}[Fisher inequality on Verlet step] 
\label{L:fisher_verlet}
For all matrices~$\tilde{\bfA}$, all smooth~$h$ with~$\mathcal I(h) <+\infty$, and all~$\varepsilon>0$,
\[
\mathcal I_{\tilde{\bfA}}\po (\rmV_\delta^K)^* h \pf \leq (1+\varepsilon) \mathcal I_{\tilde{\bfA}\Psi } (h )+(1+\varepsilon^{-1})\left|\tilde{\bfA}\right|^2 \mathrm{Er}_1^{\delta,K}(h) \, ,
\]
with $\Psi =(\na \Phi_\delta^{-K})\circ \Phi_\delta^{K}$   and the error term
\[\mathrm{Er}_1^{\delta,K}(h  ) = \int_{\R^{2d}} \left|\left(\na \square H\right)\circ\Phi_\delta^{K} \right|^2 h \, \dd \mu \,. 
\]
\end{lemma}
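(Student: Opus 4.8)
The plan is to unfold the definition of $\mathcal I_{\tilde{\bfA}}\po (\rmV_\delta^K)^* h \pf$ using the explicit action of the adjoint Verlet operator computed in~\eqref{eq:action_adjoint_Verlet}, apply the chain rule to differentiate the composition, and then split the resulting gradient into a ``main term'' involving the Jacobian of the Verlet flow and an ``error term'' coming from the fact that $\mu$ is not exactly preserved by $\rmV_\delta$. Concretely, from~\eqref{eq:action_adjoint_Verlet} we have $(\rmV_\delta^K)^* h = (h\circ \Phi_\delta^{-K})\rme^{\square H}$ where $\square H = H - H\circ \Phi_\delta^{-K}$. Writing $g = (\rmV_\delta^K)^* h$, I would first observe that $\mathcal I_{\tilde{\bfA}}(g) = \int |\tilde{\bfA}\nabla\ln(g/\mathrm{d}\mu\text{-density})|^2 \, \mathrm{d}(g\mu)$; but it is cleaner to work with the form~\eqref{eq:FisherA}, i.e. $\mathcal I_{\tilde{\bfA}}(g) = \int |\tilde{\bfA}\nabla g|^2/g \,\dd\mu$ — wait, more precisely since $g$ is the $\mu$-density of a probability measure, $\mathcal I_{\tilde{\bfA}}(g) = \int |\tilde{\bfA}\nabla g|^2/g\,\dd\mu$. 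Using the change of variables $z\mapsto \Phi_\delta^K(z)$ (which has unit Jacobian by symplecticity) and the fact that $\mu(z)\rme^{-\square H(\Phi_\delta^K(z))}$ rearranges appropriately, I would rewrite this integral as an integral against $h\,\dd\mu$ after pushing forward. The key algebraic identity is $\nabla\bigl[(h\circ\Phi_\delta^{-K})\rme^{\square H}\bigr] = \rme^{\square H}\bigl[(\nabla\Phi_\delta^{-K})(\nabla h)\circ\Phi_\delta^{-K} + (h\circ\Phi_\delta^{-K})\nabla(\square H)\bigr]$, using the Notation convention that $\nabla(\Phi\circ\Psi) = \nabla\Psi\,\nabla\Phi\circ\Psi$ spelled out in the paper's notation paragraph.

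The second step is the splitting. After the change of variables, the integrand becomes (up to the factor from $\mu$ and $\rme^{\square H}$ which I expect to cancel against the Jacobian/density bookkeeping) something of the form $|\tilde{\bfA}\Psi\,\nabla h + \tilde{\bfA}\,h\,(\nabla\square H)\circ\Phi_\delta^K|^2/h$ integrated against $\dd\mu$, where $\Psi = (\nabla\Phi_\delta^{-K})\circ\Phi_\delta^K$ exactly as in~\eqref{eq:proof_def_Psi}. Then I would apply the elementary inequality $|u+w|^2 \leq (1+\varepsilon)|u|^2 + (1+\varepsilon^{-1})|w|^2$ pointwise with $u = \tilde{\bfA}\Psi\nabla h$ and $w = \tilde{\bfA}\,h\,(\nabla\square H)\circ\Phi_\delta^K$. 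The first piece integrates to $(1+\varepsilon)\mathcal I_{\tilde{\bfA}\Psi}(h)$ by definition~\eqref{eq:FisherA}. For the second piece, $|w|^2/h = h\,|\tilde{\bfA}(\nabla\square H)\circ\Phi_\delta^K|^2 \leq |\tilde{\bfA}|^2\,h\,|(\nabla\square H)\circ\Phi_\delta^K|^2$, whose integral against $\dd\mu$ is exactly $|\tilde{\bfA}|^2\,\mathrm{Er}_1^{\delta,K}(h)$ by the stated definition of the error term.

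The main obstacle — and the step requiring the most care — is the bookkeeping of the measure-change: verifying that after substituting $z = \Phi_\delta^K(w)$ the density factors $\mu(z)$, the exponential $\rme^{\square H(z)}$, and the reciprocal $1/g(z)$ combine so that the integral is genuinely against $h\,\dd\mu$ with the clean integrand described above, with no leftover Jacobian or exponential factor. This hinges on two facts already available: the unit Jacobian of $\Phi_\delta^K$ (symplecticity, \cite{HairerLubichWanner06}) and the definition $g\circ\Phi_\delta^K = h\,\rme^{\square H\circ\Phi_\delta^K}$ together with $\mu\circ\Phi_\delta^K = \mu\,\rme^{-\,\square H\circ\Phi_\delta^K}$ — wait, one must check the sign of $\square H$ under this composition carefully, since $\square H\circ\Phi_\delta^K = H\circ\Phi_\delta^K - H$, so $\rme^{\square H}$ evaluated at $\Phi_\delta^K(w)$ is $\rme^{H(\Phi_\delta^K(w)) - H(w)} = \mu(w)/\mu(\Phi_\delta^K(w))$ up to normalization, which is exactly what makes everything cancel. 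A secondary technical point is the smoothness/integrability justification for differentiating under the integral and for the change of variables, which the hypothesis ``smooth $h$ with $\mathcal I(h)<+\infty$'' together with \Cref{hyp:target} (giving $U\in\rmC^4$, hence $\Phi_\delta$ a $\rmC^3$ diffeomorphism) should suffice to handle, possibly after a routine approximation by compactly supported smooth densities as in the proof of \Cref{L:diss_damp}.
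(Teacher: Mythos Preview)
Your proposal is correct and follows essentially the same route as the paper: expand $(\rmV_\delta^K)^* h = (h\circ\Phi_\delta^{-K})\rme^{\square H}$ from~\eqref{eq:action_adjoint_Verlet}, differentiate via the product and chain rules, cancel one factor of~$\rme^{\square H}$ against the denominator, perform the change of variables $z\mapsto\Phi_\delta^K(z)$ (unit Jacobian, and $\rme^{\square H}\mu$ pulls back to~$\mu$), and finish with the elementary splitting $|u\pm w|^2\leq(1+\varepsilon)|u|^2+(1+\varepsilon^{-1})|w|^2$. The bookkeeping concern you flag is exactly the step the paper carries out, and your sign analysis of $\square H\circ\Phi_\delta^K$ is the right way to see the cancellation.
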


\begin{proof}
By an approximation argument, it can be assumed that~$h$ is bounded below by a positive constant and globally Lipschitz. Recalling the action of~$(\rmV_\delta^K)^*$ from~\eqref{eq:action_adjoint_Verlet},
\[
\begin{aligned}
 \int_{\R^{2d}} \frac{\left|\tilde{\bfA}\na  (\rmV_\delta^K)^*  h\right|^2}{(\rmV_\delta^K)^* h} \, \dd \mu & =  \int_{\R^{2d}} \frac{\left|\tilde{\bfA}\na  \po (h\circ \Phi_\delta^{-K}) \rme^{\square H} \pf \right|^2}{(h\circ \Phi_\delta^{-K})\rme^{\square H}} \, \dd \mu \\
& \qquad = \int_{\R^{2d}} \frac{\left|\tilde{\bfA}\co \na (\Phi_\delta^{-K})(\na  h)\circ\Phi_\delta^{-K}- (h \circ \Phi_\delta^{-K}) \na \square H\cf  \rme^{\square H} \right|^2}{(h\circ \Phi_\delta^{-K})\rme^{\square H}} \, \dd \mu \\
& \qquad = \int_{\R^{2d}} \frac{\left|\tilde{\bfA}\co \na (\Phi_\delta^{-K})(\na  h)\circ \Phi_\delta^{-K} - (h \circ \Phi_\delta^{-K})\na \square H\cf \right|^2}{h\circ \Phi_\delta^{-K} }\rme^{\square H} \, \dd \mu \\
& \qquad = \int_{\R^{2d}} \frac{\left|\tilde{\bfA}\co \po \na (\Phi_\delta^{-K} )\circ \Phi_\delta^{K}\pf \na  h  - h  (\na \square H)\circ \Phi_\delta^{K} \cf \right|^2}{h  } \,  \dd \mu\,,
\end{aligned}
\]
where we used the change of variable $\Phi_\delta^{-K}$ in the last line. Introducing~$ \Psi = \na (\Phi_\delta^{-K} )\circ \Phi_\delta^{K}$, we therefore obtain, with a discrete Cauchy--Schwarz inequality: for any~$\varepsilon>0$,
\[
\int_{\R^{2d}} \frac{|\tilde{\bfA}\na  (\rmV_\delta^K)^*  h|^2}{(\rmV_\delta^K)^* h} \, \dd \mu \leqslant (1+\varepsilon) \int_{\R^{2d}} \frac{|\tilde{\bfA} \Psi \na  h  |^2}{h  } \, \dd \mu + \po 1 + \frac1\varepsilon \pf\left|\tilde{\bfA}\right|^2 \int_{\R^{2d}} \left|\na \square H\circ \Phi_\delta^{K} \right|^2 h \, \dd \mu  \,,
\]
which is indeed the claimed estimate.
\end{proof}

\begin{remark}\label{rem:Fisher_Verlet_MatrixField}
If the constant matrix $\tilde{\bfA}$ is replaced by a matrix field $z\mapsto \tilde{\bfA}(z)$,  a straightforward adapation of the proof yields
\[\mathcal I_{\tilde{\bfA}} \po (\rmV_\delta^K)^* h \pf \leq (1+\varepsilon) \mathcal I_{(\tilde{\bfA}\circ \Phi_\delta^K) \Psi } (h )+(1+\varepsilon^{-1}) \left\|\tilde{\bfA}\right\|_\infty^2 \mathrm{Er}_1^{\delta,K}(h)
\]
with $\displaystyle \left\|\tilde{\bfA}\right\|_\infty = \sup_{z\in\R^{2d}}|\tilde{\bfA}(z)|$.
\end{remark}

\begin{lemma}[Entropy inequality on Verlet step]\label{lem:entropy_verlet}
For all smooth~$h$ with $\mathcal I(h)<+\infty$ and all $\varepsilon>0$,
\[
\mathcal H \po (\rmV_\delta^K)^* h\pf -\mathcal H (h  )\leq
\sum_{j=0}^{K-1} \co \frac{\varepsilon}{2} \int_0^\delta \mathcal I \po  (\rmV_\delta^j \rmV_s^1)^* h \pf \,ds  + \frac1{2\varepsilon} \mathrm{Er}_2\po (\rmV_{\delta}^j)^* h\pf  \cf \,,
\]
with the error term
\[
  \mathrm{Er}_2( h)  = \int_0^\delta\po \int_{\R^{2d}}|\partial_s\Phi_s -F_H\circ \Phi_s|^2\,h\,d\mu\pf \rmd s  \,,
\]
where 
\begin{equation}
    \label{eq:def_F_H}
    F_H(x,v)=\begin{pmatrix}
v\\
-\na U(x)
\end{pmatrix}\,.
\end{equation}
\end{lemma}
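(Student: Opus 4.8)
The plan is to reduce the $K$-step statement to a one-step bound by telescoping, and to prove the one-step bound by interpolating the leapfrog map with a volume-preserving flow and differentiating the relative entropy along it, mimicking the continuous-time identity~\eqref{eq:Villani1}. I would use the interpolation $(\Phi_s)_{s\in[0,\delta]}$ of the one-step Verlet map, with $\Phi_0=\Id$ and $\Phi_\delta$ equal to the leapfrog map of~\eqref{eq:verlet_decompo}: since each $\Phi_s$ is a composition of shears it is a diffeomorphism with $|\det\na\Phi_s|\equiv 1$, and $s\mapsto\Phi_s$ is the flow of the time-dependent vector field $b_s:=(\partial_s\Phi_s)\circ\Phi_s^{-1}$, which is divergence-free because $\Phi_s$ preserves the Lebesgue measure. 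Writing $\rmV_s^1$ for the deterministic kernel $(x,v)\mapsto\updelta_{\Phi_s(x,v)}$, so that $\rmV_\delta^1=\rmV_\delta$ and $(\rmV_\delta^j\rmV_s^1)^*=(\rmV_s^1)^*(\rmV_\delta^j)^*$, the same computation as in~\eqref{eq:action_adjoint_Verlet} gives $(\rmV_s^1)^* g=(g\circ\Phi_s^{-1})\rme^{H-H\circ\Phi_s^{-1}}$ for any density $g$.

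For the one-step estimate I would fix $j\in\cco 0,K-1\ccf$, set $g=(\rmV_\delta^j)^* h$ and $g_s=(\rmV_s^1)^* g$, and note that the probability measure $\rho_s:=g_s\mu=(\Phi_s)_\#(g\mu)$ solves the continuity equation $\partial_s\rho_s+\operatorname{div}(\rho_s b_s)=0$. Differentiating in $s$ and using conservation of mass (here $\mu$ also denotes the density of $\mu$ with respect to the Lebesgue measure),
\[
\frac{\rmd}{\rmd s}\mathcal H(g_s)=\int_{\R^{2d}}\ln(g_s)\,\partial_s(g_s\mu)=-\int_{\R^{2d}}\ln(g_s)\operatorname{div}(g_s\mu\,b_s)=\int_{\R^{2d}}\na\ln(g_s)\cdot b_s\,\rmd\rho_s\,.
\]
The crucial point --- the discrete analogue of the fact that in~\eqref{eq:Villani1} the transport part creates no entropy --- is that the exact Hamiltonian field $F_H$ of~\eqref{eq:def_F_H} preserves $\mu$, i.e.\ $\operatorname{div}(F_H\mu)=0$; hence, by integration by parts, $\int_{\R^{2d}}\na\ln(g_s)\cdot F_H\,\rmd\rho_s=\int_{\R^{2d}}\na g_s\cdot F_H\,\dd\mu=0$, so that $\frac{\rmd}{\rmd s}\mathcal H(g_s)=\int_{\R^{2d}}\na\ln(g_s)\cdot(b_s-F_H)\,\rmd\rho_s$. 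Applying Young's inequality pointwise and integrating against $\rho_s$ gives, for every $\varepsilon>0$,
\[
\frac{\rmd}{\rmd s}\mathcal H(g_s)\leqslant\frac{\varepsilon}{2}\,\mathcal I(g_s)+\frac{1}{2\varepsilon}\int_{\R^{2d}}|b_s-F_H|^2\,\rmd\rho_s=\frac{\varepsilon}{2}\,\mathcal I\po(\rmV_s^1)^* g\pf+\frac{1}{2\varepsilon}\int_{\R^{2d}}\left|\partial_s\Phi_s-F_H\circ\Phi_s\right|^2 g\,\dd\mu\,,
\]
where the last equality uses $b_s\circ\Phi_s=\partial_s\Phi_s$ and the change of variables $\rho_s=(\Phi_s)_\#(g\mu)$.

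Integrating this differential inequality over $[0,\delta]$ produces the one-step bound $\mathcal H\po(\rmV_\delta^1)^* g\pf-\mathcal H(g)\leqslant\frac{\varepsilon}{2}\int_0^\delta\mathcal I\po(\rmV_s^1)^* g\pf\,\rmd s+\frac{1}{2\varepsilon}\mathrm{Er}_2(g)$; applying it with $g=(\rmV_\delta^j)^* h$ for $j=0,\dots,K-1$, using $(\rmV_s^1)^*(\rmV_\delta^j)^* h=(\rmV_\delta^j\rmV_s^1)^* h$, and telescoping $\mathcal H\po(\rmV_\delta^K)^* h\pf-\mathcal H(h)=\sum_{j=0}^{K-1}\po\mathcal H\po(\rmV_\delta^{j+1})^* h\pf-\mathcal H\po(\rmV_\delta^j)^* h\pf\pf$ yields the claimed inequality. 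The step I expect to require the most care --- and which I would carry out exactly as in the proof of Lemma~\ref{L:fisher_verlet} --- is the rigorous justification of the differentiation of $s\mapsto\mathcal H(g_s)$ and of the integrations by parts above: one first proves the inequality assuming $g$ is bounded below by a positive constant, globally Lipschitz, and with enough decay at infinity (so that all the integrals converge and one may legitimately differentiate under the integral sign and integrate by parts), and then removes these restrictions by a routine approximation argument, noting that there is nothing to prove when the right-hand side is infinite.
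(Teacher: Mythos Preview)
Your proposal is correct and follows essentially the same approach as the paper: telescope to reduce to $K=1$, interpolate with the volume-preserving flow $(\Phi_s)_{s\in[0,\delta]}$, differentiate the relative entropy to get $\int \nabla\ln g_s\cdot b_s\,\rmd\rho_s$ with $b_s=(\partial_s\Phi_s)\circ\Phi_s^{-1}$, subtract the null contribution of $F_H$ using $\operatorname{div}(F_H\mu)=0$, apply Young's inequality, and change variables via $\rho_s=(\Phi_s)_\#(g\mu)$ to recover the error term. The only cosmetic difference is that the paper works directly with the density $f_s=f_0\circ\Phi_s^{-1}$ and explicit changes of variable, whereas you phrase the same computation in the language of the continuity equation and push-forwards.
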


\begin{proof}
In view of the decomposition
\[
\mathcal H \po (\rmV_\delta^K)^* h\pf -\mathcal H (h  ) = 
\sum_{j=0}^{K-1} \co \mathcal H \po (\rmV_\delta^{j+1})^* h\pf -\mathcal H \po (\rmV_\delta^j)^* h\pf\cf\,,  
\]
we simply have to establish the result for $K=1$.

 For $z\in\R^{2d}$  and $t\in[0,\delta]$, let $z_t  =   \Phi_{t}       (z)$.  For $f_0$  an initial (smooth positive) distribution, denote by $f_t$ the law of  $z_t$ when $z\sim f_0$ and $h_t=f_t/\mu$. By a change a variable we see that, for any smooth function~$g$ on~$\R^{2d}$,
\[
\int_{\R^{2d}} g(z)   f_t(z) \, \dd z  =  \int_{\R^{2d}} g \po  z_t\pf  f_0(z) \, \dd z  = \int_{\R^{2d}} g(z)   f_0 \po \Phi_t^{-1}(z)\pf \dd z\,,  
\]
\ie~$f_t(z) =  f_0 \po \Phi_t^{-1}(z)\pf$, and
\begin{align}
\int_{\R^{2d}} g(z) \partial_t f_t(z) \, \dd z 
 = \frac{\rmd}{\rmd t}\left( \int_{\R^{2d}} g \po z_t\pf  f_0(z) \, \dd z  \right)  &= \int_{\R^{2d}} \partial_t \Phi_t(z) \cdot \nabla g( \Phi_t(z)) f_0(z) \, \dd z \notag \\
& = \int_{\R^{2d}} j_t(z)\cdot \nabla g(z) f_t(z) \, \dd z \,, \label{ed:d_ds_int_g_f0}
\end{align}
with $j_t = \partial_t \Phi_t\circ \Phi_t^{-1}$. Then,
\[
\int_{\R^{2d}}  f_\delta \ln h_\delta - \int_{\R^{2d}} f_0\ln h_0 = \int_0^\delta \frac{\rmd}{\rmd s}\left( \int_{\R^{2d}} f_s\ln h_s \, \dd s\right),
\]
and
\[
\frac{\rmd}{\rmd s}\left( \int_{\R^{2d}} f_s\ln h_s \right) = \int_{\R^{2d}} \partial_s h_s \, \dd \mu + \int_{\R^{2d}} \left(\ln h_s\right) \partial_s f_s\,.
\]
The first term of the right hand side vanishes as it is the time derivative of the integral of~$h_s$, which is~1 for any~$s \geq 0$. Combining the latter equality with~\eqref{ed:d_ds_int_g_f0} then gives 
\[
\frac{\rmd}{\rmd s}\left( \int_{\R^{2d}} f_s\ln h_s \right) = \int_{\R^{2d}} j_s\cdot  \na (\ln h_s)  f_s   \,.
\]
Since~$F_H \cdot \na \mu = 0$ and~$F_H$ is divergence free, 
\[\int_{\R^{2d}}  F_H \cdot \na (\ln  h_s) f_s = \int_{\R^{2d}}  F_H \cdot \na h_s \, \dd\mu = - \int_{\R^{2d}} h_s \left( F_H \cdot \na \mu + \mu \, \mathrm{div} F_H \right)= 0\,.
\]
Thus, we can add this term to the previous equality to get, for any $\varepsilon>0$,
\begin{align}
\frac{\rmd}{\rmd s}\left( \int_{\R^{2d}} f_s\ln h_s \right)
& = \int_{\R^{2d}} \po j_s(z) - F_H(z)\pf \cdot  \na \ln h_s(z)  f_s(z) \, \dd z  \\
&  \leqslant \frac{1}{2\varepsilon}\int_{\R^{2d}}\left| j_s(z) - F_H(z) \right|^2 f_s(z) \, \dd z + \gabriel{\frac{\varepsilon}{2}} \int_{\R^{2d}}|\na  \ln h_s(z)|^2  f_s(z) \, \dd z \\
& = \frac{1}{2\varepsilon} \int_{\R^{2d}}\left| \partial_s \Phi_s - F_H \circ\Phi_s \right|^2 h_s \, \dd \mu + \frac{\varepsilon}{2} \int_{\R^{2d}}|\na  \ln h_s(z)|^2  f_s(z) \, \dd z \,,
\end{align}
which gives  the announced result once integrated over $s\in[0,\delta]$.
\end{proof}

At this stage, gathering Lemmas~\ref{L:diss_damp}, \ref{L:fisher_verlet} and \ref{lem:entropy_verlet} yields a bound on~$\mathcal L_a(h_{n+1})$ involving~$h_n$, the matrix field $\Psi$ of Lemma~\ref{L:fisher_verlet} and the numerical error terms of  Lemmas~\ref{L:fisher_verlet} and \ref{lem:entropy_verlet}. It remains to understand these parts. We give some estimates on $\Psi$ in Section~\ref{subsec:Psi} and analyse the numerical errors in Section~\ref{sec:numerique}. 

\subsection{Jacobian matrix of the Verlet integrator}\label{subsec:Psi}

As mentioned above, this section focuses on the matrix field $\Psi = (\na\Phi_\delta^{-K})\circ\Phi^K_\delta$ appearing in Lemma~\ref{L:fisher_verlet}. The following result is the analogue for the Verlet scheme of the first part of \cite[Lemma~1]{MonmarcheIdealized} for the  Hamiltonian dynamics.

\begin{lemma}\label{lem:VerletMatrice}
Under \Cref{hyp:target} with $L=1$, for all $z\in\R^{2d}$, there exist $d\times d$ matrices   $\alpha,\beta,\zeta $ with norms less than $1$ such that, recalling the notation $\tint=K\delta$,
\[\left | \Psi (z) - \begin{pmatrix}
\displaystyle \IdM- \frac{\tint^2}2  \alpha    &  \tint  \beta \\ -\tint   & \displaystyle \IdM- \frac{\tint^2}2   \zeta
\end{pmatrix} \right|\leqslant  \frac13 \tint^3\,.   \]
In particular, 
\begin{equation}\label{eq:normPsi}
    |\Psi(z)| \leqslant 1+ \tint + \frac12 \tint^2 + \frac13 \tint^3 \eqsp.
\end{equation}
\end{lemma}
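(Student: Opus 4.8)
The plan is to compute $\Phi_\delta^{-1}$ and its Jacobian $\nabla\Phi_\delta^{-1}$ explicitly, then track how these Jacobians compose under $K$-fold iteration, keeping careful control of the $\tint$-dependence. Since $\Psi(z) = (\nabla\Phi_\delta^{-K})(\Phi_\delta^K(z))$ and $\Phi_\delta^{-K} = (\Phi_\delta^{-1})^{\circ K}$, the chain rule (with the paper's transpose convention $\nabla(\Phi\circ\Psi) = \nabla\Psi\,\nabla\Phi\circ\Psi$) gives $\nabla\Phi_\delta^{-K}$ as an ordered product of $K$ single-step Jacobian factors evaluated along the backward orbit. First I would record, from \eqref{eq:phi-1}, that
\[
\nabla\Phi_\delta^{-1}(x,v) = \begin{pmatrix} \IdM - \tfrac{\delta^2}{2}\nabla^2 U(x) + \tfrac{\delta^2}{2}(\cdots) & \tfrac{\delta}{2}(\nabla^2 U(x) + \nabla^2 U(x')) \\ -\delta\,\IdM - \tfrac{\delta^2}{2}(\cdots) & \IdM + \text{(order }\delta\text{, order }\delta^2\text{)} \end{pmatrix},
\]
where $x' = x - \delta v - \tfrac{\delta^2}{2}\nabla U(x)$; the point is that each single-step Jacobian has the form $\begin{pmatrix} \IdM + O(\delta^2) & O(\delta) \\ -\delta\,\IdM + O(\delta^2) & \IdM + O(\delta) + O(\delta^2) \end{pmatrix}$ with the $O(\cdot)$ matrices bounded using $\|\nabla^2 U\|_\infty \leq L = 1$ from \Cref{hyp:target}.

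Next I would multiply $K$ such factors together. Writing each factor as $I_{2d} + \delta N + \delta^2 R_j$ with $N = \begin{pmatrix} 0 & 0 \\ -\IdM & 0 \end{pmatrix}$ the nilpotent "drift" part (note $N^2 = 0$) and $R_j$ uniformly bounded in norm by an absolute constant, the product telescopes: the leading term is $I_{2d} + K\delta N = \begin{pmatrix}\IdM & 0 \\ -\tint\,\IdM & \IdM\end{pmatrix}$, the next-order term collects the $K$ choices of one $\delta^2 R_j$ factor together with the surrounding $N$'s, producing the $O(\tint^2)$ corrections $-\tfrac{\tint^2}{2}\alpha$, $\tint\beta$, $-\tfrac{\tint^2}{2}\zeta$ (the off-diagonal $\tint\beta$ arising from a single $R_j$ multiplied by $N$ on one side), and everything with two or more $\delta^2 R_j$ factors, or the $O(\delta)$ parts beyond the nilpotent piece interacting nontrivially, is collected into a remainder of norm $\leq \tfrac13\tint^3$. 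The normalizations $|\alpha|,|\beta|,|\zeta|\leq 1$ follow by bookkeeping the constants (this mirrors \cite[Lemma~1]{MonmarcheIdealized} for the exact Hamiltonian flow, where the analogous matrices arise from the Taylor expansion of $\psi_{-\tint}$). Finally, \eqref{eq:normPsi} is immediate from the triangle inequality: $|\Psi(z)| \leq \big|\begin{pmatrix}\IdM - \tfrac{\tint^2}{2}\alpha & \tint\beta\\ -\tint\,\IdM & \IdM - \tfrac{\tint^2}{2}\zeta\end{pmatrix}\big| + \tfrac13\tint^3 \leq 1 + \tint + \tfrac12\tint^2 + \tfrac13\tint^3$, bounding the block matrix crudely by the sum of the norms of its blocks (or by $\|I_{2d}\| + \tint\|N\| + \tint^2(\cdots)$).

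The main obstacle is the careful combinatorics of the iterated product: one must verify that all the "higher-order" contributions — products involving two or more $R_j$ factors, and the subleading pieces of the single-step Jacobian that are $O(\delta)$ but not captured by $\delta N$ — genuinely accumulate into something of size $\leq\tfrac13\tint^3$ rather than $O(\tint^2)$, and that the $O(\tint^2)$ coefficients $\alpha,\beta,\zeta$ really do come out with norm at most $1$ given $\tint = K\delta \leq 1/10$ (or whatever smallness is available here; the statement as written only assumes $L=1$, so the constant $\tfrac13$ and the bounds $|\alpha|,|\beta|,|\zeta|\leq1$ must hold for all $\tint$, which forces the expansion to be arranged so that the geometric-series tail $\sum_{k\geq 3}\binom{K}{k}\delta^k \cdot(\text{const})^k$ sums cleanly). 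I would handle this by an induction on $K$: prove that $\nabla\Phi_\delta^{-k}$ has the claimed block structure with error $\leq c_k\delta^3$ for an explicitly controlled sequence $c_k$, using $N^2=0$ to kill cross terms at each step, and check the induction closes with $c_K \leq \tfrac13\tint^3$. The remaining single-step estimates on $\nabla^2 U$ differences are routine given \Cref{hyp:target}.
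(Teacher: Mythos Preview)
Your final strategy (induction on $k$ with an explicit block ansatz and a controlled error sequence $C_k$) is exactly what the paper does. Two corrections and one missing idea are worth flagging.

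First, the single-step Jacobian does \emph{not} decompose as $I_{2d}+\delta N+\delta^2 R_j$ with your nilpotent $N$. From \eqref{eq:phi-1} (and the paper's transpose convention) one finds
\[
\nabla\Phi_\delta^{-1}(z_k)=\begin{pmatrix} \IdM-\tfrac{\delta^2}{2}Q_k & \tfrac{\delta}{2}\!\left[Q_k+\bigl(\IdM-\tfrac{\delta^2}{2}Q_k\bigr)Q_{k+1}\right]\\ -\delta\,\IdM & \IdM-\tfrac{\delta^2}{2}Q_{k+1}\end{pmatrix},
\]
with $Q_k=\nabla^2 U(x_k)$. The upper-right block is $O(\delta)$, not $O(\delta^2)$, so the $\delta$-order part is \emph{not} nilpotent and your combinatorial expansion (with $N^2=0$ killing cross terms) does not work as stated. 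You acknowledge this obstacle later, but the heuristic leading up to it is misleading; the paper skips it entirely and goes straight to the induction.

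Second, the key mechanism you glossed over as ``bookkeeping the constants'' is in fact a clean structural observation. The paper shows that the inductive step produces explicit recursions
\[
\alpha_{k+1}=\frac{Q_k+k^2\alpha_k+k(Q_k+Q_{k+1})}{(k+1)^2},\quad \beta_{k+1}=\frac{Q_k+Q_{k+1}+2k\beta_k}{2(k+1)},\quad \zeta_{k+1}=\frac{2k\beta_k+Q_{k+1}+k^2\zeta_k}{(k+1)^2},
\]
and the point is that each of these is a \emph{convex combination} of matrices with norm $\le 1$ (check that the weights sum to the denominator). This is why $|\alpha_k|,|\beta_k|,|\zeta_k|\le 1$ holds exactly, for all $k$, with no smallness assumption and no slack to track. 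Without spotting this, the ``bookkeeping'' would be messy and you would likely end up with looser constants.

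Finally, you are right that the lemma as stated does not assume $\tint\le 1/10$, but the paper's proof does use $\delta k\le 1/10$ (to bound $|\nabla\Phi_\delta^{-k}|$ along the way and to control the geometric factor $(1+\tfrac{11}{10}\delta)^{K-1-k}$ in the final sum for $C_K$). In the paper this is always the ambient regime (Theorem~\ref{thm:main_KL} assumes it), so treat it as implicit.
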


\begin{proof}
Denote by $\mathcal M_1$ the set of $d\times d$ matrices with operator norm bounded by $1$. Note that the product of two elements in~$\mathcal{M}_1$ is still in~$\mathcal{M}_1$. Fix $z\in\R^{2d}$. For $k\geqslant 1$, set $z_{k} = \Phi_\delta^{-1}(z_{k-1})$, with $z_0=\Phi^K_\delta(z)$. For  $k\geqslant 0$, set $Q_k = \na^2 U (x_k )$, where $z_{k} = (x_k,v_k)$. By assumption, $Q_k\in\mathcal M_1$ for all $k\geqslant 0$.

Recall that, for $F:\R^d\rightarrow \R^d$, we use the convention $\na F = (\partial_i F_j)_{1\leqslant i,j \leqslant d}$ where $i$ stands for the row and $j$ for the column. Hence, from \eqref{eq:phi-1}, for all $k\geqslant 0$,
\[\na (\Phi_{\delta}^{-1})(z_k) = 
\begin{pmatrix}
\dps \IdM  - \frac{\delta^2}{2}Q_k  &
\dps \frac\delta2 \co  Q_k + \po \IdM  - \frac{\delta^2}{2} Q_k) \pf Q_{k+1}\cf\\
- \delta  & \dps \IdM  -  \frac{\delta^2}{2}Q_{k+1}
\end{pmatrix}
\,.\]
In particular,
\begin{equation}
\label{eq:recurr}
\left| \na (\Phi_{\delta}^{-1})(z_k) - 
\begin{pmatrix}
\dps \IdM  - \frac{\delta^2}{2}Q_k  &
\dps \frac\delta2 \co  Q_k +  Q_{k+1}\cf\\
- \delta   & \dps \IdM  -  \frac{\delta^2}{2}Q_{k+1}
\end{pmatrix}\right| \leqslant \frac{\delta^3}{4}\,. 
\end{equation}
For all $k\in \cco 1,K\ccf$, let us determine by induction a constant~$C_k\geqslant 0$ and matrices~$\alpha_k,\beta_k,\zeta_k \in \mathcal M_1$ such that
\begin{align}\label{eq:Jinduction}
\left|\na (\Phi_\delta^{-k})(z_0) - 
\begin{pmatrix}
\dps \IdM -  \frac{(\delta k)^2}2  \alpha_k & \delta k \beta_k\\ -\delta k & \dps \IdM -\frac{(\delta k)^2}2  \zeta_k
\end{pmatrix}   \right| \leqslant C_k\,.
\end{align}
 For $k=1$, this is given by \eqref{eq:recurr} (applied at $k=0$) with $C_1=\delta^3 /4$. Suppose that the result is true for some $k\geqslant 1$. In particular, using that $\delta k \leqslant 1/10$, 
 \begin{align}
 |\na (\Phi^{-k}_\delta)(z_0) | & \leqslant
 \left|
\begin{pmatrix}
\dps \IdM -  \frac{(\delta k)^2}2  \alpha_k & 0\\ 0 & \dps \IdM -\frac{(\delta k)^2}2  \zeta_k
\end{pmatrix}   \right| +  \left|
\begin{pmatrix}
0 & \delta k\beta_k\\ -\delta k   & 0
\end{pmatrix}   \right|+C_k \nonumber\\
& \leqslant 1 +  \frac{21}{20}  \delta k    +C_k \,.\label{eq:Jdemo}
 \end{align}
We next use~$\na \Phi^{-k-1}_\delta (z_0) = \na \Phi_\delta^{-1}(z_k) \na \Phi^{-k}_\delta(z_0)$. At dominant order, the product of the terms on the right hand side of the previous equality is
\begin{align}
& \begin{pmatrix}
\dps \IdM  - \frac{\delta^2}{2}Q_k  &
\dps \frac{\delta}{2} \co  Q_k +  Q_{k+1}\cf\\
- \delta  & \dps \IdM  -  \frac{\delta^2}{2}Q_{k+1}
\end{pmatrix}
\begin{pmatrix}
\dps \IdM -  \frac{(\delta k)^2}2  \alpha_k & \delta k \beta_k\\ -\delta k  & \dps \IdM -\frac{(\delta k)^2}2  \zeta_k
\end{pmatrix}\\
& \qquad\qquad\qquad\qquad = \begin{pmatrix}
\dps \IdM - \frac{\delta^2 (k+1)^2}2 \alpha_{k+1} & \delta (k+1) \beta_{k+1}\\ -\delta (k+1)  & \dps \IdM  -\frac{\delta^2 (k+1)^2}2 \zeta_{k+1}
\end{pmatrix} + \mathfrak R \, ,
\end{align}
with
\[
\alpha_{k+1}= \frac1{(k+1)^2}\po Q_k + k^2 \alpha_k + k(Q_k+Q_{k+1})\pf \, , 
\qquad 
\beta_{k+1}= \frac1{2(k+1)}\po  Q_k+Q_{k+1} + 2k\beta_k \pf \, , 
\]
\[
\zeta_{k+1}= \frac1{(k+1)^2}\po 2k\beta_k + Q_{k+1} + k^2 \zeta_k \pf \, ,
\]
which are all three in $\mathcal M_1$, and a remainder term
\[
\mathfrak R=\begin{pmatrix}
\dps \frac{\delta^4 k^2}4 Q_{k}  \alpha_{k}
& \dps -\frac{\delta^3 k}4\po 2 Q_k\beta_{k}+k(Q_k+Q_{k+1}) \zeta_k\pf\\ 
\dps \frac{\delta^3k}2 \po  Q_{k+1}\newOld{+} k \alpha_k\pf    
& \dps \frac{\delta^4 k^2}4 Q_{k+1} \zeta_k
\end{pmatrix}\,. 
\]
Using that $\delta \leqslant 1/10$,
\begin{align}
|\mathfrak R| & = \frac{\delta^4 k^2}4  \left|\begin{pmatrix}
Q_{k}  \alpha_{k}
&  0 \\ 
0
&  Q_{k+1} \zeta_k
\end{pmatrix}
\right|
+ \frac{\delta^3 k}2
 \left|\begin{pmatrix}
0
& - Q_k\beta_{k}-k(Q_k+Q_{k+1})/2 \zeta_k\\ 
  Q_{k+1} + k \alpha_k     
& 0
\end{pmatrix}
\right| \nonumber \\
& \leqslant \frac{\delta^4 k^2}4  + \frac{\delta^3 k}2(k+1) \leq \frac{21}{40} \delta^3 k(k+1)\,. \label{eq:borneR}
\end{align}
Then,
\begin{align}
(\star) & :=\left|\na\Phi_\delta^{-k-1}(z_0) - \begin{pmatrix}
\IdM  - \delta^2 (k+1)^2 \alpha_{k+1}/2 & \delta (k+1) \beta_{k+1}\\ -\delta (k+1) & \IdM  -\delta^2 (k+1)^2 \zeta_{k+1}/2
\end{pmatrix} \right|\\
& \leqslant |\mathfrak R| +  \left|\co \na (\Phi_{\delta}^{-1})(z_k) - 
\begin{pmatrix}
\IdM  - \delta^2 Q_k/2  &
\delta \co  Q_k +  Q_{k+1}\cf/2 \\
- \delta   & \IdM  -  \delta^2 Q_{k+1}/2
\end{pmatrix}\cf   \na\Phi_\delta^{-k}(z_0)\right|  \\
& \ \ + \left|  \begin{pmatrix}
\IdM  - \delta^2 Q_k/2  &
\delta \co  Q_k +  Q_{k+1}\cf/2\\
- \delta   & \IdM - \delta^2 Q_{k+1}/2
\end{pmatrix}
\co \na\Phi_\delta^{-k}(z_0)- \begin{pmatrix}
\IdM - (\delta k)^2  \alpha_k/2 & \delta k \beta_k \\ -\delta k   & \IdM -(\delta k)^2 \zeta_k/2
\end{pmatrix}  \cf\right| \\
& \leqslant \frac{21}{40}  \delta^3 k(k+1)  + \frac{\delta^3}{4} |\na\Phi_\delta^{-k}(z)| + \po 1+ \frac{21}{20} \delta\pf  C_k\, ,
\end{align}
where we used \eqref{eq:borneR} to bound $|\mathfrak  R|$, \eqref{eq:recurr} for the second term and the induction hypothesis \eqref{eq:Jinduction} for  the third one (with the same computation as in \eqref{eq:Jdemo} for the factor~$1+\frac{21}{20}\delta$). In view of~\eqref{eq:Jdemo}, we finally obtain
\begin{align}
(\star) \leqslant C_{k+1} & := \frac{21}{40}  \delta^3 k(k+1)  + \frac{\delta^3}{4} \po 1 +  \frac{21}{20}  \delta k    +C_k\pf  + \po 1+ \frac{21}{20} \delta\pf C_k \\
&\leqslant \frac{21}{40}  \delta^3 (k+1)^2  + \po 1+ \frac{11}{10} \delta\pf C_k  \,.
\end{align}
which allows to obtain the induction hypothesis~\eqref{eq:Jinduction} at the next step. 
From this, since~$C_1 \leq 21 \delta^3/40$,
\[
C_K \leqslant \frac{21}{40}  \delta^3   \sum_{k=0}^{K-1} (k+1)^2 \po 1+ \frac{11}{10} \delta\pf^{K-1-k} \leqslant \frac{21}{40\times 3} (\delta K)^3     \rme^{11 \delta K/10} \leqslant \frac13 (\delta K)^3\,.
\]
which gives the desired estimate.
\end{proof}


In particular, thanks to the estimate on~$|\Psi|$ of Lemma~\ref{lem:VerletMatrice} and to Lemma~\ref{L:fisher_verlet}  we can bound the Fisher term appearing in Lemma~\ref{lem:entropy_verlet} to get the following somewhat more explicit estimate.

\begin{corollary}[Improved entropy inequality on Verlet step] \label{C:improved_ent_verlet}
For any $\varepsilon >0$,
\begin{multline}
\mathcal H \po (\rmV_\delta^K)^* h\pf -\mathcal H (h) \leqslant  4 \varepsilon K \delta  \mathcal I  \po  h \pf \\
+ \sum_{j=0}^{K-1} \co 3\delta \varepsilon  \mathrm{Er}_1^{\delta,j} \po    h \pf  + \varepsilon \int_0^\delta \mathrm{Er}_1^{s,1} \po  (\rmV_\delta^j)^* h \pf  \dd s + \frac1{2\varepsilon} \mathrm{Er}_2\po (\rmV_{\delta}^j)^* h\pf  \cf    \,.
\end{multline}
\end{corollary}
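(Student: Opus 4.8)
The plan is to combine the three preceding lemmas by peeling off the Verlet sub-steps one at a time. As in the proofs above, I would first assume $h$ smooth and bounded below by a positive constant and pass to the limit at the end. The starting point is Lemma~\ref{lem:entropy_verlet} applied with its free parameter set equal to $\varepsilon$, which already yields the terms $\tfrac1{2\varepsilon}\mathrm{Er}_2\po(\rmV_\delta^j)^* h\pf$ with the desired coefficient and reduces everything to estimating, for each $j\in\{0,\dots,K-1\}$ and each $s\in(0,\delta)$, the Fisher quantity $\mathcal I\po(\rmV_\delta^j\rmV_s^1)^* h\pf$ in terms of $\mathcal I(h)$, $\mathrm{Er}_1^{\delta,j}(h)$ and $\mathrm{Er}_1^{s,1}\po(\rmV_\delta^j)^* h\pf$.

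For this estimate I would use $(\rmV_\delta^j\rmV_s^1)^* = (\rmV_s^1)^*(\rmV_\delta^j)^*$ and set $g=(\rmV_\delta^j)^* h$. First, peeling off the partial step: Lemma~\ref{L:fisher_verlet} with $\tilde{\bfA}=\IdM$, applied to the one-step Verlet map of stepsize $s$ and with its free parameter equal to $1$, gives $\mathcal I\po(\rmV_s^1)^* g\pf \leqslant 2\,\mathcal I_{\Psi_s}(g) + 2\,\mathrm{Er}_1^{s,1}(g)$ with $\Psi_s=(\na\Phi_s^{-1})\circ\Phi_s$; since $s\leqslant\delta\leqslant K\delta=\tint\leqslant 1/10$, Lemma~\ref{lem:VerletMatrice} yields $\|\Psi_s\|_\infty^2\leqslant 5/4$, hence $\mathcal I_{\Psi_s}(g)\leqslant\tfrac54\mathcal I(g)$. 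Second, peeling off the $j$ full steps: Lemma~\ref{L:fisher_verlet} with $\tilde{\bfA}=\IdM$, applied to the $j$-step Verlet map of stepsize $\delta$ and with free parameter $1$, combined with Lemma~\ref{lem:VerletMatrice} (valid since $j\delta\leqslant\tint\leqslant 1/10$), gives $\mathcal I(g)=\mathcal I\po(\rmV_\delta^j)^* h\pf\leqslant\tfrac52\mathcal I(h) + 2\,\mathrm{Er}_1^{\delta,j}(h)$. Chaining the two bounds produces $\mathcal I\po(\rmV_\delta^j\rmV_s^1)^* h\pf \leqslant \tfrac{25}{4}\mathcal I(h) + 5\,\mathrm{Er}_1^{\delta,j}(h) + 2\,\mathrm{Er}_1^{s,1}\po(\rmV_\delta^j)^* h\pf$.

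To conclude, I would multiply by $\varepsilon/2$, bound $\tfrac{25}{8}\leqslant 4$ and $\tfrac52\leqslant 3$, and then integrate over $s\in(0,\delta)$ (the term $\mathrm{Er}_1^{\delta,j}(h)$ being independent of $s$) and sum over $j\in\{0,\dots,K-1\}$; the three resulting pieces collapse into $4\varepsilon K\delta\,\mathcal I(h)$, $\sum_{j=0}^{K-1}3\delta\varepsilon\,\mathrm{Er}_1^{\delta,j}(h)$ and $\sum_{j=0}^{K-1}\varepsilon\int_0^\delta\mathrm{Er}_1^{s,1}\po(\rmV_\delta^j)^* h\pf\,\dd s$, which together with the $\mathrm{Er}_2$ contribution from Lemma~\ref{lem:entropy_verlet} is exactly the claimed inequality.

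This statement is essentially bookkeeping and I do not expect a genuine obstacle; the single delicate point is the allocation of the free parameters so that the advertised coefficients come out right. The parameter of Lemma~\ref{lem:entropy_verlet} is forced to be $\varepsilon$ (to produce the factor $1/(2\varepsilon)$ in front of $\mathrm{Er}_2$), the outer application of Lemma~\ref{L:fisher_verlet} is forced to use parameter $1$ (to produce the factor $\varepsilon$ in front of $\mathrm{Er}_1^{s,1}$), and --- this is the one place where slack matters --- one must invoke the sharp bound $\|\Psi\|_\infty^2\leqslant 5/4$ of Lemma~\ref{lem:VerletMatrice} under $\tint\leqslant 1/10$ rather than the cruder $\|\Psi\|_\infty^2\leqslant 3/2$ used elsewhere, since the latter would produce a Fisher coefficient $\tfrac92\varepsilon K\delta$, exceeding the advertised $4\varepsilon K\delta$.
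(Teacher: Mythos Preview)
Your proof is correct and follows the same route as the paper: start from Lemma~\ref{lem:entropy_verlet}, then peel off the Verlet sub-steps one at a time via Lemma~\ref{L:fisher_verlet} with free parameter~$1$, bounding $\|\Psi\|_\infty$ through Lemma~\ref{lem:VerletMatrice}. The only (cosmetic) difference is in the constant bookkeeping: the paper bounds $\|\Psi\|_\infty^2$ by $3/2$ in the first peel and by $4/3$ in the second (getting $8\mathcal I(h)$, hence exactly $4\varepsilon K\delta$ after multiplying by $\varepsilon/2$), whereas you use the single bound $5/4$ in both peels and arrive at $\tfrac{25}{4}\mathcal I(h)$, which you then relax to $8\mathcal I(h)$ --- so your observation that a uniform use of $3/2$ would overshoot the advertised $4\varepsilon K\delta$ is correct, and your remedy via $5/4$ is clean.
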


\begin{proof}
For $j\in\cco 0,K-1\ccf$ and $s\in[0,\delta]$, writing $\Psi_s = \na \Phi_s^{-1} \circ \Phi_s$ and $\Psi_j =\na \Phi_\delta^{-j}\circ \Phi_\delta^j$, applying twice Lemma~\ref{L:fisher_verlet}  (with $\tilde{\bfA}=I_{2d}$ and $\varepsilon=1$) and using the bound \eqref{eq:normPsi} (with $s\leqslant K\delta \leqslant 1/10$) on $|\Psi_s|$ and $|\Psi_j|$, which implies that these quantities are smaller than~$3/2$ in sup norm,
\begin{align}
 \mathcal I \po  (\rmV_\delta^j \rmV_s^1)^* h \pf   & \leqslant 2 \mathcal I_{\Psi_s } \po  (\rmV_\delta^j)^* h \pf +2 \mathrm{Er}_1^{s,1} \po  (\rmV_\delta^j )^* h \pf \\
  & \leqslant 2 \|\Psi_s\|_\infty^2\mathcal I \po  (\rmV_\delta^j )^* h \pf +2 \mathrm{Er}_1^{s,1} \po  (\rmV_\delta^j)^* h \pf \\
   & \leqslant 3 \mathcal I \po  (\rmV_\delta^j )^* h \pf +2 \mathrm{Er}_1^{s,1} \po  (\rmV_\delta^j)^* h \pf \\
   & \leqslant 6 \mathcal I_{\Psi_j}(h) +6 \mathrm{Er}_1^{\delta,j}(h)+2 \mathrm{Er}_1^{s,1} \po  (\rmV_\delta^j)^* h \pf \\
    & \leqslant 8 \mathcal I(h) +6 \mathrm{Er}_1^{\delta,j}(h)+2 \mathrm{Er}_1^{s,1} \po  (\rmV_\delta^j)^* h \pf \,.
\end{align}
Plugging this bound in the result of Lemma~\ref{lem:entropy_verlet} gives the claimed estimate.
\end{proof}

\subsection{Numerical error}\label{sec:numerique}

The goal of this section is to bound the numerical errors appearing in Lemmas~\ref{L:fisher_verlet} and \ref{lem:entropy_verlet}.
At various places, we use the following inequality (obtained by convexity): for~$\alpha \geq 1$ and nonnegative sequences~$(a_i)_{1 \leq i \leq I}$ and~$(w_i)_{1 \leq i \leq I}$, it holds 
\begin{equation}
    \label{eq:convexity_ineq_sums}
\left( \sum_{i=1}^I w_i a_i\right)^\alpha \leq \left( \sum_{i=1}^I w_i\right)^{\alpha-1} \sum_{i=1}^I w_i a_i^\alpha.
\end{equation}

\subsubsection{Error in the Fisher term}

\begin{lemma}\label{Lemme:error1}
Under \Cref{hyp:target} with $L=1$ and \Cref{assu:derivativesUbounded}, it holds, for any~$z\in\R^d$,
\begin{multline}
  |\na \square H(z)|^ 2 \leqslant    2 \delta^6 K \sum_{k=0}^{K-1}\Big[  N_4^6(v_k) +     N_4^6 (\na U(x_k)) + N_3^4(v_k)+      N_3^4(\na U(x_k)) \\
  + |v_k|^2 +  |\na U(x_k)|^2 \Big]\,, 
\end{multline}
where $(x_k,v_k) = \Phi_\delta^{- k}(z)$ for $k\in\cco 0,K\ccf$.
\end{lemma}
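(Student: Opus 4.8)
The plan is to reduce the $K$-step energy mismatch $\square H = H - H\circ\Phi_\delta^{-K}$ to a one-step quantity and telescope. Writing $z_k = \Phi_\delta^{-k}(z)$ and $\square_1 H = H - H\circ\Phi_\delta^{-1}$, the starting point is the identity
\[
\square H = \sum_{k=0}^{K-1}\left(H\circ\Phi_\delta^{-k} - H\circ\Phi_\delta^{-(k+1)}\right) = \sum_{k=0}^{K-1}\left(\square_1 H\right)\circ\Phi_\delta^{-k}\,,
\]
which, by the chain rule with the paper's convention $\na(f\circ g) = \na g\,(\na f)\circ g$, gives
\[
\na\square H(z) = \sum_{k=0}^{K-1}\left(\na\Phi_\delta^{-k}\right)(z)\,\left(\na\square_1 H\right)(z_k)\,.
\]
A discrete Cauchy--Schwarz inequality then yields $|\na\square H(z)|^2 \leq K\sum_{k=0}^{K-1}\big|\big(\na\Phi_\delta^{-k}\big)(z)\big|^2\,\big|\big(\na\square_1 H\big)(z_k)\big|^2$, and \Cref{lem:VerletMatrice}, applied with $K$ replaced by $k\leq K$ (still satisfying $k\delta\leq 1/10$), bounds $\big|\big(\na\Phi_\delta^{-k}\big)(z)\big|$ uniformly by $1 + k\delta + (k\delta)^2/2 + (k\delta)^3/3$, hence by a universal constant close to $1$. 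It therefore suffices to prove a one-step estimate of the form $\big|\big(\na\square_1 H\big)(w)\big|^2 \leq c\,\delta^6\,\mathbf{M}(w)$ for $w\in\R^{2d}$, with $\mathbf{M}$ as in~\eqref{eq:defVmoments} (recall $L=1$) and $c$ small enough that $c\,\sup_{0\leq k\leq K}|\na\Phi_\delta^{-k}|^2\leq 2$; summing over $k$ then gives the claim.

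For the one-step estimate I would write, using the explicit formula~\eqref{eq:phi-1} for $\Phi_\delta^{-1}(x,v)=(x',v')$, so that $x'-x = -\delta v - \tfrac{\delta^2}{2}\na U(x)$, together with the explicit block form of $\na\Phi_\delta^{-1}$ recalled in the proof of \Cref{lem:VerletMatrice},
\[
\na\square_1 H(x,v) = \begin{pmatrix}\na U(x)\\ v\end{pmatrix} - \na\Phi_\delta^{-1}(x,v)\begin{pmatrix}\na U(x')\\ v'\end{pmatrix}\,,
\]
and expand in powers of $\delta$. The terms of orders $\delta^0,\delta^1,\delta^2$ cancel exactly, reflecting the second-order accuracy of leapfrog, so the whole expression is genuinely $O(\delta^3)$; the remaining contributions are estimated using the bound $|\na^2 U|\leq 1$ of \Cref{hyp:target}, the trapezoidal-type estimate~\eqref{eqdef:assuU4bounded} (applied with $y = x'-x$) for $\na U(x')-\na U(x)-\tfrac12(\na^2 U(x)+\na^2 U(x'))(x'-x)$, and~\eqref{eqdef:assuU3bounded} for $\big(\na^2 U(x')-\na^2 U(x)\big)z$. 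Since $N_3(x'-x)\leq \delta N_3(v)+\tfrac{\delta^2}{2}N_3(\na U(x))$ and likewise for $N_4$, and since $K\delta\leq 1/10$ absorbs the surplus powers of $\delta$, each contribution is $\delta^3$ times a product of terms drawn from $\{|v|,\,|\na U(x)|,\,N_3(v),\,N_3(\na U(x)),\,N_4(v),\,N_4(\na U(x))\}$ of total degree at most three; squaring and splitting these products with the convexity inequality~\eqref{eq:convexity_ineq_sums} then produces exactly $c\,\delta^6\,\mathbf{M}(x,v)$.

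The hard part will be the bookkeeping in this one-step expansion: one must track both the position and velocity components of $\na\square_1 H$, verify that every term up to $O(\delta^2)$ genuinely cancels, and organise the $O(\delta^3)$ remainder — which a priori involves mixed quantities such as $N_3(v)N_3(\na U(x))$ or $|\na^2 U|\,|\na U(x)|$ — so that, after invoking $|\na^2 U|\leq 1$ and the convexity inequality~\eqref{eq:convexity_ineq_sums}, it is dominated by the six building blocks of $\mathbf{M}$; this is precisely where the flexibility in the choice of the norms $N_3,N_4$ in \Cref{assu:derivativesUbounded} is exploited. The remaining effort is purely quantitative: chasing the constants (repeatedly using $K\delta\leq 1/10$, e.g.\ to bound $\sup_k|\na\Phi_\delta^{-k}(z)|^2$ and the geometric-type tails of the expansion) down to the clean prefactor $2$ stated in the lemma.
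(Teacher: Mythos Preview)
Your proposal is correct and matches the paper's approach: telescope $\square H$ into one-step increments, apply Cauchy--Schwarz, and reduce to an explicit computation of $\na(H-H\circ\Phi_\delta^{-1})$ using~\eqref{eqdef:assuU3bounded}--\eqref{eqdef:assuU4bounded} together with $\|\na^2 U\|_\infty\leq 1$. You are if anything slightly more careful than the paper, which simply writes ``it suffices to establish the estimate for~$K=1$'' without explicitly tracking the Jacobian factor $|\na\Phi_\delta^{-k}|$ from the chain rule that you correctly bound via \Cref{lem:VerletMatrice}.
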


\begin{proof}
In view of the decomposition
\[
\square H = H - H\circ \Phi_\delta^{-K}= \sum_{k=1}^K \po H \circ \Phi_\delta^{-k+1} -H \circ \Phi_\delta^{-k}\pf\,, 
\]
which leads to the upper bound
\[
|\na   \square H|^2 \leqslant K \sum_{k=1}^K \left|\na  \co  H \circ  \Phi_\delta^{-k+1}-H \circ  \Phi_\delta^{-k}\cf\right|^2 \,. 
\]
it suffices to establish the estimate for~$K=1$. Recall $H(x,v)=U(x)+|v|^2/2$ and recall from~\eqref{eq:phi-1} that
\begin{align}
(x_1,v_1) & = \Phi_\delta^{-1}(x,v)\\
 &= \po x- \delta v - \frac{\delta^2}{2} \na U(x)\ , \ v + \frac{\delta}{2}\co \na U(x) + \na U \po x- \delta v - \frac{\delta^2}{2} \na U(x)\pf  \cf \pf  \,. 
\end{align}
Hence, starting with the gradient with respect to the velocities, 
\begin{align}
  & \na_v \po H  - H\circ \Phi_\delta^{-1}\pf(x,v) \\
  & \qquad = v - \co -\delta \na U(x_1) + \po I- \frac{\delta^2}{2}\na^2 U(x_1)\pf v_1 \cf \\
  & \qquad = \frac{\delta}2\po  \na U(x_1) - \na U(x) \pf + \frac{\delta^2}{2}\na^2 U(x_1)v + \frac{\delta^3}{4}\na^2 U(x_1) \po \na U(x)+ \na U(x_1)\pf \\
  & \qquad =  \frac{\delta}2\po  \na U(x_1) - \na U(x) +\na^2 U(x_1)(x-x_1)\pf     + \frac{\delta^3}{4}\na^2 U(x_1)   \na U(x_1) \,. 
\end{align}
  Notice that \eqref{eqdef:assuU3bounded} implies that
\begin{align}
    |\na U(x+y) - \na U(x) - \na^2 U(x) y| 
&= \left|\int_0^1 \po \na^2 U (x+sy) - \na^2 U (x) \pf y \, \dd s \right| \leqslant  \frac12 \rmN_3^2(y) \label{eqdef:U3bounded_bis}\,.
\end{align}
 Using this bound and the assumption~$\|\na^2 U\|_\infty\leqslant L = 1$, we have thus obtained that
\begin{align}
\left|\na_v \po H  - H\circ \Phi_\delta^{-1}\pf(x,v)\right| 
& \leqslant \frac{\delta}4\rmN_3^2(x-x_1)   + \frac{\delta^3}{4}|  \na U(x_1) |\nonumber\\
& \leqslant \frac{\delta^3}2 \rmN_3^2(v) +  \frac{\delta^5}8 \rmN_3^2(\na U(x))  + \frac{\delta^3}{4}\po |  \na U(x) | + |x_1-x|\pf \nonumber \\
& \leqslant \frac{\delta^3}2 \rmN_3^2(v) +  \frac{\delta^5}8 \rmN_3^2(\na U(x))  + \frac{\delta^4}{4}|v| + \po \frac{\delta^3}{4} + \frac{\delta^5}{8} \pf   |  \na U(x) | \,. 
\end{align}
With~$\delta\leqslant 1/10$ and a Cauchy--Schwarz inequality,
 \begin{align}
|\na_v \po H  - H\circ \Phi_\delta^{-1}\pf(x,v)|^2 
& \leqslant \delta^6 \po \frac{1}2 \rmN_3^2(v) +  \frac{1}{800} \rmN_3^2(\na U(x))  + \frac{1}{40}|v| +  \frac{201}{800}   |  \na U(x) | \pf^2\nonumber \\
& \leqslant \frac{\delta^6}3  \po  \rmN_3^4(v) +   \rmN_3^4(\na U(x)) + |v|^2 + |\na U(x) |^2 \pf\,. \label{eq:erreurnumlemme}
 \end{align}
 
Turning to the gradient with respect to the positions,
  \begin{align}
  \lefteqn{\na_x \po H  - H\circ \Phi_\delta^{-1}\pf(x,v)} \\
  &= \na U(x) - \co \po I - \frac{\delta^2}{2}\na^2 U(x)\pf \na U(x_1) +\frac\delta2 \po \na^2 U(x) + \po I - \frac{\delta^2}{2}\na^2 U(x)\pf \na^2 U(x_1) \pf v_1 \cf \\
  &= (\ast) + (\ast\ast) + (\ast\ast\ast)\,,
  \end{align}
  with, organizing terms depending on their orders in $\delta$, and using~$\delta v_1 = x-x_1+\delta \nabla U(x_1)/2$,
  \begin{align}
  (\ast) &= \na U(x) - \na U(x_1) - \frac12\po \na^2 U(x)+\na^2 U(x_1)\pf (x-x_1)\,, \\
  (\ast \ast) & =\frac{\delta^2}4\po \na^2 U(x)-\na^2 U(x_1)\pf \na U(x_1)\,, \\ 
  (\ast \ast \ast) & = \frac{\delta^3}{4}\na^2 U(x) \na^2 U(x_1) v_1   \,. 
  \end{align}
  Using the bound \eqref{eqdef:assuU4bounded} on the fourth derivative of $U$ and~\eqref{eq:convexity_ineq_sums}, 
  \begin{align}
  |(\ast)|  \leqslant  \newOld{N_4^3}(x-x_1) & \leqslant  \delta^3\po  \newOld{N_4}(v) +  \delta\newOld{N_4} (\na U(x))\pf^3 \\ & \leqslant  \delta^3\left(1+\delta\right)^2   \po \newOld{N_4^3}(v) + \delta \newOld{N_4^3} (\na U(x))\pf \,.     
  \end{align}
 From the bound \eqref{eqdef:assuU3bounded} on the third derivative of $U$  and that $\|\na^2 U\|_\infty \leqslant L = 1$
  \begin{align}
  |(\ast\ast)| & \leqslant \newOld{  \frac{\delta^2}{4}|(\na^2 U(x) - \na^2 U(x_1)) \na U(x) | + \frac{\delta^2}{2} |\na U(x_1)-\na U(x)| } \\
   & \leqslant \frac{\delta^2}4 \newOld{N_3}(x-x_1)\newOld{N_3}(\na U(x)) + \frac{\delta^2}{2}|x_1-x| \\
   & \leqslant \frac{\delta^3}8 \newOld{N_3^2}(v)+    \frac{\delta^3+\delta^4}8 \newOld{N_3^2}(\na U(x)) + \frac{\delta^3}{2}|v| + \frac{\delta^4}{4}|\na U(x)|\,.
  \end{align}
  Finally, 
  \begin{align}
  | (\ast\ast\ast)| \leqslant \frac{\delta^3}{4}|v_1|  & \newOld{\leqslant \frac{\delta^3}{4} |v| + \frac{\delta^4}{4}|\na U(x)| + \frac{\delta^4}{8}|x_1-x|}\\
  &\newOld{ \leqslant \frac{2\delta^3 + \delta^5}{8}   |v| + \frac{4\delta^4 + \delta^6}{16} |\na U(x)|  } \,. 
    \end{align}
As a conclusion, using that $\delta\leqslant 1/10$ and Cauchy--Schwarz inequality,
\begin{align}
   \lefteqn{|\na_x \po H  - H\circ \Phi_\delta^{-1}\pf(x,v)|^2} \\
   & \qquad \leqslant \delta^6 \Big[   (1+\delta)^2   \po \newOld{N_4^3}(v) + \delta \newOld{N_4^3} (\na U(x))\pf + \frac{1}8 \newOld{N_3^2}(v) \\
   & \qquad \qquad \left. +    \frac{1+\delta}{8} \newOld{N_3^2}(\na U(x)) + \left(\frac34+\frac{\delta^2}{8}\right)|v| + \delta\left(\frac12+\frac{\delta^2}{16}\right)|\na U(x)|\right]^2 \\ 
     & \qquad \leqslant \frac{5}{3} \delta^6 \Big[       \newOld{N_4^6}(v) +  \newOld{N_4^6} (\na U(x)) + \newOld{N_3^4}(v) +     \newOld{N_3^4}(\na U(x)) + |v|^2 + |\na U(x)|^2\Big] \,,
\end{align}
which, combined with \eqref{eq:erreurnumlemme}, gives the desired conclusion.
\end{proof}

Integrating the bound of \Cref{Lemme:error1} with respect to~$h\mu$ immediately yields the following (recall the definitions of the error in Lemma~\ref{L:fisher_verlet}  and of the function $\mathbf{M}$ in \eqref{eq:defVmoments}).

\begin{corollary} \label{C:error1}
Under \Cref{hyp:target} with $L=1$ and  \Cref{assu:derivativesUbounded}, for any smooth positive relative density $h$,
    \[ \mathrm{Er}_1^{\delta,K}(h) \leqslant \newOld{2} \delta^6 K \sum_{k=0}^{\newOld{K-1}} \int_{\R^{2d}} \po \mathbf{M} \circ \Phi_\delta^k \pf     h   \dd \mu  \,.\]
       In particular, assuming moreover \Cref{assu:moments}, and writing $h_n = (\rmPo^n)^*h_0$ for $n\in\N$,
       \begin{align}
         \mathrm{Er}_1^{\delta,K}(\rmD_\eta  h_n)& \leqslant \newOld{2} \delta^6 K^2 \po  C_1 \rme^{-\rho n \tint} \nu_0(\Wlyap)  + C_2\pf \eqsp, \\
         \mathrm{Er}_1^{\delta,1 }\po(\rmV_{\delta}^j)^*\newOld{\rmD_\eta}  h_n\pf & \leqslant \newOld{2} \delta^6  \po  C_1 \rme^{-\rho n \tint} \nu_0(\Wlyap)  + C_2\pf \eqsp,
       \end{align}
       \newOld{for all $j\in\cco 0,K\ccf $.}
    \end{corollary}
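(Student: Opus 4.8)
The plan is to combine \Cref{Lemme:error1} with the definition of $\mathrm{Er}_1^{\delta,K}$ recorded in \Cref{L:fisher_verlet}, and then to invoke \Cref{assu:moments} through a short duality computation. For the first inequality, I would start from $\mathrm{Er}_1^{\delta,K}(h) = \int_{\R^{2d}} |(\na\square H)\circ\Phi_\delta^K|^2\, h\,\dd\mu$ and apply the pointwise estimate of \Cref{Lemme:error1} at the point $\Phi_\delta^K(z)$. Since $L=1$, a direct comparison with the definition \eqref{eq:defVmoments} of $\mathbf M$ shows that the bracket appearing in \Cref{Lemme:error1}, evaluated at $\Phi_\delta^{-k}(\Phi_\delta^K(z)) = \Phi_\delta^{K-k}(z)$, is exactly $\mathbf M(\Phi_\delta^{K-k}(z))$; relabelling the summation index and using $\mathbf M\geq 0$ then gives $|(\na\square H)(\Phi_\delta^K(z))|^2 \leq 2\delta^6 K\sum_{k=0}^{K-1}\mathbf M(\Phi_\delta^k(z))$ uniformly in $z$, and integrating this against $h\,\dd\mu$ yields the first displayed bound.

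The last two estimates follow from this one by converting the moments $\int(\mathbf M\circ\Phi_\delta^k)\,h\,\dd\mu$ into quantities controlled by \Cref{assu:moments}. The ingredients are: $\mathbf M\circ\Phi_\delta^k = \rmV_\delta^k\mathbf M$, since $\rmV_\delta$ is the deterministic kernel associated with $\Phi_\delta$ (see \eqref{eq:def_Verlet_kernel}); $\rmD_\eta$ is self-adjoint on $\Ltwo(\mu)$; and $h_n\mu = \nu_0\rmPo^n$, by the definition $h_n = (\rmPo^n)^\ast h_0$. Combining these, for each $k$,
\[
\int_{\R^{2d}} (\mathbf M\circ\Phi_\delta^k)\,(\rmD_\eta h_n)\,\dd\mu
\;=\; \int_{\R^{2d}} \bigl(\rmD_\eta\rmV_\delta^k\mathbf M\bigr)\, h_n\,\dd\mu
\;=\; \nu_0\bigl(\rmPo^n\rmD_\eta\rmV_\delta^k\mathbf M\bigr)\,,
\]
and \Cref{assu:moments} bounds the integrand by $C_1\rme^{-\rho n\tint}\Wlyap + C_2$ pointwise, so that the integral is at most $C_1\rme^{-\rho n\tint}\nu_0(\Wlyap) + C_2$. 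Summing the $K$ contributions in the first inequality produces the factor $K^2$ in the bound for $\mathrm{Er}_1^{\delta,K}(\rmD_\eta h_n)$. For $\mathrm{Er}_1^{\delta,1}\bigl((\rmV_\delta^j)^\ast\rmD_\eta h_n\bigr)$, I would use the first inequality with $K=1$ and then slide $(\rmV_\delta^j)^\ast$ onto $\mathbf M\circ\Phi_\delta$ via $\rmV_\delta^j(\mathbf M\circ\Phi_\delta) = \mathbf M\circ\Phi_\delta^{j+1} = \rmV_\delta^{j+1}\mathbf M$, after which the same computation applied to $\rmPo^n\rmD_\eta\rmV_\delta^{j+1}\mathbf M$ gives the claim.

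I expect essentially no obstacle: the argument is pure bookkeeping. The only points requiring a little attention are the composition identity $\Phi_\delta^{-k}\circ\Phi_\delta^K = \Phi_\delta^{K-k}$ together with the ensuing relabelling of the sum, the adjoint manipulations (self-adjointness of $\rmD_\eta$, and $(\rmV_\delta^k)^\ast$ acting by precomposition, the Jacobian factor being $1$ by symplecticity, see \eqref{eq:action_adjoint_Verlet}), and checking that every index of a Verlet power that appears stays within the range $\{0,\dots,K\}$ covered by \Cref{assu:moments}.
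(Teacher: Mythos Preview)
Your approach is correct and is exactly what the paper does (it only says the corollary follows ``immediately'' by integrating the bound of \Cref{Lemme:error1} against $h\,\dd\mu$ and then applying \Cref{assu:moments}). One small bookkeeping point: the relabelling $k\mapsto K-k$ actually yields $\sum_{k=1}^{K}\mathbf M\circ\Phi_\delta^{k}$ rather than $\sum_{k=0}^{K-1}$, and nonnegativity of $\mathbf M$ does not convert one into the other; however this is harmless for the two displayed consequences, since \Cref{assu:moments} covers every $k\in\{0,\dots,K\}$ and in the third line you end up applying it at index $j$ (or $j{+}1$), which stays in range for the values of $j$ actually used in \Cref{C:improved_ent_verlet}.
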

 
 \begin{remark}\label{rem:order1}
If in \Cref{assu:derivativesUbounded} we only assume  \eqref{eqdef:assuU3bounded} but not \eqref{eqdef:assuU4bounded}, from \eqref{eqdef:assuU3bounded}, using that $\na U(x+y) - \na U(x) = \int_0^1 \na^2 U(x+sy) y\, \dd s $ (and distinguishing the integration on~$[0,1/2]$ and on~$[1/2,1]$), we deduce that for all $x,y\in\R^d$,
\begin{align}
\abs{\na U(x+y) - \na U(x) - \frac12\po \na^2 U(x)+\na^2 U(x+y)\pf y}  &\leqslant   \frac14 \rmN_3^2(y)
\,. \label{eqdef:assuU4bounded_bis}
\end{align}
We can thus use this inequality instead of~\eqref{eqdef:assuU4bounded} in Lemma~\ref{Lemme:error1}  (which is the only place where~\eqref{eqdef:assuU4bounded} intervenes) and replace $\rmN_4^6$ by $\rmN_3^4$ in the definition \eqref{eq:defVmoments} of   $ \mathbf{M}$. Following then the rest of the proof, we see that Theorem~\ref{thm:main_KL} still holds in this case, but with $\delta^4$ replaced by $\delta^2$ as the error terms in \Cref{C:error1} are of order~$\delta^4$ instead of~$\delta^6$.
\end{remark}
  
\subsubsection{Error on the entropy term}

It suffices to consider the case $K=1$, as the general case is deduced from this one by summing up estimates. Recall that~$F_H$ is defined in~\eqref{eq:def_F_H}.

\begin{lemma}
Under \Cref{hyp:target} with $L=1$ and  \Cref{assu:derivativesUbounded}, for all $z\in\R^d$ and $s\in[0,\delta]$, 
\[|\partial_s \Phi_s(z) - F_H \po \Phi_s(z)\pf |^2 \leqslant s^4  \po \frac{1}{3}|v|^2 + \frac{1}{7}|\na U(x)|^2 + \frac{1}7  \newOld{N_3^4}(v) + \frac{1}{140}\newOld{N_3^4}(\na U(x)) \pf\,. \]
\end{lemma}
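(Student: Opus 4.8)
The plan is to expand the one‑step Verlet flow explicitly, differentiate it in $s$, and estimate the resulting residual component by component. As already observed, it suffices to treat $K=1$. From the splitting~\eqref{eq:verlet_decompo} one has $\Phi_s(x,v)=(x_s,v_s)$ with
\[
x_s = x + sv - \frac{s^2}{2}\na U(x)\,,\qquad v_s = v - \frac{s}{2}\bigl(\na U(x)+\na U(x_s)\bigr)\,.
\]
Differentiating at fixed $(x,v)$ gives $\partial_s x_s = v - s\na U(x)$ and $\partial_s v_s = -\frac12(\na U(x)+\na U(x_s)) - \frac{s}{2}\na^2 U(x_s)\bigl(v-s\na U(x)\bigr)$. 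Substituting these into $\partial_s\Phi_s(z)-F_H(\Phi_s(z))$ and using the elementary identity $\frac{s}{2}\bigl(v-s\na U(x)\bigr) = \frac12(x_s-x) - \frac{s^2}{4}\na U(x)$ (which follows from $x_s-x = s\bigl(v-\frac{s}{2}\na U(x)\bigr)$), I would obtain the position residual $\partial_s x_s - v_s = \frac{s}{2}\bigl(\na U(x_s)-\na U(x)\bigr)$ and the velocity residual $\partial_s v_s + \na U(x_s) = \frac12\bigl[\na U(x_s)-\na U(x)-\na^2 U(x_s)(x_s-x)\bigr] + \frac{s^2}{4}\na^2 U(x_s)\na U(x)$.

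Next I would bound each residual separately. For the position part, $\|\na^2 U\|_\infty\leqslant L=1$ gives $|\na U(x_s)-\na U(x)|\leqslant |x_s-x|\leqslant s|v|+\frac{s^2}{2}|\na U(x)|$, hence $|\partial_s x_s - v_s|\leqslant \frac{s^2}{2}|v| + \frac{s^3}{4}|\na U(x)|$. For the velocity part, the second summand is controlled by $\frac{s^2}{4}|\na U(x)|$, again using $\|\na^2 U\|_\infty\leqslant 1$. For the first summand, with $y = x_s-x$ I would use the integral representation $\na U(x+y)-\na U(x)-\na^2 U(x+y)y = \int_0^1\bigl(\na^2 U(x+ty)-\na^2 U(x+y)\bigr)y\,\dd t$ and apply~\eqref{eqdef:assuU3bounded} with base point $x+y$ and increment $(t-1)y$, which yields $|\na U(x_s)-\na U(x)-\na^2 U(x_s)(x_s-x)|\leqslant \frac12 N_3^2(y)$; combined with the triangle inequality $N_3(y)\leqslant s N_3(v) + \frac{s^2}{2}N_3(\na U(x))$, this bounds the velocity residual by $\frac14\bigl(sN_3(v)+\frac{s^2}{2}N_3(\na U(x))\bigr)^2 + \frac{s^2}{4}|\na U(x)|$. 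Note that only the third‑derivative bound~\eqref{eqdef:assuU3bounded} enters here, not~\eqref{eqdef:assuU4bounded}, consistently with the absence of $N_4$ in the stated estimate.

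Finally, I would square the two residual bounds, add them, expand, and absorb every power of $s$ higher than $s^4$ using $s\leqslant\delta\leqslant 1/10$. The dominant contributions are $\frac14 s^4|v|^2$ from the position part and $\frac{1}{16}s^4N_3^4(v)$ together with $\frac{1}{16}s^4|\na U(x)|^2$ from the velocity part, all remaining terms being $O(s^6)$ or smaller; the elementary inequality $(a+b)^2\leqslant (1+\theta)a^2+(1+\theta^{-1})b^2$ applied a few times with suitable small $\theta$ lets one turn the prefactor $\tfrac14$ on $|v|^2$ into $\tfrac13$, the prefactors $\tfrac1{16}$ on $N_3^4(v)$ and on $|\na U(x)|^2$ into $\tfrac17$, and the (genuinely higher order) $N_3^4(\na U(x))$ term into $\tfrac1{140}$. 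The one delicate point is precisely this bookkeeping of numerical constants — choosing the split parameters $\theta$ so that all four target constants are met simultaneously; everything else is a direct computation.
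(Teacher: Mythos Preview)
Your proposal is correct and follows essentially the same route as the paper: you obtain the identical decomposition of the position and velocity residuals, invoke the third–derivative bound~\eqref{eqdef:assuU3bounded} (via the integral representation, exactly as in the paper's~\eqref{eqdef:U3bounded_bis}) for the Taylor remainder in the velocity part, and then absorb higher powers of~$s$ using $s\leqslant 1/10$. The only cosmetic difference is that the paper squares the velocity residual directly via $|\tfrac12 A+\tfrac12 B|^2\leqslant \tfrac12|A|^2+\tfrac12|B|^2$ before inserting the bounds on $|x_s-x|$ and $N_3(x_s-x)$, whereas you bound the norms first and square afterwards; both lead to the same leading $s^4$ coefficients and the final numerical bookkeeping is of the same nature.
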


\begin{proof}
Decomposing $z_s= \Phi_s(x,v)$ as 
\[
z_s = \begin{pmatrix}
x_s\\ v_s
\end{pmatrix}
=
\begin{pmatrix}
\dps x+sv - \frac{s^2}{2} \na U(x) \\
\dps v - \frac{s}{2}\co \na U(x) + \na U \po x+sv - \frac{s^2}{2} \na U(x)\pf  \cf 
\end{pmatrix}\,,\]
we compute
\begin{align}
(\partial_s \Phi_s)(z) - F_H(z_s) & = \begin{pmatrix}
v - s \na U(x)\\
\dps -\frac12   \po \na U(x) + \na U(x_s)\pf -\frac s2  \na^2 U(x_s) \frac{dx_s}{ds}
\end{pmatrix} - \begin{pmatrix}
v_s \\ -\na U(x_s)
\end{pmatrix}\\
& = \frac12\begin{pmatrix}
 s \po \na U(x_s)-\na U(x)\pf \\
   \na U(x_s)-\na U(x) -  s  \na^2 U(x_s) \po v - s \na U(x)\pf
\end{pmatrix} \,.
\end{align}
As a consequence, using \eqref{eqdef:U3bounded_bis} and~$\|\na^2U\|_\infty\leqslant 1$,
\begin{align}
|(\partial_s \Phi_s)(z) - F_H(z_s)|^2 
& \leqslant \frac{s^2}{4}|x_s-x|^2 + \frac12 |\na U(x_s)-\na U(x) - \na^2 U(x_s) (x_s-x)|^2 \\
& \ \ + \frac{1}2 |x_s-x - s(v-s\na U(x))|^2 \\
& \leqslant \frac{s^2}{4}|x_s-x|^2 + \frac{\newOld{1}}8 \newOld{N_3^4}(x_s-x) + \frac{\newOld{s^4}}{8} |\na U(x)|^2 \,.
\end{align}
Using that $s\leqslant \delta\leqslant 1/10$, we bound 
\[
\newOld{N_3^4}(x_s-x) \leqslant \po s \newOld{N_3}(v) + \frac{s}{20} \newOld{N_3}(\na U(x))\pf^4 \leqslant \frac{21^3}{20^3 } s^4  \po  \newOld{N_3^4}(v) + \frac{1}{20} \newOld{N_3^4}(\na U(x))\pf\,,
\]
and similarly $|x_s-x|^2 \leqslant 21/20  s^2\po |v|^2 + |\na U(x)|^2/20\pf$, which allows to conclude.
\end{proof}

Again, integrating the previous bound as
\begin{align}
\mathrm{Er}_2(\delta,h) &=  \int_0^\delta\po \int_{\R^{2d}}|\partial_s \Phi_s - F_H \circ \Phi_s |^2  \,h\,d\mu\pf\,\dd s \leqslant \frac13 \int_0^\delta\po \int_{\R^{2d}}s^4 \mathbf{M}   \,h\,d\mu\pf\,\dd s \,.
\end{align}
immediately yields the following result (recall the definitions of the Error 2 in Lemma~\ref{lem:entropy_verlet} and of the function $\mathbf{M}$ in \eqref{eq:defVmoments}).

\begin{corollary} \label{C:error2}
 Under \Cref{hyp:target} with $L=1$ and  \Cref{assu:derivativesUbounded}, for any smooth positive relative density $h$,
        \[\mathrm{Er}_2(h) \leqslant \frac{\delta^5}{15}  \int_{\R^{2d}}  \mathbf{M}   \,h\,d\mu\,.\]
        In particular, assuming moreover \Cref{assu:moments}, and writing $h_n = (\rmPo^n)^*$ for $n\in\N$ then, for any $j\in\cco 0,K-1\ccf$,
    \[\mathrm{Er}_2((\rmV_\delta^j)^* \rmD_\eta  h_n) \leqslant \frac{\delta^5}{15} \po  C_1 (1-\rho K\delta )^n \nu_0(\Wlyap)  + C_2\pf\,. \]
\end{corollary}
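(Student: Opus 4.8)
The plan is to read off \Cref{C:error2} directly from the pointwise bound on $\abs{\partial_s\Phi_s(z)-F_H(\Phi_s(z))}^2$ proved in the lemma immediately above, combined with the definition~\eqref{eq:defVmoments} of $\mathbf{M}$ and with \Cref{assu:moments}; I expect no new analytic estimate to be needed.

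For the first inequality, I would note that, since $L=1$, the four coefficients $\tfrac13,\tfrac17,\tfrac17,\tfrac1{140}$ occurring in that lemma are all at most $\tfrac13$, so that for every $z=(x,v)\in\R^{2d}$ and $s\in[0,\delta]$,
\[
\abs{\partial_s\Phi_s(z)-F_H(\Phi_s(z))}^2 \leq \frac{s^4}{3}\Bigl( \abs{v}^2+\abs{\na U(x)}^2+N_3^4(v)+N_3^4(\na U(x)) \Bigr) \leq \frac{s^4}{3}\,\mathbf{M}(z),
\]
the last step just discarding the nonnegative terms $N_4^6(v),N_4^6(\na U(x))$ from~\eqref{eq:defVmoments}. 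Plugging this into the expression for $\mathrm{Er}_2$ of \Cref{lem:entropy_verlet}, integrating against $h\,\dd\mu$ and then over $s\in[0,\delta]$ with $\int_0^\delta s^4\,\dd s=\delta^5/5$, gives $\mathrm{Er}_2(h)\leq \frac{\delta^5}{15}\int_{\R^{2d}}\mathbf{M}\,h\,\dd\mu$. This is precisely the computation already displayed before the statement.

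For the second inequality I would apply the first with $h$ replaced by $(\rmV_\delta^j)^*\rmD_\eta h_n$, and then rewrite the integral $\int\mathbf{M}\,(\rmV_\delta^j)^*\rmD_\eta h_n\,\dd\mu$ in a form controlled by \Cref{assu:moments}. Since $\rmV_\delta^j$ acts on functions by composition with $\Phi_\delta^j$ (by~\eqref{eq:def_Verlet_kernel}), its $\rml^2(\mu)$-adjoint satisfies $\int f\,(\rmV_\delta^j)^*g\,\dd\mu=\int(\rmV_\delta^j f)\,g\,\dd\mu$, and $\rmD_\eta$ is self-adjoint; writing $h_n=(\rmPo^n)^*h_0$ with $h_0=\dd\nu_0/\dd\mu$ then yields
\[
\int_{\R^{2d}}\mathbf{M}\,(\rmV_\delta^j)^*\rmD_\eta h_n\,\dd\mu = \int_{\R^{2d}}\rmPo^n\rmD_\eta\rmV_\delta^j\mathbf{M}\,\dd\nu_0 \leq C_1\,\rme^{-\rho n\tint}\,\nu_0(\Wlyap)+C_2,
\]
the inequality being exactly \Cref{assu:moments} applied with $k=j\in\cco 0,K-1\ccf$. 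Combining with the first inequality gives the claimed bound (with $(1-\rho K\delta)^n$ or, equivalently since $1-\rho K\delta\leq\rme^{-\rho K\delta}$, with $\rme^{-\rho n\tint}$), mirroring verbatim the derivation of the analogous estimates in \Cref{C:error1}.

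In terms of difficulty, I do not foresee any real obstacle: the substance is entirely contained in the preceding lemma and in \Cref{assu:moments}. The only point requiring care is the bookkeeping of the two adjoints $(\rmV_\delta^j)^*$ and $\rmD_\eta$, so as to land exactly on the operator $\rmPo^n\rmD_\eta\rmV_\delta^k\mathbf{M}$ appearing in \Cref{assu:moments}, together with the verification that the index $j$ stays in the admissible range $\cco 0,K-1\ccf$.
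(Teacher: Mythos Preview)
The proposal is correct and follows exactly the paper's approach: bound the pointwise estimate from the preceding lemma by $\tfrac{s^4}{3}\mathbf{M}(z)$, integrate over $s\in[0,\delta]$ and against $h\,\dd\mu$, and then dualize the adjoints to invoke \Cref{assu:moments}. Your remark on the harmless discrepancy between $(1-\rho K\delta)^n$ and $\rme^{-\rho n\tint}$ is also appropriate.
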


\subsection*{Acknowledgments}

The works of P.M. and G.S. are supported by the European Research Council (ERC) under the European Union’s Horizon 2020 research and innovation program (project EMC2, grant agreement No 810367); and by the Agence Nationale de la Recherche, under grant SWIDIMS (ANR-20-CE40-0022) for P.M., and ANR-19-CE40-0010-01 (QuAMProcs) and ANR-21-CE40-0006 (SINEQ) for G.S. A.O.D. would like
to thank the Isaac Newton Institute for Mathematical Sciences for support and hospitality during the programme "The mathematical and statistical foundation of future data-driven engineering" where work on this paper was undertaken. This work was supported by EPSRC grant number EP/R014604/1.

\bibliographystyle{plain}
\bibliography{bibliography}

\begin{appendix}

\section{Proof of Proposition~\ref{prop:regularity}}\label{sec:regularity}

Note first that, for~$j \neq i$ and defining~$w(x_i,x_j) = W^{(q)}(x_i-x_j)$, it holds
\[
\nabla^2_{x_i,x_j} w(x_i,x_j) = -\frac{1}{\nq} \nabla^2 W^{(q)}(x_i-x_j)\,,
\]
while
\[
\nabla^2_{x_i} w(x_i,x_j) = \frac{1}{\nq} \sum_{j=1}^{\nq} \nabla^2 W^{(q)}(x_i-x_j)\,.
\]
Therefore, 
\[
\begin{aligned}
& \sum_{i=1}^{d_0} \left| \sum_{j=1}^{d_0}  \po  \na^2_{x_i,x_j} w(x_i+y_i,x_j+y_j) - \na^2_{x_i,x_j} w(x_i,x_j)\pf z_j \right|^2 \\
& \qquad = \frac{1}{\nq^2}\sum_{i=1}^{d_0} \left| \sum_{j \neq i} \po  \na^2  W^{(q)}(x_i+y_i-x_j-y_j) - \na^2  W^{(q)}(x_i-x_j)\pf (z_i-z_j)\right|^2 \\
& \qquad = \frac{1}{\nq^2}\sum_{i=1}^{d_0} \left| \sum_{j=1}^{d_0}  \po  \na^2  W^{(q)}(x_i+y_i-x_j-y_j) - \na^2  W^{(q)}(x_i-x_j)\pf (z_i-z_j)\right|^2.
\end{aligned}
\]
We can now compute
\begin{align}
& \left| \po \na^2 U(x+y) - \na^2 U(x) \pf z\right |^2 \\
& \quad = \sum_{i=1}^{d_0} \left| \sum_{j=1}^{d_0}\po  \na^2 _{x_i,x_j} U(x+y) - \na^2_{x_i,x_j} U(x)\pf z_j\right|^2 \\
& \quad \leqslant (1+\epsilon) \sum_{i=1}^{\nq} \abs{\po \na^2 U^{(q)}(x_i+y_i) - \na^2 U^{(q)}(x_i) \pf z_i}^2 \\
& \quad \ \ + \frac{\epsilon^2(1+\epsilon^{-1})}{\nq^2 }  \sum_{i=1}^{d_0} \left| \sum_{j=1}^{d_0}  \po  \na^2  W^{(q)}(x_i+y_i-x_j-y_j) - \na^2  W^{(q)}(x_i-x_j)\pf (z_i-z_j)\right|^2 \\
& \quad \leqslant (1+\epsilon) \left\|\rmD^3 U^{(q)}\right\|_\infty^2 \sum_{i=1}^{\nq} |y_i|^2|z_i|^2 + \frac{\epsilon(1 + \epsilon)}{\nq }  \left\|\rmD^3 W^{(q)}\right\|_\infty^2 \sum_{i=1}^{d_0}  \sum_{j=1}^{d_0} |y_i-y_j|^2 |z_i-z_j|^2  \\
& \quad \leqslant (1+\epsilon) \po \left\|\rmD^3 U^{(q)}\right\|_\infty ^2 + 16 \epsilon \left\|\rmD^3 W^{(q)}\right\|_\infty^2 \pf \sqrt{ \sum_{i=1}^{\nq} |y_i|^4 \sum_{j=1}^{\nq}|z_j|^{\newOld{4}}} \,,
\end{align}
where we used 
\begin{align}
\frac{1}{\nq}\sum_{i=1}^{\nq}  \sum_{j=1}^{\nq} |y_i-y_j|^2 |z_i-z_j|^2 & \leq \frac{4}{\nq}\sum_{i=1}^{\nq}  \sum_{j=1}^{\nq} \left( |y_i|^2 + |y_j|^2\right) \left( |z_i|^2 + |z_j|^2 \right) \\
& \leq 8 \sum_{i=1}^{\nq} |y_i|^2 |z_i|^2 + \frac{8}{\nq}\sum_{i=1}^{\nq} |y_i|^2 \sum_{j=1}^{\nq} |z_j|^2 \\
& \leq 16 \sqrt{ \sum_{i=1}^{\nq} |y_i|^4 \sum_{j=1}^{\nq}|z_j|^{\newOld{4}}} \,,
\end{align}
to conclude the proof of \eqref{eqdef:assuU3bounded}. Following similar computations,
\begin{align}
  & \abs{\na U(x+y) - \na U(x) - \frac12\po \na^2 U(x)+\na^2 U(x+y)\pf y}^2\\
  & \quad = \left| \po \int_0^1 \na^2 U(x+sy) \, \dd s  - \frac12\po \na^2 U(x)+\na^2 U(x+y)\pf \pf y\right|^2  \\
   & \quad = \sum_{i=1}^{\nq} \left| \sum_{j=1}^{\nq}  \po \int_0^1 \na_{x_i,x_j}^2 U(x+sy) \, \dd s  - \frac12\po \na_{x_i,x_j}^2 U(x)+\na^2 _{x_i,x_j} U(x+y)\pf \pf y_j\right|^2  \\
   & \quad \leq (1+\epsilon) \sum_{i=1}^{\nq} \left|   \po \int_0^1 \na^2 U^{(q)}(x_i+sy_i) \, \dd s  - \frac12\po \na^2 U^{(q)}(x_i)+\na^2 U^{(q)}(x_i+y_i)\pf \pf y_i\right|^2 \\
   & \quad \ \  + \frac{\epsilon +\epsilon^2}{\nq }  \sum_{i,j=1}^{d_0} \left| \left( \int_0^1 \na^2 W^{(q)}(x_i-x_j+s(y_i-y_j)) \, \dd s \right. \right. \\
   & \quad \qquad \left. \left. \phantom{\int_0^1} - \frac12\po \na^2 W^{(q)}(x_i-x_j)+\na^2 W^{(q)}(x_i+y_i-x_j-y_j)\pf \right) (y_i-y_j)\right|^2 \,.
\end{align}
Using that, for all $f\in\mathrm{C}^2([0,1],\R)$, integrating twice by parts,
\[
\int_0^1 f(s) \dd s - \frac{f(0)+f(1)}2 
 = \gabriel{\frac12} \int_0^1 s(1-s)  f''(s) \, \dd s\,,
\]
fixing $i \in\cco 1,\nq\ccf$, $x_i,y_i\in\R^{q}$ and denoting by~$\bfe_{\ell}$  the $\ell$-th  canonical vector of $\rset^{q}$,
\begin{align}
   & \left| \bfe_\ell \cdot  \po \int_0^1 \na^2 U^{(q)}(x_i+sy_i) \, \dd s  - \frac12\po \na^2 U^{(q)}(x_i)+\na^2 U^{(q)}(x_i+y_i)\pf \pf y_i\right| \\
   & \qquad = \frac12 \left| \int_0^1 s(1-s) \rmD^4 U^{(q)}(x_i+s y_i) \{\bfe_\ell,y_i,y_i,y_i\} \, \dd s\right|  \\
   &\qquad \leqslant \frac1{12} \| \rmD^4 U^{(q)} \|_\infty |y_i|^3  \,.
\end{align}
The terms involving $W^{(q)}$ are treated in a similar manner and we get
\begin{align}
  & \abs{\na U(x+y) - \na U(x) - \frac12\po \na^2 U(x)+\na^2 U(x+y)\pf y}^2 \\
   & \qquad \leqslant (1+\epsilon) \sum_{i=1}^{\nq}  \frac{q}{144} \left\| \rmD^4 U^{(q)} \right\|_\infty^2 |y_i|^6  + \frac{\epsilon +\epsilon^2}{\nq }  \sum_{i,j=1}^{d_0}  \frac{q}{144} \left\| \rmD^4 W^{(q)} \right\|_\infty^2 |y_i-y_j|^6 \\ 
   & \qquad \leqslant \po \frac{1+\epsilon}{144} \left\| \rmD^4 U^{(q)} \right\|_\infty^2 + \frac{4(\epsilon +\epsilon^2) }{9 } \left\|\rmD^4 W^{(q)} \right\|_\infty^2 \pf q \sum_{i=1}^{\nq}   |y_i|^6 \,,
\end{align}
 which concludes the proof of \eqref{eqdef:assuU4bounded}.
 
 Finally, to bound $\mathbf{M}$, for $\ell \in \{2,4,6\}$, using that $\na U^{(q)}(0)=0=\na W^{(q)}(0)$, we write
 \begin{align}
    |\na_{x_i} U(x) |^\ell & = \left| \na U^{(q)}(x_i) + \frac{2\epsilon}{\nq}\sum_{j=1}^{\nq} \na W^{(q)}(x_i-x_j) \right|^\ell  \\
    & \qquad \leqslant 2^{\ell-1} |\na U^{(q)}(x_i)|^\ell + \frac{2^{2\ell-1}\epsilon^{\ell}}{\nq } \sum_{j=1}^{\nq} \left| \na W^{(q)}(x_i-x_j) \right|^\ell \\
    & \qquad \leqslant 2^{\ell-1} \left\|\na^2 U^{(q)}\right\|_\infty^{\ell}  |x_i|^\ell + \frac{2^{2\ell-1}\epsilon^{\ell}}{\nq } \left\|\na^2 W^{(q)}\right\|_\infty^\ell \sum_{j=1}^{\nq} \left| x_i-x_j \right|^\ell \,.
 \end{align}
 Using  $|x_i-x_j|^\ell \leqslant 2^{\ell-1}(|x_i|^\ell + |x_j|^{\ell})$ in the last term, we end up with
 \[
 \sum_{i=1}^{\nq} |\na_{x_i} U(x) |^\ell  \leqslant \po 2^{\ell-1}\left\|\na^2 U^{(q)}\right\|_\infty^{\ell}  + 2^{3\ell}\epsilon^{\ell} \left\|\na^2 W^{(q)}\right\|_\infty^\ell \pf \sum_{i=1}^{\nq}  |x_i|^\ell\,, 
 \] 
which leads to the claimed result.

\section{Uniform-in-time moment bounds}\label{sec:Lyapunov}

In all this section, which is devoted to the proof of Theorem~\ref{Thm:drift}, assumption~\Cref{ass:for_drift} is enforced. We denote by~$L_1 = \max(\|\na^2 U^{(q)}\|_\infty,\|\na^2 W^{(q)}\|_\infty)$. In fact, using a rescaling as in Section~\ref{subsubsec:scaling}, we assume without loss of generality that $L_1 \leqslant 1$. 
The structure of the proof is the following. After providing some useful preliminary estimates, we describe the class of Lyapunov functions we consider, state the main result in Theorem~\ref{theo:drift}, and discuss that it implies Theorem~\ref{Thm:drift}. The remainder of the section is then devoted to the proof of Theorem~\ref{theo:drift}. We start by making precise in Lemma~\ref{lem:drift_v1} how the Lyapunov function evolves over one Verlet step; and then over several steps in Lemma~\ref{lem:drift_k_step}, relying on Lemma~\ref{lem:rec_for_drift} which quantifies how much prefactors change when performing several steps. We can then consider the addition of the fluctuation-dissipation step, by first incorporating the momentum update in Lemma~\ref{lem:drift_for_fixed_Gaussian}, then averaging over the momentum updates in Lemma~\ref{lem:drift_one_step}, relying on some moment bounds provided by Lemma~\ref{lem:moment_gaussian}. The proof of Theorem~\ref{theo:drift} can then be concluded.

Let us emphasize that our proofs are written both for kinetic Langevin dynamics, for which~$\eta>0$ (more precisely $1-\eta \geq 0$ is of order~$\delta$), and for HMC, for which~$\eta = 0$. This second case requires more precise estimates.

\paragraph{Preliminary estimates.}
We state a few consequences of \Cref{ass:for_drift} which will be useful in the remainder of this section. We additionally assume without loss of generality that~$\Uq(0) = 0$. First, \eqref{eq:ass_drift} and the inequality 
\[
\ps{x_1 }{ \nabla U^{(q)}(x_1)} \leq \frac{1}{2\mtt} \abs{\nabla U^{(q)}(x_1)}^2 + \frac{\mtt}{2} \abs{x_1}^{2}, 
\]
imply that
\begin{equation}
  \label{eq:tail_nabla_U}
  \forall x_1 \in\rset^q, \qquad \norm{\nabla \Uq(x_1)}^2 \geq 2\mtt \left(\frac{\mtt}{2}\norm{x_1}^2 - \Mtt\right).
\end{equation}
In addition, since $\na U^{(q)}$ is $L_1$-Lipschitz and $\na U^{(q)}(0)=0$, the inequality $\ps{x_1 }{ \nabla U^{(q)}(x_1)} \leq L_1 \abs{x_1}^2$,
and the drift condition~\eqref{eq:ass_drift} together imply, by taking the limit~$\abs{x_1} \to +\infty$, that 
\begin{equation}
\label{eq:bound_m}
    \mtt \leq L_1 \leqslant 1 \eqsp .
\end{equation}
Finally, since $\na U^{(q)}(0)=\na W^{(q)}(0)=0$, we obtain that, for any $x_1\in\rset^q$
\begin{equation}
    \label{eq:pot_drift_sub_linear}
    \norm{\nabla \Uq(x_1)} \leq \norm{x_1} \eqsp, \qquad \norm{\nabla \Wq(x_1)} \leq \norm{x_1}\eqsp, \qquad U^{(q)}(x_1) \leqslant \frac{|x_1|^2}{2} \eqsp.
\end{equation}

\paragraph{Construction of Lyapunov functions.}
Our analysis is based on the following Lyapunov functions. Under \Cref{ass:for_drift}, for $\bfomega = (K,\delta,\eta) \in \nsets\times \rset_+^* \times \coint{0,1}$ and $\ell \in\nsets$, we define $\Wol : \rset^{2d} \to \rset$ as: for any $x =(x_1,\ldots,x_{\nq})$ and~$v =(v_1,\ldots,v_{\nq}) \in \rset^d$, 
\begin{equation}
  \label{eq:def_wlyap}
  \Wol(x,v) = \sum_{i=1}^{\nq} \po \Woi(x,v)\pf^\ell  \eqsp, \quad \Woi(x,v) = \frac{\gamma_0^{2}}{2}\norm{x_i}^2 +\norm{v_i}^2 + \eta \gamma_0\ps{x_i}{v_i} + \Uq(x_i) \eqsp,
\end{equation}
where (recall~\eqref{eq:def_tint_gammaint}) $  \gamma_0 =  \frac{1-\eta}{\delta K}$.
Note that since $\eta \in \coint{0,1}$, using Young's inequality $\ps{\mathbf{a}}{\mathbf{b}} \leq (\varepsilon/2)\norm{\mathbf{a}}^2 + \norm{\mathbf{b}}^2/(2\varepsilon)$ with $\varepsilon = 2/3$ for the left inequality and~$\varepsilon = 1$ for the right one, 
we get
\begin{equation}
  \label{eq:23}
  \frac16 \left(\gamma_0^{2} \norm{x}^2 + \norm{v}^2\right) + \sum_{i=1}^{\nq}\Uq(x_i) \leq \WoU(x,v) \leq \frac32 \left(\gamma_0^{2}\norm{x}^2 + \norm{v}^2\right) +  \sum_{i=1}^{\nq}\Uq(x_i) \eqsp.
\end{equation}
By \eqref{eq:23} and the fact $0 \leqslant U^{(q)}(x_1) \leqslant |x_1|^2/2$, the function $\Wlyap$ defined in \eqref{eq:Wlyap_simple} is equivalent to $\Wlyap_{\bfomega}^{(3)}$, more precisely there exists $A>0$ which depends only on $\gamma_0$ such that, for all $(x,v)\in\R^{2d}$,
\begin{equation}
\label{eq:Lyap_for_main_drift_thm}
A^{-1} \left(\nq+\Wlyap_{\bfomega}^{(3)}(x,v)\right) \leqslant \nq+\Wlyap(x,v)  \leqslant A \left(\nq+ \Wlyap_{\bfomega}^{(3)}(x,v)\right).
\end{equation}

\paragraph{Statement of the main result.}
The main result in this section is the following.
\begin{theorem}
  \label{theo:drift}
  Assume that~\Cref{ass:for_drift} holds with $L_1=1$ and let $\bdelta,\bT,\bareta >0$.
  Then, for any $\ell \in\nset$, $\epsilon >0$, $\bfomega = (K,\delta,\eta)$, $\delta >0$, $K\in\nsets$, $\eta \in \coint{0,\bareta}$ satisfying
      \begin{equation}
    \label{eq:20}
    \epsilon \leq \espilon_1 \wedge \epsilon_2 \eqsp,\qquad     \delta \leq \delta_1 \wedge \delta_2 \wedge 1/[7\times 8] \wedge \bdelta \eqsp, \qquad K \delta \leq {T}_1 \wedge T_2 \wedge T_3 \wedge \bT \eqsp,
  \end{equation}
  for any $x,v\in\rset^d$ and $k \in\{0,\ldots,K\}$,
  \begin{multline}
    \label{eq:drift_final}
     \rmP_{\bfomega}^n \rmD_{\eta}\rmV_{\delta}^k  \Wol(x,v) \leq C_{1,\ell}^{\ell} (1-\rho_{\bfomega}  K\delta)^{\ell  n}    \Wol(x,v) \\
     + \nq (C_{2,\ell} q^{\ell} + C_{3,\ell}) \sup_{s \in [0,\bT]} \frac{s}{1-(1-\rho_{\bfomega}s/8)^{\ell}}  \eqsp, 
     \end{multline}
   where $\epsilon_1,\epsilon_2,T_1,T_2,T_3,\delta_1,\delta_2$ are defined in  
\eqref{eq:10_applied_final}-\eqref{eq:condition_epsilon_e_0_final}-\eqref{eq:condition_epsilon_e_0_1_final}-\eqref{eq:condi_k_delta_T_3}-\eqref{eq:def_epsilon_2},
   \begin{equation}
  \label{eq:def_rho_omega_final}
  \rho_{\bfomega} = 2^{5}[\gamma_0\wedge \{ \eta \gamma_0 +\gamma_0^2K \delta/2\}\mtt(2^6 3)^{-1}] \eqsp,
\end{equation}
and  $C_{1,\ell},C_{2,\ell},C_{3,\ell}$ are given in \eqref{eq:def_C_ell_final}. These constants do not depend on $\nq,q$ and only depend on $\delta,K,\eta$ through $\gamma_0$ and $\rho_{\bfomega}$.
   \end{theorem}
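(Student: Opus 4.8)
The plan is to prove \Cref{theo:drift} by a Foster--Lyapunov argument: one establishes a single-step geometric drift inequality of the form $\rmP_{\bfomega}\Wol \leq (1-\rho_{\bfomega}K\delta)^{\ell}\Wol + \nq(C_{2,\ell}q^{\ell}+C_{3,\ell})\,K\delta$, together with the analogous control for the intermediate operators $\rmD_{\eta}\rmV_{\delta}^{k}$, $0\le k\le K$, and then iterates it $n$ times. Summing the resulting geometric series $\sum_{j<n}(1-\rho_{\bfomega}K\delta)^{\ell j}$ and rewriting $\frac{K\delta}{1-(1-\rho_{\bfomega}K\delta)^{\ell}}$ through the uniform bound $\sup_{s\in[0,\bT]} s/(1-(1-\rho_{\bfomega}s/8)^{\ell})$ yields exactly \eqref{eq:drift_final}. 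All the work is in the single-step estimate, which is obtained by propagating the per-particle building block $\Woi$ of \eqref{eq:def_wlyap} through the elementary maps: one Verlet step $\Phi_{\delta}$, its $K$-fold iterate $\Phi_{\delta}^{K}$, and the momentum refreshment $\rmD_{\eta}$.

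First I would treat a single Verlet step. Using the explicit form \eqref{eq:verlet_decompo}--\eqref{eq:phi-1} of $\Phi_{\delta}$ and a Taylor expansion in $\delta$, one computes $\Woi\circ\Phi_{\delta}-\Woi$ and isolates its term linear in $\delta$. The drift condition \eqref{eq:ass_drift}, the sub-linearity bounds \eqref{eq:tail_nabla_U}--\eqref{eq:pot_drift_sub_linear} and Young's inequality show that this linear term is $\leq -c\,\delta\,\mathcal{E}_i(x,v)+c'\delta\,(q+\mathrm{Int}_i(x))$, where $\mathcal{E}_i$ is a coercive quadratic comparable to $\Woi$ (via \eqref{eq:23}) and the interaction contribution $\mathrm{Int}_i$ coming from $\tfrac{\epsilon}{\nq}\sum_j\nabla W^{(q)}(x_i-x_j)$ is $\mathcal O(\epsilon)\sum_j(\norm{x_i}+\norm{x_j})$, hence absorbable into the contraction once $\epsilon$ is small. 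Raising to the power $\ell$ by means of $(1-u)^{\ell}\le 1-\ell u+\dots$ and the convexity inequality \eqref{eq:convexity_ineq_sums} for $\bigl(\sum_i w_i a_i\bigr)^{\ell}$, then summing over particles (the $\sum_i$ structure is preserved, which is where the weak interaction terms recombine), gives the one-Verlet-step estimate for $\Wol=\sum_i\Woi^{\ell}$; this is \Cref{lem:drift_v1}. Iterating over $K$ Verlet steps is not a naive composition: a crude iteration would accumulate the additive constant $K$ times against a contraction factor $(1-c\delta)^{K}\approx 1-cK\delta$. Instead I would use the recursion \Cref{lem:rec_for_drift} to control precisely how the prefactors of the various remainder terms evolve along the $K$ steps, keeping all constants uniform in $K$, $q$ and $\nq$ under the normalization $K\delta\le\bT$; this produces \Cref{lem:drift_k_step}, which also supplies the bounds for $\rmV_{\delta}^{k}$ with $k\le K$ needed for the $\rmD_{\eta}\rmV_{\delta}^{k}$ prefix in \eqref{eq:drift_final}.

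Next I would incorporate the refreshment $\rmD_{\eta}$, $v_i\mapsto\eta v_i+\sqrt{1-\eta^2}\,g$ with $g$ a standard Gaussian on $\rset^{q}$. Expanding $\Woi$ at the updated momentum (\Cref{lem:drift_for_fixed_Gaussian}), the terms $\eta^{2}\norm{v_i}^{2}$ and $\eta^{2}\gamma_0\ps{x_i}{v_i}$ contribute a (partial) contraction, while the cross terms with $g$ are handled by Young's inequality and then averaged out using the Gaussian moment bounds of \Cref{lem:moment_gaussian}, which generate additive contributions of order $q^{\ell}$ per particle --- this is the source of the $q^{\ell}$ (here $q^{3}$ for $\ell=3$) in the statement. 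After averaging one obtains \Cref{lem:drift_one_step}, and composing it with \Cref{lem:drift_k_step} gives the single-step drift for $\rmP_{\bfomega}=\rmD_{\eta}\rmV_{\delta}^{K}$ with rate $\rho_{\bfomega}$ as in \eqref{eq:def_rho_omega_final}, under the smallness conditions \eqref{eq:20} on $\epsilon$, $\delta$ and $K\delta$ (these conditions being exactly what is needed to make the various remainders subdominant). Iterating and summing as described above then completes the proof.

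The main obstacle is the case $\eta=0$ (unadjusted HMC). When $\eta=0$ the cross term $\eta\gamma_0\ps{x_i}{v_i}$ in $\Woi$ vanishes and the refreshment becomes a full resampling, so there is no dissipation at the level of an infinitesimal Verlet step, nor any from the damping: the entire drift must be extracted from the integrated ballistic dynamics over a full window $\tint=K\delta$, which forces the sharp $K$-step analysis of \Cref{lem:drift_k_step} and a use of \eqref{eq:ass_drift} in integrated rather than pointwise form. A secondary, purely bookkeeping difficulty is to keep every constant independent of $\nq$ and $q$: one must systematically separate the role of $q$, which enters only through Gaussian moments $\PE\norm{g}^{2\ell}\asymp q^{\ell}$, from that of $\nq$, which merely multiplies the additive constant, and ensure the weak interaction term is absorbed by the choice of $\epsilon$ --- this is precisely the accounting that \Cref{lem:rec_for_drift} is designed to perform.
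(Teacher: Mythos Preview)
Your proposal is essentially correct and follows the paper's architecture, citing the right chain of lemmas. One point of precision is worth making, because it affects why the argument closes: you place the ``raising to the power $\ell$'' at the single-Verlet-step stage and attribute the resulting estimate for $\Wol=\sum_i\Woi^{\ell}$ to \Cref{lem:drift_v1}, but the paper's ordering is different. \Cref{lem:drift_v1}, \Cref{lem:rec_for_drift} and \Cref{lem:drift_k_step} work entirely with the functional $\Voi$ which is \emph{linear} in the quadratic quantities $\norm{x_i}^2,\norm{v_i}^2,\ps{x_i}{v_i},U^{(q)}(x_i)$ and the interaction sums, parameterized by $\bfo=(a,b,c,b_0,e,f)$: one Verlet step sends $\bfo\mapsto\tilde{\bfo}$ by an explicit affine map, and \Cref{lem:rec_for_drift} is a discrete Gr\"onwall estimate for this finite-dimensional recursion on the coefficients. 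Only \emph{after} the full $K$-step estimate and the insertion of the refreshed momentum (\Cref{lem:drift_for_fixed_Gaussian}) does one raise to the power $\ell$ and take the Gaussian expectation, in \Cref{lem:drift_one_step} via \Cref{lem:moment_gaussian}. This ordering is what keeps the Verlet propagation tractable: pushing $\Voi^{\ell}$ directly through a Verlet step would not yield a closed recursion on finitely many coefficients. The proof of \Cref{theo:drift} proper is then nothing more than the specific choice $a_0=\gamma_0^{2}/2$, $b_0=1$, $c_0=\eta\gamma_0$, $e_0=f_0=0$ (so that $\Voi=\Woi$), which satisfies \eqref{eq:condition_c_0} and turns \Cref{lem:drift_one_step}(b) into the one-step contraction with rate $\rho_{\bfomega}$, followed by the induction on $n$ and the geometric sum you describe.
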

   
   The remainder of Section~\ref{sec:Lyapunov} is devoted to the proof of this result. Before proceeding with this proof, let us notice that Theorem~\ref{theo:drift} implies Theorem~\ref{Thm:drift} in view of~\eqref{eq:Lyap_for_main_drift_thm}
and, as can be checked on their expressions, the constants $\epsilon_1,\epsilon_2,T_1,T_2,T_3,\delta_1,\delta_2,C_{1,\ell},C_{2,\ell},C_{3,\ell}$ appearing in Theorem~\ref{theo:drift} depend only on $\mtt,\Mtt,\overline{T},\overline{\delta},\gamma_0 $ and not on $q,\nq,\epsilon$.
   
\paragraph{Evolution over one Verlet step.} In order to prove \Cref{theo:drift}, we will work with a Lyapunov function slightly more general than $\Woi$.  Consider, for any $\bfo = (\a,\b,\c,\b_0,\e,\f) \in \rset_+^6$, $i \in \{1,\ldots,\nq\}$, and $x = (x_1,\ldots,x_q), v = (v_1,\ldots,v_q) \in \rsetd$, the function
\[
\begin{aligned}
\Voi(x,v) =  \a \norm[2]{x_i} & + \b \norm[2]{v_i} + \c \ps{x_i}{v_i} + 2\b_0 \Uq(x_i) \\
 & + \e \frac{\epsilon}{\nq}\sum_{j=1}^{\nq}\parentheseDeux{\norm{x_i-x_j}^2+\norm{v_i-v_j}^2} +\f\eqsp.
\end{aligned}
\]
Notice that the structure of $\Voi$ is similar to the one of $\Woi$ except that we have added the parameters $\bfo$ to be more flexible. That way, we will bound the value of $\Voi$ after some Verlet steps in terms of  $\Vlyapc[\tilde{\bfo}][i]$ with different parameters $\tilde \bfo$, so that the general forms of the function is preserved and we only have to keep track of the evolution of the coefficients $\bfo$.

Specifically, the following result makes precise how the Lyapunov function evolves over one  Verlet step, by bounding it with a similar Lyapunov function with modified coefficients. Note that the coefficient~$b_0$ in front of~$U^{(q)}$ in the expression of~$\Voi$ is fixed, so that this parameter of the Lyapunov function does not change over the iterations. 

\begin{lemma}
\label{lem:drift_v1}
Assume that \Cref{ass:for_drift} holds and that $\epsilon \leq \mtt/4$. 
Then, for any $x,v \in \rsetdd$, $\delta\in \ocintLigne{0,\mtt/40}$, $i\in\iint{1}{\nq}$ and $\bfo = (\a,\b,\c,b_0,\e,\f)\in \rset_+^6$ with $b_0 \leq b$, it holds
  \begin{equation}
    \label{eq:7}
      \rmV_{\delta} \Voi(x,v) \leq        \Vlyapc[\tilde{\bfo}][i](x,v)  \eqsp,
    \end{equation}
    where $\tilde{\bfo} = (\tilde{\a},\tilde{\b},\tilde{\c},\b_0,\tilde{\e},\tilde{\f})$ with
    \begin{align}
    \tilde{\a} & = a\left(1-\frac{\mtt\delta^2}{2}\right) + 8 b \delta^2 {- b_0 \frac{ \delta^2\mtt^2}{2}} - \frac{\delta \mtt c}{2} + \delta (b-b_0)\eqsp, \\
    \tilde{\b} &= 2 \delta^2 \a + (1+\epsilon\delta + 31\delta^2)\b {+ 2 b_0\delta^2} +\delta(1+10\delta) \c  + \delta (b-b_0)\eqsp,\\
    \tilde{\c} &= 2 \delta \a + \c \eqsp, \\
    \tilde{\e} & = \e + \delta(2\a\delta + 2\b+2\delta b_0+2\c+{7}\e) \eqsp,\\
    \tilde{\f} & =  \f +\a\delta^2 \Mtt+ 2\b_0 \delta^2 \mtt \Mtt  + \delta \Mtt c \eqsp.
    \end{align}
\end{lemma}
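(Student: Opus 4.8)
## Proof Proposal for Lemma~\ref{lem:drift_v1}

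The plan is to track, term by term, how each summand of $\Voi$ changes under one Verlet step $\Phi_\delta = \Phi^{(v)}_{\delta/2}\circ\Phi^{(x)}_\delta\circ\Phi^{(v)}_{\delta/2}$. First I would write down explicitly the image $(\tilde x,\tilde v) = \Phi_\delta(x,v)$ componentwise: the half-kick gives $v^- = v - (\delta/2)\nabla U(x)$, the drift gives $\tilde x = x + \delta v^-$, and the final half-kick gives $\tilde v = v^- - (\delta/2)\nabla U(\tilde x)$. Because $U$ has the mean-field form~\eqref{eq:Umean_field}, $\nabla_{x_i} U(x) = \nabla U^{(q)}(x_i) + (2\epsilon/\nq)\sum_j \nabla W^{(q)}(x_i - x_j)$, so each block $(\tilde x_i,\tilde v_i)$ depends on $x_i,v_i$ plus an $O(\epsilon)$ interaction correction. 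Then I would expand each of the five nontrivial pieces $\a|\tilde x_i|^2$, $\b|\tilde v_i|^2$, $\c\langle \tilde x_i,\tilde v_i\rangle$, $2b_0 U^{(q)}(\tilde x_i)$, and the interaction term $\e(\epsilon/\nq)\sum_j(|\tilde x_i - \tilde x_j|^2 + |\tilde v_i - \tilde v_j|^2)$ as polynomials in $\delta$, keeping all terms exactly and bounding the higher-order ones.

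The key structural inputs are: (i) $\|\nabla^2 U^{(q)}\|_\infty \le L_1 \le 1$ and the sub-linear bounds~\eqref{eq:pot_drift_sub_linear}, which let me bound $|\nabla U^{(q)}(x_i)| \le |x_i|$ and $U^{(q)}(x_i) \le |x_i|^2/2$; (ii) the drift condition~\eqref{eq:ass_drift}, $\langle x_i, \nabla U^{(q)}(x_i)\rangle \ge \mtt|x_i|^2 - \Mtt$, which is what produces the contractive factors $(1-\mtt\delta^2/2)$ in $\tilde a$ and the $-\delta\mtt c/2$ term, and also generates the additive constant $\tilde f$ via the $-\Mtt$ remainder; (iii) Young's inequalities with carefully chosen weights to absorb cross-terms like $\langle x_i, \nabla U(x_i)\rangle$ or $\langle v_i, \nabla U(x_i)\rangle$ into the $|x_i|^2$, $|v_i|^2$ coefficients, at the cost of the numerical constants ($8$, $31$, $10$, $7$, etc.) appearing in the statement. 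For the potential term I would use a first-order Taylor expansion $U^{(q)}(\tilde x_i) = U^{(q)}(x_i) + \langle \nabla U^{(q)}(x_i), \tilde x_i - x_i\rangle + \tfrac12 \langle \nabla^2 U^{(q)}(\xi)(\tilde x_i - x_i), \tilde x_i - x_i\rangle$ with $\|\nabla^2 U^{(q)}\|_\infty \le 1$, so that the $U^{(q)}$-coefficient stays exactly $b_0$ while the linear term $\langle \nabla U^{(q)}(x_i), \delta v_i - (\delta^2/2)\nabla U^{(q)}(x_i) - \dots\rangle$ feeds (via the drift inequality) the $-b_0\delta^2\mtt^2/2$ into $\tilde a$ and the $2b_0\delta^2\mtt\Mtt$ into $\tilde f$, with the $O(\epsilon)$ interaction gradient pieces absorbed into $\tilde e$.

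For the interaction term I would exploit that $|\tilde x_i - \tilde x_j|^2 + |\tilde v_i - \tilde v_j|^2$ is, to leading order, $|x_i - x_j|^2 + |v_i - v_j|^2$ plus $\delta$ times cross-terms; summing over $j$ and using $(1/\nq)\sum_j |x_i - x_j|^2 \le 2(|x_i|^2 + (1/\nq)\sum_j|x_j|^2)$ together with $\epsilon \le \mtt/4 \le 1/4$ lets me fold the new contributions into $\tilde e$ and into the $a$-, $b$-coefficients — this is where the hypothesis $\epsilon \le \mtt/4$ is used to keep the interaction perturbation small relative to the confining terms. I expect the main obstacle to be bookkeeping discipline rather than any conceptual difficulty: one must carefully collect every $\delta$- and $\delta^2$-order contribution from all five pieces (plus the interaction), decide which go into which output coefficient, and verify that the crude bounds on $\delta^3$ and higher terms (using $\delta \le \mtt/40 \le 1/40$) are dominated by the constants already written (e.g. the $31\delta^2$ in $\tilde b$ must genuinely absorb all the leftover $|v_i|^2$ contributions from the double kick and the interaction). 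I would organize the computation as a short lemma-internal table of "contribution of term $X$ to coefficient $\tilde Y$" and then simply sum the rows, checking each inequality sign is oriented so that $\rmV_\delta \Voi \le \Vlyapc[\tilde\bfo][i]$ holds pointwise.
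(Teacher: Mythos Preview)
Your proposal is correct and follows essentially the same approach as the paper: expand each piece of $\Voi\circ\Phi_\delta$ in~$\delta$, use the drift condition~\eqref{eq:ass_drift} (and its consequence~\eqref{eq:tail_nabla_U}) to generate the contractive $\mtt$-terms and the additive $\Mtt$-terms, a first-order Taylor expansion with Hessian bound for $U^{(q)}(y_i)$, and Young/Cauchy--Schwarz inequalities to absorb all cross-terms and higher-order remnants into the stated numerical constants under $\delta\le\mtt/40$ and $\epsilon\le\mtt/4$. The paper's proof is exactly this term-by-term bookkeeping exercise, organized around separate bounds for $|y_i|^2$, $|p_i|^2$, $\langle y_i,p_i\rangle$, $U^{(q)}(y_i)$, $|y_i-y_j|^2$ and $|p_i-p_j|^2$.
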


\begin{proof}
Note first that the assumption $\epsilon \leq \mtt/4$ and the bound $\mtt \leq 1$ from~\eqref{eq:bound_m} imply that $\epsilon \leq 1/4$.
  Let $x,v \in \rsetd$, $i \in \iint{1}{\nq}$, $\delta \leq \mtt/40$ and $\bfo = (a,b,c,b_0,e,f)$.
To alleviate the notation, we introduce, for~$j \in\iint{1}{\nq}$, the partial derivative of~$U$ with respect to the $j$-th variable, namely the function $\psi_j: \rset^d \to \rset^d$ defined for any $\tilde{x} = \newOld{(\tilde x_1,\dots,\tilde x_{d_0})}\in \mathbb R^{d}$ 
 as
  \begin{equation}
    \label{eq:5}
    \psi_j(\tilde{x}) = \nabla \Uq(\tilde{x}_j) + \frac{\epsilon}{\nq}\sum_{j'=1}^{\nq}  \nabla \Wq(\tilde{x}_j-\tilde{x}_{j'}) \eqsp.
  \end{equation}
With this notation, it holds by definition of $\rmV_{\delta}$ that 
  \begin{equation}
    \label{eq:drift_eq_1}
\rmV_{\delta} \Voi(x,v) = \Voi(y,p)\eqsp,
\end{equation}
where $y = \newOld{(y_1,\dots,y_{d_0})}$ and $p=\newOld{(p_1,\dots,p_{d_0})}$ 
 with, for $j \in \iint{1}{\nq}$,
\begin{equation}
  \label{eq:def_y_j_v_j}
  y_j = x_j + \delta v_j -\frac{\delta^2}{2}\psi_j(x) \eqsp, \qquad  p_j = v_j -\frac{\delta}{2} \defEns{\psi_j(x) + \psi_j(y)} \eqsp.
\end{equation}

We start by giving some useful preliminary estimates. First, by a discrete Cauchy--Schwarz inequality and~\eqref{eq:pot_drift_sub_linear},
\begin{align}
\norm{\psi_i(x)-\psi_j(x)}^2 & \leq 2 \norm{\nabla \Uq(x_i)-\nabla \Uq(x_j)}^2 \\
& \qquad + \frac{2\epsilon^2}{\nq} \sum_{k=1}^{\nq} \norm{\nabla \Wq(x_k-x_{i})-\nabla \Wq(x_k-x_{j})}^2 \\
& \leq 2\left(1+\epsilon^2\right)\norm{x_i-x_j}^2.
\label{eq:bound_psi_i_minus_psi_j}
\end{align}
Next, by two successive discrete Cauchy--Schwarz inequalities and~\eqref{eq:pot_drift_sub_linear}, we have, for any $j \in \iint{1}{\nq}$,
\begin{align}
  \norm{y_{j}-x_{j}}^2 & \leq \frac{\delta^4}{2} \norm{\psi_j(x)}^2 + 2\delta^2 \norm{v_j}^2\\
  & \leq 2 \parenthese{ \delta^4 \norm{ \nabla \Uq(x_{j})}^2 + \frac{\delta^4 \epsilon^2}{\nq} \sum_{j'=1}^{\nq} \norm{\nabla \Wq(x_{j}-x_{j'})}^2 + \delta^2 \norm{v_j}^2 } \\
    \label{eq:bound_drift_lem_1_y_i_x_i}
  & \leq 2 \parenthese{ \delta^4  \norm{ x_{j} }^2 + \frac{\delta^4\epsilon^2}{\nq} \sum_{j'=1}^{\nq} \norm{x_{j}-x_{j'}}^2+ \delta^2 \norm{v_{j}}^2 } \eqsp .
\end{align}
In view of~\eqref{eq:pot_drift_sub_linear} and discrete Cauchy--Schwarz inequalities, it holds, for any $\tilde{x} = \newOld{(\tilde x_1,\dots,\tilde x_{d_0})} , \tilde{y} = \newOld{(y_1,\dots,y_{d_0})} \in \mathbb R^{d_0}$ 
and $i \in \iint{1}{\nq}$,
\begin{align}
\norm{  \psi_i(\tilde{x}) - \psi_i(\tilde{y})}^2
& \leqslant 2 \norm{ \tilde{x}_i-\tilde{y}_i}^2 + \frac{2\epsilon^2}{\nq} \sum_{j=1}^{\nq}\norm{\tilde{x}_i-\tilde{x}_{j}-\left(\tilde{y}_i-\tilde{y}_{j}\right)}^2\\
\label{eq:bound_phi_j_x}
& \leqslant 2\left(1 +2  \epsilon^2\right)\norm{\tilde{x}_i-\tilde{y}_i}^2 + \frac{4  \epsilon^2}{\nq} \sum_{j=1}^{\nq}\norm{\tilde{x}_{j}-\tilde{y}_{j}}^2 \eqsp, 
\end{align}
as well as
\begin{equation}
\label{eq:bound_psi_j_square}
\norm{\psi_i(\tilde{x})}^2 \leqslant 2 \norm{\tilde{x}_i}^2 +\frac{2 \epsilon^2}{\nq} \sum_{j=1}^{\nq} \norm{\tilde{x}_i-\tilde{x}_{j}}^2  \eqsp.
\end{equation}
As a result, using \eqref{eq:bound_drift_lem_1_y_i_x_i} to bound all the terms on right hand side of~\eqref{eq:bound_phi_j_x}, we obtain 
\begin{align}
    \norm{  \psi_i(x) - \psi_i(y)}^2 &\leq 4 \left(1 +2  \epsilon^2\right) 
    \parenthese{\delta^4  \norm{ x_i }^2 +   \delta^4  \frac{\epsilon^2}{\nq} \sum_{j=1}^{\nq} \norm{x_i-x_j}^2+ \delta^2 \norm{v_i}^2} \\
    &\ \ + \frac{8  \epsilon^2}{\nq} \sum_{j=1}^{\nq} \left( \delta^4  \norm{ x_{j} }^2 + \frac{\delta^4\epsilon^2}{\nq} \sum_{j'=1}^{\nq} \norm{x_{j}-x_{j'}}^2+ \delta^2 \norm{v_{j}}^2 \right)\\
    &\leq 4 \left(1 +2  \epsilon^2\right) 
    \parenthese{\delta^4  \norm{ x_i }^2 +   \delta^4  \frac{\epsilon^2}{\nq} \sum_{j=1}^{\nq} \norm{x_i-x_j}^2+ \delta^2 \norm{v_i}^2} \\
    &\ \ + \frac{16  \epsilon^2}{\nq} \sum_{j=1}^{\nq} \left( \delta^4  \norm{ x_{i} }^2 +   (1+2\epsilon^2)\delta^4 \norm{ x_{i}-x_j }^2 + \delta^2 \norm{v_{i}}^2 + \delta^2 \norm{v_{j}-v_i}^2 \right)\\
    & \leq 30 \parenthese{\delta^4  \norm{ x_i }^2 + \frac{\delta^4\epsilon^2}{\nq} \sum_{j=1}^{\nq} \norm{x_i-x_j}^2+ \delta^2 \norm{v_i}^2 + \frac{  \epsilon^2 \delta^2}{\nq} \sum_{j=1}^{\nq}  \norm{v_i-v_j}^2} \eqsp ,
      \label{eq:delta_psi_x_y}
\end{align}
where we used~$20+40\epsilon^2 \leq 30$ in the last inequality, and the second inequality has been obtained by using
\begin{equation}
\label{eq:astuce_somme_xi_2}    
\frac{1}{\nq} \sum_{j=1}^{\nq} \abs{x_j}^2 \leq 2\abs{x_i}^2 + \frac{2}{\nq} \sum_{j=1}^{\nq} \abs{x_j-x_i}^2,  
\end{equation}
and a similar inequality for~$v$, as well as the inequality $\norm{x_{j}-x_i + x_i-x_{j'}}^2 \leq 2 \norm{x_{j}-x_i}\newOld{^2} + 2\norm{x_i-x_{j'}}^2$ to write
\[
\begin{aligned}
\frac{1}{\nq^2}\sum_{j=1}^{\nq} \sum_{j'=1}^{\nq} \norm{x_{j}-x_{j'}}^2 & = \frac{1}{\nq^2}\sum_{j=1}^{\nq} \sum_{j'=1}^{\nq} \norm{x_{j}-x_i + x_i-x_{j'}}^2 \leq \frac{4}{\nq}\sum_{j=1}^{\nq} \norm{x_{j}-x_i}^2 \eqsp.
\end{aligned}
\]

We can now turn to estimating the various terms in~$\Vlyapc[\tilde{\bfo}][i](x,v)$. In view of~\eqref{eq:bound_drift_lem_1_y_i_x_i},
\begin{align}
\norm{y_i}^2 & = \norm{x_i}^2 + 2 \ps{x_i}{\delta v_i-\frac{\delta^2}{2}\psi_i(x)} +  \norm{y_i-x_i}^2 \\
& \leq  \norm{x_i}^2- \delta^2\ps{x_i}{\nabla \Uq(x_i)} + \delta^2\epsilon \norm{x_i}^2 +   \frac{\delta^2\epsilon}{\nq} \sum_{j=1}^{\nq} \norm{x_i-x_j}^2 \\
& \qquad +2 \delta\ps{x_i}{v_i} +       2 \parenthese{ \delta^4  \norm{ x_i }^2 + \frac{\delta^4\epsilon^2}{\nq} \sum_{j=1}^{\nq} \norm{x_i-x_j}^2 + \delta^2 \norm{v_i}^2 }\eqsp,
\end{align}
where $\ps{x_i}{\nabla W^{(q)}(x_i-x_j)}$ is bounded with a discrete Cauchy--Schwarz inequality and~\eqref{eq:pot_drift_sub_linear}.
Therefore, using first the drift condition~\eqref{eq:ass_drift} and the inequality $2\delta^2 \epsilon \leq 1$, and then $\max(\delta,\epsilon) \leq \mtt/4 \leq 1/4$,
\begin{align}
\label{eq:11}
\norm{y_i}^2 & \leq \left(1+\delta^2 \epsilon  + 2 \delta^4 \right)\norm{x_i}^2+2 \delta\ps{x_i}{v_i} +2\delta^2  \norm{v_i}^2 \\
&  \qquad - \delta^2\ps{x_i}{\nabla \Uq(x_i)} +  \frac{\delta^2  \epsilon(1 + 2 \delta^2 \epsilon)}{\nq}\sum_{j=1}^{\nq} \norm{x_i-x_j}^2\\
& \leq \left(1-\delta^2 \mtt +\delta^2 \epsilon  + 2 \delta^4 \right)\norm{x_i}^2+2 \delta\ps{x_i}{v_i} +2\delta^2  \norm{v_i}^2 + \delta^2 \Mtt + \frac{2 \delta^2  \epsilon} {\nq}\sum_{j=1}^{\nq} \norm{x_i-x_j}^2 \\
& \leq \left(1-\frac{\delta^2 \mtt}{2} \right)\norm{x_i}^2+2 \delta\ps{x_i}{v_i} +2\delta^2  \norm{v_i}^2  + \delta^2 \Mtt +  \frac{2 \delta^2  \epsilon}{\nq}\sum_{j=1}^{\nq} \norm{x_i-x_j}^2 \eqsp.
\label{eq:bound_y_i}
\end{align}
Similarly, using a discrete Cauchy--Schwarz inequality to bound~$\delta \langle v_i,\psi_i(x)-\psi_i(y)\rangle$,
\begin{align}
\norm{p_i}^2 & = \norm{v_i}^2 - \delta \ps{v_i}{ \psi_i(x) + \psi_i(y) } + \frac{\delta^2}{4} \norm{ \psi_i(x) + \psi_i(y) }^2   \\
&  \leqslant\norm{v_i}^2-2 \delta\left\langle v_i, \psi_i(x)\right\rangle  +\frac{\delta^2 }{2}\norm{v_i}^2  +\frac{1}{2}\norm{\psi_i(x)-\psi_i(y)}^2 \\
& \qquad \qquad + \frac{\delta^2}{2}\left[4\norm{\psi_i(x)}^2+\norm{\psi_i(x)-\psi_i(y)}^2\right].
\end{align}
Using 
\[
2\left\langle v_i, \frac{1}{d_0}\sum_{j=1}^{d_0} \nabla W^{(q)}(x_i-x_j)\right\rangle \leq \norm{v_i}^2 + \frac{1}{d_0}\sum_{j=1}^{d_0} \norm{x_i-x_j}^2,
\]
as well as~\eqref{eq:bound_psi_j_square}, it follows that
\begin{align}
\norm{p_i}^2 & \leqslant\norm{v_i}^2-2 \delta\left\langle v_i, \nabla \Uq(x_i)\right\rangle  +\left(\delta\epsilon+ \frac{\delta^2}{2}\right)\norm{v_i}^2  + \frac{1+\delta^2}{2} \norm{\psi_i(x)-\psi_i(y)}^2\\
& \qquad \qquad + 4\delta^2 \left[ \norm{x_i}^2 +\frac{\epsilon^2}{\nq} \sum_{j=1}^{\nq} \norm{x_i-x_j}^2 \right] + \frac{\delta \epsilon}{\nq} \sum_{j=1}^{\nq}\norm{x_i-x_j}^2 \\
& \leqslant \left(1+\delta \epsilon + \frac{\delta^2}{2}\right) \norm{v_i}^2 + 4\delta^2 \norm{x_i}^2 -2 \delta\left\langle v_i, \nabla \Uq(x_i)\right\rangle + \frac{\delta \epsilon(1+4\delta \epsilon)}{\nq} \sum_{j=1}^{\nq} \norm{x_i-x_j}^2 \\
& \qquad \qquad +  \norm{\psi_i(x)-\psi_i(y)}^2\\
& \leq \left[1+\delta\epsilon+\frac{\delta^2}{2}+30\delta^2 \right]\norm{v_i}^2 -2\delta\ps{v_i}{\nabla \Uq(x_i)} + 4\delta^2 \left[1+8\delta^2 \right]\norm{x_i}^2  \\
\label{eq:bound_pi}
& \quad + \frac{\delta\epsilon(1+4\delta\epsilon+30\delta^3\epsilon)}{\nq} \sum_{j=1}^{\nq} \norm{x_i-x_j}^2+ \frac{30 \epsilon^2 \delta^2}{\nq} \sum_{j=1}^{\nq} \norm{v_i-v_j}^2 \\
& \leq \left(1+\delta\epsilon +31 \delta^2\right)\norm{v_i}^2 -2\delta\ps{v_i}{\nabla \Uq(x_i)} + 8\delta^2\norm{x_i}^2  \\
\label{eq:bound_pi}
& \qquad + \frac{2\delta\epsilon}{\nq} \sum_{j=1}^{\nq} \norm{x_i-x_j}^2+ \frac{30 \epsilon^2 \delta^2}{\nq} \sum_{j=1}^{\nq} \norm{v_i-v_j}^2 
\eqsp,
\end{align}
where we used~\eqref{eq:delta_psi_x_y} to obtain the third inequality and that $\delta \leq 1/40$ and $\epsilon\leq 1/4$ to simplify the bounds in the last inequality. Next, 
\begin{align}
  \left\langle y_i, p_i\right\rangle & =\left\langle x_i+\delta v_i-\frac{\delta^2}{2} \psi_i(x), v_i-\frac{\delta}{2}\left(\psi_i(x)+\psi_i(y)\right)\right\rangle \\
  &=\left\langle x_i, v_i\right\rangle-\frac{\delta}{2}\left\langle x_i, \psi_i(x)+\psi_i(y)\right\rangle +\delta\norm{v_i}^2 -\frac{\delta^2}{2}\left\langle v_i, \psi_i(x)\right\rangle \\
  & \qquad -\frac{\delta^2}{2}\left\langle v_i, \psi_i(x)+\psi_i(y)\right\rangle +\frac{\delta^3}{4}\left\langle\psi_i(x), \psi_i(x)+\psi_i(y)\right\rangle \\
  & \leqslant\left\langle x_i, v_i\right\rangle-\delta\left\langle x_i, \psi_i(x)\right\rangle  +\frac{\delta}{2}\norm{x_i} \norm{\psi_i(x)-\psi_i(y)} +\delta\norm{v_i}^2 -\frac{3\delta^2}{2}\left\langle v_i, \psi_i(x)\right\rangle\\
   & \qquad +\frac{\delta^2}{2}\norm{v_i} \norm{\psi_i(x)-\psi_i(y)} + \frac{\delta^3}{2}  \norm{\psi_i(x)}^2+\frac{\delta^3}{4}\norm{\psi_{i}(x)}\norm{\psi_i(x)-\psi_i(y)}   \\
  & \leq \left\langle x_i, v_i\right\rangle-\delta\left\langle x_i, \psi_i(x)\right\rangle  +\frac{1}{4}\left( \delta^2 \norm{x_i}^2+\norm{\psi_i(x)-\psi_{i}(y)}^2\right)-\frac{3\delta^2}{2}\left\langle v_i, \psi_i(x)\right\rangle \\
  & \qquad +\delta\left(1+\frac{\delta}{4} \right) \norm{v_i}^2  +\frac{5\delta^3}{8} \norm{\psi_i(x)}^2 +\frac{\delta^2(2+\delta)}{8}\norm{\psi_i(x)-\psi_i(y)}^2.
  \end{align}
Let us emphasize that the term~$-\delta\left\langle x_i, \psi_i(x)\right\rangle$ is crucial to obtained a dissipation of order~$\delta$ in the position variables. This motivates the inclusion of a cross term~$\left\langle x_i, v_i\right\rangle$ in the Lyapunov function. We next use~\eqref{eq:pot_drift_sub_linear}, \eqref{eq:bound_psi_j_square} and~\eqref{eq:delta_psi_x_y} and the drift condition~\eqref{eq:ass_drift} to write
  \begin{align}
  \left\langle y_i, p_i\right\rangle & 
  \leq \left\langle x_i, v_i\right\rangle-\delta\left\langle x_i, \nabla \Uq(x_i)\right\rangle - \frac{3\delta^2}{2}\left\langle v_i, \nabla \Uq(x_i)\right\rangle \\
  & \qquad + \delta \left(\norm{x_i} + \frac{3\delta}{2}\norm{v_i}\right)\frac{\epsilon}{\nq}\sum_{j=1}^{\nq} \norm{\nabla W^{(q)}(x_i-x_j)} +\delta\left(1+\frac{\delta}{4} \right) \norm{v_i}^2 \\
  & \qquad + \frac{\delta^2}{4}\norm{x_i}^2 + \frac{7\delta^3}{8} \norm{\psi_i(x)}^2 + \frac{2+\delta^2(2+\delta)}{8}\norm{\psi_i(x)-\psi_i(y)}^2 \\
  & \leq \left\langle x_i, v_i\right\rangle-\delta \mtt \norm{x_i}^2 + \delta \Mtt + \frac{3\delta^2}{4}\left( \norm{v_i}^2 + \norm{\nabla \Uq(x_i)}^2\right) +\delta\left(1+\frac{\delta}{4} \right) \norm{v_i}^2\\
  & \qquad + \delta \epsilon \left(\frac{1}{2} \norm{x_i}^2 + \frac{9\delta^2}{8}\norm{v_i}^2 + \frac{1}{\nq}\sum_{j=1}^{\nq} \norm{\nabla W^{(q)}(x_i-x_j)}^2 \right) + \frac{\delta^2}{4}\norm{x_i}^2 \\
  & \qquad + \frac{7\delta^3}{4} \left( \norm{x_i}^2 +\frac{\epsilon^2}{\nq} \sum_{j=1}^{\nq} \norm{x_i-x_j}^2 \right)\\
  & \qquad + \frac{30\delta^2 [2+\delta^2(2+\delta)]}{8}  
        \parenthese{\delta^2  \norm{ x_i }^2 + \frac{\delta^2\epsilon^2}{\nq} \sum_{j=1}^{\nq} \norm{x_i-x_j}^2+ \norm{v_i}^2 + \frac{\epsilon^2 }{\nq} \sum_{j=1}^{\nq} \norm{v_i-v_j}^2}\\
  &   \leq \left\langle x_i, v_i\right\rangle- \frac{\delta \mtt}{2} \norm{x_i}^2 + \delta \Mtt + \delta(1+10\delta) \norm{v_i}^2
    \\
    \label{eq:bound_ps_drift}
  & \qquad + \frac{2 \delta \epsilon}{\nq} \sum_{j=1}^{\nq} \norm{x_i-x_j}^2  + \frac{8 \delta^2\epsilon^2}{\nq} \sum_{j=1}^{\nq} \norm{v_i-v_j}^2   \eqsp,
\end{align}
where we used in the last inequality that $\epsilon \leq \mtt/4$ and $\delta \leq \mtt / 40 \leq 1/40$.

Using first a Taylor expansion and (recalling~\eqref{eq:pot_drift_sub_linear} and~\eqref{eq:bound_psi_j_square})
\begin{equation}
    \ps{\nabla \Uq(x_i) }{\psi_i(x)} \leq \frac12 \left(\norm{\nabla \Uq(x_i)}^2 + \norm{\psi_i(x)}\right) \leq \frac12 \parenthese{3 \norm{x_i}^2 + \frac{2 \epsilon^2}{d_0} \sum_{j=1}^{d_0} \norm{x_i-x_j} }\,,
\end{equation}
then~\eqref{eq:bound_drift_lem_1_y_i_x_i} and~\eqref{eq:tail_nabla_U}, it follows
\begin{align}
  \Uq(y_i)& \leq \Uq(x_i) + \ps{\nabla \Uq(x_i)}{y_i-x_i} + \frac12 \norm{y_i-x_i}^2 \\
  & \leq  \Uq(x_i) + \ps{\nabla \Uq(x_i)}{\delta v_i-\frac{\delta^2}{2} \psi_i(x)} + \frac12 \norm{y_i-x_i}^2 \\
          & \leq  \Uq(x_i) -\frac{\delta^2}{2} \norm{\nabla \Uq(x_i)}^2 + \delta \ps{\nabla \Uq(x_i)}{v_i} \\
          & \qquad + \frac{\delta^2\epsilon}{2}\parenthese{2  \norm{x_i}^2 + \frac{2}{\nq}\sum_{j=1}^{\nq} \norm{x_i-x_j}^2} + {\frac12} \norm{y_i-x_i}^2  \\
          & \leq \Uq(x_i) -\frac{\delta^2 \mtt^2}{2} \norm{x_i}^2 + \delta^2 \mtt \Mtt + \delta \ps{\nabla \Uq(x_i)}{v_i} \\
          & \qquad+\delta^2 \left(\epsilon + \delta^2 \right) \norm{x_i}^2 + \delta^2 \norm{v_i}^2  +  \frac{\epsilon \delta^2(1+\delta^2 \epsilon)}{\nq} \sum_{j=1}^{\nq} \norm{x_i-x_j}^2 \\
          & \leq \Uq(x_i) -\frac{\delta^2 \mtt^2}{4} \norm{x_i}^2 + \delta^2 \mtt \Mtt + \delta \ps{\nabla \Uq(x_i)}{v_i} + \delta^2 \norm{v_i}^2 \\
          \label{eq:bound_uq}
          & \qquad  +  \frac{2\epsilon \delta^2}{\nq} \sum_{j=1}^{\nq} \norm{x_i-x_j}^2 \eqsp.
\end{align}
,Using first
$\norm{\mathbf{a} + \mathbf{b}}^2 \leq (1+\delta) \norm{\mathbf{a}}^2 + (1+\delta^{-1})\norm{\mathbf{b}}^2$, then~\eqref{eq:bound_psi_i_minus_psi_j} as well as $\delta \leq \mtt/40\leq 1/40 $ and $\epsilon \leq \mtt/4\leq 1/4$, we obtain 
\begin{align}
\norm{y_i-y_j}^2 & = \norm{x_i-x_j-\frac{\delta^2}{2}\left(\psi_{i}(x)-\psi_j(x)\right)+\delta(v_i-v_j)}^2 \\
& \leq (1+\delta)\norm{x_j-x_j}^2  + \delta^2\left(1+\delta^{-1}\right)\norm{v_i-v_j-\frac{\delta}{2}\left(\psi_i(x)-\psi_j(x)\right)}^2 \\
&\leq (1+\delta)\norm{x_i-x_j}^2 + 2 \delta(1+\delta) \norm{v_i-v_j}^2 + \frac{\delta^3(1+\delta)}{2} \norm{\psi_i(x)-\psi_j(x)}^2 \\
  & \leq \left[1+\delta+\delta^3(1+\delta)\left(1+\epsilon^2\right) \right]\norm{x_i-x_j}^2 
    + 2\delta(1+\delta) \norm{v_i-v_j}^2\\
  \label{eq:bound_y_iy_j}
    & \leq (1+2\delta)\norm{x_i-x_j}^2 + 4\delta \norm{v_i-v_j}^2 \eqsp.
\end{align}
Finally, using twice~\eqref{eq:bound_psi_i_minus_psi_j},
\begin{align}
\norm{p_i-p_j}^2 
& \leqslant(1+\delta)\norm{v_i-v_j}^2 + \frac{\delta^2\left(1+\delta^{-1}\right)}{4} \norm{ \psi_i(y)+\psi_i(x)-\psi_j(y)-\psi_j(x)}^2  \\
& \leqslant(1+\delta)\norm{v_i-v_j}^2 + \frac{\delta(1+\delta)}{2} \left( \norm{ \psi_i(x)-\psi_j(x)}^2 + \norm{ \psi_i(y)-\psi_j(y)}^2 \right)\\
& \leqslant(1+\delta)\norm{v_i-v_j}^2 + \delta(1+\delta)\left(1+\epsilon^2\right) \left( \norm{ x_i-x_j}^2 + \norm{y_i-y_j}^2 \right)\\
& \leqslant \delta(1+\delta)\left(1+\epsilon^2\right) \left( 4\norm{ x_i-x_j}^2 + 3\delta^2\norm{v_i-v_j}^2 + {\frac{3\delta^4}{4}}\norm{\psi_i(x)-\psi_j(x)}^2\right)\\
& \qquad + (1+\delta)\norm{v_i-v_j}^2 \\
& \leq \delta(1+\delta)\left(1+\epsilon^2\right) \left(4 + \frac{3\delta^2}{2}\left(1+\epsilon^2\right)\right) \norm{ x_i-x_j}^2 + (1+{2\delta})\norm{v_i-v_j}^2 \\
& \leq {5\delta}\norm{ x_i-x_j}^2 + (1+{2\delta})\norm{v_i-v_j}^2 \eqsp .
\end{align}
Combining this last inequality with \eqref{eq:bound_y_i}-\eqref{eq:bound_pi}-\eqref{eq:bound_ps_drift}-\eqref{eq:bound_uq}-\eqref{eq:bound_y_iy_j} in
\eqref{eq:drift_eq_1}, and (recalling $b \geq b_0$)
\[
{-2}\delta (b-b_0) \psLigne{v_i}{\nabla \Uq(x_i)} \leq \delta (b-b_0)\left( \norm{v_i}^2 + \norm{x_i}^2 \right),
\]
completes the proof.
\end{proof}

\paragraph{Quantifying how prefactors change over several steps.}\  In view of Lemma~\ref{lem:drift_v1}, controlling the evolution of the Lyapunov function along several Verlet steps amounts to a discrete Grönwall-type result for the sequence of parameters  defined recursively by the relation $\bfo \mapsto \tilde{\bfo}$ of Lemma~\ref{lem:drift_v1}. Notice that, among these parameters, $b_0$ is fixed and $f$ does not influence the other parameters, so that we focus first on the relations intertwining the parameters $a,b,c,e$. We state an abstract result, independent from the specific constants appearing in Lemma~\ref{lem:drift_v1}, except that it is crucial to keep track  of the order of the various parts in terms of the step size $\delta$. More specifically, Lemma~\ref{lem:drift_v1} reads $\tilde{\bfo} = (I+\delta A + \delta^2 B)\bfo$ for some matrices $A,B$ and thus, iterating this $k$ times, as long as $k\delta$ is small enough, it is sufficient to keep the lowest order terms in $k\delta$ and $\delta$ in $(I+\delta A + \delta^2 B)^k$. This is essentially the content of the next lemma.

\begin{lemma}
  \label{lem:rec_for_drift}
 Consider the four real sequences
  $(a_k)_{k\in\nset},(b_k)_{k\in\nset},(c_k)_{k\in\nset},(e_k)_{k\in\nset}$
such as $a_0,b_0,c_0,e_0 >0$ and defined by
  \begin{align}
    a_{k+1} &= (1-\delta^2 C_{a,1}) a_k - \delta c_k C_{a,2} + \delta^2 b_k C_{a,3}+ \delta e_k C_{a,4} + C_{a,5}\delta(b_{k}-b_0)\eqsp, \\
    b_{k+1} & = (1+C_{b,1}\delta + C_{b,2}\delta^2) b_k + \delta^2 C_{b,3} a_k + \delta c_k C_{b,4} + \delta e_k C_{b,5} + \delta(b_{k}-b_0) \eqsp, \\
    c_{k+1} &= c_k + C_{c,1} \delta a_k \eqsp, \quad e_{k+1} = e_k + C_{e,1} \delta (a_k\delta+b_k+c_k+e_k) \eqsp,
  \end{align}
  for some non-negative real constants $\{C_{\square,\triangle}\,: \, \square \in \{a,b,c,e\} \, ,\triangle \in \{1,2,3,4,5\}\}$ and $\delta >0$.
  Then, for any $\delta >0$ and $k \in \nset$ satisfying
  \begin{equation}
    \label{eq:10}
\delta \leq \tilde{\delta}_1 \wedge \tdelta \eqsp, \quad k\delta \leq \tilde{T}_1 \wedge \tilde{T}_2\wedge\tilde{T} \eqsp,
  \end{equation}
 for $\tilde{T},\tdelta >0$, with 
\begin{equation}
    \label{eq:10_v2}
    \begin{aligned}
      \tilde{\delta}_1 &=  [ C_{b,2}]^{-1} \wedge [a_0 C_{a,2} C_{c,1}/\{8(\tC_{e,1} C_{a,4} +  \tC_{b,3} ( \tdelta C_{a,3} + C_{a,5} ))\}] \wedge [4 C_{a,1}]^{-1} \wedge [4 C_{c,1}C_{a,2} \tilde{C}_{a,3}]^{-1} 
      \\
      \tilde{T}_1 & = 4^{-1}\{\tilde{C}_{b,1}^{-1} \wedge \tilde{C}_{b,2}^{-1/2} \wedge C_{e,1}^{-1} \wedge 1\} \eqsp, \\
      \tilde{T}_2 & = 4^{-1}\{[C_{a,2} /(4C_{b,4} C_{a,5})] \wedge  [a_0 C_{a,2} C_{c,1}(2 C_{a,5}\tC_{b,2} b_0)^{-1}]\wedge  [a_0(2 C_{a,2}c_0)^{-1}]\wedge [2 C_{c,1}C_{a,2}]^{-1/2}\} \eqsp, \\
    \end{aligned}
  \end{equation}
  assuming moreover that
  \begin{equation}
  \begin{aligned}
    & 2 e_0 C_{a,4} \leq c_0 C_{a,2}/4   \eqsp, \qquad  C_{a,5}(C_{b,1} b_0+ C_{b,5} e_0) +\tC_{e,1} C_{a,4} \leq C_{a,2} C_{c,1} a_0 / 8 \eqsp, 
  \end{aligned}
      \label{eq:condition_c0}
\end{equation}
 it holds
  \begin{equation}
  \begin{aligned}
    &a_0/2 \leq a_k \leq a_0 + \tilde{C}_{a,3}k\delta^2  \eqsp, \quad b_0 \leq  b_k \leq 2 b_0 + \tilde{C}_{b,3}  k \delta^2    \eqsp, \quad e_k \leq 2 e_0 + \tilde{C}_{e,1} k \delta + \tilde{C}_{e,2} k \delta^2  \eqsp, \\
    &c_0 + k\delta a_0 C_{c,1}/2 \leq c_k \leq     c_0 + k\delta a_0 C_{c,1} + (\delta k)^2 \delta C_{c,1} \tilde{C}_{a,3}\\
    & b_k \leq b_0(1+k\delta \tilde{C}_{b,1} + (k\delta)^2 \tilde{C}_{b,2}) + \delta^2 k \tilde{C}_{b,3}\\
    & e_k \leq e_0(1+k\delta {C}_{e,1}) + \tilde{C}_{e,1} k \delta + \tilde{C}_{e,2} k \delta^2 \\
        & a_k\leq a_0 (1-k\delta \tilde{C}_{a,1} -(k\delta)^2 \tilde{C}_{a,2}) + \delta^2 k \tilde{C}_{a,3} \\
        & a_k\geq a_0 (1-k\delta (C_{a,2}c_0/a_0 + \delta C_{a,1}) -  (k\delta)^2 C_{a,2} C_{c,1} (1+ \delta \tilde{T} \tilde{C}_{a,3}) )\\
    & c_k \geq c_0 + C_{c,1} k\delta a_0 -(k\delta)^2 C_{c,1} (C_{a,2}c_0 + \delta C_{a,1} a_0) -(k\delta)^3 C_{c,1}^2 C_{a,2}a_0(1+\delta \tilde{T} \tilde{C}_{a,3})   \eqsp,
  \end{aligned}\label{eq:14}
\end{equation}
  where
  \begin{align}
    \label{eq:C_a_2_tilde}
    \tilde{C}_{a,1} &= c_0 C_{a,2}/(2a_0) \eqsp, \quad \tilde{C}_{a,2} = a_0 [ C_{a,1} \wedge C_{a,2} C_{c,1} / 16] \eqsp, \\
     \tilde{C}_{a,3} &= 2 b_0 C_{a,3} + a_0C_{a,1} \tT (\tC_{a,1} + \tT \tC_{a,2})  \eqsp, \\
    \tilde{C}_{b,1} & = C_{b,1} + c_0 C_{b,4}/b_0 + e_0 C_{b,5}/b_0 \eqsp, \\
    \tC_{b,2} &= 4[ C_{b,1} \tC_{b,1} + \tC_{b,1} + C_{b,4}(C_{c,1} a_0 + \tC_{e,1} C_{b,5}))/b_0] \eqsp, \\
    \tilde{C}_{b,3} &= 3\{b_0 C_{b,2} + a_0 C_{b,3} + \tT(C_{b,3} \tC_{a,3} + b_0 C_{b,2} \tC_{b,1} + C_{b,5} \tC_{e,2} + \tT \tC_{a,3} C_{c,1} C_{b,4})\} \\
    \tC_{e,1} & =2 C_{e,1} (2b_0 + c_0 + \tT(C_{e,1} e_0 + C_{c,1} a_0 +\delta \tC_{b,3}))\\
    \tC_{e,2} & = 2 C_{e,1} (a_0 + \tT \delta \tC_{a,3} + \tT^2 C_{c,1} \tC_{a,3}))\eqsp.                        
  \end{align}
\end{lemma}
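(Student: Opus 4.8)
### Proof Plan for Lemma~\ref{lem:rec_for_drift}

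The plan is to prove all the asserted bounds in~\eqref{eq:14} simultaneously by a single induction on $k$, exploiting the fact that the recursion~$\bfo\mapsto\tilde{\bfo}$ is of the form $\bfo_{k+1}=(I+\delta A+\delta^2 B)\bfo_k + (\text{lower-order shift in }b_0)$. The heart of the matter is that the ``slow'' variable is $c_k$, which grows linearly in $k\delta$ at rate $\approx C_{c,1}a_0$, and it is precisely this growing $c_k$ that feeds back (through the $-\delta c_k C_{a,2}$ term) to make $a_k$ \emph{decrease}. So the bounds are genuinely coupled and must be carried together. I would organize the induction hypothesis as the full list~\eqref{eq:14}, and at each step use the previously established inequalities on $a_k,b_k,c_k,e_k$ to bound the right-hand sides of the four recursions.

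First I would treat the \textbf{easy monotone variables}. For $c_{k+1}=c_k+C_{c,1}\delta a_k$: using $a_0/2\le a_k\le a_0+\tilde C_{a,3}k\delta^2$ from the induction hypothesis gives immediately both the lower bound $c_k\ge c_0+C_{c,1}k\delta a_0/2$ and, after summing $\sum_{j<k}(a_0+\tilde C_{a,3}j\delta^2)\le k a_0 + \tilde C_{a,3}k^2\delta^2$, the upper bound $c_k\le c_0+C_{c,1}k\delta a_0+(\delta k)^2\delta C_{c,1}\tilde C_{a,3}$. Similarly for $e_k$, since $e_{k+1}=e_k+C_{e,1}\delta(a_k\delta+b_k+c_k+e_k)$ is a linear recursion with positive coefficients, a discrete Grönwall bound (comparing with $(1+C_{e,1}\delta)^k$ and using $k\delta\le\tilde T_1\le 1/(4C_{e,1})$ so that $(1+C_{e,1}\delta)^k\le 2$) yields the stated $e_k\le e_0(1+k\delta C_{e,1})+\tilde C_{e,1}k\delta+\tilde C_{e,2}k\delta^2$, and hence $e_k\le 2e_0+\tilde C_{e,1}k\delta+\tilde C_{e,2}k\delta^2$. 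Then for $b_k$: plug the bounds on $a_k,c_k,e_k$ into $b_{k+1}=(1+C_{b,1}\delta+C_{b,2}\delta^2)b_k+\delta^2C_{b,3}a_k+\delta c_kC_{b,4}+\delta e_kC_{b,5}+\delta(b_k-b_0)$; since $c_k,e_k$ are themselves $\le$ (const)$+$(const)$k\delta$, the effective multiplicative factor is $1+\tilde C_{b,1}\delta+O(\delta^2)$ and a Grönwall argument (again with $k\delta\le\tilde T_1$, $\delta\le\tilde\delta_1\le 1/C_{b,2}$) gives $b_k\le b_0(1+k\delta\tilde C_{b,1}+(k\delta)^2\tilde C_{b,2})+\delta^2k\tilde C_{b,3}$ and in particular $b_k\le 2b_0+\tilde C_{b,3}k\delta^2$; the lower bound $b_k\ge b_0$ follows because every term added to $b_k$ is nonnegative and $b_k-b_0\ge 0$ is preserved. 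At each stage the various smallness conditions on $\tilde\delta_1,\tilde T_1,\tilde T_2$ and~\eqref{eq:condition_c0} are exactly what is needed to absorb the error terms into the claimed constants.

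The \textbf{main obstacle} is the two-sided control of $a_k$, and for the upper bound the key is that the destabilizing term $-\delta c_kC_{a,2}$ must dominate the stabilizing-looking but positive terms $\delta^2 b_kC_{a,3}+\delta e_kC_{a,4}+C_{a,5}\delta(b_k-b_0)$. Writing $a_{k+1}-a_k\le -\delta^2C_{a,1}a_k-\delta C_{a,2}c_k+\delta^2C_{a,3}b_k+\delta C_{a,4}e_k+C_{a,5}\delta(b_k-b_0)$ and inserting the lower bound $c_k\ge c_0+C_{c,1}k\delta a_0/2$ together with the already-proved upper bounds $b_k-b_0\le b_0(k\delta\tilde C_{b,1}+\dots)+\delta^2k\tilde C_{b,3}$, $e_k\le 2e_0+\tilde C_{e,1}k\delta+\dots$, one gets
\[
a_{k+1}-a_k \le -\delta\Bigl( C_{a,2}c_0 - 2e_0C_{a,4} - C_{a,5}C_{b,1}b_0 - C_{a,5}C_{b,5}e_0 - \tilde C_{e,1}C_{a,4}\Bigr) - \delta^2 k\, C_{a,2}C_{c,1}a_0/2 + (\text{l.o.t.}),
\]
where the bracket is $\ge 0$ by the first inequality in~\eqref{eq:condition_c0} and the parenthetical lower-order terms in $\delta^2$ and $\delta^3 k$ are controlled by the second condition in~\eqref{eq:condition_c0} and the bounds on $\tilde\delta_1,\tilde T_2$. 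Summing over $k$ and using $\sum_{j<k}j=k(k-1)/2$ yields $a_k\le a_0(1-k\delta\tilde C_{a,1}-(k\delta)^2\tilde C_{a,2})+\delta^2k\tilde C_{a,3}$ (the $\delta^2k\tilde C_{a,3}$ collecting all strictly lower-order positive residues, which is why $\tilde C_{a,3}$ in~\eqref{eq:C_a_2_tilde} is built from $b_0C_{a,3}$ plus a term proportional to $a_0C_{a,1}\tilde T(\tilde C_{a,1}+\tilde T\tilde C_{a,2})$), and in particular $a_k\le a_0+\tilde C_{a,3}k\delta^2$. For the lower bound $a_k\ge a_0/2$ one runs the same computation with inequalities reversed, using the \emph{upper} bound on $c_k$ and $a_{k+1}-a_k\ge -\delta^2C_{a,1}a_k-\delta C_{a,2}c_k$ (dropping the nonnegative terms $\delta^2b_kC_{a,3}$, $\delta e_kC_{a,4}$, $C_{a,5}\delta(b_k-b_0)$); this gives $a_k\ge a_0(1-k\delta(C_{a,2}c_0/a_0+\delta C_{a,1})-(k\delta)^2C_{a,2}C_{c,1}(1+\delta\tilde T\tilde C_{a,3}))$, and the constraint $k\delta\le\tilde T_2$ (whose definition contains exactly $[a_0/(2C_{a,2}c_0)]$ and $[2C_{c,1}C_{a,2}]^{-1/2}$) forces the bracket to stay $\ge 1/2$. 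Finally the refined lower bound on $c_k$ in~\eqref{eq:14} is obtained by re-summing $c_{k+1}=c_k+C_{c,1}\delta a_k$ with this lower bound on $a_k$, producing the $(k\delta)^2$ and $(k\delta)^3$ corrections displayed. Throughout, I would suppress the purely arithmetic verification that each named constant $\tilde C_{\square,\triangle}$ is an upper bound for the corresponding accumulated residue; these are routine once the induction is set up, but keeping the powers of $\delta$ versus $k\delta$ separate is essential, since the whole point of the lemma is that over $k\sim 1/\delta$ steps only the lowest-order terms survive.
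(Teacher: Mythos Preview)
Your proposal is correct and follows essentially the same route as the paper: a simultaneous induction on $k$ carrying all the bounds in~\eqref{eq:14} at once, treating $c_k,e_k,b_k$ via discrete Gr\"onwall-type estimates and then handling the two-sided control of $a_k$ by feeding in the lower (resp.\ upper) bound on $c_k$ and invoking the conditions in~\eqref{eq:condition_c0} together with the smallness of $\delta$ and $k\delta$ from~\eqref{eq:10_v2}. One small organizational point: in your displayed bracket for $a_{k+1}-a_k$, the terms $C_{a,5}C_{b,1}b_0$, $C_{a,5}C_{b,5}e_0$, $\tilde C_{e,1}C_{a,4}$ actually arise at order $k\delta^2$ (not $\delta$), so the first condition in~\eqref{eq:condition_c0} controls only the $O(\delta)$ part $C_{a,2}c_0-2e_0C_{a,4}$ while the second condition controls these $O(k\delta^2)$ terms against $C_{a,2}C_{c,1}a_0/2$---but you already invoke both conditions, so this is purely a bookkeeping correction.
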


\begin{proof}
  The proof is by induction. Equation \eqref{eq:14} is trivially true
  for $k=0$. Suppose that it is true for $k \in\nset$ and suppose $\delta,k+1$ satisfy \eqref{eq:10}.
  Then by definition, the induction hypothesis implies that $c_0 + (k+1)\delta a_0 C_{c,1}/2 \leq c_{k+1} \leq     c_0 + (k+1)\delta a_0 C_{c,1} + (k+1)^2\delta^3 C_{c,1}  \tilde{C}_{a,3}$.

  Similarly, it follows that
  \begin{align}
      & b_{k+1}
      \leq [(1+k\delta \tilde{C}_{b,1} + (k\delta)^2 \tilde{C}_{b,2})b_0+\tilde{C}_{b,3} \delta^2 k](1+C_{b,1}\delta + C_{b,2} \delta^2) \\
    &\quad + \delta^2 C_{b,3}(a_0 + \tC_{a,3} \delta^2 k) + \delta C_{b,4}(c_0+\delta k C_{c,1} a_0 + (\delta k )^2 \delta C_{c,1}  \tC_{a,3}) \\
     &\quad  +\delta C_{b,5}(2e_0 + \delta k \tC_{e,1} + \delta^2 k \tC_{e,2}) \\
      & \leq b_0 + \delta b_0(k \tilde{C}_{b,1} +  C_{b,1}  + c_0 C_{b,4}/b_0+ 2 C_{b,5}e_0/b_0) \\
      & \quad + \delta^2 k b_0 \{\tC_{b,2} k  + k \delta \tC_{b,2}C_{b,1} + C_{b,1}\tC_{b,1} + \tC_{b,1} \\
    & \quad \qquad \qquad + (C_{b,4}/b_0) (C_{c,1}a_0 +  \tC_{e,1}C_{b,5})+ k \delta (\delta \tC_{b,2} C_{b,2} + \tC_{b,2}) \} \\
      & \quad + \delta^2 (k \tC_{b,3} + \tC_{b,3}(k\delta C_{b,1} + k\delta^2 C_{b,2}) + b_0 C_{b,2} + a_0 C_{b,3} \\
    &\quad \qquad \qquad  + \tT^2 \tC_{a,3} C_{c,1} C_{b,4}+ \tT C_{b,3} \tC_{a,3} +\tT b_0 C_{b,2} \tC_{b,1} + \tT C_{b,5}\tC_{e,2}) \\
    &\leq b_0(1+(k+1)\delta\tilde{C}_{b,1}+k(k+2)\delta^2 \tilde{C}_{b,2}) + \tC_{b,3} (k+1)\delta^2   \eqsp,
  \end{align}
  where we have used that $ k\delta \leq 4^{-1} [C_{b,1}^{-1} \wedge 1]$, $\delta \leq C_{b,2}^{-1}$, $\tilde{C}_{b,1} \geq C_{b,1}$ and $\tilde{C}_{b,2} \geq C_{b,2}$ by definition.
  In addition, $b_0 \leq b_{k+1} \leq 2 b_0 + \delta^2 k \tC_{b,3}$ since $\delta,k+1$ satisfy \eqref{eq:10}.

  Similarly, using that $k\delta \leq (4 C_{e,1})^{-1}$, we have
  \begin{align}
    &    e_{k+1} \leq [(1+k\delta C_{e,1})e_0 + \delta k \tC_{e,1} + \delta^2 k \tC_{e,2}](1+C_{e,1}\delta) \\
    & \quad + C_{e,1}\delta^2(a_0 + \tC_{a,3} k \delta^2) + C_{e,1} \delta(2b_0 + \delta^2 k \tC_{b,3}) + C_{e,1} \delta(c_0 + k \delta C_{c,1}a_0 + (k\delta)^2 \delta C_{c,1}\tC_{a,3}) \\
    & \leq (1+(k+1)\delta C_{e,1}) e_0 + \delta(k \tC_{e,1} + C_{e,1}(2b_0 + c_0 + \tT C_{e,1} e_0 + \tT \tC_{e,1} + \tT  C_{c,1} a_0 + \delta \tT \tC_{b,3}))\\
    & \quad + \delta^2 (k \tC_{e,2} + C_{e,1}(k \delta \tC_{e,2} + a_0 + k \delta^2 \tC_{a,3} + (k\delta)^2 C_{c,1} \tC_{a,3})) \eqsp,
  \end{align}
which concludes the proof of the upper bound for $e_{k+1}$ using $k\delta \leq 1/(4 C_{e,1})$.
  
Using  the induction hypothesis, in particular the lower bounds on $c_k$ and the definition of $\tC_{b,1}$,  we get 
  \begin{align}
    a_{k+1} &\leq a_0- \delta(a_0k \tilde{C}_{a,1} + C_{a,2} c_0 - 2 C_{a,4} e_0 ) - \delta^2 a_0 C_{a,1} \\
    & \qquad+  \delta^2 (k \tC_{a,3} +2b_0 C_{a,3} + a_0 C_{a,1}\tT(\tC_{a,1} + \tT \tC_{a,2})) \\
            &\qquad  -  \delta^2 k [k \tilde{C}_{a,2} a_0 + a_0C_{c,1}C_{a,2}/2 -C_{a,5} \tC_{b,1} b_0 - \tC_{e,1} C_{a,4} \\
            & \qquad \qquad -\delta k C_{a,5} \tC_{b,2} b_0 - \delta^2 C_{a,3} \tC_{b,3} -\delta C_{a,4} \tC_{e,1} -\delta C_{a,5} \tC_{b,3}]\\ 
    &\leq a_0- \delta(a_0k \tilde{C}_{a,1} + C_{a,2} c_0 - 2 C_{a,4} e_0 - c_0 C_{b,4} C_{a,5} \delta k ) - \delta^2 a_0 C_{a,1} \\
    & \qquad + \delta^2 (k \tC_{a,3} +2b_0 C_{a,3} + a_0 C_{a,1}\tT(\tC_{a,1} + \tT \tC_{a,2})) \\
            &\qquad  -  \delta^2 k [k \tilde{C}_{a,2} a_0 + a_0C_{c,1}C_{a,2}/2 -C_{a,5} (C_{b,1} b_0+ C_{b,5} e_0) - \tC_{e,1} C_{a,4} \\
    & \qquad \qquad -\delta k C_{a,5} \tC_{b,2} b_0 - \delta^2 C_{a,3} \tC_{b,3} -\delta C_{a,4} \tC_{e,1} -\delta C_{a,5} \tC_{b,3}]
               \eqsp.
  \end{align}
  Using the condition \eqref{eq:condition_c0}, $\delta, k+1$ satisfy \eqref{eq:10}, we get that $a_{k+1} \leq a_0 - a_0 (k+1)\delta \tilde{C}_{a,1} - a_0 (k+2)k \delta^2 \tilde{C}_{a,2} - a_0 C_{a,1} \delta^2 +  \delta^2 k \tilde{C}_{a,3}  \leq  a_0 - a_0 (k+1)\delta \tilde{C}_{a,1} - a_0 (k+1)^2\delta^2 \tilde{C}_{a,2} +  \tilde{C}_{a,3}\delta^2 (k+1)$. The lower bound for $a_{k+1}$ proceeds similarly  using the upper bound on $c_k$ given by the induction hypothesis. Finally, the conditions $(k+1)\delta \leq 8^{-1}\{[a_0(C_{a,2}c_0)^{-1}]\wedge [C_{c,1}C_{a,2}]^{-1/2}\}$, $\delta \leq  [4 C_{a,1}]^{-1} \wedge [4 C_{c,1}C_{a,2} \tilde{C}_{a,3}]^{-1}$ and $(k+1)\delta \leq 1/2$ imply that $a_0/2 \leq a_{k+1} \leq a_0 + \delta^2 k \tC_{a,3}$.
  The last inequality for $c_{k+1}$ in \eqref{eq:14} uses the induction hypothesis  $ a_k\geq a_0 (1-k\delta (C_{a,2}c_0/a_0 + \delta C_{a,1}) -  (k\delta)^2 C_{a,2} C_{c,1} (1+ \delta \tilde{T} \tilde{C}_{a,3}) )$ and $c_{k+1}=c_k + C_{c,1} \delta a_k$. 
\end{proof}

\paragraph{Evolution over several Verlet steps.} Combining Lemmas~\ref{lem:drift_v1} and \ref{lem:rec_for_drift} leads to the following, which gives a control of the expected value of the Lyapunov function after a full trajectory of Verlet integrator.

\begin{lemma}
  \label{lem:drift_k_step}
Assume that \Cref{ass:for_drift} holds.
Let $\bfo = (\a_0,\b_0,\c_0,2b_0,\e_0,\f_0)\in \rset_+^6$,  $\bT,\bdelta >0$ and denote
    \begin{align}
      \bar{C}_{a,1} & = \mtt / (3\times 4) \eqsp, \quad \bar{C}_{a,2} = \mtt / (3 \times 2^3) \eqsp, \bC_{a,3} = 16 b_0 + a_0\bT(\mtt/2)^2(1+\bT) \eqsp, \\
      \bar{C}_{b,1} & = \epsilon + (c_0/b_0)(1+10\bdelta)  \eqsp, \quad \bC_{b,2,0} = 4[(1+\epsilon) \bC_{b,1} +(1+10\bdelta)(2a_0/b_0)  ] \eqsp, \\
      \bC_{b,3,0} &= 3\{31b_0 +2a_0 +\bT(2\bC_{a,3} + 31b_0 \bC_{b,1} +  2 \bar{C}_{a,3}\bT(1+10\bdelta)) \} \eqsp,\\
      \bC_{b,2} & = \bC_{b,2,0} + c_0 \mtt + a_0 \mtt \bT   \eqsp, \quad  \bC_{b,3} = \bC_{b,3,0} + \bar{C}_{a,3} \eqsp,\\
      \bC_{e,1} &= 14[2b_0 + c_0 + \bT(14 e_0 + 2 a_0 + \bdelta \bC_{b,3})] \eqsp, \quad \bC_{e,2} = 14(a_0+\bT \bC_{a,3}(\bdelta  + 2\bT)) \eqsp, \\
 \bar{C}_{f,1} &= a_0 \Mtt \bdelta + \bC_{a,3} \Mtt \bdelta \bT + 4 b_0 \mtt \Mtt \bdelta + \Mtt(c_0+\bC_{b,3,0}  \bar{T}^2 \bar{\delta}  + 2 \bar{T} a_0 + 2 \bC_{a,3} \bar{T}^2 \bar{\delta}) \eqsp.
  \end{align}

Let $\epsilon \leq \mtt/4$ and suppose that 
\begin{equation}
  \label{eq:condition_epsilon_e_0}
 \epsilon b_0 \leq a_0 \mtt/2^3  \eqsp.
\end{equation}
Then, for any $\delta>0$, $\eta \in \coint{0,1}$, and $K \in \nset$ satisfying
$\delta \leq \bdelta_1 \wedge \bdelta$ and $K\delta \leq \bT_1 \wedge \bT_2 \wedge \bar{T}$, and 
\begin{equation}
  \label{eq:condition_epsilon_e_0_1}
\delta \eta \bC_{a,3} \leq \frac{c_0 \mtt}{ 3\times 2^2} \eqsp, \quad   2\delta \frac{(1-\eta)}{\delta K}\bC_{a,3} \leq \frac{a_0  \mtt}{3 \times 2^3} \eqsp,
\end{equation}
where
\begin{equation}
    \label{eq:10_applied}
    \begin{aligned}
      \bdelta_1 &= 31^{-1} \wedge [\mtt/40] \wedge [ a_0 2^{-3}/( \bC_{b,3}(1+8\bdelta))] \wedge [1/(2\mtt)] \wedge [1/(8\mtt \bC_{a,3})] \eqsp, \\
    \bT_1 & = 4^{-1}\parentheseDeux{31^{-1} \wedge \bar{C}_{b,1}^{-1} \wedge \bar{C}_{b,2}^{-1/2}}\eqsp, \\
  \quad 
    \bT_2&  =4^{-1} \defEnsLigne{[2^{-3}\mtt/(1+10\bdelta)]\wedge[ 8^{-1} a_0 /(\bC_{b,2} b_0)]\wedge a_0\mtt(2 c_0)^{-1} \wedge 1/(7\mtt)^{1/2 }} \eqsp.
  \end{aligned}
  \end{equation}
 it holds
\begin{enumerate}[label=(\alph*),noitemsep,topsep=0pt,parsep=0pt,partopsep=0pt,leftmargin=*,wide]
\item   \label{lem:drift_k_step_a} For any $i \in \iint{1}{\nq}$, $x,v \in \rsetd$,  $      \rmV_{\delta}^K \Voi(x,v) \leq        \Vlyapc[\bfo_{K}][i](x,v)$,
    where ${\bfo}_K = ({\a}_K,{\b}_K,{\c}_K,2b_0,{\e}_K,{\f}_K)$
    \begin{equation}
    \begin{aligned}
      {\a}_K & = a_0 -K\delta \bar{C}_{a,1}c_0  - (K\delta)^2 \bar{C}_{a,2} a_0  \eqsp, \qquad
      {\b}_K = b_0 (1+K\delta\bar{C}_{b,1} + (K\delta)^2 \bar{C}_{b,2})  + \delta^2 K \bC_{b,3} \eqsp, \\
      {\c}_K &= c_0 + 2 K \delta a_0   \eqsp, \quad       {\e}_K  = \e_0(1 +17 K \delta) + \bC_{e,1} K\delta + \bC_{e,2} K\delta^2  \eqsp, \\
                   {\f}_K & = f_0 + K \delta \bar{C}_{f,1} \eqsp.
    \end{aligned}\label{eq:coeff_drift_after_rec_and_drift}
  \end{equation}
\item   \label{lem:drift_k_step_b}  In addition for any $k \in\{0,\ldots,K\}$, $i \in \iint{1}{\nq}$,
  $x,v \in \rsetd$,  $      \rmV_{\delta}^k \Voi(x,v) \leq        \Vlyapc[\bfo_{k}][i](x,v)$,
    where ${\bfo}_k = ({\a}_k,{\b}_k,{\c}_k,2b_0,{\e}_k,{\f}_k)$
    \begin{equation}
    \begin{aligned}
      {\a}_k & = a_0+2\bC_{a,3}\delta^2 k   \eqsp,   \quad 
      {\b}_k = 2b_0  + \delta^2 k \bC_{b,3} \eqsp, \\
      {\c}_k& = c_0 + 2 k \delta a_0   \eqsp, \quad       {\e}_k  = 2 \e_0 + \bC_{e,1} k\delta + \bC_{e,2} k\delta^2  \eqsp, \quad
                   {\f}_k  = f_0 + k \delta \bar{C}_{f,1} \eqsp.
    \end{aligned}\label{eq:coeff_drift_after_rec_and_drift_v2}
  \end{equation}
\end{enumerate}
\end{lemma}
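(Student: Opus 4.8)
The plan is to iterate the one-step estimate of \Cref{lem:drift_v1} and then control the resulting evolution of the six-tuple of coefficients by the abstract Gr\"onwall-type \Cref{lem:rec_for_drift}. Since $\rmV_\delta$ acts on functions by composition with the deterministic Verlet map $\Phi_\delta$, it is order-preserving, so repeated application of \eqref{eq:7} gives $\rmV_\delta^k\Voi \leq \Vlyapc[\bfo_k][i]$ for $k\in\{0,\dots,K\}$, where $\bfo_0$ is the initial tuple and $\bfo_{k+1}=\tilde\bfo_k$ is produced by the update map of \Cref{lem:drift_v1}. To legitimise this iteration one checks the hypotheses of \Cref{lem:drift_v1} at each step: the bound $\delta\leq \mtt/40$ follows from $\delta\leq \bdelta_1$, and the requirement that the fixed $U^{(q)}$-coefficient not exceed the current velocity coefficient $b_k$ propagates by induction, because the update satisfies $\tilde b\geq b$ as soon as $b$ is at least the fixed coefficient and the other coefficients are nonnegative — which holds throughout, being part of the output of \Cref{lem:rec_for_drift}.

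Second, I would match the update map $\bfo\mapsto\tilde\bfo$ of \Cref{lem:drift_v1} with the abstract recursion of \Cref{lem:rec_for_drift}. Discarding the beneficial negative term $-b_0\delta^2\mtt^2/2$ in the $\tilde a$-update (which only strengthens an upper bound) and absorbing the constant contributions $2b_0\delta^2$ in $\tilde b$ and $2\delta b_0$ in $\tilde e$ into the leading $b$-terms via $b_0\leq b_k$, the four coupled coefficients $(a_k,b_k,c_k,e_k)$ satisfy the recursion of \Cref{lem:rec_for_drift} with the explicit constants $C_{a,1}=\mtt/2$, $C_{a,2}=\mtt/2$, $C_{a,3}=8$, $C_{a,4}=0$, $C_{a,5}=1$, $C_{b,1}=\epsilon$, $C_{b,2}=33$, $C_{b,3}=2$, $C_{b,4}=1+10\delta$, $C_{b,5}=0$, $C_{c,1}=2$, $C_{e,1}=7$. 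The vanishing of $C_{a,4}$ and $C_{b,5}$ — no $e$-term appears in the $\tilde a$- or $\tilde b$-updates — makes the first condition of \eqref{eq:condition_c0} automatic and collapses the second one to $\epsilon b_0\leq C_{a,2}C_{c,1}a_0/8=\mtt a_0/8$, which is exactly \eqref{eq:condition_epsilon_e_0}. Substituting these constants, together with the definitions of $\tC_{a,3},\tC_{b,1},\tC_{b,2},\tC_{b,3},\tC_{e,1},\tC_{e,2}$, into the abstract thresholds \eqref{eq:10}--\eqref{eq:10_v2} produces the explicit $\bdelta_1,\bT_1,\bT_2$ and the constants $\bC_{a,3},\bar C_{b,1},\bC_{b,2},\bC_{b,3},\bC_{e,1},\bC_{e,2}$ as stated, so that \Cref{lem:rec_for_drift} applies and yields the bounds \eqref{eq:14}.

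Third, the two conclusions follow from \eqref{eq:14}. For part~\ref{lem:drift_k_step_b} I would read off the crude upper bounds directly: $a_k\leq a_0+\tC_{a,3}k\delta^2\leq a_0+2\bC_{a,3}k\delta^2$, $b_k\leq 2b_0+\bC_{b,3}k\delta^2$, $c_k\leq c_0+2k\delta a_0$ (using $c_k=c_0+2\delta\sum_{j<k}a_j$ and the upper bound on $a_j$), and $e_k\leq 2e_0+\bC_{e,1}k\delta+\bC_{e,2}k\delta^2$; the $f$-coefficient decouples from the others, and summing the increment $\tilde f-f=\delta(\Mtt\delta a+2b_0\delta\mtt\Mtt+\Mtt c)$ over $k$ steps with the crude bounds on $a$ and $c$ gives $f_k\leq f_0+k\delta\bar C_{f,1}$ with $\bar C_{f,1}$ as defined, thereby establishing \eqref{eq:coeff_drift_after_rec_and_drift_v2}. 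For part~\ref{lem:drift_k_step_a}, at $k=K$, I would instead use the refined bounds of \eqref{eq:14}, in particular $a_K\leq a_0(1-K\delta\tC_{a,1}-(K\delta)^2\tC_{a,2})+\delta^2 K\tC_{a,3}$ with $\tC_{a,1}=c_0\mtt/(4a_0)$ and $\tC_{a,2}=a_0\mtt/8$; the additional smallness conditions \eqref{eq:condition_epsilon_e_0_1}, which bound $\eta\delta\bC_{a,3}$ and $\gamma_0\delta\bC_{a,3}$ in terms of $c_0,a_0,\mtt$, ensure that the positive error $\delta^2 K\tC_{a,3}$ is absorbed into the negative dissipation terms, leaving the closed form $a_K=a_0-K\delta\bar C_{a,1}c_0-(K\delta)^2\bar C_{a,2}a_0$ with $\bar C_{a,1}=\mtt/12$, $\bar C_{a,2}=\mtt/24$; the expressions for $b_K,c_K,e_K,f_K$ in \eqref{eq:coeff_drift_after_rec_and_drift} are read off in the same way from the corresponding lines of \eqref{eq:14}.

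The argument is essentially bookkeeping; there is no conceptual difficulty beyond what is already encapsulated in \Cref{lem:rec_for_drift}. The step requiring the most care is the faithful translation of the many abstract smallness conditions of \Cref{lem:rec_for_drift} into the explicit thresholds \eqref{eq:10_applied}, \eqref{eq:condition_epsilon_e_0} and \eqref{eq:condition_epsilon_e_0_1}, and the verification that they suffice to absorb the error terms of order $\delta^2 K$ in \eqref{eq:14} into the genuine dissipation of orders $\delta K$ and $(\delta K)^2$, so as to obtain the compact coefficients of part~\ref{lem:drift_k_step_a}; one must keep precise track of the powers of $\delta$ and $K\delta$ attached to each contribution throughout.
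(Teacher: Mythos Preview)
Your overall plan is correct and matches the paper's approach: iterate \Cref{lem:drift_v1}, feed the resulting recursion into \Cref{lem:rec_for_drift}, and absorb residual errors. The identification of constants and the role of~\eqref{eq:condition_epsilon_e_0} are right.

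There is, however, a genuine gap in your treatment of the cross-term coefficient~$c_k$. You write ``$c_k\leq c_0+2k\delta a_0$ (using $c_k=c_0+2\delta\sum_{j<k}a_j$ and the upper bound on $a_j$)''. This is wrong on two counts. First, the upper bound $a_j\leq a_0+\tC_{a,3}j\delta^2$ gives $c_k\leq c_0+2k\delta a_0 + \tC_{a,3}\delta^3 k(k-1)$, which is \emph{larger} than $c_0+2k\delta a_0$. Second, and more fundamentally, an upper bound on~$c_k$ is not enough: the term $c_k\ps{x_i}{v_i}$ has no sign, so to pass from $\Vlyapc[\tilde\bfo_k][i]$ to $\Vlyapc[\bfo_k][i]$ with the \emph{exact} value $c_k=c_0+2k\delta a_0$ you need a \emph{two-sided} control. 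The paper's device, which you omit, is to write
\[
\bigl|(\tilde c_k - c_0 - 2k\delta a_0)\ps{x_i}{v_i}\bigr| \leq \tfrac12|\tilde c_k - c_0 - 2k\delta a_0|\bigl(\|x_i\|^2+\|v_i\|^2\bigr),
\]
bound $|\tilde c_k-c_0-2k\delta a_0|$ using both the upper and lower estimates on~$c_k$ in~\eqref{eq:14}, and then absorb the resulting contribution into the $a$- and $b$-coefficients. This is precisely where the slack built into $a_k=a_0+2\bC_{a,3}\delta^2 k$ (part~(b)) and the reduction from $\mtt/4,\mtt/8$ to $\bar C_{a,1}=\mtt/12,\bar C_{a,2}=\mtt/24$ (part~(a)) is spent, and where condition~\eqref{eq:condition_epsilon_e_0_1} enters: writing $\delta\bC_{a,3}=\eta\delta\bC_{a,3}+(1-\eta)\delta\bC_{a,3}=\eta\delta\bC_{a,3}+K\delta\cdot\gamma_0\delta\bC_{a,3}$ lets the two inequalities of~\eqref{eq:condition_epsilon_e_0_1} absorb the error into the $K\delta c_0$ and $(K\delta)^2 a_0$ dissipation terms respectively. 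Your proposal attributes~\eqref{eq:condition_epsilon_e_0_1} only to the $\delta^2 K\tC_{a,3}$ error in the $a$-bound, but it is equally needed for the redistributed $c$-error. Without this step, neither part~(a) nor part~(b) goes through.
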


\begin{proof}
  \begin{enumerate}[label=(\alph*),noitemsep,topsep=0pt,parsep=0pt,partopsep=0pt,leftmargin=*,wide]
  \item Let $i\in\{1,\ldots,\nq\}$,  $\delta \leq \bdelta_1$, $K \in\nset$, $K\delta \leq \bT \wedge \bT_1\wedge \bT_2$ and $x,v \in \rset^d$. Let $(a_0,b_0,c_0,e_0,f_0) \in \rset_+^6$.
  By \Cref{lem:drift_v1}, we have starting from $\tilde{\bfo}_0 = \bfo_0 = (a_0,b_0,c_0,2b_0,e_0,f_0)$, for any $k\in\nset$,
  \begin{equation}
    \label{eq:7_rec_drift_proof}
      \rmV_{\delta}^{k+1} \Vlyapc[\tbfo_{0}][i](x,v) \leq  \rmV_{\delta} \Vlyapc[\tbfo_{k}][i](x,v)     \leq  \Vlyapc[\tbfo_{k+1}][i](x,v)  \eqsp,
    \end{equation}
    where ${\tbfo}_{k+1} = (\tilde{\a}_{k+1},\tilde{\b}_{k+1},\tilde{\c}_{k+1},b_0,\tilde{\e}_{k+1},\tilde{\f}_{k+1})$ are defined by the recursion
        \begin{align}
      \tilde{\a}_{k+1} & = \tilde{a}_k(1-\mtt\delta^2/2) + 8 \tilde{b}_k \delta^2 - \delta \mtt \tilde{c}_k/2  + \delta \absLigne{\tilde{b}_k-b_0} \\
      \tilde{\b}_{k+1} &= 2 \delta^2 \tilde{\a}_k + (1+\epsilon\delta + 31 \delta^2)\tilde{\b}_k + \delta(1+10 \delta) \tilde{\c}_k  + \delta \absLigne{\tilde{b}_k-b_0}\\
      \tilde{\c}_{k+1} &= 2 \delta \tilde{\a}_k + \tilde{\c}_k \eqsp, \quad       \tilde{\e}_{k+1}  = \tilde{\e}_k + 7 \delta(\tilde{\a}_k\delta + \delta b_0 +\tilde{\b}_k+\tilde{\c}_k+\tilde{\e}_k)\\
                   \tilde{\f}_{k+1} & =  \tilde{\f}_k +\tilde{\a}_k \delta^2 \Mtt+ 2 \tilde{\b}_{k} \mtt \Mtt \delta^2  + \delta \Mtt \tilde{c}_k \eqsp.
    \end{align}

Applying \Cref{lem:rec_for_drift}, we get that for any $k \in \iint{0}{K}$,
        \begin{align}
      \tilde{\a}_{k} & \leq  a_0 (1-\mtt  k\delta c_0/(4 a_0) -\mtt (k\delta)^2/2^3) + (16 b_0 +  a_0(\mtt/2)^2 \bT(1+\bT)) \delta^2 k   \\
      \tilde{\b}_{k} & \leq  b_0 (1+k\delta\bar{C}_{b,1} + (k\delta)^2 \bar{C}_{b,2,0}) +\bC_{b,3,0} k\delta^2 \\
          \abs{\tilde{\c}_{k} - c_0 - 2k\delta a_0 } &\leq (k\delta)^2c_0 \mtt + a_0 \mtt (k \delta)^3 + 2 \bar{C}_{a,3} \delta^2 k\\
                 \tilde{\e}_{k}  &\leq \e_0(1 + 7 k \delta) + k\delta\bC_{e,1} + k\delta^2 \bC_{e,2}  \eqsp, \quad
                   \tilde{\f}_{k} \leq  f_0 + \delta k \bar{C}_{f,1}  \eqsp.
        \end{align}

        Using that $\abs{[c_k-c_0 - 2 \delta k a_0]\ps{v}{x}} \leq 2^{-1}\abs{c_k-c_0 - 2 \delta k a_0}[\norm{x}^2 + \norm{v}^2]$, \eqref{eq:condition_epsilon_e_0_1} and $K \delta \leq 3 \times 2^{-3} $ completes the proof.
        
      \item The proof of the second statement follows the same lines and is omitted. 
      \end{enumerate}
\end{proof}

\paragraph{Introducing momentum updates.} In the next lemma, we state pointwise estimates that control the effect on the Lyapunov function of a momentum update for a given   realization $g$ of a Gaussian variable. After raising these estimates to some powers, the expectation with respect to the Gaussian distribution will be taken afterwards (Lemma~\ref{lem:drift_one_step} below).

\begin{lemma}
  \label{lem:drift_for_fixed_Gaussian}
Assume that \Cref{ass:for_drift} holds and that $\epsilon \leq \mtt/4$ and consider the notations introduced in \Cref{lem:drift_k_step}.
 Then, for any $i \in \iint{1}{\nq}$, $\bfomega = (\delta,K,\eta)$, $\delta >0$, $K \in \nset$, $\eta \in \ooint{0,1}$, $\bfo = (\a_0,\b_0,\c_0,2b_0,\e_0,\f_0)\in \rset_+^6$,
  satisfying
$\delta \leq \bdelta_1$,  $K\delta \leq \bT \wedge \bar{T}_1 \wedge \bT_2 $ with $\bT >0$, \eqref{eq:condition_epsilon_e_0}-\eqref{eq:condition_epsilon_e_0_1},
\begin{equation}
     \label{eq:condition_c_0}
  \begin{aligned}
    &    c_0(\eta-1)+2 \eta a_0 \delta K = 0 \eqsp, \quad c_0 \leq  \sqrt{2a_0b_0} \eqsp, \quad c_0/b_0 \leq \eta(1-\eta)/(\delta K) \eqsp, \\
    & \bC_{b,1} -c_0/b_0 \leq (1-\eta)/[4\delta K ] \eqsp, \quad \delta K  \bC_{b,2} + \delta \bC_{b,3}/b_0 \leq (1-\eta)/[4\delta K ] \eqsp,
  \end{aligned}
\end{equation}
it holds
\begin{enumerate}[label=(\alph*),noitemsep,topsep=0pt,parsep=0pt,partopsep=0pt,leftmargin=*,wide]
\item   \label{lem:drift_for_fixed_Gaussian_a} For any $x=(x_1,\ldots,x_{\nq}),v=(v_1,\ldots,v_{\nq}),g=(g_1,\ldots,g_{\nq}) \in \rsetd$,
  \begin{multline}
    \label{eq:7}
    \rmV_{\delta}^K \Voi(x,\eta v + \teta g ) \leq  (1- \bar{\rho}_{\bfomega}K\delta)[a_0 \norm{x_i}^2 + b_0 \norm{v_i}^2 +c_0\ps{x_i}{v_i} + 2 b_0 \Uq(x_i)] +f_K \\
    +2 \eta \teta b_K\ps{v_i}{g_i} + c_K\teta\ps{x}{g}  + \teta^2 \b_K \norm{g_i}^2  + e_K \frac{\epsilon}{\nq}\sum_{j=1}^{\nq}[\norm{x_i-x_j}^2 + \norm{v_i-v_j+g_i-g_j}^2]  \eqsp,
    \end{multline}
    where  $b_K \leq 2b_0 + \delta^2K \bC_{b,3} $, $e_K \leq 2 e_0 + (K\delta) \bC_{e,1} + K\delta^2  \bC_{e,2}$,
    \begin{align}
      \label{eq:18}
      \teta^2 &= 1- \eta^2 \\
      \bar{\rho}_{\bfomega} &= [(c_0 + K\delta a_0) \bar{C}_{a,2} /(4a_0)] \wedge [(1-\eta)/(4K\delta)] \wedge [ (c_0 + K\delta a_0) \bar{C}_{a,2}/(4b_0a_0)]  \eqsp.
    \end{align}
  \item     \label{lem:drift_for_fixed_Gaussian_b}  In addition for any $k \in \{1,\ldots,K\}$,  $x=(x_1,\ldots,x_{\nq}),v=(v_1,\ldots,v_{\nq}),g=(g_1,\ldots,g_{\nq}) \in \rsetd$,
  \begin{multline}
    \label{eq:7}
    \rmV_{\delta}^k \Voi(x,\eta v + \teta g ) \leq  \bC_1 [a_0 \norm{x_i}^2 + b_0 \norm{v_i}^2 +c_0\ps{x_i}{v_i} + 2 b_0 \Uq(x_i)] +f_K \\
    +2 \eta \teta b_K\ps{v_i}{g_i} + c_K\teta\ps{x}{g}  + \teta^2 \b_K \norm{g_i}^2  + e_K \frac{\epsilon}{\nq}\sum_{j=1}^{\nq}[\norm{x_i-x_j}^2 + \norm{v_i-v_j+g_i-g_j}^2]  \eqsp,
  \end{multline}
  where
  \begin{equation}
    \label{eq:12}
    \bC_1 =    [\{a_0 + 2 c_0 + \bT a_0 +  \bT^2c_0 \mtt/8 + a_0 \mtt \bT^3/4 +2\bC_{a,3}\bdelta \bT\}/a_0] \vee [\{2b_0 + 2c_0 +\bT a_0 + \bdelta \bT\bC_{b,3}\}/b_0]    \eqsp.
  \end{equation}
  \end{enumerate}
\end{lemma}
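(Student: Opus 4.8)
The plan is to derive the estimate by composing Lemma~\ref{lem:drift_k_step} with the explicit expansion of one momentum update, so that the contraction claim is reduced to a single $2\times2$ quadratic-form inequality.

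\emph{Reduction to Lemma~\ref{lem:drift_k_step}.} Under the stated hypotheses — in particular $\delta\le\bdelta_1\wedge\bdelta$, $K\delta\le\bT\wedge\bT_1\wedge\bT_2$, together with \eqref{eq:condition_epsilon_e_0} and \eqref{eq:condition_epsilon_e_0_1} — the assumptions of Lemma~\ref{lem:drift_k_step} are met, so for all $\xi,\omega\in\rsetd$ one has $\rmV_\delta^K\Voi(\xi,\omega)\le\Vlyapc[\bfo_{K}][i](\xi,\omega)$ and $\rmV_\delta^k\Voi(\xi,\omega)\le\Vlyapc[\bfo_{k}][i](\xi,\omega)$, with $\bfo_K$ and $\bfo_k$ given by \eqref{eq:coeff_drift_after_rec_and_drift} and \eqref{eq:coeff_drift_after_rec_and_drift_v2}. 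Evaluating the first inequality at $\omega=\eta v+\teta g$ and expanding the quadratic blocks gives
\[
\b_K\norm[2]{\eta v_i+\teta g_i}=\b_K\eta^2\norm[2]{v_i}+2\b_K\eta\teta\ps{v_i}{g_i}+\b_K\teta^2\norm[2]{g_i},\qquad \c_K\ps{x_i}{\eta v_i+\teta g_i}=\c_K\eta\ps{x_i}{v_i}+\c_K\teta\ps{x_i}{g_i},
\]
while the position interaction block $e_K\frac{\epsilon}{\nq}\sum_j\norm[2]{x_i-x_j}$, the constant $\f_K$ and the term $\a_K\norm[2]{x_i}+2b_0\Uq(x_i)$ carry over unchanged, and the velocity interaction block is kept written in terms of $v_i-v_j$ and $g_i-g_j$ (its precise form is immaterial, since it is multiplied by the small coefficient $e_K$ and is processed through the moment bounds of Lemma~\ref{lem:drift_one_step}). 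The bounds $\b_K\le2b_0+\delta^2K\bar{C}_{b,3}$ and $e_K\le2e_0+(K\delta)\bar{C}_{e,1}+K\delta^2\bar{C}_{e,2}$ follow directly from \eqref{eq:coeff_drift_after_rec_and_drift} using $17K\delta\le1$ and $K\delta\bar{C}_{b,1}+(K\delta)^2\bar{C}_{b,2}\le1$, valid for $K\delta\le\bT_1$.

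\emph{The key identity and the contraction.} The first equality in \eqref{eq:condition_c_0}, $c_0(\eta-1)+2\eta a_0\delta K=0$, combined with $\c_K=c_0+2K\delta a_0$, yields the exact identity $\c_K\eta=c_0$ (and, more generally, $\c_k\eta\le c_0$ for $0\le k\le K$). Hence the part of the substituted Lyapunov function depending only on $(x_i,v_i)$ equals $\a_K\norm[2]{x_i}+\b_K\eta^2\norm[2]{v_i}+c_0\ps{x_i}{v_i}+2b_0\Uq(x_i)$, and the contraction claim reduces, after using $2\Uq(x_i)\le\norm[2]{x_i}$ from \eqref{eq:pot_drift_sub_linear} to move the potential contribution onto $\norm[2]{x_i}$, to the positive semidefiniteness of
\[
(1-\bar\rho_{\bfomega}K\delta)\begin{pmatrix}a_0&c_0/2\\c_0/2&b_0\end{pmatrix}-\bar\rho_{\bfomega}K\delta\,b_0\begin{pmatrix}1&0\\0&0\end{pmatrix}-\begin{pmatrix}\a_K&c_0/2\\c_0/2&\b_K\eta^2\end{pmatrix}.
\]
One checks this by bounding below its two diagonal entries and its determinant: the $\norm[2]{x_i}$-entry is controlled because $\a_K=a_0-K\delta\bar{C}_{a,1}c_0-(K\delta)^2\bar{C}_{a,2}a_0$ leaves a surplus of order $K\delta$; the $\norm[2]{v_i}$-entry because $\b_K\eta^2=[b_0(1+K\delta\bar{C}_{b,1}+(K\delta)^2\bar{C}_{b,2})+\delta^2K\bar{C}_{b,3}](1-\gamma_0K\delta)^2$, and the last two conditions of \eqref{eq:condition_c_0} — $\bar{C}_{b,1}-c_0/b_0\le(1-\eta)/(4\delta K)$ and $\delta K\bar{C}_{b,2}+\delta\bar{C}_{b,3}/b_0\le(1-\eta)/(4\delta K)$ — are exactly what absorbs the growth of $\b_K$ into the factor $\eta^2=(1-\gamma_0K\delta)^2$; the determinant bound uses $c_0\le\sqrt{2a_0b_0}$ and $c_0/b_0\le\eta(1-\eta)/(\delta K)$. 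The quantity $\bar\rho_{\bfomega}$ in \eqref{eq:18} is precisely the minimum of the three rates coming out of these sub-estimates. Part (b) runs in the same way, using the second part of Lemma~\ref{lem:drift_k_step} (so that the coefficients are those of \eqref{eq:coeff_drift_after_rec_and_drift_v2}), the inequality $\c_k\eta\le c_0$, and the cruder comparison with constant $\bar C_1$ from \eqref{eq:12}, obtained as the larger of the ratios $\a_k/a_0$ and $\b_k\eta^2/b_0$ over $k\le K$, with $c_0\le\sqrt{2a_0b_0}$ again ensuring the initial form is coercive.

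\emph{Main obstacle.} The delicate point is the $2\times2$ quadratic-form comparison in part (a): the conditions \eqref{eq:condition_c_0} are calibrated precisely so that the determinant of the difference matrix remains non-negative while a genuine contraction of order $K\delta$ survives, and the bookkeeping — matching each condition to the entry it controls and verifying compatibility with the step-size windows $\bdelta_1,\bT_1,\bT_2$ inherited from Lemma~\ref{lem:drift_k_step} — is where the care lies. Everything else is substitution and elementary inequalities.
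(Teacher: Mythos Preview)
Your approach is essentially the paper's. Both proofs apply Lemma~\ref{lem:drift_k_step} to the shifted velocity $\eta v+\teta g$, expand, use the exact identity $\eta c_K=c_0$ coming from the first condition in~\eqref{eq:condition_c_0}, bound $b_K\eta^2$ via the remaining conditions, and then absorb the residual cross term $-\bar\rho_{\bfomega}K\delta\,c_0\ps{x_i}{v_i}$ using $c_0\le\sqrt{2a_0b_0}$. Your $2\times2$ PSD formulation is exactly the matrix repackaging of the paper's final Cauchy--Schwarz step; the paper also handles the potential deficit $\bar\rho_{\bfomega}K\delta\cdot 2b_0\Uq(x_i)$ by borrowing from the surplus on $\norm{x_i}^2$ via $2\Uq(x_i)\le\norm{x_i}^2$, just as you do.

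One small attribution slip: the condition $c_0/b_0\le\eta(1-\eta)/(\delta K)$ is used in the paper to control $b_K\eta^2$ (it absorbs the $c_0\delta K/b_0$ piece of $\bar C_{b,1}$ into $\eta(1-\eta)$), not the determinant; the off-diagonal in your difference matrix is only $-\bar\rho_{\bfomega}K\delta\,c_0/2$, which is $O((K\delta)^2)$ in size and is handled by $c_0\le\sqrt{2a_0b_0}$ alone. This does not affect the correctness of your outline.
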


\begin{remark}
  \label{rem:choice_a_b_c}
  Note that the first line in  \eqref{eq:condition_c_0} is in particular satisfied choosing $b_0=1$, $a_0 = (1-\eta)^2/[2(\delta K)^2]$  and $c_0=  \eta  (1-\eta)/(\delta K)$. 
\end{remark}
\begin{proof}
  \begin{enumerate}[label=(\alph*),noitemsep,topsep=0pt,parsep=0pt,partopsep=0pt,leftmargin=*,wide]
  \item Let $\delta >0$, $K \in \nset$, $\eta \in \ooint{0,1}$, $\bfo = (\a_0,\b_0,\c_0,2b_0,\e_0,\f_0)\in \rset_+^6$,
  satisfying
$\delta \leq \bdelta_1 \wedge \bdelta$,  $K\delta \leq \bT \wedge \bar{T}_1 \wedge \bT_2 $, \eqref{eq:condition_epsilon_e_0}-\eqref{eq:condition_epsilon_e_0_1}-\eqref{eq:condition_c_0}. For any $x=(x_1,\ldots,x_{\nq}),v=(v_1,\ldots,v_{\nq}),g=(g_1,\ldots,g_{\nq}) \in \rsetd$,  using \Cref{lem:drift_k_step}-\ref{lem:drift_k_step_a}, we get 
  \begin{align}
    \label{eq:7_0}
&    \rmV_{\delta}^K \Voi(x,\eta v + \teta g ) \leq a_K \norm{x_i}^2 + b_K \eta^2 \norm{v_i}^2 + c_K\eta \ps{x_i}{v_i} + 2 b_0  \Uq(x_i) +f_K\\
    & +2 \eta\teta b_K\ps{v_i}{g_i} + \teta^2 b_K\norm{g_i}^2 + c_K\teta\ps{x_i}{g_i}     + \frac{\epsilon e_K}{\nq} \sum_{j=1}^{\nq} [\norm{x_i-x_j}^2+ \norm{v_i-v_j+g_i-g_j}^2]\eqsp.
  \end{align}

  Note that by the first line in \eqref{eq:condition_c_0} and using $c_K = c_0 +2 K\delta a_0$,

  \begin{equation}
    \label{eq:08_2024_1}
    \eta c_K \ps{x}{v} = c_0\ps{x}{v}\eqsp.
  \end{equation}
  Second using the second line in \eqref{eq:condition_c_0}, we have
  \begin{align}
    \label{eq:1}
    b_K\eta^2 &= (1+\bar{C}_{b,1} K\delta + \bar{C}_{b,2}(K\delta)^2 + \bC_{b,3} K\delta^2/b_0) b_0 (1-(1+\eta)(1-\eta)) \\
              & \leq  b_0[1- (1-\eta) -\eta(1-\eta) + C_1 \delta K + \delta K (\delta K\bC_{2,b} + \delta \bC_{3,b}/b_0)] \\
              & \leq  b_0[1- (1-\eta) -\eta(1-\eta) + c_0 \delta K/b_0 + (1-\eta)/4 + \delta K (\bC_{2,b} + \delta \bC_{3,b}/b_0)]\\
        \label{eq:08_2024_2}
    & \leq   b_0 (1- \delta K(1-\eta)/[2\delta K]) \eqsp.
  \end{align}

  Then using \eqref{eq:08_2024_1}-\eqref{eq:08_2024_2}, $U(x) \leq \norm{x}^2$ under \Cref{ass:for_drift} and since $U(0)=0$, we get setting $\bC_{a,4} = K\delta \bC_{a,2}(c_0 + \delta K a_0)/a_0$,
  \begin{align}
    \label{eq:7_0}
    &\rmV_{\delta}^K \Voi(x,\eta v + \teta g ) \leq (1-\bar{C}_{a,4}/2)a_0 \norm{x_i}^2 + \delta K(1-\eta)\norm{v_i}^2/[2 K \delta] + c_0 \ps{x_i}{v_i}\\
    &\qquad + (1-\bar{C}_{a,4}/(4  b_0))2 b_0  \Uq(x_i) +f_K  +  2 \eta\teta b_K\ps{v_i}{g_i} + \teta^2 b_K\norm{g_i}^2 + c_K\teta\ps{x_i}{g_i}  \\
    &\qquad \txts  +  \frac{\epsilon e_K}{\nq} \sum_{j=1}^{\nq} [\norm{x_i-x_j}^2+\norm{v_i-v_j+g_i-g_j}^2]\\
&    \leq (1-\bar{C}_{a,4}/2)a_0 \norm{x_i}^2 + \delta K(1-\eta)\norm{v_i}^2/[2 K \delta] + c_0(1- \bar{\rho}_{\bfomega}K\delta) \ps{x_i}{v_i} +  \bar{\rho}_{\bfomega}K\delta c_0\ps{x}{v}\\
    & \qquad + (1-\bar{C}_{a,4}/(4  b_0))2 b_0  \Uq(x_i) +f_K  +  2 \eta\teta b_K\ps{v_i}{g_i} + \teta^2 b_K\norm{g_i}^2 + c_K\teta\ps{x_i}{g_i}  \\
    &  \qquad \txts +  \frac{\epsilon e_K}{\nq} \sum_{j=1}^{\nq} [\norm{x_i-x_j}^2+ \norm{v_i-v_j+g_i-g_j}^2]\eqsp,
  \end{align}
  which completes the proof using the Cauchy-Schwarz inequality and the condition $c_0 \leq 2 \sqrt{a_0b_0}$.
\item The proof of the second statement follows the same lines using \Cref{lem:drift_k_step}-\ref{lem:drift_k_step_b} instead of \Cref{lem:drift_k_step}-\ref{lem:drift_k_step_a}.
\end{enumerate}
\end{proof}

\paragraph{Averaging over momentum updates.}

We first give a technical result on certain Gaussian moments.
\begin{lemma}
  \label{lem:moment_gaussian}
  Let $a\in\rset_+$, $\ell \in\nsets$ and $G$ be a $d$-dimensional zero-mean Gaussian random variable with covariance matrix identity. Then, for any $x \in\rset^d$,
  \begin{equation}
    \label{eq:def_tbfm}
    \PE[(a+\ps{G}{x})^\ell] \leq a^{\ell-2\floor{\ell/2}}  \parenthese{a^2+\tilde{\bfm}_\ell \norm{x}^2}^{\floor{\ell/2}} \eqsp,
\text{ where }
\tbfm_\ell =     \rme^{2}(\ell/2)^2 \floor{\ell/2}^{-1}  \bfm_{2\floor{\ell/2}}^{1/\floor{\ell/2}} \eqsp, 
  \end{equation}
and $\bfm_{\ell'}$ is the $\ell'$-th moment of the zero-mean one-dimensional Gaussian distribution with variance $1$. 
\end{lemma}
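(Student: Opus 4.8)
The plan is to reduce the $d$-dimensional estimate to a one-dimensional one by rotational invariance of the Gaussian, then control the one-dimensional moment $\PE[(a+\|x\|Z)^\ell]$ with $Z\sim\mathrm N(0,1)$. First I would observe that, since $G$ has the identity covariance and its law is invariant under orthogonal transformations, $\ps{G}{x}$ has the same distribution as $\|x\|Z$ for a standard one-dimensional Gaussian $Z$; hence $\PE[(a+\ps{G}{x})^\ell] = \PE[(a+\|x\|Z)^\ell]$ and it suffices to treat the scalar case, with $t = \|x\|$ playing the role of $\|x\|$.

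Next I would expand $(a+tZ)^\ell = \sum_{j=0}^\ell \binom{\ell}{j} a^{\ell-j} t^j \PE[Z^j]$ after taking expectations, and recall that the odd Gaussian moments vanish, so only even $j=2m$ with $0\le m\le\floor{\ell/2}$ contribute: $\PE[(a+tZ)^\ell] = \sum_{m=0}^{\floor{\ell/2}} \binom{\ell}{2m} a^{\ell-2m} t^{2m} \bfm_{2m}$. The target bound is of the form $a^{\ell-2\floor{\ell/2}}(a^2+\tbfm_\ell t^2)^{\floor{\ell/2}}$; expanding the right-hand side by the binomial theorem gives $a^{\ell-2\floor{\ell/2}}\sum_{m=0}^{\floor{\ell/2}} \binom{\floor{\ell/2}}{m} a^{2\floor{\ell/2}-2m}\tbfm_\ell^m t^{2m} = \sum_m \binom{\floor{\ell/2}}{m}\tbfm_\ell^m a^{\ell-2m} t^{2m}$, so it is enough to show the coefficient-wise inequality $\binom{\ell}{2m}\bfm_{2m} \le \binom{\floor{\ell/2}}{m}\tbfm_\ell^m$ for every $m\in\{0,\dots,\floor{\ell/2}\}$ (trivial at $m=0$). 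Writing $p=\floor{\ell/2}$, this reads $\tbfm_\ell^m \ge \binom{\ell}{2m}\bfm_{2m}/\binom{p}{m}$, so I would take $\tbfm_\ell := \max_{1\le m\le p}\big(\binom{\ell}{2m}\bfm_{2m}/\binom{p}{m}\big)^{1/m}$ and then show this max is dominated by the stated value $\rme^2(\ell/2)^2 p^{-1}\bfm_{2p}^{1/p}$.

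The main obstacle — and the only non-routine part — is establishing that $\big(\binom{\ell}{2m}\bfm_{2m}/\binom{p}{m}\big)^{1/m}\le \rme^2(\ell/2)^2 p^{-1}\bfm_{2p}^{1/p}$ for all $1\le m\le p$. Here I would use two ingredients: (i) log-convexity of Gaussian moments, i.e. $m\mapsto \log\bfm_{2m}$ is convex (which follows from Hölder's inequality, or directly from $\bfm_{2m}=(2m-1)!!$), giving $\bfm_{2m}^{1/m}\le \bfm_{2p}^{1/p}$ after a suitable normalization by the $m=1$ value $\bfm_2=1$; more precisely convexity yields $\bfm_{2m}\le \bfm_{2p}^{m/p}$ since $\bfm_0=1$; and (ii) crude binomial estimates $\binom{\ell}{2m}\le (\rme\ell/(2m))^{2m}$ and $\binom{p}{m}\ge (p/m)^m$, so that $\binom{\ell}{2m}/\binom{p}{m}\le (\rme\ell/(2m))^{2m}(m/p)^m = \big(\rme^2\ell^2 m/(4m^2 p)\big)^m \le \big(\rme^2\ell^2/(4p)\big)^m$ using $m\ge1$. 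Combining (i) and (ii), $\big(\binom{\ell}{2m}\bfm_{2m}/\binom{p}{m}\big)^{1/m}\le \rme^2\ell^2/(4p)\cdot\bfm_{2p}^{1/p} = \rme^2(\ell/2)^2 p^{-1}\bfm_{2p}^{1/p}$, which is exactly $\tbfm_\ell$ as defined in~\eqref{eq:def_tbfm}. This closes the coefficient-wise comparison and hence the lemma; I would double-check the edge cases $\ell$ even versus odd (only the power $a^{\ell-2\floor{\ell/2}}$, equal to $1$ or $a$, is affected) and the degenerate case $x=0$ separately, both of which are immediate.
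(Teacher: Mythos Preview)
Your proposal is correct and follows essentially the same route as the paper: reduce to the scalar case (the paper notes directly that $\ps{G}{x}$ is Gaussian with variance $\norm{x}^2$), expand by the binomial theorem keeping only even powers, then compare coefficients via the two estimates $\bfm_{2m}\le \bfm_{2\floor{\ell/2}}^{m/\floor{\ell/2}}$ (the paper invokes Jensen, you invoke log-convexity---same thing) and $\binom{\ell}{2m}\le (\rme\ell/(2m))^{2m}\le \big(\rme^2(\ell/2)^2\floor{\ell/2}^{-1}\big)^m\binom{\floor{\ell/2}}{m}$, before recognising the resulting sum as the binomial expansion of $a^{\ell-2\floor{\ell/2}}(a^2+\tbfm_\ell\norm{x}^2)^{\floor{\ell/2}}$.
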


\begin{proof}
  Expanding $(a+\ps{G}{x})^\ell$ and using that $\ps{G}{x}$ is a zero-mean Gaussian random variable with variance $\norm{x}^2$, we get
  \begin{align}
    \PE[(a+\ps{G}{x})^\ell]  & = \sum_{k=0}^{\ell} \binom{\ell}{k} a^{\ell-k} \PE[\ps{G}{x}^k] = \sum_{k=0}^{\floor{\ell/2}} \binom{\ell}{2k} \norm{x}^{2k} a^{\ell-2k} \bfm_{2k}^{2k/2k}\\
                          & \leq a^{\ell-2\floor{\ell/2}}  \sum_{k=0}^{\floor{\ell/2}} \binom{\ell}{2k} \norm{x}^{2k} a^{2(\floor{\ell/2}-k)} \bfm_{2\floor{\ell/2}}^{2k/2\floor{\ell/2}} \eqsp,
  \end{align}
  where we apply Jensen inequality for the last step. 
  Using that $\binom{\ell}{2k} \leq (\rme \ell/(2k))^{2k} \leq (\rme^2 \ell/2)^k(\ell/(2k))^k \leq (\rme^2 (\ell/2)^2 \floor{\ell/2}^{-1})^{k}\binom{\floor{\ell/2}}{k}$, we get 
  \begin{align}
    \PE[(a+\ps{G}{x})^\ell]  & \leq a^{\ell-2\floor{\ell/2}}  \sum_{k=0}^{\floor{\ell/2}} (\rme^2 (\ell/2)^2 \floor{\ell/2}^{-1})^{k}\binom{\floor{\ell/2}}{k} \norm{x}^{2k} a^{2(\floor{\ell/2}-k)} \bfm_{2\floor{\ell/2}}^{2k/2\floor{\ell/2}}\\
    & \leq a^{\ell-2\floor{\ell/2}}  \parenthese{a^2+\rme^{2}(\ell/2)^2 \floor{\ell/2}^{-1} \norm{x}^2 \bfm_{2\floor{\ell/2}}^{1/\floor{\ell/2}}}^{\floor{\ell/2}} \eqsp,
  \end{align}
  which completes the proof. 
\end{proof}

Define for any $x=(x_1,\ldots,x_{\nq}),v=(v_1,\ldots,v_{\nq})\in \rsetd$ and $\ell \in \nsets$,
 \begin{equation}
    \label{eq:def_Vol}
    \Vlyol(x,v) = \sum_{i=1}^{\nq}  \Voi^{\ell}(x,v)\eqsp.
 \end{equation}
We are now ready to control the evolution of the Lyapunov function $\Voi^\ell$ for any $\ell \in \nsets$  over one full transition of the gHMC chain (notice that we are essentially interested in $\ell=3$, since this is the dominant order in \eqref{eq:Lyap_for_main_drift_thm}).

\begin{lemma}
  \label{lem:drift_one_step}
Assume that \Cref{ass:for_drift} holds and that $\epsilon \leq \mtt/4$.
 Then, for any $\bfomega = (K,\delta,\eta)$, $K \in\nset$, $\delta >0$,  $\eta \in \ooint{0,1}$, $\bfo = (\a_0,\b_0,\c_0,2b_0,\e_0,\f_0)\in \rset_+^6$,
  satisfying
$\delta \leq \bdelta_1\wedge \bdelta$,  $K\delta \leq \bT \wedge \bar{T}_1 \wedge \bT_2 $ with $\bT,\bdelta >0$, \eqref{eq:condition_epsilon_e_0}-\eqref{eq:condition_epsilon_e_0_1}-\eqref{eq:condition_c_0},
it holds:
\begin{enumerate}[label=(\alph*),noitemsep,topsep=0pt,parsep=0pt,partopsep=0pt,leftmargin=*,wide]
\item  \label{lem:drift_one_step_a}
For any $x=(x_1,\ldots,x_{\nq}),v=(v_1,\ldots,v_{\nq})\in \rsetd$ and $\ell \in \nsets$,
\begin{align}
  \label{eq:16}
  &\rmP_{\bfomega} \Voi^\ell(x,v) \leq (1-\bar{\rho}_{\bfomega}K\delta/4)^\ell[a_0 \norm{x_i}^2 + b_0\norm{v_i}^2 + c_0 \ps{x_i}{v_i} + 2b_0 \Uq(x_i)]^{\ell}\\
  & \qquad + 2^\ell \floor{\ell/2} \bE_{\ell}[(a_0 \bD_2^x + b_0 (\bD_2^{v}+2\bD_2^x) + c_0 (\bD_2^x)^{1/2}(\bD_2^v)^{1/2})^2 + \bE_{\ell}]^{\floor{\ell/2}-1}  \\
  & \qquad+ 12^{\ell-1}(1+4/(\bar{\rho}_{\bfomega}K\delta))^{\ell-1}[\teta^{2\ell} b_K^\ell q^\ell  \bfm_{2\ell} + 8^{\ell-1} \epsilon^\ell e_K^{\ell}q^\ell \bfm_{2\ell} +  f_K^\ell]\\
    &\qquad \txts  +12^{\ell-1}(1+4/(\bar{\rho}_{\bfomega}K\delta))^{\ell-1} \epsilon^\ell e_K^\ell\parentheseDeux{\nq^{-1} \sum_{j=1}^{\nq}\norm{x_i-x_j}^2 + \norm{v_i-v_j}^2}^\ell \eqsp,
\end{align}
where
\begin{align}
  \label{eq:22}
  \bD_1^x &=  \frac{2^{3/2} (c_K \teta)^2 \tbfm_{\ell}}{a_0^2\bar{\rho}_{\bfomega}K\delta(\sqrt{2}-1)} \eqsp, \quad  \bD_1^{v}  = \frac{8\sqrt{2}  (\teta\eta b_K )^2 \tbfm_{\ell} }{\bar{\rho}_{\bfomega}K\delta(\sqrt{2}-1)}\eqsp, \\
 \bD_2^x &= [2 \bD_1^x] \vee 4 \sqrt{\frac{\bD_1^v \tbfm_{\ell}(\eta\teta b_K)^2 \sqrt{2}}{\bar{\rho}_{\bfomega} K \delta a_0^2 (\sqrt{2}-1)}} \eqsp , \quad   \bD_2^v = [2 \bD_1^v] \vee 2 \sqrt{\frac{\bD_1^x \tbfm_{\ell}(\teta c_K)^2 \sqrt{2}}{\bar{\rho}_{\bfomega} K \delta b_0^2 (\sqrt{2}-1)}}\\
    \bE_{\ell} & = [4   (\teta\eta b_K)^2 \bD_2^{v} + 2 (c_K \teta)^2\bD_2^x]\tbfm_{\ell}\eqsp,
\end{align}
where $\tbfm_{\ell}$ is defined in \eqref{eq:def_tbfm} and $\teta^2= 1- \eta^2$.
\item In addition, if for $\ell \in \nsets$
  \begin{equation}
    \label{eq:verif_epsi_momentn}
    \epsilon \leq \ell^{1/(\ell-1)}( 1-\bar{\rho}_{\bfomega}K\delta/4) \eqsp, \quad 
24 e_K    \epsilon (1+4/(\bar{\rho}_{\bfomega}K\delta))^{(\ell-1)/\ell} \leq \bar{\rho}_{\bfomega}K\delta/8  \eqsp,
  \end{equation}
for   any $x=(x_1,\ldots,x_{\nq}),v=(v_1,\ldots,v_{\nq})\in \rsetd$,
\begin{align}
  \label{eq:16}
  \rmP_{\bfomega} \Vlyol(x,v) &\txts \leq (1-\bar{\rho}_{\bfomega}K\delta/8)^\ell \sum_{i=1}^{\nq}[a_0 \norm{x_i}^2 + b_0\norm{v_i}^2 + c_0 \ps{x_i}{v_i} + 2b_0 \Uq(x_i)]^{\ell}\\
  &\qquad + 4^\ell \floor{\ell/2} \bE_{\ell}[(a_0 \bD_1^x + b_0 (\bD_1^{v}+2\bD_1^x) + c_0 (\bD_1^{x})^{1/2}(\bD_1^v)^{1/2})^2 + \bE_{\ell}]^{\floor{\ell/2}-1}  \\
  &\qquad + 12^{\ell-1}\nq (1+4/(\bar{\rho}_{\bfomega}K\delta))^{\ell-1}[\teta^{2\ell} b_K^\ell q^\ell  \bfm_{2\ell} + 8^{\ell-1} \epsilon^\ell e_K^{\ell}q^\ell \bfm_{2\ell} +  f_K^\ell] \eqsp.
\end{align}
\item Finally, for any $k \in\{0,\ldots,K\}$,
  $x=(x_1,\ldots,x_{\nq}),v=(v_1,\ldots,v_{\nq})\in \rsetd$ and $\ell \in \nsets$,
\begin{align}
  \label{eq:16}
    \rmD_{\eta} \rmV_{\delta}^k \Vlyol(x,v) & \txts \leq \bC_{2}^{\ell} \sum_{i=1}^{\nq} [a_0 \norm{x_i}^2 + b_0\norm{v_i}^2 + c_0 \ps{x_i}{v_i} + 2b_0 \Uq(x_i)]^{\ell} \\
  & \qquad+ 12^{\ell-1}\nq(1+4/(\bar{\rho}_{\bfomega}K\delta))^{\ell-1}[\teta^{2\ell} b_K^\ell q^\ell  \bfm_{2\ell} + 8^{\ell-1} \epsilon^\ell e_K^{\ell}q^\ell \bfm_{2\ell} +  f_K^\ell] \eqsp,
\end{align}
where
\begin{equation}
  \label{eq:bC2}
  \bC_{2} = 8 (1+\bar{\rho}_{\bfomega}K\delta/4) [\bC_{1}+ ( [4 (\eta\teta b_K)^2 b_0^{-1} ] \vee [(c_K \teta)^2 a_0^{-1}])] + 24(1+4/(\bar{\rho}_{\bfomega}K\delta))^{(\ell-1)/\ell} \epsilon e_K \eqsp. 
\end{equation}
\end{enumerate}
\end{lemma}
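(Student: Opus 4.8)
Throughout, abbreviate $Q_i(x,v)=a_0\norm{x_i}^2+b_0\norm{v_i}^2+c_0\ps{x_i}{v_i}+2b_0\Uq(x_i)$; since $c_0\leq\sqrt{2a_0b_0}$ by \eqref{eq:condition_c_0} and $\Uq\geq0$ (by \eqref{eq:pot_drift_sub_linear}), a Cauchy--Schwarz bound on the cross term gives $Q_i\geq\tfrac14\bigl(a_0\norm{x_i}^2+b_0\norm{v_i}^2\bigr)\geq0$, a lower bound used repeatedly. Because $\rmV_\delta$ commutes with raising to a power (being composition with a deterministic map), $\rmP_{\bfomega}\Voi^\ell(x,v)=\PE\bigl[(\rmV_\delta^K\Voi(x,\eta v+\teta G))^\ell\bigr]$ and likewise $\rmD_\eta\rmV_\delta^k\Vlyol(x,v)=\sum_{i=1}^{\nq}\PE\bigl[(\rmV_\delta^k\Voi(x,\eta v+\teta G))^\ell\bigr]$, with $G\sim\mathrm{N}(0,\IdM)$. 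So all three statements reduce to inserting the pointwise bounds of \Cref{lem:drift_for_fixed_Gaussian} for $\rmV_\delta^K\Voi$ (resp. $\rmV_\delta^k\Voi$) evaluated at $(x,\eta v+\teta g)$, raising to the power $\ell$, and integrating over $g$ by means of \Cref{lem:moment_gaussian}.

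For item (a): insert the first bound of \Cref{lem:drift_for_fixed_Gaussian} and use $\norm{v_i-v_j+g_i-g_j}^2\leq2\norm{v_i-v_j}^2+2\norm{g_i-g_j}^2$ to discard the $g$-linear part of the mean-field term. This gives $\rmV_\delta^K\Voi(x,\eta v+\teta g)\leq S_1+S_2$, where $S_1=(1-\bar\rho_{\bfomega}K\delta)Q_i(x,v)+\ps{G_i}{c}$ carries the contracting quadratic form together with the genuine Gaussian fluctuation $c=2\eta\teta b_K v_i+c_K\teta x_i\in\rset^q$, while $S_2$ collects all purely Gaussian quadratic contributions ($\teta^2 b_K\norm{g_i}^2$ and the $g$-quadratic remnants of the mean-field term), the constant $f_K$, and the deterministic remainder $e_K\tfrac{\epsilon}{\nq}\sum_j[\norm{x_i-x_j}^2+\norm{v_i-v_j}^2]$. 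A two-term instance of the convexity inequality \eqref{eq:convexity_ineq_sums} with weight $\bar\rho_{\bfomega}K\delta/4$ yields $(S_1+S_2)^\ell\leq(1+\bar\rho_{\bfomega}K\delta/4)^{\ell-1}S_1^\ell+(1+4/(\bar\rho_{\bfomega}K\delta))^{\ell-1}S_2^\ell$, which explains the factors $(1+4/(\bar\rho_{\bfomega}K\delta))^{\ell-1}$ in the statement. The term $\PE[S_2^\ell]$ is handled by routine bookkeeping: split $S_2$ into its constituent nonnegative pieces via \eqref{eq:convexity_ineq_sums} (producing the combinatorial prefactor $12^{\ell-1}$), compute the Gaussian moments through $\PE[\norm{G_i}^{2\ell}]\leq q^\ell\bfm_{2\ell}$ and $\PE[(\nq^{-1}\sum_j\norm{G_j}^2)^\ell]\leq q^\ell\bfm_{2\ell}$ (again by \eqref{eq:convexity_ineq_sums}), and plug in $b_K\leq2b_0+\delta^2K\bC_{b,3}$ and $e_K\leq2e_0+K\delta\bC_{e,1}+K\delta^2\bC_{e,2}$ from \Cref{lem:drift_for_fixed_Gaussian}; this accounts for the $q^\ell$-line, the mean-field remainder line, and part of the constant.

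The crux is $\PE[S_1^\ell]=\PE[(P+\ps{G_i}{c})^\ell]$ with $P:=(1-\bar\rho_{\bfomega}K\delta)Q_i\geq0$. By \Cref{lem:moment_gaussian}, this is at most $P^{\ell-2\floor{\ell/2}}(P^2+\tbfm_\ell\norm{c}^2)^{\floor{\ell/2}}$, and $\norm{c}^2\leq8(\eta\teta b_K)^2\norm{v_i}^2+2(c_K\teta)^2\norm{x_i}^2$. Since $\teta^2=1-\eta^2$ and $\bar\rho_{\bfomega}K\delta$ are of the same order $\delta K$, one expands $(P^2+\tbfm_\ell\norm{c}^2)^{\floor{\ell/2}}$ binomially and, in each nontrivial cross term, trades the factor $\tbfm_\ell\norm{c}^2$ against the available margin via Young's inequality together with $Q_i\gtrsim a_0\norm{x_i}^2+b_0\norm{v_i}^2$ (so $\norm{c}^2\lesssim\kappa_0 Q_i$ with $\kappa_0$ of order $\delta K$, whence $\kappa_0^k Q_i^{\,\ell-k}\leq(\text{small})\,Q_i^\ell+(\text{finite})$). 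Choosing the Young weights to be exactly $\bD_1^x,\bD_1^v,\bD_2^x,\bD_2^v$ — whose nested suprema and square roots are tuned so that this iterated completion of squares closes, and whose $1/(\bar\rho_{\bfomega}K\delta)$ dependence reflects the blow-up of the Young constant as the absorption margin shrinks — collapses the expansion into $(1-\bar\rho_{\bfomega}K\delta/4)^\ell Q_i^\ell$ (the convexity factor $(1+\bar\rho_{\bfomega}K\delta/4)^{\ell-1}$ being absorbed because $(1+\bar\rho_{\bfomega}K\delta/4)(1-\bar\rho_{\bfomega}K\delta/2)\leq1-\bar\rho_{\bfomega}K\delta/4$) plus the constant $2^\ell\floor{\ell/2}\,\bE_\ell\bigl[(a_0\bD_2^x+b_0(\bD_2^v+2\bD_2^x)+c_0(\bD_2^x)^{1/2}(\bD_2^v)^{1/2})^2+\bE_\ell\bigr]^{\floor{\ell/2}-1}$, the last expression arising from evaluating a $Q_i$-type form at $(\bD_2^x,\bD_2^v)$ and raising it to the power $\floor{\ell/2}$ as in \Cref{lem:moment_gaussian}. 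This constant-tracking step — getting the contraction exponent from $(1-\bar\rho_{\bfomega}K\delta)$ down to $(1-\bar\rho_{\bfomega}K\delta/4)$ at the price of the precise $\bE_\ell$-constant — is the only genuinely delicate point; the rest is arithmetic. Combining with the bound on $\PE[S_2^\ell]$ proves (a).

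For (b), sum (a) over $i\in\{1,\dots,\nq\}$. The sole term not already in the stated form is the mean-field remainder: by \eqref{eq:astuce_somme_xi_2}, $\nq^{-1}\sum_j\norm{x_i-x_j}^2\leq2\norm{x_i}^2+2\nq^{-1}\sum_j\norm{x_j}^2$ (and similarly for $v$), hence by \eqref{eq:convexity_ineq_sums} and $Q_i\gtrsim a_0\norm{x_i}^2+b_0\norm{v_i}^2$, $\sum_i[\nq^{-1}\sum_j\norm{x_i-x_j}^2+\norm{v_i-v_j}^2]^\ell\leq(\text{const})\sum_i Q_i^\ell$ with a constant that is a power of $\min(a_0,b_0)^{-1}$. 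The two conditions in \eqref{eq:verif_epsi_momentn} are precisely what makes $12^{\ell-1}(1+4/(\bar\rho_{\bfomega}K\delta))^{\ell-1}\epsilon^\ell e_K^\ell\cdot(\text{const})$ small enough that, by the superadditivity $u^\ell+w^\ell\leq(u+w)^\ell$, adding it to $(1-\bar\rho_{\bfomega}K\delta/4)^\ell$ stays below $(1-\bar\rho_{\bfomega}K\delta/8)^\ell$, while the constant terms pass through unchanged. Statement (c) follows the same scheme but starting from the second bound of \Cref{lem:drift_for_fixed_Gaussian}, in which $Q_i$ has prefactor $\bC_1$ instead of a contraction: no Young absorption is then needed — one takes $P=\bC_1 Q_i$, applies \Cref{lem:moment_gaussian} and the same convexity splittings with crude constants (distinguishing $Q_i\geq1$ and $Q_i<1$ to dispose of the sub-top-degree terms) — and the resulting prefactor is exactly $\bC_2^\ell$.
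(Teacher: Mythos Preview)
Your overall architecture matches the paper: insert \Cref{lem:drift_for_fixed_Gaussian}, raise to the power $\ell$, split via weighted convexity into $S_1=A+\langle G_i,c\rangle$ (with $A=(1-\bar\rho_{\bfomega}K\delta)Q_i$ and $c=2\eta\teta b_K v_i+c_K\teta x_i$) plus a remainder $S_2$, apply \Cref{lem:moment_gaussian} to $\PE[S_1^\ell]$, and compute the Gaussian moments in $\PE[S_2^\ell]$. Your treatments of (b) and (c) are essentially the paper's.

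The gap is in the crux of (a). After \Cref{lem:moment_gaussian} delivers $(1+\bar\rho_{\bfomega}K\delta/4)^{\ell-1}A^{\ell-2\lfloor\ell/2\rfloor}B^{\lfloor\ell/2\rfloor}$ with $B=A^2+\bigl[4(\eta\teta b_K)^2\|v_i\|^2+(c_K\teta)^2\|x_i\|^2\bigr]\tbfm_\ell$, you invoke ``binomial expansion plus Young via $\|c\|^2\lesssim\kappa_0 Q_i$'' without carrying it out. The constants $\bD_1^x,\bD_1^v,\bD_2^x,\bD_2^v$ are not Young weights, and their form --- separate position/velocity thresholds, with $\bD_2^x$ depending on $\bD_1^v$ through a square root and conversely --- cannot be produced by a one-variable argument in $Q_i$. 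What the paper actually does is a \emph{four-region case analysis} on $\|x_i\|^2$ and $\|v_i\|^2$ separately. When $\|x_i\|^2\geq\bD_1^x$ and $\|v_i\|^2\geq\bD_1^v$, the bound $c_0\leq\sqrt{2a_0b_0}$ lets one peel from $A^2$ a margin $\bar\rho_{\bfomega}K\delta\,\tfrac{\sqrt2-1}{\sqrt2}\,[a_0\|x_i\|^2+b_0\|v_i\|^2]^2$; the definitions of $\bD_1^x,\bD_1^v$ are chosen precisely so that this quartic margin swallows the added (quadratic) variance, yielding $B\leq(1-\bar\rho_{\bfomega}K\delta/2)Q_i^2$. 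The two mixed cases (e.g.\ $\|x_i\|^2\geq\bD_2^x$ but $\|v_i\|^2\leq\bD_1^v$) are what \emph{fix} $\bD_2^x$: the quartic $a_0^2\|x_i\|^4$ alone must now absorb also the $v$-variance, which in this region is bounded by the \emph{constant} $4(\eta\teta b_K)^2\bD_1^v\tbfm_\ell$; this is why $\sqrt{\bD_1^v}$ enters $\bD_2^x$. In the remaining small--small region $\|x_i\|^2\leq\bD_2^x$, $\|v_i\|^2\leq\bD_2^v$, the variance is at most $\bE_\ell$ and the elementary bound $(t+s)^m-t^m\leq ms(t+s)^{m-1}$ with $t=A^2$, $s=\bE_\ell$, $m=\lfloor\ell/2\rfloor$ gives the additive constant in the statement (by bounding $t+s$ on that region). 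Your ``iterated completion of squares'' neither identifies this case split nor accounts for the threshold structure, so the stated constants are not actually derived.
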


\begin{proof}
  Let $\delta \leq \bdelta_1\wedge \bdelta$,  $K\delta \leq \bT \wedge \bar{T}_1 \wedge \bT_2 $ with $\bT,\bdelta >0$, satisfying  \eqref{eq:condition_epsilon_e_0}-\eqref{eq:condition_epsilon_e_0_1}-\eqref{eq:condition_c_0}.
\begin{enumerate}[label=(\alph*),noitemsep,topsep=0pt,parsep=0pt,partopsep=0pt,leftmargin=*,wide]
\item Consider  $x,v \in \rsetd$ and $\ell \in \nsets$.
  Using \Cref{lem:drift_for_fixed_Gaussian}-\ref{lem:drift_for_fixed_Gaussian_a} and Jensen inequality, we have,  for any $g=(g_1,\ldots,g_{\nq}) \in \rset^d$,
  \begin{align}
    \label{eq:3}
    & \rmV_{\delta}^K \Voi^{\ell}(x,\eta v + \teta g )\\
    & \leq (1+\bar{\rho}_{\bfomega}K\delta/4)^{\ell-1}[A +2\eta\teta b_K\ps{v_i}{g_i} + c_K \teta\ps{x_i}{g_i}]^{\ell}\\
  & \txts  \qquad + 12^{\ell-1}(1+4/\bar{\rho}_{\bfomega}K\delta)^{\ell-1}[\teta^{2\ell} b_K^{\ell} \norm{g}^{2\ell} +  \epsilon^{\ell} e_K^{\ell}\nq^{-1}\sum_{j=1}^{\nq}  \norm{g_i-g_j}^{2\ell} +f_K^{\ell}]\\
    & \txts  \qquad \txts+12^{\ell-1}(1+4/\bar{\rho}_{\bfomega}K\delta)^{\ell-1} \epsilon^{\ell} e_K^{\ell}\parentheseDeux{\nq^{-1} \sum_{j=1}^{\nq}[\norm{x_i-x_j}^2 + \norm{v_i-v_j}^2}^{\ell} \eqsp,
  \end{align}
  where we have set $A=(1-\bar{\rho}_{\bfomega}K\delta)[a_0 \norm{x_i}^2 + b_0 \norm{v_i}^2 + c_0\ps{x_i}{v_i} + 2b_0 \Uq(x_i)]$. Then, taking expectation and \Cref{lem:moment_gaussian} imply that
\begin{align}
  \label{eq:3_1}
    & \rmP_{\bfomega} \Voi^{\ell}(x,v) 
     \leq (1+\bar{\rho}_{\bfomega}K\delta/4)^{\ell-1}A^{\ell-2\floor{\ell/2}}B^{\floor{\ell/2}}\\
  & \txts  \qquad + 12^{\ell-1}(1+4/\bar{\rho}_{\bfomega}K\delta)^{\ell-1}[\teta^{2\ell} b_K^{\ell} q^{\ell} \bfm_{2\ell} + 8^{\ell-1} \epsilon^{\ell} e_K^{\ell}q^{\ell} \bfm_{2\ell}+f_K^{\ell}]\\
    & \txts  \qquad +12^{\ell-1}(1+4/\bar{\rho}_{\bfomega}K\delta)^{\ell-1} \epsilon^{\ell} e_K^{\ell}\nq^{-1} \sum_{j=1}^{\nq}[\norm{x_i-x_j}^2 + \norm{v_i-v_j}^2]^{\ell} \eqsp.
\end{align}
with $B=  A^2 +[4(\eta\teta b_K)^2 \norm{v_i}^2  + (c_K\teta)^2 \norm{x_i}^2]\tbfm_{\ell}$.
For $\norm{x_i}^2  \geq \bD_1^x$  and $\norm{v_i}^2 \geq \bD_1^{v}$, we get using  $c_0\ps{x_i}{v_i} \leq (a_0\norm{x_i}^2 + b_0\norm{v_i}^2)/\sqrt{2}$ since $c_0 \leq \sqrt{2a_0b_0}$ by the first line in \eqref{eq:condition_c_0},
\begin{align}
  \label{eq:6}
  B & \leq (1-\bar{\rho}_{\bfomega}K\delta/2)[a_0 \norm{x_i}^2 + b_0 \norm{v_i}^2 + c_0\ps{x_i}{v_i} + 2b_0 \Uq(x_i)]^2 \\
&   \qquad \qquad-\bar{\rho}_{\bfomega}K\delta(\sqrt{2}-1)2^{-1/2}[a_0 \norm{x_i}^2 + b_0 \norm{v_i}^2 ]^2 + [4(\eta\teta b_K)^2 \norm{v_i}^2  + (c_K\teta)^2 \norm{x_i}^2]\tbfm_{\ell} \\
  & \leq (1-\bar{\rho}_{\bfomega}K\delta/2)[a_0 \norm{x_i}^2 + b_0 \norm{v_i}^2 + c_0\ps{x_i}{v_i} + 2b_0 \Uq(x_i)]^2\eqsp.
\end{align}

Similarly, for $\norm{x_i}^2  \geq \bD_2^x$  and $\norm{v_i}^2 \leq \bD_1^{v}$ or $\norm{x_i}^2  \leq \bD_1^x$  and $\norm{v_i}^2 \geq \bD_2^{v}$, we get
\begin{align}
  \label{eq:6_0}
  B 
  & \leq (1-\bar{\rho}_{\bfomega}K\delta/2)[a_0 \norm{x_i}^2 + b_0 \norm{v_i}^2 + c_0\ps{x_i}{v_i} + 2b_0 \Uq(x_i)]^2\eqsp.
\end{align}

It yields for $\norm{x_1}^2 \geq \bD_2^x$ or $\norm{v_i}^2 \geq \bD_2^v$,
\begin{align}
  \label{eq:3_2_final}
 & (1+\bar{\rho}_{\bfomega}K\delta/4)^{\ell-1}A^{\ell-2\floor{\ell/2}}B^{\floor{\ell/2}}\\
 & \leq (1+\bar{\rho}_{\bfomega}K\delta/4)^{\ell}A^{\ell-2\floor{\ell/2}}((1-\bar{\rho}_{\bfomega}K\delta/2)[a_0 \norm{x_i}^2 + b_0 \norm{v_i}^2 + c_0\ps{x_i}{v_i} + 2b_0 \Uq(x_i)]^2)^{\floor{\ell/2}}\\
  & \leq (1-\bar{\rho}_{\bfomega}K\delta/4)^{\ell}[a_0 \norm{x_i}^2 + b_0 \norm{v_i}^2 + c_0\ps{x_i}{v_i} + 2b_0 \Uq(x_i)]^{\ell} \eqsp.
\end{align}
Therefore, the proof is completed for $\norm{x_1}^2 \geq \bD_2^x$ or $\norm{v_i}^2 \geq \bD_2^v$. It remains to consider the case where $\norm{x_1}^2 \leq \bD_2^x$ and  $\norm{v_i}^2 \leq \bD_2^v$.

Using that $(t+s)^{\ell'} -t^{\ell'}\leq \ell' s(t+s)^{\ell'-1}$, for $s,t \geq 0$ and $\ell' \geq 1$, we get for $\norm{x_1}^2 \leq \bD_2^x$ and  $\norm{v_i}^2 \leq \bD_2^v$,
\begin{align}
  \label{eq:15}
  B^{\floor{\ell/2}} - A^{2\floor{\ell/2}}  & \leq (A^2 + \bE_{\ell})^{\floor{\ell/2}} - A^{2\floor{\ell/2}} \leq \floor{\ell/2}\bE_{\ell}B^{\floor{\ell/2}-1} \eqsp.
\end{align}
Combining this inequality and \eqref{eq:3_1}, we obtain
\begin{align}
  \label{eq:3_1}
    & \rmP_{\bfomega} \Voi^{\ell}(x,v) 
     \leq (1+\bar{\rho}_{\bfomega}K\delta/4)^{\ell-1}A^{\ell} + (1+\bar{\rho}_{\bfomega}K\delta/4)^{\ell-1} \floor{\ell/2}\bE_{\ell}B^{\floor{\ell/2}} \\
  & \txts  \qquad + 12^{\ell-1}(1+4/\bar{\rho}_{\bfomega}K\delta)^{\ell-1}[\teta^{2\ell} b_K^{\ell} q^{\ell} \bfm_{2\ell} + 8^{\ell-1} \epsilon^{\ell} e_K^{\ell}q^{\ell} \bfm_{2\ell}+f_K^{\ell}]\\
    & \txts  \qquad +12^{\ell-1}(1+4/\bar{\rho}_{\bfomega}K\delta)^{\ell-1} \epsilon^{\ell} e_K^{\ell}\nq^{-1} \sum_{j=1}^{\nq}[\norm{x_i-x_j}^2 + \norm{v_i-v_j}^2]^{\ell} \eqsp,
\end{align}
and the proof is completed using the definition of $A$, $(1+\bar{\rho}_{\bfomega}K\delta/4)^{\ell-1}(1-\bar{\rho}_{\bfomega}K\delta)^{\ell} \leq (1-\bar{\rho}_{\bfomega}K\delta/2)^{\ell}$, $B \leq (a_0 \bD_2^x + b_0 (\bD_2^{v}+2\bD_2^x) + c_0 (\bD_2^x)^{1/2}(\bD_2^v)^{1/2})^2 + \bE_{\ell}$ since $U(x) \leq \norm{x}^2$ under \Cref{ass:for_drift} and $(1+\bar{\rho}_{\bfomega}K\delta/4)^{\ell-1} \leq 2^{\ell}$.

\item By Young's inequality, we have
\begin{align}
  \rmP_{\bfomega} \Vlyol(x,v)& \txts \leq (1-\bar{\rho}_{\bfomega}K\delta/4)^\ell \sum_{i=1}^{\nq}[a_0 \norm{x_i}^2 + b_0\norm{v_i}^2 + c_0 \ps{x_i}{v_i} + 2b_0 \Uq(x_i)]^{\ell}\\
  & \qquad + 2^{\ell} \floor{\ell/2} \nq \bE_{\ell}[(a_0 \bD_2^x + b_0 (\bD_2^{v}+2\bD_2^x) + c_0 (\bD_2^x)^{1/2}(\bD_2^v)^{1/2})^\ell + \bE_{\ell}^\ell]  \\
                             & \qquad+ 12^{\ell-1}(1+4/(\bar{\rho}_{\bfomega}K\delta))^{\ell-1} \nq [\teta^{2\ell} b_K^\ell q^\ell  \bfm_{2\ell} + 8^{\ell-1} \epsilon^\ell e_K^{\ell}q^\ell \bfm_{2\ell} +  f_K^\ell]\\
    \label{eq:16}
    &\txts \qquad   +2 \times 24^{\ell-1}(1+4/(\bar{\rho}_{\bfomega}K\delta))^{\ell-1} \epsilon^\ell e_K^\ell \sum_{i=1}^{\nq}[\norm{x_i}^2 + \norm{v_i}^2 ]^{\ell} \eqsp.
\end{align}
Then the proof is completed using
\begin{equation}
  \label{eq:19}
a_0  \norm{x_i}^2 + b_0\norm{v_i}^2 \leq \sqrt{2}/(\sqrt{2}-1)\{a_0 \norm{x_i}^2 + b_0 \norm{v_i}^2+ c_0 \ps{x_i}{v_i}\} \leq 4 \{a_0 \norm{x_i}^2 + b_0 \norm{v_i}^2 + c_0 \ps{x_i}{v_i}\}  \eqsp,
\end{equation}
the fact that  by \eqref{eq:verif_epsi_momentn}
\begin{align}
&  (1-\bar{\rho}_{\bfomega}K\delta/4)^\ell  + 2 \times 4 \times 24^{\ell-1}[a_0^{-1}\vee b_0^{-1}]^{\ell}(1+4/(\bar{\rho}_{\bfomega}K\delta))^{\ell-1} \epsilon^\ell e_K^\ell \\
  & \qquad  \leq (1-\bar{\rho}_{\bfomega}K\delta/4)^\ell + \ell(1-\bar{\rho}_{\bfomega}K\delta/4)^{\ell-1}[24e_K \epsilon [a_0^{-1}\vee b_0^{-1}] (1+4/(\bar{\rho}_{\bfomega}K\delta))^{(\ell-1)/\ell}]\\
  &\qquad  \leq (1-\bar{\rho}_{\bfomega}K\delta/4+24e_K \epsilon [a_0^{-1}\vee b_0^{-1}] (1+4/(\bar{\rho}_{\bfomega}K\delta))^{(\ell-1)/\ell})^\ell \eqsp,
\end{align}
and \eqref{eq:verif_epsi_momentn} again.
\item Following the same lines as in the proof \ref{lem:drift_one_step_a}  using \Cref{lem:drift_for_fixed_Gaussian}-\ref{lem:drift_for_fixed_Gaussian_b} instead of \Cref{lem:drift_for_fixed_Gaussian}-\ref{lem:drift_for_fixed_Gaussian_a} we have using \eqref{eq:19}
  \begin{align}
  \label{eq:16_0}
   \rmD_{\eta} \rmV_{\delta}^k \Voi^\ell(x,v) & \leq \bC_{2,0}^{\ell} [a_0 \norm{x_i}^2 + b_0\norm{v_i}^2 + c_0 \ps{x_i}{v_i} + 2b_0 \Uq(x_i)]^{\ell} \\
  & \qquad + 2^{\ell} \floor{\ell/2} \bE_{\ell}[(a_0 \bD_2^x + b_0 (\bD_2^{v}+2\bD_2^x) + c_0 (\bD_2^x)^{1/2}(\bD_1^v)^{1/2})^\ell + \bE_{\ell}^\ell]  \\
  & \qquad+ 12^{\ell-1}(1+4/(\bar{\rho}_{\bfomega}K\delta))^{\ell-1}[\teta^{2\ell} b_K^\ell q^\ell  \bfm_{2\ell} + 8^{\ell-1} \epsilon^\ell e_K^{\ell}q^\ell \bfm_{2\ell} +  f_K^\ell]\\
    & \qquad \txts  + 12^{\ell-1}(1+4/(\bar{\rho}_{\bfomega}K\delta))^{\ell-1} \epsilon^\ell e_K^\ell\nq^{-1} \sum_{j=1}^{\nq}\parentheseDeux{\norm{x_i-x_j}^2 +  \norm{v_i-v_j}^2}^\ell \eqsp,
  \end{align}
  with
\begin{equation}
  \label{eq:4_0}
  \bC_{2,0} = 8 (1+\bar{\rho}_{\bfomega}K\delta/4) [\bC_{1}+ ( [4 (\eta\teta b_K)^2 \tbfm_{\ell}b_0^{-1} ] \vee [(c_K \teta)^2 \tbfm_{\ell}a_0^{-1}]) ] \eqsp. 
\end{equation}
The proof is then easily completed.
\end{enumerate}
\end{proof}

\paragraph{Conclusion.}
We can finally conclude this section by providing the proof of \Cref{theo:drift}. Indeed, it only remains to chose carefully the initial parameters $\bfo$ so that Lemma~\ref{lem:drift_one_step} reads $ \rmP_{\bfomega} \Vlyol \leqslant (1-\rho_{\bfomega}  K\delta)^{\ell}\Vlyol +C $ for some constant $C$ (with $\rho_{\bfomega}$  given by \eqref{eq:def_rho_omega_final}), and then \eqref{eq:drift_final} will follow by induction on $n$ and the equivalence between $\Woi$ and $\Vlyol$.

\begin{proof}[Proof of \Cref{theo:drift}]
Taking
  \begin{equation}
    \label{eq:8}
    a_0 = \gamma_0^{2}/2 \eqsp, \quad b_0 = 1 \eqsp, \quad c_0 = \eta \gamma_0\eqsp, \quad \gamma_0 = (1-\eta)/(\delta K) \eqsp, \quad  f_0 = 0 \eqsp,  \quad e_0 = 0 \eqsp.
  \end{equation}
  As mentioned in \Cref{rem:choice_a_b_c}, the first condition in \eqref{eq:condition_c_0} is satisfied for this case.
  In addition, for this choice of parameter, the constants appearing in \Cref{lem:drift_k_step} become:
    \begin{align}
      {C}_{a,1} & = \mtt / (3\times 2^5) \eqsp, \quad {C}_{a,2} = \mtt / (3 \times 2^6) \eqsp, \nC_{a,3} = 20  + \gamma^2_0 \bT(\mtt / (3\times 2^5) + \bT\mtt / (3\times 2^6))/2 \eqsp, \\
      {C}_{b,1} & = \epsilon + \eta \gamma_0(1+10\bdelta) \eqsp, \quad \nC_{b,2,0} = 4[(1+\epsilon) \nC_{b,1} +(1+10\bdelta)(\gamma_0^2 + \nC_{e,1}\epsilon)  ] \eqsp, \\
      \nC_{b,3,0} &= 17 +\gamma^2_0 +\bT(2\nC_{a,3} + 17  \nC_{b,1} + \epsilon \nC_{e,2} + 20\bT(1+10\bdelta)) \eqsp,\\
      \nC_{b,2} & = \nC_{b,2,0} + \eta \gamma_0 \mtt/8 + a_0 \mtt \bT/4   \eqsp, \quad  \nC_{b,3} = \nC_{b,3,0} + 20  + 2 \nC_{a,3} \eqsp,\\
      \nC_{e,1} &= 37[2 + \eta \gamma_0 + \bT(17 \eta \gamma_0 + \gamma_0^2 + \bdelta \nC_{b,3})] \eqsp, \quad \nC_{e,2} = 37(\gamma_0^2+\bT \nC_{a,3}(\bdelta \nC_{a,3} + 2\bT)) \eqsp, \\
 {C}_{f,1} &= \gamma_0^2 \Mtt \bdelta/2 +\bC_{a,3} \Mtt \bdelta \bT+  \mtt \Mtt \bdelta /2 + \Mtt(\eta \gamma_0  + \bar{T} \gamma_0^2/2) \eqsp.
  \end{align}
  Then, with these notations, $\delta_1,\bT_1,\bT_2$ in \eqref{eq:10_applied} can be written as
  \begin{equation}
    \label{eq:10_applied_final}
    \begin{aligned}
      \delta_1 &= 17^{-1} \wedge [\mtt/40] \wedge [ \gamma_0^2 2^{-6}/(48 \epsilon \nC_{e,1} + \nC_{b,3}(1+10\bdelta))] \eqsp, \\
    T_1 & = 4^{-1}\parentheseDeux{17^{-1} \wedge \nC_{b,1}^{-1} \wedge \nC_{b,2,0}^{-1/2}}\eqsp, \\
  \quad 
    T_2&  =4^{-1} \defEnsLigne{[2^{-5}/(1+10\bdelta)]\wedge[ 2^4 \gamma^2_0 /\nC_{b,2}]\wedge \gamma_0/2^4 \wedge 2} \eqsp.
  \end{aligned}
\end{equation}
and the condition \eqref{eq:condition_epsilon_e_0}-\eqref{eq:condition_epsilon_e_0_1} are equivalent to 
\begin{equation}
  \label{eq:condition_epsilon_e_0_final}
 \epsilon \leq \epsilon_1 \eqsp, \qquad \epsilon_1 =  \gamma^2_0 \mtt 2^{-6}/ [1 + 48 \nC_{e,1}]  \eqsp, 
\end{equation}
and 
\begin{equation}
  \label{eq:condition_epsilon_e_0_1_final}
\delta \leq \delta_2 \eqsp, \qquad \delta_2  = \gamma_0 \mtt (3\times 2^8)^{-1}/[  20 +\gamma_0^2 \bT(\nC_{a,1} + \bT\nC_{a,2})/2 ] \eqsp.
\end{equation}
In addition,
\begin{equation}
  \label{eq:bound_b_k_eK}
  b_K \leq \bar{b} \eqsp, \quad \bar{b} = 2 + \bT \bdelta C_{b,3} \eqsp, \quad e_K \leq K\delta \bar{e} \eqsp, \quad \bar{e}  = ( \nC_{e,1} + \bdelta \nC_{e,2}) \eqsp.
\end{equation}

Furthermore, the second line of \eqref{eq:condition_c_0} is equivalent to $\delta \leq 1/(7\times 8)$ and
\begin{equation}
  \label{eq:condi_k_delta_T_3}
  K\delta  \leq T_3 \eqsp, \qquad T_3= 4^{-1} \gamma_0 /[(1-\eta^2)^{1/2}\nC_{b,1} + \nC_{b,2}] \eqsp.
\end{equation}

Then, the proof is  an easy consequence of \Cref{lem:drift_one_step} with $\rho_{\bfomega}$  given by \eqref{eq:def_rho_omega_final}. Condition \eqref{eq:verif_epsi_momentn} is equivalent using \eqref{eq:bound_b_k_eK} to 
\begin{equation}
  \label{eq:def_epsilon_2}
     \epsilon \leq \epsilon_2 \eqsp, \quad  \epsilon_2 = [\ell^{1/(\ell-1)}( 1-\bar{\rho}_{\bfomega}K\delta/4)] \wedge [  (8 \times 24)^{-1}\bar{\rho}_{\bfomega}/\{ \bar{e}(1+4/({\rho}_{\bfomega}\bT))^{(\ell-1)/\ell}\}]\eqsp.
   \end{equation}
   and \eqref{eq:22}-\eqref{eq:bC2} can be written using \eqref{eq:bound_b_k_eK} and $1+\eta \leq 2$, as 
   \begin{align}
  \label{eq:22_final}
     D_1 &=  16 \gamma_0 \rho_{\bfomega}^{-1} \{\bareta^2 + \bT^2 \gamma_0^2 \}\eqsp, \quad  D_2  = 32 \times 2 \bareta^2  \gamma_0/\rho_{\bfomega}\eqsp,\\
       E_{\ell}  &= \delta K F_{\ell}   \eqsp, \quad F_{\ell} = 8 \gamma_0 [\bareta^2 (2+ C_{b,3} \bdelta \bT) D_2 + (\gamma_0^2 \bareta^2 + \gamma_0^4 \bT^2)D_1] \tbfm_{\ell} \eqsp, \\
  \label{eq:bC2_dinfal}
     C_{2} &= (1+{\rho}_{\bfomega}\bT/4) [C_{1}+ ( [8 \gamma_0 \bT\bareta^2 (2+ C_{b,3} \bdelta \bT) ] \vee [16 \gamma_0\bT ( \bareta^2 + \gamma_0^2 \bT^2) ])] \\
     & \qquad \qquad + 24 \bT (C_{e,1} + \bdelta C_{e,2})(4+ \rho_{\bfomega})^{(\ell-1)/\ell} \eqsp, \\
     C_1 & = [2 \{(1+\bT + \bT^3\mtt/4)\gamma_0^2/2 + 2\bareta(1+\bT^2\mtt/8) \gamma_0+2C_{a,3} \bdelta\bT \}/\gamma_0^2] \\
     & \qquad \qquad \vee [2+2\gamma_0 \bareta + \bT \gamma_0^2/2+\bdelta \bT C_{b,3}] \eqsp,
   \end{align}
   where $\tbfm_{\ell}$ is defined in     \eqref{eq:def_tbfm}.
   It completes the proof of \eqref{eq:drift_final} with
   \begin{equation}
   \begin{aligned}
     C_{1,\ell} & = C_2 \eqsp,\\
     C_{2,\ell} & =  2^{\ell} \floor{\ell/2} F_{\ell}[(a_0 D_1 + b_0 (D_2+2D_1) + c_0 D_1^{1/2}D_2^{1/2})^\ell + \bT^{\ell}F_{\ell}^\ell]  \eqsp, \\
C_{3,\ell}  &=  2\times 12^{\ell-1}  \rho_{\bfomega}^{1-\ell}(4+\rho_{\bfomega}\bT)^{\ell-1}[4^{\ell}\gamma_0^{\ell}(2+ \bdelta \bT C_{b,3})\bfm_{2\ell} + 8^{\ell-1}\epsilon^{\ell}(C_{e,1}+\bdelta C_{e,2})^{\ell}\bfm_{2\ell}+ C_{f,1}] \eqsp,
\end{aligned}
\label{eq:def_C_ell_final}
 \end{equation}
   where  $\bfm_{\ell'}$ is the $\ell'$-th moment of the zero-mean one-dimensional Gaussian distribution with variance $1$. 

\end{proof}

\end{appendix}

\end{document}